\documentclass[article,11pt,reqno]{amsart}
\usepackage[english]{babel}
\usepackage[utf8]{inputenc}
\usepackage{thm-restate}
\usepackage{amsmath, amssymb, amsthm}
\usepackage[table]{xcolor}
\usepackage{tikz-cd}
\usepackage{graphicx}
\usepackage[]{todonotes}
\usepackage{physics}
\usepackage{setspace}
\usepackage[margin=0.9in]{geometry}

\usepackage{caption}
\usepackage{subcaption}
\usepackage{floatrow}
\usepackage{xpatch}
\usepackage{enumitem}
\usepackage{graphicx}
\usepackage{mathrsfs}
\usepackage{bbm}

\usepackage{verbatim}
\usepackage{adjustbox}
\newcommand{\nocontentsline}[3]{}
\newcommand{\tocless}[2]{\bgroup\let\addcontentsline=\nocontentsline#1{#2}\egroup}

\DeclareMathOperator*{\UC}{\mathtt{UC}}
\DeclareMathOperator*{\UEG}{UEG}

\newcommand{\N}{\mathbb{N}}

\newcommand{\Z}{\mathbb{Z}}

\newcommand{\cc}{\leftrightarrow}
\newcommand{\Prb}{\mathbb{P}}

\newcommand{\Prbcur}{\mathbf{P}}

\usepackage{tikz}
\usepackage{pgfplots}
\usepackage{tikz-3dplot} 
\usetikzlibrary{positioning}
\usetikzlibrary{arrows}

\newcommand{\id}{1\! \!1}

\newcommand{\nn}{\mathbf{n}}

\newcommand\independent{\protect\mathpalette{\protect\independenT}{\perp}}
\def\independenT#1#2{\mathrel{\rlap{$#1#2$}\mkern2mu{#1#2}}}

\usetikzlibrary{positioning,calc}

\newlength\tindent
\makeatletter
\newcommand{\changeoperator}[1]{%
  \csletcs{#1@saved}{#1@}%
  \csdef{#1@}{\changed@operator{#1}}%
}
\newcommand{\changed@operator}[1]{%
  \mathop{%
    \mathchoice{\textstyle\csuse{#1@saved}}
               {\csuse{#1@saved}}
               {\csuse{#1@saved}}
               {\csuse{#1@saved}}%
  }%
}

\usetikzlibrary{calc,arrows.meta}

\newcommand{\Bin}{\operatorname{Bin}}
\newcommand{\Giant}{\mathtt{Giant}}

\newcommand{\ak}{\mathtt{Ann}^k}
\newcommand{\Pis}{\mathtt{Pis}}

\newcommand{\ssd}  {\overset{\mathbf{s}}{\preceq}}

\newcommand{\xiA}{\xi_0}

\usepackage{sidecap}
\usetikzlibrary{patterns,decorations.pathmorphing,arrows.meta,calc}
\usepackage{wrapfig}

\changeoperator{prod}
\pgfplotsset{compat=1.18}
\usepackage{mdframed}
\mdfdefinestyle{MyFrame}{
    linecolor=blue,
    outerlinewidth=30pt,
    roundcorner=20pt,
    innertopmargin=4pt,
    innerbottommargin=5pt,
    innerrightmargin=20pt,
    innerleftmargin=20pt,
    backgroundcolor=gray!15!white}
    
\usepackage[hidelinks,hypertexnames=false]{hyperref}
\usepackage{cleveref}
\newtheorem{theorem}{Theorem}[section]
\newtheorem{definition}[theorem]{Definition}
\newtheorem{proposition}[theorem]{Proposition}

\newtheorem{remark}[theorem]{Remark}
\newtheorem{corollary}[theorem]{Corollary}

\newtheorem{question}[theorem]{Question}

\newtheorem{lemma}[theorem]{Lemma}

\newtheorem{coupling}[theorem]{Coupling}
\crefname{coupling}{coupling}{couplings}
\Crefname{coupling}{Coupling}{Couplings}

\makeatletter
\newcommand*{\centernot}{\mathpalette\@centernot}
\def\@centernot#1#2{%
  \mathrel{%
    \rlap{%
      \settowidth\dimen@{$\m@th#1{#2}$}%
      \kern.5\dimen@
      \settowidth\dimen@{$\m@th#1=$}%
      \kern-.5\dimen@
      $\m@th#1\not$%
    }%
    {#2}%
  }%
}
\makeatother

\author{Ulrik Thinggaard Hansen}
\address{Ulrik Thinggaard Hansen \\ Department of Mathematics,
Universität Innsbruck, Technikerstrasse 13, 6020 Innsbruck, Austria }
\email{ulrik.hansen@uibk.ac.at}

\author{Frederik Ravn Klausen}
\address{Frederik Ravn Klausen, University of Cambridge, DPMMS, Cambridge, United Kingdom}
\email{frk23@cam.ac.uk}

\title{The Supercritical Loop O(1) and Random Current Models: Uniqueness and Mixing}

\begin{document}
\begin{abstract}
Much recent rigorous study of the classical ferromagnetic Ising model has been powered by its graphical representations, such as the random current and loop O(1) model (high temperature expansion).
In this paper, we prove uniqueness of Gibbs measures and exponential ratio weak mixing for the loop O(1) and random current models corresponding to the supercritical Ising model on the hypercubic lattice $\mathbb{Z}^d$ in any dimension $d \geq 2$. 
The methods generalise to $q$-flow models and have natural applications for gradient measures of $\mathbb{Z}/q\mathbb{Z}$-gauge theories. 
\end{abstract}

\vspace{-1cm}
\maketitle
\vspace{-0.5cm}

\section{Introduction}
The loop O(1) model $\ell$ and the random current $\Prbcur$ are percolation models arising as graphical representations of the Ising model, encoding its correlations in terms of connectivity properties of random graphs. Together with the FK-Ising model $\phi$, they have been central to much recent progress on the Ising model \cite{aizenman2021marginal} and have increasingly become objects of study in their own right \cite{ Gri06}. A basic question about any such model is the uniqueness of its infinite-volume Gibbs measure, which, by the work of Pisztora \cite{Pis96} and Bodineau \cite{bodineau2006translation}, see also \cite{raoufi2020translation}, is now well-understood for the FK-Ising model.  In this paper, we resolve this question for the loop $O(1)$ and random current models in the supercritical regime on the hypercubic lattice $\Z^d$ for all $d\geq2$.

We generally work with the standard percolation state space $\{0,1\}^{E(\mathbb{Z}^d)}$. For subgraphs $G\subseteq \mathbb{Z}^d,$ we denote by $\mathcal{A}_G$ the $\sigma$-algebra generated by the restriction to $G$. As is standard, $\Lambda_n=[-n,n]^d\cap \mathbb{Z}^d$ denotes the box of size $n$.
A percolation measure $\nu$ on $\mathbb{Z}^d$ is \emph{(exponentially) ratio weak mixing} if there exist constants $c,C,k>0$ such that, 
\begin{equation}\label{eq:RWM}
\forall n\in \mathbb{N}\; \forall A\in \mathcal{A}_{\Lambda_n}\forall B\in \mathcal{A}_{\mathbb{Z}^d\setminus \Lambda_{kn}}:\quad |\nu[A\cap B]-\nu[A]\nu[B]|\leq C\exp(-c n) \nu[A]\nu[B]. \tag{RWM}
\end{equation}

For a finite subgraph $G\subseteq \mathbb{Z}^d$ and $x\in [0,1],$ the loop O($1$) model on $G$ with sources $A\subseteq G$ (typically a subset of the vertex boundary $\partial_v G),$ denoted $\ell_{G,x}^A$, is  Bernoulli percolation with edge probabilities $\frac{x}{1+x},$ conditioned on all vertices in $A$ having odd degree, and all others having even degree. The loop O($1$) model is tied to the Ising model through the high temperature expansion, Kramers-Wannier duality, and the uniform even subgraph. 
In particular, the infinite volume limit $\ell_{\mathbb{Z}^d,x}$ has exponential decay if $x < x_c = \tanh(\beta_c)$ and a polynomial lower bound on connection probabilities if $x > x_c$, where $\beta_c$ is the critical inverse temperature of the Ising model on $\Z^d$ \cite{hansen2023uniform}. 
Our first theorem establishes the existence of a unique thermodynamic limit in the supercritical regime in arbitrary dimension and shows that this limit is ratio weak mixing. We say that a measure $\nu$ is \emph{Gibbs} for the loop O($1$) model if its conditional distributions on a finite volume $\Lambda$ given its exterior comport with the finite volume measures (see \eqref{SMP} and following discussion).

\begin{theorem} \label{thm:Unique_Loop}
For any $d\geq 2$ and $x>x_c,$ there exists a measure $\ell_{\mathbb{Z}^d,x}$ such that for any exhaustion $G_N\nearrow \mathbb{Z}^d$ and any $A_N\subseteq \partial_v G_N$ with $|A_N|$ even, 
$$
\ell_{\mathbb{Z}^d,x}=\lim_{N\to\infty} \ell_{G_N,x}^{A_N}.
$$
 In particular, $\ell_{\mathbb{Z}^d,x}$ is the unique Gibbs measure for the loop O($1$) model. Furthermore, $\ell_{\mathbb{Z}^d,x}$ is exponentially ratio weak mixing.
\end{theorem}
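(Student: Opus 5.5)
We will realise the loop O(1) measure through its known coupling with the FK-Ising model, and transfer uniqueness and mixing from the FK side.

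\textbf{Coupling.} The coupling is the uniform even subgraph: if $\omega\sim\phi^{w}_{G,p}$ is FK-Ising with wired boundary and $p=2x/(1+x)$, then a uniformly chosen even subgraph of $\omega$ has law $\ell_{G,x}$. With sources $A$, we instead take a uniform subgraph of $\omega$ with odd-degree set $A$, and $\omega$ is tilted by the event that the vertices of $A$ are paired within clusters; in the wired case this event is automatic, since all of $\partial_v G$ is one cluster. So $\ell^{A}_{G,x}=\UEG(\phi^{w}_{G,p})$ twisted by an $A$-dependent $\mathbb{Z}/2$ ``flow'' from the boundary. The relation $x>x_c$ corresponds exactly to $p>p_c$ (since $p_c=1-e^{-2\beta_c}$ and $x_c=\tanh\beta_c$), i.e. the supercritical FK regime.

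\textbf{Locality of the uniform even subgraph.} A uniform even subgraph of $\omega$ can be sampled by taking a fixed subgraph $\eta_0$ with the prescribed parities and adding (mod $2$) independent fair coin combinations of a basis of the cycle space of $\omega$. On the event that $\omega$ restricted to an annulus $\Lambda_{kn}\setminus\Lambda_n$ contains a unique crossing cluster that is well connected (Pisztora's coarse-graining events), we choose a basis of cycles adapted to the annulus. The restriction of the UEG to $\Lambda_n$ then depends only on $\omega|_{\Lambda_{2n}}$ and independent local coins. The boundary parities $A_N$ can be routed along the giant cluster outside $\Lambda_{kn}$, and then change the inside only by an even subgraph, which is absorbed by the uniform randomisation. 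Concretely: on the good event $\Giant$ for the annulus, conditionally on $\omega$, the law of $\eta|_{\Lambda_n}$ is independent of $A_N$ and of the coins outside.

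\textbf{Uniqueness.} By Pisztora and Bodineau, $\phi_p$ is unique for $p>p_c$, and $\phi[\Giant^c]\le Ce^{-cn}$ in all $d\ge2$ (using slab results). Hence $\ell^{A_N}_{G_N,x}[E]$ for local $E$ differs from a limit-independent quantity by at most $Ce^{-cn}+o_N(1)$. This gives convergence along any exhaustion and independence of $A_N$. For an arbitrary Gibbs measure, the Gibbs property \eqref{SMP} writes it as a mixture of finite-volume measures with boundary sources, so it has the same limit.

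\textbf{Ratio weak mixing (main obstacle).} A multiplicative error $e^{-cn}\nu[A]\nu[B]$ cannot follow from additive estimates. We plan to use ratio weak mixing of supercritical FK-Ising (Alexander's results, available in $d=2$, and Pisztora-type coarse graining in general $d$). We then show that the conditional law of the inside event given $\omega$ and the outside is close in ratio, by conditioning on $\Giant$ and showing that $\Giant^c$ has conditional probability at most $e^{-cn}$ \emph{uniformly in the outside configuration} $B$. This uniform control is delicate: it requires finite energy and the fact that the outside information can only affect $\omega$ through boundary conditions, which are handled by FK mixing in ratio form.
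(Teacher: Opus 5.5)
\textbf{Gap: the coupling is misstated, and this hides the main difficulty.} Your starting coupling is incorrect. By \Cref{Thm:The_Coupling}, the FK marginal of $\ell^{A}_{G,x}$ is $\phi^0_{G,p}[\,\cdot\mid\mathcal{F}_A]$. That is \emph{free} boundary conditions, conditioned on the increasing event $\mathcal{F}_A$ that every cluster of $\omega\subseteq E(G)$ meets $A$ an even number of times. Since $\eta$ must be a subgraph of $\omega$ inside $G$, the wiring outside $G$ cannot be used to pair up sources, so $\mathcal{F}_A$ is not automatic.

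Already for $A=\emptyset$, the $\eta$-marginal of $\phi^1_{G,p}[\UEG_\omega[\,\cdot\,]]$ is not $\ell_{G,x}$. Dividing $\phi^1_{G,p}[\omega]\propto (p/(1-p))^{|\omega|}2^{\kappa^1(\omega)}$ by $|\Omega_\emptyset(\omega)|=2^{|\omega|-|V|+\kappa(\omega)}$ leaves the non-local factor $2^{\kappa^1(\omega)-\kappa(\omega)}$, which cancels only for free boundary conditions. If instead you let $\eta$ use the outside wiring, you get the wired loop measure $\ell_{G^1,x}$. That is a mixture over boundary source sets, not $\ell^{A_N}_{G_N,x}$ for a prescribed $A_N$.

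\textbf{Consequence for uniqueness.} Your step ``$\phi[\Giant^c]\le Ce^{-cn}$, hence $\ell^{A_N}_{G_N,x}[E]$ is within $Ce^{-cn}$ of a limit-independent quantity'' is unsupported. The good event must be shown likely under $\phi^0_{G_N,p}[\,\cdot\mid\mathcal{F}_{A_N}]$, and the law of $\omega$ near the origin under this measure must be compared with $\phi_{\mathbb{Z}^d,p}$. Bayes' rule is useless here: $\phi^0_{G_N,p}[\mathcal{F}_{A_N}]$ can be as small as $e^{-cN^{d-1}}$, for instance when $A_N$ contains order $N^{d-1}$ well-separated boundary vertices, each of which must be non-isolated. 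Meanwhile the unconditional failure probability of a unique crossing is only $e^{-\Theta(n)}$. FKG does not help either, since neither $\mathtt{UC}$ nor Pisztora's event is increasing.

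\textbf{The missing ingredient.} What you need is \Cref{theorem:any_incerasing_event}: $\phi^\xi_{\Lambda_n,p}[\mathtt{UC}_n\mid\mathcal{F}]\ge 1-e^{-cn}$ for every increasing $\mathcal{F}$ and every $\xi$. For $d\ge3$ it is proved by peeling $\Lambda_n\setminus\Lambda_{n/2}$ into layers of width $h$. Via SMP, FKG and slab percolation, two boundary points that both reach the next layer but are still disconnected get joined inside that layer with conditional probability at least $c$, uniformly in the explored configuration and in $\mathcal{F}$. For $d=2$ the circuit event is increasing and FKG suffices. With this in hand, \Cref{lemma:FAevent_and_unique_crossings} shows that on $\mathtt{UC}_N$ the event $\mathcal{F}_{A_N}$ is determined by the outer annulus. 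By SMP the conditioning then disappears inside, and one reduces to $\sup_\xi\phi^\xi_{\Lambda_{N/2},p}[\UEG_{\omega^\xi}[F]]$, which FK weak mixing controls.

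\textbf{Ratio mixing.} The same misconception undermines your ratio-mixing plan. Conditioning on the outside loop configuration does not reach the inside FK configuration through boundary conditions. It acts through the non-local conditioning on $\mathcal{F}_A$ for the induced source set $A$, so FK ratio mixing plus finite energy cannot control it. In the paper, the estimate $|\ell^{A}_{\Lambda_n,x}[F]-\ell_{\mathbb{Z}^d,x}[F]|\le e^{-cn}$ holds uniformly in $A$. Via SMP this gives $|\ell[F\cap F']-\ell[F]\ell[F']|\le e^{-cn}\ell[F']$, and Alexander's theorem upgrades it to ratio weak mixing because the loop O(1) interaction is finite range.
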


We remark that the result is not new for $d=2,$ where it is a combination of the Aizenman-Higuchi Theorem \cite{aizenman1980translation,higuchi1981absence} and mixing results for the dual (subcritical) random-cluster model \cite{alexander1998weak,DCsharpness}. Nevertheless, a proof in the framework of our methods is included for completeness.

The single random current measure $\mathbf{P}^A_{G,\beta}$ at inverse temperature $\beta>0$ on a finite graph $G$ with sources $A$ is given by an i.i.d. family $(\mathbf{n}_e)_{e\in E(G)}$ of $\mathbf{Poi}(\beta)$ variables on $E(G)$ conditioned on having $\partial\mathbf{n}:=\{v\in V(G)\mid \sum_{w:vw\in E(G)} \mathbf{n}_{vw} \;\mathrm{odd}\}=A$. Uniqueness of the random current follows relatively painlessly from \Cref{thm:Unique_Loop}.
\begin{theorem} \label{thm:Free-mix}
For any $d\geq 2$ and $\beta>\beta_c,$ there exists a measure $\mathbf{P}_{\mathbb{Z}^d, \beta}$ such that for any exhaustion $G_N\nearrow \mathbb{Z}^d$ and any $A_N\subseteq \partial_v G_N$ with $|A_N|$ even,
$$
\lim_{N\to\infty} \mathbf{P}^{A_N}_{G_N, \beta} =\mathbf{P}_{\mathbb{Z}^d,\beta}
$$
 
 Moreover, $\mathbf{P}_{\mathbb{Z}^d,\beta}$ and $\mathbf{P}_{\mathbb{Z}^d,\beta} \otimes \mathbf{P}_{\mathbb{Z}^d,\beta}$  are exponentially ratio weak mixing. 
\end{theorem}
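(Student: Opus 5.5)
The plan is to reduce the random current to the loop O(1) model through the standard parity decomposition, and then transfer both uniqueness and mixing from \Cref{thm:Unique_Loop}. Write $\mathbf{n}_e=2\mathbf{m}_e+\omega_e$ with $\omega_e=\mathbf{n}_e \bmod 2$. Under $\mathbf{P}^A_{G,\beta}$, the trace $\omega=(\mathbf{1}[\mathbf{n}_e \text{ odd}])_e$ is Bernoulli percolation with parameter $\frac{\sinh\beta}{\cosh\beta+\sinh\beta}=\frac{\tanh\beta}{1+\tanh\beta}$, conditioned on $\partial\omega=A$. Its law is therefore exactly $\ell^A_{G,x}$ with $x=\tanh\beta$, and $x>x_c$ exactly when $\beta>\beta_c$.

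Conditionally on $\omega$, the variables $(\mathbf{n}_e)_e$ are independent. On edges with $\omega_e=1$, $\mathbf{n}_e$ has the law of a $\mathbf{Poi}(\beta)$ variable conditioned to be odd; on edges with $\omega_e=0$, it is conditioned to be even. This holds because the constraint $\partial\mathbf{n}=A$ depends only on $\omega$. Let $K$ denote this product kernel. It is local, and it does not depend on $G$ or $A$. We get $\mathbf{P}^{A_N}_{G_N,\beta}=\ell^{A_N}_{G_N,x}K$, and set $\mathbf{P}_{\mathbb{Z}^d,\beta}:=\ell_{\mathbb{Z}^d,x}K$.

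Convergence then follows directly. For a cylinder event $E$ depending on $\mathbf{n}$ restricted to a finite edge set $F$, we have $\mathbf{P}^{A_N}_{G_N,\beta}[E]=\sum_{\eta\in\{0,1\}^F}\ell^{A_N}_{G_N,x}[\omega|_F=\eta]\,K_\eta[E]$. This is a finite sum, and each term converges by \Cref{thm:Unique_Loop}. The possibly unbounded values of $\mathbf{n}_e$ cause no problem, since convergence of cylinder probabilities in the product topology on $\mathbb{N}^{E}$ is exactly what is needed.

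For ratio weak mixing of $\mathbf{P}:=\mathbf{P}_{\mathbb{Z}^d,\beta}$, take $A\in\mathcal{A}_{\Lambda_n}$ and $B\in\mathcal{A}_{\mathbb{Z}^d\setminus\Lambda_{kn}}$. Define $f(\omega)=K[A\mid\omega]$, which is $\mathcal{A}_{\Lambda_n}$-measurable in $\omega$, and define $g(\omega)=K[B\mid \omega]$ similarly outside $\Lambda_{kn}$. Because $K$ is a product kernel, $\mathbf{P}[A\cap B]=\ell[fg]$, $\mathbf{P}[A]=\ell[f]$ and $\mathbf{P}[B]=\ell[g]$. We then need (RWM) for nonnegative functions rather than events. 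This follows by writing $f$ and $g$ as finite combinations of indicators of $\omega|_{\Lambda_n}=\eta$ with nonnegative coefficients. Since $\Lambda_n$ is finite, $f=\sum_\eta a_\eta\mathbf{1}_{\eta}$, and by monotone approximation $g$ can be written the same way. Then
$|\ell[fg]-\ell[f]\ell[g]|\le\sum_{\eta,\xi}a_\eta b_\xi|\ell[\eta\cap\xi]-\ell[\eta]\ell[\xi]|\le Ce^{-cn}\ell[f]\ell[g]$,
and the nonnegativity of the coefficients is what makes the ratio form survive this step.

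For the product $\mathbf{P}\otimes\mathbf{P}$, the same bilinear argument applies once we show that $\ell\otimes\ell$ is ratio weak mixing. The difficulty is that events in the product $\sigma$-algebra are not rectangles. I would again expand over the finitely many configurations $(\eta_1,\eta_2)$ of the two copies on $\Lambda_n$. For each such pair and each rectangle $B_1\times B_2$, write $\ell[\eta_i\cap B_i]=\ell[\eta_i]\ell[B_i](1+\epsilon_i)$ with $|\epsilon_i|\le Ce^{-cn}$. Then $(1+\epsilon_1)(1+\epsilon_2)-1$ is bounded by $3Ce^{-cn}$.

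A general $B$ outside $\Lambda_{kn}$ is not a rectangle. To handle it, condition on the pair $(\eta_1,\eta_2)$ and compare the two measures $\ell\otimes\ell[\,\cdot\mid\eta_1,\eta_2]$ and $\ell\otimes\ell$ on $\mathcal{A}_{\mathbb{Z}^d\setminus\Lambda_{kn}}^{\otimes2}$. The ratio weak mixing of $\ell$ says that the Radon–Nikodym derivative of $\ell[\,\cdot\mid\eta]$ with respect to $\ell$ on the outer $\sigma$-algebra lies in $[1-Ce^{-cn},1+Ce^{-cn}]$. A product of two such densities lies in $[(1-\delta)^2,(1+\delta)^2]$. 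Integrating against $\mathbf{1}_B$ then gives $|\ell\otimes\ell[B\mid\eta_1,\eta_2]-\ell\otimes\ell[B]|\le 3\delta\,\ell\otimes\ell[B]$, and summing over $(\eta_1,\eta_2)$ weighted by $\mathbf{1}_A$ yields the claim.

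The main point to check carefully is this density reformulation of (RWM): it must hold for all events in the outer $\sigma$-algebra, with the same constant, so that it passes to products. That follows from (RWM) applied with $A=\{\omega|_{\Lambda_n}=\eta\}$.
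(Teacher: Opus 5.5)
Your proof is correct. It shares the paper's skeleton: the current is the loop O($1$) configuration decorated by independent, edgewise parity-conditioned Poisson variables (your kernel $K$ is exactly \Cref{Loop_to_curren_sampling}), so uniqueness and mixing are inherited from \Cref{thm:Unique_Loop}.

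Where you differ is in how mixing is transported.

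\emph{The paper's route.} It realises $K$ as a deterministic edgewise function of $(\eta,U)$, with $U$ i.i.d.\ uniform. It proves the general stability statement \Cref{Stable_RWM} via \Cref{Convexity_Lemma}, approximating product-$\sigma$-algebra events by countable disjoint unions of rectangles and then letting $\varepsilon\to 0$. It applies this to $\ell_{\mathbb{Z}^d,x}\otimes\mathtt{Unif}([0,1])^{\otimes E(\mathbb{Z}^d)}$ and pulls local events back along the edgewise map. Convergence is written out only for the traced current, via continuity of $\cup$.

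\emph{Your route.} You integrate the kernel out, so everything reduces to \eqref{eq:RWM} for nonnegative local functions of $\eta$; this is the same convexity mechanism as \Cref{Convexity_Lemma}. You handle $\ell\otimes\ell$ by using that the inner state space $\{0,1\}^{E(\Lambda_n)}$ is finite. Then \eqref{eq:RWM} says precisely that the density of $\ell[\,\cdot\mid\omega|_{\Lambda_n}=\eta]$ with respect to $\ell$ on the outer $\sigma$-algebra lies in $[1-\delta,1+\delta]$. That property is visibly stable under tensor products.

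\emph{What each buys.} Your argument is more elementary, avoids the approximation step, and treats convergence of the full (untraced) current explicitly. The paper's argument yields a product lemma for arbitrary state spaces, which it reuses verbatim for $\widehat{\mathbf{P}}$, $\widehat{\mathbf{P}}\otimes\widehat{\mathbf{P}}$ and the currents. Your density viewpoint in fact recovers \Cref{Stable_RWM} in full generality. By a monotone class argument, \eqref{eq:RWM} is equivalent to the joint law of the inner and outer restrictions having density in $[1-\delta,1+\delta]$ with respect to the product of their marginals, and such densities multiply.

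One cosmetic point: the bound $(1+\delta)^2-1\le 3\delta$ needs $\delta\le 1$, which is absorbed into the constants.
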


We state the two main theorems in the case without sources in the bulk for readability and because we take them to be of the most a priori interest. However, our methods transfer with suitable modifications to the setting where sources are left in the bulk (see \Cref{thm:Unique_loop_with_sources} and  \Cref{thm:Unique_cur_with_sources}). In addition, the results generalise to the loop representation of the $q$-state Potts model for $q>2$ (see \Cref{thm:Unique_loop_qversion} - here we caveat that the final theorem is weaker, reflecting the comparatively weaker a priori input available for the associated random-cluster model).

One motivation for proving mixing statements of random currents is that they can be used to gain further insight on the Ising model. 
Previously, polynomial mixing of the critical random current was proven in two dimensions by Duminil-Copin, Lis and Qian using planar techniques \cite{duminil2021conformal}.
A different mixing property of the double random current \cite[Theorem 6.4]{aizenman2021marginal} was mentioned as the core of the proof of marginal triviality of $\varphi^4$-fields and Ising models \cite{panis2024incipient}. Finally, it has recently been speculated that improved mixing statements for the random current would help prove analyticity of the so-called Ursell functions in the supercritical regime \cite[Remark 1.9]{dalimontegassman}.

A supercritical mixing result which goes in the direction of \Cref{thm:Unique_Loop} was previously obtained in \cite[Theorem 4.11]{hansen2023uniform}, but we note that the proof there gives neither uniqueness of Gibbs measures nor stability under conditioning on events of small probability. In words, previous work was concerned with studying measures which can be written as uniform even subgraphs of random-cluster models, whereas the current paper proves that any Gibbs measure of the loop O($1$) model in the supercritical regime \textit{is} the uniform even subgraph of the random-cluster model.

\subsection{Organisation of Paper 
}
The main technical input needed to derive the results for the loop O(1) model and random currents is \Cref{theorem:any_incerasing_event}, which shows that FK crossings of annuli are unique with high probability, even in the presence of sources. In \Cref{sec:proofs_of_main_theorems}, it is shown how \Cref{thm:Unique_Loop}
and \Cref{thm:Free-mix} follow from \Cref{theorem:any_incerasing_event} by combining the relationship between the loop O($1$) and FK-Ising model first developed in \cite{evertz2002new, grimmett2007random} and extended to the setting with sources in \cite{aizenman2019emergent}.
These arguments yield weak mixing, which for our models implies ratio weak mixing by classical work of Alexander \cite[Theorem 3.3]{alexander1998weak}. 

Once uniqueness for the loop O($1$) model is established, one may deduce Theorem \ref{thm:Free-mix} by abstract arguments, which are covered in \Cref{sec:proofs_of_main_theorems}. Furthermore, in \Cref{sec:how_to_handle_sources}, we discuss an adaptation of our methods to the setting with sources in the bulk. Our arguments make substantial use of Pisztora's supercritical sharpness results \cite{Pis96}, which hold throughout the entire supercritical regime by Bodineau's result \cite{Bod05} (see \cite{SeverSlab} for a recent, simpler proof).

The methods of this paper can also be applied to the $q$-flow model defined in \cite{zhang2020loop}. To avoid notational clutter, we defer this discussion to \Cref{sec:qflow}. 
Furthermore, by the general duality of the loop O(1) model and  $\Z/2\Z$-lattice gauge theories, the results here have implications for the gradient measure of lattice gauge theories, which we discuss in \Cref{sec:gauge}.

In the first appendix, we discuss uniqueness of the loop O($1$) and random current measures at the weakest but most general level. In particular, we write down conditions under which the so-called wired and free measures coincide. We take this fact to be well-known, but do not know of any written reference. Along the way, we discuss infinite volume relations between the FK-Ising and loop O($1$) models.

In Appendix \ref{sec:Pisztora_giant meets_the_boundary} and \ref{sec:Catch} we record an alternative proof of a special case of \Cref{theorem:any_incerasing_event}  (see \Cref{proposition:unique_crossing_with_fa_old_version}).  This proposition is also sufficient to prove the main theorem and while the proof given in the appendix is substantially more complicated, it uses techniques we believe to be of independent interest. These sections  are entirely focused on the random-cluster model.
The goal is to prove that the random-cluster model is insensitive to extra required connectivities (obtained by conditioning on an $\mathcal{F}_A$ event, which encodes the sources of the loop O($1$) model). Roughly speaking, we perform a multiscale argument where, at each scale, a positive fraction of the sources from the loop O($1$) model get connected to the unique giant component of the unconditioned random-cluster model, whence the conditioning will be erased after a logarithmic number of scales (see \Cref{fig:catching_figure} below).

\subsection{Open Problems.}
 It is worth noting that we rely heavily on the model being supercritical and, indeed, uniqueness might fail for small $x$ in high dimension. This would be analogous to Dobrushin's construction of non-translation invariant Gibbs states for the Ising model \cite{DobrushinStates}. More concretely, the Ising lattice gauge model (interacting over codimension $1$ cells) having non-translation invariant Gibbs states at very low temperature is equivalent to having non-uniqueness of Gibbs measures for $\ell_x$ when $x$ is small.

\begin{question}
Does $\ell_{\Z^d,x}$ always have a unique Gibbs measure for $x < x_c$? Equivalently, does $\mathbf{P}^{\emptyset}_{\Z^d,\beta}$ have a unique Gibbs measure for $\beta<\beta_c$?
\end{question}

We believe the answer might very well be yes. This would be analogous to the two-dimensional analysis in the celebrated Aizenman-Higuchi Theorem \cite{aizenman1980translation,higuchi1981absence}. It is possible that the techniques from the recent proof \cite{Coquille_Trans} adapt without too much trouble.

In \cite{duminil2020exponential}, Duminil-Copin, Goswami and Raoufi proved exponential decay of truncated correlations and show exponential ratio weak mixing for the FK-Ising model. Here, our statement of \Cref{thm:Unique_Loop} would imply exponential ratio weak mixing of FK-Ising, since this measure, just as the random current, arises as a sprinkling of the loop O(1) model. However, our proof of \Cref{thm:Unique_Loop} (but notably not that of the main technical input \Cref{theorem:any_incerasing_event}), relies on the ratio weak mixing for the FK-Ising model from \cite{duminil2020exponential}. Nevertheless, the heuristic of our present results is that the supercritical loop O($1$) model  (and its $q$-flow cousins) mixes "as well"  as the corresponding random-cluster model does. This invites the following question:
\begin{question}
Can the techniques of this paper be adapted to give a new proof of exponential ratio weak mixing for the FK-Ising model?
\end{question}
A positive answer, along with our applications to the $q$-flow model below in \Cref{sec:qflow}, would yield a proof of exponential decay of truncated correlations for the Potts model above the slab percolation threshold.

In \Cref{sec:how_to_handle_sources} we prove the main theorem for finitely many sources left in the bulk. The corresponding question for infinitely many sources is left open.
\begin{question}
    Obtain analouges of \Cref{thm:Unique_loop_with_sources} and \Cref{thm:Unique_cur_with_sources} when infinitely many sources are left in the bulk. 
\end{question}

The $q$-flow representation (of the Potts model) was introduced by Zhang et al. in \cite{zhang2020loop} and generalised to the plaquette case in \cite{hansen2025general}. It is the natural generalisation of the loop O(1) model to $q>2$. For $x > x_{\text{slab}}$, \Cref{thm:Unique_loop_qversion}  gives a characterisation of its Gibbs measures. The remaining case might be non-trivial - especially when $q$ is large.
 \begin{question}
     For $q\neq 2$, and $x \leq x_{\text{slab}},$ what are the Gibbs measures for the $q$-flow model on $\Z^d$?
 \end{question}
 We also wonder whether something could be said about the critical exponents for the random current and loop O($1$) models, following up on the recent work on the random-cluster model in high-dimensions by van Engelenburg, Garban, Panis and Severo \cite{vanengelenburg2025onearm}. 
 
For a locally finite, infinite graph $\mathbb{G}$ with finitely many ends $\mathfrak{e}_1, \dots \mathfrak{e}_n$, say that an end is \emph{robust} if for any finite $\Lambda$ which is large enough to separate the ends (as the infinite connected components of $\mathbb{G} \setminus \Lambda$), the threshold for Bernoulli percolation on the connected component corresponding to that end is strictly less than 1. Denote the set of robust ends of $\mathbb{G}$ by $\mathfrak{re}(\mathbb{G})$.  
\begin{question}
    Given a locally finite, infinite graph with finitely many ends $\mathbb{G}$, does there exist an $x_c < 1$ such that for any $x \in (x_c,1)$ the set of extremal Gibbs measures of $\ell_{\mathbb{G},x}$ is in natural correspondence with $\{f \in \{0,1\}^{\mathfrak{re}(\mathbb{G})}   \mid \abs{f} \in 2\mathbb{N}_0  \}$?
\end{question}
This would mimic the characterisation of \cite[Cor. 3.17]{hansen2023uniform} of the extremal Gibbs measures of the uniform even subgraph of $\mathbb{G}$ 
in terms of the ends of $\mathbb{G}$ by $\{f \in
\{0,1\}^{\mathfrak{e}(\mathbb{G})} \mid \abs{f} \in 2\mathbb{N} \}$.

On the technical side, while our techniques in Appendix \ref{sec:Pisztora_giant meets_the_boundary} and \ref{sec:Catch} do prove that the giant supercritical cluster is robust enough that it can touch an arbitrary boundary set in many points, even under adverse boundary conditions, they fall short of what should be a plausible result, and which would complement Pisztora's original work \cite{Pis96} much better:
\begin{question}
For $d\geq 3$, $p>p_c$, and a box $\Lambda_n,$ let $A$ denote the set of points on $\partial_v\Lambda_n$ connected to the giant $\mathtt{Giant}_{\Lambda_n}$ (cf. \Cref{sec:Pisztora_giant meets_the_boundary}). Does $|A|$ satisfy a large deviation principle? 
\end{question}
The question is, on purpose, slightly vague, as boundary conditions might come into play - for instance, it would not at all be surprising that the free and wired measures would have different typical sizes of $|A|,$ even if they both agree that the giant should have size roughly $\theta|\Lambda_n|$ (which, among other things, is the content of Pisztora's Theorem). One apparent avenue of tackling the question would be to prove an analogous statement for half-space measures for the random-cluster model with constant boundary conditions.
\section*{Acknowledgments}
We would like to thank Lorca Heeney for several insightful remarks in casual discussion. We would also like to thank Romain Panis for encouraging us to write \Cref{sec:how_to_handle_sources}. Towards the completion of this work, we became aware of a related effort announced by Gunaratnam, Panagiotis, Panis and Severo \cite{GPPS} containing results similar to those presented here. However, their methods are very different, revolving around the geometry of the supercritical double random current model rather than that of the supercritical random-cluster model. One should therefore expect the techniques to generalise differently.  No data were used for this study and the authors have no relevant conflicts of interest.   FRK was supported by the Carlsberg Foundation, grant CF24-0466.  This research was funded in part by the Austrian Science Fund (FWF) 10.55776/P34713.

\section{Setup, notation, and necessary basic properties}
We start by fixing graph-theoretic notation. For a graph $G$, denote by $V(G)$, respectively $E(G)$, the set of vertices, respectively edges, of $G$. We will be particularly concerned with the finite subgraphs $\Lambda_n=[-n,n]^d\cap \mathbb{Z}^d$ of $\mathbb{Z}^d.$ For $G\subseteq \mathbb{Z}^d,$ we denote by $\partial_v G$ the vertex boundary of $G$, i.e. the set of vertices in $G$ with at least one neighbour outside of $G$. For percolation configurations $\omega\in \{0,1\}^E,$ we generally identify $\omega$ with the graph $(V,\omega^{-1}(\{1\}))$ and equivalently with its set of open edges $\omega^{-1}(\{1\}).$ We write $\mathcal{C}_v$ for the connected component (or cluster) of the vertex $v$ and $\{A\cc B\}$ for the event that there are $v\in A,w\in B$ with $\mathcal{C}_v=\mathcal{C}_w.$ In case of ambiguity, we let $\{A\overset{\omega}{\longleftrightarrow} B\}$ denote the event that $\omega\in \{A\cc B\}.$ We also remark at the outset that for a probability measure $\nu$ and a measurable function $f$, $\nu[f]$ denotes the expectation of $f$ under $\nu$. 

 We follow the standard notation set in e.g., \cite{DC17} and let $\Prb_{G,p}$ denote Bernoulli percolation on the finite graph $G\subseteq \mathbb{Z}^d$, $\phi^\xi_{G,p}$ be the FK-Ising model, also known as the random-cluster model with cluster weight $2$, with parameter $p\in (0,1)$ and boundary condition $\xi\in \{0,1\}^{E(\mathbb{Z}^d)}$ defined by assigning a probability to every edge configuration $\omega \in \{0,1\}^{E(G)}$,
$$
\phi^{\xi}_{G,p}[\omega] \propto 2^{\kappa^{\xi}(\omega)} \left( \frac{p}{1-p} \right)^{\abs{\omega}}, 
$$
where $\abs{\omega}=\sum_{e\in E}\omega_e$ is the number of open edges  in $\omega$ and $\kappa^{\xi}(\omega)$ is the number of connected components intersecting $G$ in $(V(\mathbb{Z}^d),\omega^{\xi}),$ where
$$
\omega^{\xi}_e=\begin{cases}
    \omega_e & e\in E(G) \\ \xi_e & e\in E(\mathbb{Z}^d)\setminus E(G).
\end{cases}
$$
The cases $\xi\equiv 0$ and $\xi\equiv 1$ are of particular interest and referred to as the free and wired measures, respectively.

Similarly, for $x\in (0,1)$ and $A\subseteq V(G)$ with $|A|$ even, $\ell_{G,x}$ denotes the loop O($1$) model defined on $\{0,1\}^E$ by assigning probabilities
$$
\ell^A_{G,x}[\eta]\propto x^{|\eta|}\id_{\partial \eta=A},
$$
where $\partial\eta=\{v\in V(G)\mid \sum_{w:vw\in E(G)} \eta_{vw} \;\mathrm{odd}\}$ denotes the set of sources of $\eta$. In general, we denote\footnote{Since any finite graph must have an even number of vertices with odd degree, this set is empty if $|A|$ is odd.} $\Omega_A(G)=\{\eta\in \{0,1\}^E\mid \partial \eta=A\}.$

Both models satisfy spatial Markov properties: For $H\subseteq G,$
\begin{equation*} \label{SMP}
\phi^{\xi}_{G,p}[\omega|_H \mid \omega|_{G\setminus H}]=\phi_{H,p}^{(\omega|_{G\setminus H})^{\xi}}[\omega|_H] \qquad \ell^{A}_{G,x}[\eta|_H\mid \eta|_{G\setminus H}]=\ell_{H,x}^{A\triangle\partial (\eta|_{G\setminus H})}[\eta|_H], \tag{SMP}
\end{equation*}
both of which may be checked manually\footnote{Note that our definition of $\phi^{\xi}_G$ only depends on $\xi|_{\mathbb{Z}^d\setminus G}$.}.

The Markov property gives rise to the following definition of Gibbs measures: Say that a measure $\ell$ is a Gibbs measure of the loop O(1) model if $\ell$-almost surely
\begin{align}\label{eq:definition_of_Gibbs_measure_for_loop_O(1)}
    \ell_{G,x}^{\partial (\eta\vert_{\Z^d \setminus G})}[\;\cdot\;] = \ell[\; \cdot \mid \eta\vert_{\Z^d \setminus G}].
\end{align}

While the main results of this paper are related to the loop O(1) model, most of the paper is concerned with proving technical results on the side of the random-cluster model. 
In \Cref{sec:proofofmain}, it is shown how the unique crossing events of the random-cluster model can be cashed out as uniqueness of measure for the loop O(1) model using the loop-cluster coupling. 
This coupling was introduced in \cite{evertz2002new, grimmett2007random} and generalised in \cite{aizenman2019emergent,hansen2025general,zhang2020loop}.

Define $\mathcal{F}_A=\{\omega\in \{0,1\}^E\mid \exists \eta \in \Omega_A(G):\eta\subseteq \omega\}.$ Equivalently, $\mathcal{F}_A$ is the event that for each connected component $\mathcal{C}$ of $\omega,$ $|\mathcal{C}\cap A|$ is even. 

Furthermore, we denote $\operatorname{UG}^A_G=\ell^A_{G,1},$ i.e. the uniform measure on $\Omega_A(G)$. For $A=\emptyset,$ $\operatorname{UG}^{\emptyset}$ is the uniform even subgraph, which we denote $\operatorname{UEG}$.
The measures $\phi^0_{G,p}[\;\cdot\mid \mathcal{F}_A]$ and $\ell^A_{G,x}[\;\cdot\;]$ are coupled through $\operatorname{UG}^A$ and Bernoulli sprinkling: 
\begin{coupling}[\cite{aizenman2019emergent,evertz2002new, grimmett2007random}] \label{Thm:The_Coupling}
Let $G=(V,E)$ be a finite graph, $x\in (0,1),$ $A\subseteq V$ with $|A|$ even and let $(\omega,\eta)\in \{0,1\}^E\times \Omega_A(G)$ be a random element with distribution
$$
\mathscr{P}[(\omega,\eta)]\propto \mathbb{P}_{G,x}[\omega]\id_{\eta\subseteq \omega}.
$$
Set $p = \frac{2x}{1+x}$. The marginals are $\mathscr{P}[\omega\in\cdot\;]=\phi^0_{G,p}[\;\cdot\mid \mathcal{F}_A]$ and $\mathscr{P}[\eta\in \cdot\;]=\ell^A_{G,x}[\;\cdot\;]$, while the conditional measures are $\mathscr{P}[\eta\mid \omega]=\operatorname{UG}^A_{\omega}[\eta]$ and $\mathscr{P}[\omega\mid \eta]=\delta_{\eta}\cup \mathbb{P}_{G,x}[\omega].$ 
\end{coupling}
Here, for two percolation measures $\mu$ and $\nu$, we denote by $\mu\cup \nu$ the pushforward of $\mu\otimes \nu$ under the union map. Equivalently, it is the distribution of the union of independent samples of $\mu$ and $\nu$. Thus, the last item says that 
\begin{equation} \label{FK_is_sprinkled_O(1)}
\hspace{4cm}\phi^0_{G,p}[\;\cdot \mid \mathcal{F}_A]=\ell^A_{G,x}\cup \mathbb{P}_{G,x} \tag{FK is sprinkled O($1$)}[\;\cdot\;].
\end{equation}
Similarly, the conclusion of the other conditional identity is that  $\ell_{G,x}^A$ can be sampled as the uniform subgraph with sources $A$ of $\phi^0_{G,p}[\;\cdot \mid \mathcal{F}_A]$, i.e.,
\begin{equation} \label{O(1)_as_ueg}
\ell^A_{G,x}[\;\cdot\;]=\phi^0_{G,p}[\operatorname{UG}^A_{\omega}[\;\cdot\;]\mid \mathcal{F}_A]. 
\end{equation}

\Cref{Thm:The_Coupling} is one of the main tools for extracting information about the loop O($1$) model, as the model itself lacks many desirable properties, such as positive association, finite energy and monotonicity \cite{klausen2021monotonicity}.

Monotonicity properties, in turn, play an immense role in the study of the FK-Ising model.
There is a natural partial order $\preceq$ on $\{0,1\}^E$ given by pointwise comparison (i.e. inclusion). We say that an event $A\subseteq \{0,1\}^E$ is increasing if whenever $\omega\preceq \omega'$ and $\omega\in A,$ then $\omega'\in A$. For two percolation measures $\nu,\mu$ on $\{0,1\}^E,$ we say that $\mu$ \emph{stochastically dominates} $\nu$, written $\nu\preceq \mu$, if $\mu[A]\geq \nu[A]$ for every increasing event $A$. By Strassen's Theorem, this is equivalent to the existence of a coupling $(\omega_1,\omega_2)$ of $\omega_1\sim \mu$ and $\omega_2\sim \nu$ such that $\omega_2\preceq \omega_1$ almost surely. Such a coupling is called \emph{increasing}. Two classical instances of stochastic monotonicity for the FK-Ising model are the comparison between boundary conditions (CBC) and the FKG inequality:
\begin{align*}
\phi^{\xi}_{G,p}\succeq \phi^{\xi'}_{G,p} &\qquad \mathrm{for} \; \xi\succeq \xi'\label{eq_CBC} \tag{CBC}\\
\phi^{\xi}_{G,p}[\;\cdot\mid A]\succeq \phi^{\xi}_{G,p} &\qquad \mathrm{for}\; A\; \mathrm{increasing} \label{FKG} \tag{FKG}
\end{align*}
We refer to \cite{DC17,Gri06} for the proofs and more thorough introductions to the random-cluster model. 

In this paper, we shall also be concerned with infinite volume versions of these measures, defined as weak limits of the form $\lim_{G_n\nearrow \mathbb{Z}^d} \phi_{G_n,p}^{\xi_n}$, $\lim_{G_n\nearrow \mathbb{Z}^d} \ell_{G_n,x}^{A_n}$. Previous work has already established the existence of a unique infinite volume measure $\phi_{\mathbb{Z}^d,p}=\lim_{G_n\nearrow \mathbb{Z}^d} \phi_{G_n,p}^{\xi_n}$ for all choices of exhaustion $(G_n)_{n\in\mathbb{N}},$ boundary conditions $(\xi_n)_{n\in \mathbb{N}}$ and $p\in (0,1)$ - we refer to \cite{raoufi2020translation} for the most unified treatment of this fact. The existence of a measure $\ell_{\mathbb{Z}^d,x}=\lim_{G_n\nearrow \mathbb{Z}^d} \ell^{\emptyset}_{G_n,x}$ may be justified purely via considerations of correlations in the Ising model along similar lines as \cite{aizenman2015random}, whereas we will  proceed via an infinite volume generalisation of the coupling from \Cref{Thm:The_Coupling}. That is, in \cite{hansen2023uniform}, a construction was given of a random pair $(\omega,\eta)$ with $\omega\sim \phi_{\mathbb{Z}^d,p}$ and $\eta$ selected as an even subgraph of $\omega$ chosen uniformly at random - and this pair is the weak limit of the coupling in \Cref{Thm:The_Coupling} with $A=\emptyset$. In particular, the marginal $\eta$ is the weak limit of $\ell^{\emptyset}_{G_n,x}$ for $G_n\nearrow \mathbb{Z}^d.$ We refer to \cite[Section 3]{hansen2023uniform} for a general treatment of uniform even subgraphs in infinite volume with some highlights repeated in this paper in our treatment of the $q$-flow model in \Cref{sec:qflow}, and some marginally new input appearing in Appendix \ref{sec:A_priori_inf}. Therefore, similarly to \eqref{O(1)_as_ueg}, we define
\begin{equation} \label{O(1)_as_inf_ueg}
\ell_{\mathbb{Z}^d,x}[\;\cdot\;]=\phi_{\mathbb{Z}^d,p}[\operatorname{UEG}_{\omega}[\;\cdot\;]].
\end{equation}
We note that \eqref{SMP} transfers to the infinite volume limit automatically for the loop O($1$) model, and holds for the FK model due to uniqueness of the infinite cluster \cite{BurtonKeane}.

In this paper, we primarily study the supercritical phase. By techniques dating back to Peierls \cite{peierls1936ising}, when $d\geq 2,$ there exists $p_c\in (0,1)$ such that $\phi_{\mathbb{Z}^d,p}[0\cc\infty]=0$ for $p<p_c$ and $\phi_{\mathbb{Z}^d,p}[0\cc\infty]>0$ for $p>p_c,$ where $\{0\cc \infty\}$ denotes the event that the origin lies in an infinite connected component. With \Cref{Thm:The_Coupling} in hand, we will attach to this parameter the corresponding critical parameter $x_c=\frac{p_c}{2-p_c}$ for the loop O($1$) model.

Finally, we will discuss the relationship between the random current and loop O($1$) model. A current on a graph $G=(V,E)$ is a function $\mathbf{n}\in\mathbb{N}_0^E$.  To each current, we naturally associate two percolation configurations, which we will occasionally call the traced current and the odd part of the current. They are defined through 
$$
\hat{\mathbf{n}}_e=\id_{\mathbf{n}_e\geq 1} \qquad \mathrm{and}\qquad \mathbf{n}_{e}^{\mathtt{odd}}=\id_{\mathbf{n}_e\;\mathrm{odd}}.
$$
The random current with source set $A$ at inverse temperature $\beta>0$ on the finite graph $G=(V,E)$ is the measure on $\mathbb{N}_0^E$ given by
$$
\mathbf{P}^A_{G,\beta}[\mathbf{n}]\propto \id_{\partial \mathbf{n}^{\mathtt{odd}}=A}\prod_{e\in E} \frac{\beta^{\mathbf{n}_e}}{\mathbf{n}_e!}.
$$
One elementarily checks that 
\begin{align*}\mathbf{P}^A_{G,\beta}[\hat{\mathbf{n}},\mathbf{n}^{\mathtt{odd}}]&\propto\id_{\hat{\mathbf{n}}\supseteq\mathbf{n}^{\mathtt{odd}},\partial \mathbf{n}^{\mathtt{odd}}=A} (\cosh(\beta)-1)^{|\hat{\mathbf{n}}\setminus\mathbf{n}^{\mathtt{odd}}|}\sinh(\beta)^{|\mathbf{n}^{\mathtt{odd}}|} \\
&\propto \id_{\hat{\mathbf{n}}\supseteq\mathbf{n}^{\mathtt{odd}},\partial \mathbf{n}^{\mathtt{odd}}=A} \left(\frac{1}{\cosh(\beta)}\right)^{|E\setminus \hat{\nn}|}\left(1-\frac{1}{\cosh(\beta)}\right)^{|\hat{\mathbf{n}}\setminus\mathbf{n}^{\mathtt{odd}}|}\tanh(\beta)^{|\mathbf{n}^{\mathtt{odd}}|},
\end{align*}
from which one immediately gets that the odd part of the single current has the law of the loop O(1) model,
\begin{equation}\label{dd_extraction}
\mathbf{P}^A_{G,\beta}[\mathbf{n}^{\mathtt{odd}}\in \cdot \;]=\ell^A_{G,x}[\;\cdot\;], \qquad x=\tanh(\beta)
\end{equation}
and that one can sprinkle loop O(1) to get the traced current, 
\begin{equation} \label{Sprinkle_to_RC} \mathbf{P}^A_{G,\beta}[\hat{\mathbf{n}}\in\cdot\;]=\ell^A_{G,x}\cup \mathbb{P}_{G,1-\frac{1}{\cosh(\beta)}}[\;\cdot\;]=\ell^A_{G,x}\cup \mathbb{P}_{G,1-\sqrt{1-x^2}}[\;\cdot\;].
\end{equation}

Since the only interactions in $\mathbf{n}$ are given by the constraint $\mathbf{n}^{\mathtt{odd}}\in \Omega_A,$ one sees that the conditional distribution 
\begin{align} \label{Resampling density}\mathbf{P}^A_{G,\beta}[\mathbf{n}\mid \mathbf{n}^{\mathtt{odd}}]\propto \prod_{e\in \mathbf{n}^{\mathtt{odd}}} \id_{\mathbf{n}_e\in 2\mathbb{N}_0+1}\frac{\beta^{\mathbf{n}_e}}{\mathbf{n}_e!}\prod_{e\in E\setminus \mathbf{n}^{\mathtt{odd}}} \id_{\mathbf{n}_e\in 2\mathbb{N}_0} \frac{\beta^{\mathbf{n}_e}}{\mathbf{n}_e!}
\end{align}
is a product measure.
In fact, the conditional probabilities tell us how to sample a current from a loop O($1$) configuration in a robust way. This is standard, see e.g., \cite[Equation 4.5]{Hutchcroft_continuity_2023}. In the following, for fixed $\beta>0,$ let $f_{\mathtt{even}},f_{\mathtt{odd}}:[0,1]\to \mathbb{N}_0$ be given by 
\begin{align*}
    f_{\mathtt{even}}(u) &=f_{\mathtt{even}}(u,\beta)=\min\{n \in 2\mathbb{N}_0\mid \sum_{k=0}^{n/2} \frac{\beta^{2k}}{(2k)!}\geq u\cosh(\beta) \} \\
    f_{\mathtt{odd}}(u) &=f_{\mathtt{odd}}(u,\beta)=\min\{n \in 2\mathbb{N}_0+1\mid \sum_{k=0}^{(n-1)/2} \frac{\beta^{2k+1}}{(2k+1)!}\geq u\sinh(\beta) \}.
\end{align*}
It is immediate that if $U$ is a uniform variable on $[0,1],$ then $f_{\mathtt{even}}(U),$ respectively $f_\mathtt{odd}(U),$ is supported on $2\mathbb{N}_0,$ respectively $2\mathbb{N}_0+1,$ and for any $n\in \mathbb{N}_0,$ 
\begin{align} \label{Parity weights}
\mathbb{P}[f_{\mathtt{even}}(U)=2n]&\propto \frac{\beta^{2n}}{(2n)!} \qquad \mathrm{and}\qquad
\mathbb{P}[f_{\mathtt{odd}}(U)=2n+1]\propto \frac{\beta^{2n+1}}{(2n+1)!}.
\end{align}

In particular, \eqref{dd_extraction} and \eqref{Resampling density} yield the following. Recall that $x=\tanh(\beta)$.
\begin{coupling} \label{Loop_to_curren_sampling}
Let $G$ be a finite graph,   $(U_e)_{e\in E(G)}$ be i.i.d. uniforms in $[0,1]$ and $\eta$ be an independent sample of $\ell^A_{G,x}$ for $A\subseteq V,$ $x\in [0,1]$ . Then,  the random variables
$$
\mathbf{n}_e=(1-\eta_e)f_{\mathtt{even}}(U_e)+\eta_e f_{\mathtt{odd}}(U_e)
$$
form a random current $\mathbf{n}\sim \mathbf{P}^A_{G,\beta}$.
\end{coupling}
For completeness, note that the random current measure also has a Spatial Markov Property,
\begin{align}\label{eq:random_current_SMP}
\Prbcur^{A}_{G,\beta}[\nn\vert_H\mid \nn\vert_{G\setminus H}]=\Prbcur_{H,\beta}^{A\triangle\partial (\nn\vert_{G\setminus H})}[\nn\vert_H]. \tag{SMP Current}
\end{align}
As in \eqref{eq:definition_of_Gibbs_measure_for_loop_O(1)}, the SMP can be used to define Gibbs measures of the single random current. 
\\
\\
Let  $\mathtt{UC}_n$ be the event that the annulus $\Lambda_n \setminus \Lambda_{n/2}$ has at most one crossing cluster from inner to outer boundary.
Generalisations of the following argument will be used throughout the paper.  
\begin{lemma}\label{lemma:FAevent_and_unique_crossings}
Let $\omega \in \{0,1\}^{E(\Lambda_n)}$ and $A\subseteq \partial_v \Lambda_n$, then
\begin{center}
$\omega \in \mathcal{F}_A \cap \mathtt{UC}_n$ if and only if $\omega\vert_{\Lambda_n \setminus \Lambda_{n/2}} \in \mathcal{F}_A \cap \mathtt{UC}_n$. 
\end{center}
\end{lemma}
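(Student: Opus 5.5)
The plan is to separate the two events. First, $\mathtt{UC}_n$ is by definition measurable with respect to the edges of the annulus $\Lambda_n\setminus\Lambda_{n/2}$, so $\omega\in\mathtt{UC}_n$ if and only if $\omega\vert_{\Lambda_n\setminus\Lambda_{n/2}}\in\mathtt{UC}_n$. It therefore suffices to show that, on $\mathtt{UC}_n$, we have $\omega\in\mathcal{F}_A$ if and only if $\omega\vert_{\Lambda_n\setminus\Lambda_{n/2}}\in\mathcal{F}_A$. For this we use the characterisation of $\mathcal{F}_A$ stated above: a configuration lies in $\mathcal{F}_A$ exactly when every cluster meets $A$ in an even number of points. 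Thus $\mathcal{F}_A$ depends only on the partition of $A$ induced by the clusters. The key claim is that, on $\mathtt{UC}_n$, the partition of $A$ induced by $\omega$ coincides with the partition induced by $\omega' := \omega\vert_{\Lambda_n\setminus\Lambda_{n/2}}$.

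Since $\omega'\preceq\omega$, every $\omega'$-cluster is contained in an $\omega$-cluster, so the $\omega$-partition of $A$ is coarser than the $\omega'$-partition. For the converse, take $a,b\in A$ with $a\overset{\omega}{\longleftrightarrow}b$ but lying in distinct $\omega'$-clusters $\mathcal{C}'_a\neq\mathcal{C}'_b$. Follow an $\omega$-open path $\gamma$ from $a$ to $b$. Since $a$ and $b$ are not connected inside the annulus, $\gamma$ must use an edge outside the annulus, and hence an edge of $\Lambda_{n/2}$. Let $\gamma_a$ be the initial segment of $\gamma$ up to its first vertex in $\Lambda_{n/2}$, and let $\gamma_b$ be the analogous final segment traversed backwards from $b$.

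Both segments consist of annulus edges. So $\mathcal{C}'_a$ contains $a\in\partial_v\Lambda_n$ and a vertex on the inner boundary of the annulus; that is, $\mathcal{C}'_a$ is a crossing cluster of the annulus. The same holds for $\mathcal{C}'_b$. This produces two distinct crossing clusters, contradicting $\mathtt{UC}_n$. Hence the two partitions of $A$ agree, and so do the parity conditions, which proves the equivalence.

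The only delicate point is bookkeeping about the annulus as a subgraph. One has to ensure that the vertices of $\partial_v\Lambda_n$ belong to the annulus, and that "crossing" means touching both the outer boundary $\partial_v\Lambda_n$ and the inner boundary, i.e. the vertices shared with $\Lambda_{n/2}$. It also has to be checked that the first exit of $\gamma$ from the annulus edge set indeed lands on the inner boundary. This holds because all edges of $\Lambda_n$ lie either in the annulus or in $\Lambda_{n/2}$. I would fix the convention that the annulus contains all edges of $\Lambda_n$ not in $E(\Lambda_{n/2})$, under which this step is immediate.
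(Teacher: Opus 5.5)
Your proof is correct and follows essentially the same route as the paper. Both arguments rest on the same observation: on $\mathtt{UC}_n$, two points of $A$ connected in $\omega$ are already connected inside the annulus, and you justify the crossing case more explicitly than the paper does. The only difference is cosmetic: you conclude via the cluster-parity characterisation of $\mathcal{F}_A$, whereas the paper pairs up the points of $A$ in each cluster and takes the symmetric difference of the connecting annulus paths to build $\eta\subseteq\omega\vert_{\Lambda_n\setminus\Lambda_{n/2}}$ with $\partial\eta=A$ directly.
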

\begin{proof} Note that the event $\mathtt{UC}_n$ is measurable with respect to the configuration in $\Lambda_{n}\setminus \Lambda_{n/2}$.
 Thus, the "if" part follows from the fact that $\mathcal{F}_A$ is increasing. 
 Suppose that $\omega \in \mathcal{F}_A \cap \mathtt{UC}_n$. For each cluster $\mathcal{C}$ of $\omega$, as $\abs{\mathcal{C} \cap A}$ is even, choose an arbitrary (possibly empty) pairing of its vertices. For each pair $\mathfrak{p}=(v,w)$, there is a path $\gamma_{\mathfrak{p}}$ connecting $v$ and $w$  inside $\omega\vert_{\Lambda_n \setminus \Lambda_{n/2}}$. 
Indeed, if $\mathcal{C}$ does not cross the annulus $\Lambda_n \setminus \Lambda_{n/2},$ this must necessarily be the case. Conversely, if $\mathcal{C}$ crosses the annulus, it follows, since $\omega \in \mathtt{UC}_n$. 
 Define $\eta=\Delta_{\mathfrak{p}} \gamma_{\mathfrak{p}}$. Then $\eta \subseteq \omega\vert_{\Lambda_n \setminus \Lambda_{n/2}}$ and $\partial \eta = \Delta_{\mathfrak{p}} (\partial \gamma_{\mathfrak{p}})= A$, so $\omega\vert_{\Lambda_n \setminus \Lambda_{n/2}} \in \mathcal{F}_A$.
\end{proof}
The following lemma is related to \cite[Proposition 3.6 \& Corollary 3.7]{hansen2023uniform} and \Cref{Separating_Surface_Prop} below. It shows how the unique crossing event decorrelates $ \operatorname{UG}^A$, the uniform subgraph with sources $A$. 
\begin{lemma}\label{lemma:UGA_to_UEG}
    For any $A \subset \partial_v \Lambda_n$, $\omega \in  \mathcal{F}_A \cap \mathtt{UC}_n$, the following marginals agree,
    $$
    \operatorname{UG}^A_{\omega}\mid_{\Lambda_{n/2}} = \operatorname{UEG}_{\omega}\mid_{\Lambda_{n/2}}.
    $$
Moreover, if $F \in \mathcal{A}_{n/2}$, then the random variable $\id_{\mathtt{UC}_n(\omega)} \UEG_\omega[F]$ is measurable with respect to $\omega\vert_{\Lambda_n}$.
\end{lemma}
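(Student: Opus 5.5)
The approach is to use the linear-algebraic structure of $\Omega_A(\omega)$. Since $\omega\in\mathcal{F}_A$, the set $\Omega_A(\omega)$ of subgraphs of $\omega$ with sources $A$ is a coset $\eta_0+\Omega_\emptyset(\omega)$ in the $\mathbb{F}_2$-vector space $\{0,1\}^{E(\omega)}$, where $\eta_0$ is any fixed element. Hence $\operatorname{UG}^A_\omega$ is the law of $\eta_0+\eta$ with $\eta\sim\UEG_\omega$, because translation by $\eta_0$ is a bijection $\Omega_\emptyset(\omega)\to\Omega_A(\omega)$. By \Cref{lemma:FAevent_and_unique_crossings}, since $\omega\in\mathcal{F}_A\cap\mathtt{UC}_n$, we may choose $\eta_0\subseteq\omega|_{\Lambda_n\setminus\Lambda_{n/2}}$. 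Naively, $\eta_0$ then vanishes on edges inside $\Lambda_{n/2}$, and translation by $\eta_0$ would not change the restriction to $\Lambda_{n/2}$. This would give the first claim at once.

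The point I expect to be the main obstacle is exactly this step: a boundary issue. Edges with both endpoints in $\partial_v\Lambda_{n/2}$ belong to both $\Lambda_{n/2}$ and the annulus. To handle it, I would re-examine the construction in \Cref{lemma:FAevent_and_unique_crossings}. The paths $\gamma_{\mathfrak{p}}$ may be taken to avoid the interior edges of $\Lambda_{n/2}$ if we work with the annulus in the edge sense, meaning edges not contained in $\Lambda_{n/2}$. Alternatively, I would interpret $\mid_{\Lambda_{n/2}}$ as the restriction to edges with at least one endpoint in the interior, which suffices for the later applications. Either way, the goal is $\eta_0|_{\Lambda_{n/2}}\equiv 0$, after which the translation argument applies verbatim.

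For the measurability claim, fix $F\in\mathcal{A}_{n/2}$ and $\omega\in\mathtt{UC}_n$, and let $\omega'$ agree with $\omega$ on $\Lambda_n$. I need to show $\UEG_\omega[F]=\UEG_{\omega'}[F]$. The plan is to pass to the finite graph $\omega|_{\Lambda_n}$ with sources on $\partial_v\Lambda_n$, and to decompose $\UEG_\omega$ by conditioning on the configuration outside $\Lambda_n$ via the spatial Markov property (SMP). Conditional on $\eta|_{\mathbb{Z}^d\setminus\Lambda_n}$, the restriction $\eta|_{\Lambda_n}$ is $\operatorname{UG}^{B}_{\omega|_{\Lambda_n}}$, where $B\subseteq\partial_v\Lambda_n$ is the induced boundary source set. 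This holds in infinite volume via the construction of \cite{hansen2023uniform} (Proposition 3.6 there, which this lemma mirrors). Every such $B$ that occurs satisfies $\omega|_{\Lambda_n}\in\mathcal{F}_B$. Note also that $\mathtt{UC}_n$ is measurable with respect to $\omega|_{\Lambda_n}$. By the first part, each conditional law restricted to $\Lambda_{n/2}$ equals $\UEG_{\omega|_{\Lambda_n}}|_{\Lambda_{n/2}}$, which does not depend on $B$. Averaging over $B$ gives $\UEG_\omega[F]=\UEG_{\omega|_{\Lambda_n}}[F]$, a function of $\omega|_{\Lambda_n}$ alone.

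The delicate part here is justifying the infinite-volume Markov decomposition of $\UEG_\omega$ for infinite $\omega$. Because of the ambiguity of even subgraphs when there are infinite clusters, this is where the reference to \cite{hansen2023uniform} is needed. In finite volume, the decomposition is elementary.
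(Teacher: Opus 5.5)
Your proposal is correct and follows the paper's proof essentially verbatim. For the first claim you translate by a witness $\eta_0\subseteq\omega|_{\Lambda_n\setminus\Lambda_{n/2}}$ from \Cref{lemma:FAevent_and_unique_crossings}; for the second you condition on $\eta$ outside $\Lambda_n$ via \eqref{SMP} to obtain $\operatorname{UG}^{B}_{\omega|_{\Lambda_n}}$ and then apply the first claim, which is exactly the paper's route.

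The paper glosses over both the boundary-edge convention and the infinite-volume Markov decomposition that you flag. The latter is less delicate than you suggest: it is just the disintegration of Haar measure on $\Omega_\emptyset(\omega)$ along the restriction homomorphism to the outside of $\Lambda_n$, whose kernel is the finite group of even subgraphs of $\omega|_{\Lambda_n}$.
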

\begin{proof}
    By \Cref{lemma:FAevent_and_unique_crossings}, there exists a subgraph $\eta_0 \subseteq \omega\vert_{\Lambda_n\setminus\Lambda_{n/2}}$ with $\partial \eta_0 =A$. The map $\eta \mapsto \eta \triangle \eta_0$ is a bijection between $\Omega_{\emptyset}(\omega)$ and $\Omega_A(\omega)$ and it satisfies that $\eta\vert_{\Lambda_{n/2}} = (\eta \triangle \eta_0)\vert_{\Lambda_{n/2}}$.

For the second part, note that by \eqref{SMP},
$\operatorname{UEG}_\omega[F \mid  \eta\vert_{\Lambda_n^c} = \eta_0] = \operatorname{UG}^{\partial \eta_0}_{\omega\vert_{\Lambda_n}}[F] = \operatorname{UEG}_{\omega\vert_{\Lambda_n}}[F].$ 
\end{proof}

\subsection{On the use of constants} In this paper, several constants will appear throughout proofs. Our general approach is to try to mark a constant $c$ changing by e.g., writing $c'$. After the change has happened, we will revert back to writing $c$ to prevent notational bloat. In cases where multiple constants are imported from different propositions, we will always uniformly pick one constant which satisfies both.

\section{Proofs of Main Theorems}\label{sec:proofs_of_main_theorems}
\subsection{From unique crossings to unique measures: Proof of \texorpdfstring{\Cref{thm:Unique_Loop}}{Theorem \ref*{thm:Unique_Loop}}} \label{sec:proofofmain}
Recall that $\UC_n$ denotes the event that the annulus $\Lambda_{n}\setminus\Lambda_{n/2}$ has at most one cluster crossing from inner to outer boundary. One of our main technical inputs is the following:

\begin{theorem}
\label{theorem:any_incerasing_event}
For $d\geq 2, p>p_c,$ there exists $c>0$ such that for any increasing event $\mathcal{F}$, and any boundary condition $\xi$,  
$$
\phi^\xi_{\Lambda_n,p}[\mathtt{UC}_n\mid \mathcal{F}]\geq 1-\exp(-cn).
$$ 
\end{theorem}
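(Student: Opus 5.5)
The plan is a Pisztora-type coarse graining. Stochastic domination from the conditioning will supply the \emph{existence} of crossings, while the \emph{uniqueness} of crossings, which is not monotone, will be handled locally through the spatial Markov property. Fix $\xi$ and $\mathcal F$ increasing, and write $\mu=\phi^\xi_{\Lambda_n,p}[\;\cdot\mid\mathcal F]$. By \eqref{FKG} and \eqref{eq_CBC}, $\mu\succeq\phi^\xi_{\Lambda_n,p}\succeq\phi^0_{\Lambda_n,p}$.

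First I fix a large mesoscopic scale $k$ and tile the annulus $\Lambda_n\setminus\Lambda_{n/2}$ into boxes $B_i$ of side $k$, with concentric enlargements $B_i'$ of side $3k$. Call $B_i$ \emph{good} if two things hold:
\begin{itemize}[label={}]
\item (a) $B_i'$ contains a cluster crossing every box of side $k$ inside $B_i'$ (an increasing event);
\item (b) any two open paths in $B_i'$ of diameter at least $k/10$ are connected inside $B_i'$.
\end{itemize}
Pisztora's theorem \cite{Pis96}, valid for all $p>p_c$ by \cite{Bod05}, gives $\phi^0[\text{(a) fails}]\le e^{-ck}$. By domination the same bound holds under $\mu$, and it holds conditionally on the configuration outside $B_i'$ as long as the boxes are far apart. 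The standard deterministic argument then applies: if the good boxes have no $*$-connected circuit of bad boxes separating the two boundaries of the annulus, then any two crossing clusters are glued through chains of good boxes, so $\mathtt{UC}_n$ holds. A Peierls count over such bad $*$-paths of length of order $n/k$ then yields $1-e^{-cn}$, provided each box is bad with probability at most $\varepsilon(k)\to 0$ uniformly in the configuration outside a fixed neighbourhood.

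The crux is the bound on (b) under $\mu$, conditionally on the outside. By \eqref{SMP}, the conditional law of $\omega|_{B_i'}$ given $\omega|_{\Lambda_n\setminus B_i'}$ is $\phi^{\xi'}_{B_i',p}[\;\cdot\mid\mathcal F']$. Here $\mathcal F'$ is the section of $\mathcal F$, which is again increasing. So what I need is the local statement itself at scale $k$, uniformly over boundary conditions and increasing events, but only with a small constant error $\varepsilon$ rather than an exponential one. The argument thus becomes a bootstrap: a uniform $\varepsilon$-version at scale $k$ implies the exponential version at scale $n$.

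To get the $\varepsilon$-version, I would argue as follows. Failure of (b) means there are two large clusters that are disjoint within $B_i'$. By Pisztora under $\phi^0$ (and hence under the dominating measure), in every sub-box of side $\sqrt k$ there is with high probability a crossing cluster. Two macroscopic disjoint clusters must therefore come within distance of order $\sqrt k$ in a positive fraction of roughly $k^{(d-1)/2}$ disjoint locations. At each such location, opening $O(\sqrt k)$ edges merges them. This surgery preserves $\mathcal F'$, since $\mathcal F'$ is increasing, and maps the bad event into its complement.

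The difficulty is that a single local surgery costs a factor $e^{-C\sqrt k}$, which is far too much. The key step is therefore to exploit the many independent-ish locations: I would use an exploration revealing one cluster $\mathcal C_1$ together with its closed boundary. Given this information, the remaining region carries a free-boundary random-cluster measure conditioned on an increasing event, and it still dominates $\phi^0$. The requirement that a second crossing cluster avoid $\mathcal C_1$ then forces a closed separating surface of area at least of order $k$ that has a crossing cluster on both sides. Pisztora's surface-order large deviation bound makes this cost $e^{-ck^{(d-1)/d}}$ under any measure dominating $\phi^0$.

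This last point, showing that the separating surface is costly even after conditioning on $\mathcal F'$, is the step I expect to be the main obstacle. The reason is that a closed surface is a decreasing feature, while the conditioning is increasing. The hope is that FKG, applied after revealing $\mathcal C_1$, works in our favour precisely because closed edges are disfavoured by an increasing conditioning.
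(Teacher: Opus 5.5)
There is a genuine gap, and it is exactly the step you flag. Condition (b) is itself a local version of the theorem. You need $\phi^{\xi'}_{B_i',p}[\text{(b) fails}\mid\mathcal F']\le\varepsilon(k)$ uniformly over boundary conditions $\xi'$ and increasing events $\mathcal F'$, which is the same non-monotone problem at scale $k$ with a weaker error. The renormalisation and Peierls step is standard, but it only upgrades a uniform local estimate to an exponential one; the local estimate is the whole difficulty.

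The mechanism you sketch for that estimate does not work. The event ``a closed surface separates two crossing clusters'' is neither increasing nor decreasing. So Pisztora's surface-order bound for $\phi^{\xi}$ says nothing about an arbitrary measure dominating $\phi^0$. If you drop the increasing part (the two clusters) to get a decreasing event, nothing forces the closed set to have large area. Revealing $\mathcal C_1$ first does not help either. Its closed edge boundary then sits in the conditioning, and its cost would have to be paid under the law of $\mathcal C_1$ given $\mathcal F'$, which domination does not control. Some supercritical input that survives increasing conditioning is unavoidable here: the analogous point-to-point statement fails subcritically, e.g.\ for $\mathcal F=\{v\leftrightarrow\Lambda_{n/2},\,w\leftrightarrow\Lambda_{n/2}\}$. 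The FKG heuristic does not supply it.

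The missing idea is never to estimate a non-monotone event directly. Instead, write the bad event as the failure of linearly many successive \emph{increasing} gluing attempts.
\begin{itemize}
\item For $d\ge3$, Pisztora's slab lemma gives a fixed width $h$ and $c>0$ with $\phi^0_{\Lambda_{R+h}\setminus\Lambda_R,p}[v\leftrightarrow w]\ge c$ for all $R$ and all $v,w$ in that thin annulus.
\item Fix $v,w\in\partial_v\Lambda_n$ and explore $\Lambda_n\setminus\Lambda_{n/2}$ from the outside in, one ring of width $h$ at a time.
\item Suppose that after the $(k-1)$st ring both $v$ and $w$ still reach the next ring, at points $v',w'$, but are not yet connected. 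By the spatial Markov property, FKG and comparison of boundary conditions, the conditional law of that ring given the explored exterior and $\mathcal F$ dominates the free measure on the ring. Hence $v'\leftrightarrow w'$ inside the ring with probability at least $c$.
\item Iterating gives $\phi^\xi_{\Lambda_n,p}[v\leftrightarrow\Lambda_{n/2},\,w\leftrightarrow\Lambda_{n/2},\,v\not\leftrightarrow w\mid\mathcal F]\le(1-c)^{\lfloor n/2h\rfloor}$. A union bound over the $O(n^{2(d-1)})$ pairs of boundary points yields the theorem.
\end{itemize}
For $d=2$ there is nothing non-monotone to handle. An open circuit in $\Lambda_n\setminus\Lambda_{n/2}$ is increasing and implies $\mathtt{UC}_n$, so FKG, planar duality and subcritical sharpness finish. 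Neither case needs coarse graining or a Peierls count.
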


Let us deduce the main theorem from \Cref{theorem:any_incerasing_event}:
\begin{proof}[Proof of \Cref{thm:Unique_Loop}]

For $F\in  \mathcal{A}_{\Lambda_{n/8}},$  using first \eqref{O(1)_as_ueg} and \eqref{O(1)_as_inf_ueg}, and then \Cref{lemma:UGA_to_UEG},
\begin{align*}
|\ell^A_{\Lambda_n,x}[F]-\ell_{\mathbb{Z}^d,x}[F]| &=|\phi^0_{\Lambda_n,p}[\operatorname{UG}^A_{\omega}[F]\mid \mathcal{F}_A]-\phi_{\mathbb{Z}^d,p}[\operatorname{UEG}_{\omega}[F]]| \\
&\leq |\phi^0_{\Lambda_n,p}[\operatorname{UEG}_{\omega}[F]\mid \mathcal{F}_A,\mathtt{UC}_n]-\phi_{\mathbb{Z}^d,p}[\operatorname{UEG}_{\omega}[F]]|+1-\phi^0_{\Lambda_N,p}[\mathtt{UC}_n\mid \mathcal{F}_A]. \\
&\leq  \sup_{\xi} |\phi^{\xi}_{\Lambda_{n/2},p}[\operatorname{UEG}_{\omega^{\xi}}[F]]-\phi_{\mathbb{Z}^d,p}[\operatorname{UEG}_{\omega}[F]]|+\exp(-cn),
\end{align*}
where the second inequality is due to \Cref{theorem:any_incerasing_event} and \eqref{SMP} and \Cref{lemma:FAevent_and_unique_crossings}.
Next, we argue that $\sup_{\xi}|\phi^{\xi}_{\Lambda_{n/2},p}[\operatorname{UEG}_{\omega^{\xi}}[F]]-\phi_{\mathbb{Z}^d,p}[\operatorname{UEG}_{\omega}[F]]|\leq \exp(-c'n)$. This is verbatim the same as the proof of the weaker mixing result \cite[Theorem 1.3]{hansen2023uniform}, but we include the argument for completeness. The random variable $\id_{\mathtt{UC}_{n/4}(\omega)}\operatorname{UEG}_{\omega}[F]$ is positive, bounded above by $1,$ and by \Cref{lemma:UGA_to_UEG}, a measurable function of $\omega|_{\Lambda_{n/4}}.$ Therefore,
\begin{align*}
&\sup_{\xi} |\phi^{\xi}_{\Lambda_{n/2},p}[\operatorname{UEG}_{\omega^{\xi}}[F]]-\phi_{\mathbb{Z}^d,p}[\operatorname{UEG}_{\omega}[F]]|\\
\leq  &d_{TV}(\phi^{\xi}_{\Lambda_{n/2},p}[\omega|_{\Lambda_{n/4}}\in \cdot \;],\phi_{\mathbb{Z}^d,p}[\omega|_{\Lambda_{n/4}}\in \cdot \;])+2-\phi^{\xi}_{\Lambda_{n/2},p}[\mathtt{UC}_{n/4}]-\phi_{\mathbb{Z}^d,p}[\mathtt{UC}_{n/4}]\\
\leq & 3\exp(-c''n),
\end{align*}
where $d_{TV}$ denotes the total variation distance between probability measures, and we have used \Cref{theorem:any_incerasing_event} and weak mixing of the FK-Ising model \cite[Theorem 1.3]{duminil2020exponential}. Plugging back into our previous estimate, we get $[\ell^A_{\Lambda_n,x}[F]-\ell_{\mathbb{Z}^d,x}[F]|\leq \exp(-c'n)$ for a suitably adjusted constant.

Rearranging and applying \eqref{SMP} again, we get
\begin{align*}
&|\ell_{\mathbb{Z}^d,x}[F\cap \{\partial(\eta|_{\mathbb{Z}^d\setminus \Lambda_n})=A\}]-\ell_{\mathbb{Z}^d,x}[F]\ell_{\mathbb{Z}^d,x}[\partial(\eta|_{\mathbb{Z}^d\setminus \Lambda_n})=A]| \\
&=\ell_{\mathbb{Z}^d,x}[\partial(\eta|_{\mathbb{Z}^d\setminus \Lambda_n})=A]|\ell^A_{\Lambda_n,x}[F]-\ell_{\mathbb{Z}^d,x}[F]|
\leq \ell_{\mathbb{Z}^d,x}[\partial(\eta|_{\mathbb{Z}^d\setminus \Lambda_n})=A]\exp(-cn).
\end{align*}
The Markov property yields weak mixing, i.e., for general $F'\in \mathcal{A}_{\mathbb{Z}^d\setminus \Lambda_n}$
$$
|\ell_{\mathbb{Z}^d,x}[F\cap F']-\ell_{\mathbb{Z}^d,x}[F]\ell_{\mathbb{Z}^d,x}[F']|\leq \exp(-cn) \ell_{\mathbb{Z}^d,x}[F']. 
$$
 Ratio weak mixing now follows by a classical result of Alexander \cite[Theorem 3.3]{alexander1998weak} (note that the exponentially bounded controlling regions property is automatically satisfied by $\ell_{\mathbb{Z}^d,x}$ since its interaction is finite range). 
\end{proof}

\subsection{Consequences for random currents: Proof of \texorpdfstring{\Cref{thm:Free-mix}}{Theorem \ref*{thm:Free-mix}}}\label{sec:Inf_Rel}

 Let us first note the following elementary fact, which is useful for playing around with ratio mixing:

\begin{lemma} \label{Convexity_Lemma}
For any measure $\rho$, any at most countable index sets $I$ and $J$ and functions $(f_i)_{i\in I},$ $(g_j)_{j\in J}$ satisfying $0\leq f_i,g_j,$  that $0<\rho[ f_i],\rho[g_j]$ for all $i\in I,j\in J$ and that $\rho[\sum_i f_i],\rho[\sum_jg_j]<\infty,$ 
$$
\left|\frac{\rho [\sum_{i,j} f_ig_j]}{\rho[\sum_l f_l]\rho[\sum_kg_k]}-1\right|\leq \sup_{i,j}\left|\frac{\rho [f_ig_j]}{\rho[f_i]\rho[g_j]}-1\right|.
$$
\end{lemma}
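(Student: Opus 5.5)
The statement to prove is the Convexity Lemma (\Cref{Convexity_Lemma}). The plan is to write the left-hand side as a convex combination of the ratios on the right-hand side.

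Set $a_i=\rho[f_i]>0$, $b_j=\rho[g_j]>0$, $A=\sum_i a_i$ and $B=\sum_j b_j$. By monotone convergence (all functions are nonnegative), $\rho[\sum_i f_i]=A<\infty$ and $\rho[\sum_j g_j]=B<\infty$. Likewise $\rho[\sum_{i,j}f_ig_j]=\sum_{i,j}\rho[f_ig_j]$, where the double series has nonnegative terms, so it may be summed in any order and its value may be $+\infty$.

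Let $S=\sup_{i,j}|\rho[f_ig_j]/(a_ib_j)-1|$. If $S=\infty$ there is nothing to prove, so assume $S<\infty$. Then each term satisfies $\rho[f_ig_j]\le (1+S)a_ib_j$, and summing gives $\rho[\sum_{i,j}f_ig_j]\le (1+S)AB<\infty$.

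Next define weights $w_{ij}=a_ib_j/(AB)$. These are nonnegative and satisfy $\sum_{i,j}w_{ij}=1$. Writing $r_{ij}=\rho[f_ig_j]/(a_ib_j)$, we get
$$\frac{\rho[\sum_{i,j}f_ig_j]}{AB}-1=\sum_{i,j}w_{ij}(r_{ij}-1).$$
Rearranging the series is legitimate because it converges absolutely: $\sum_{i,j} w_{ij}|r_{ij}-1|\le S$.

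The triangle inequality then bounds the absolute value by $\sum_{i,j} w_{ij}|r_{ij}-1|\le S$, which is the claim.

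The only delicate point is the handling of infinite sums, namely the interchange of $\rho$ with countable sums and the finiteness needed to subtract $1$. Both follow from Tonelli's theorem, since all terms are nonnegative, together with the reduction to the case $S<\infty$. No further obstacle is expected.
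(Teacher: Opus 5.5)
Your proposal is correct and follows the paper's proof: both write the ratio as the convex combination $\sum_{i,j} w_{ij}\, r_{ij}$ with weights $w_{ij}=\rho[f_i]\rho[g_j]/(\rho[\sum_l f_l]\rho[\sum_k g_k])$ summing to $1$. The paper concludes via $\inf_{i,j} r_{ij}\le$ ratio $\le \sup_{i,j} r_{ij}$ where you apply the triangle inequality directly. Your extra care with Tonelli, the case $S=\infty$, and absolute convergence fills in details the paper leaves implicit.
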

\begin{proof}
Write
$$
\frac{\rho [\sum_{i,j} f_ig_j]}{\rho[\sum_l f_l]\rho[\sum_kg_k]}=\sum_{i,j} \frac{\rho[f_ig_j]}{\rho[f_i]\rho[g_j]} \frac{\rho[f_i]\rho[g_j]}{\rho[\sum_lf_l]\rho[\sum_kg_k]}.
$$
Since all terms of the sum are positive and $\sum_{i,j} \frac{\rho[f_i]\rho[g_j]}{\rho[{\sum_l f_l}]\rho[\sum_kg_k]}=1$, we conclude that
$$
\inf_{i,j}\frac{\rho[f_ig_j]}{\rho[f_i]\rho[g_j]}\leq \frac{\rho [\sum_{i,j} f_ig_j]}{\rho[\sum_l f_l]\rho[\sum_kg_k]}\leq \sup_{i,j} \frac{\rho[f_ig_j]}{\rho[f_i]\rho[g_j]},
$$
which yields the desired.
\end{proof}

Note that our definition of ratio weak mixing \eqref{eq:RWM} readily generalises to state spaces beyond $\{0,1\}.$ 

\begin{corollary} \label{Stable_RWM} Let $S$ and $T$ be measurable spaces and consider two probability measures $\mu$ and $\nu$ on $S^{E(\mathbb{Z}^d)}$ respectively $T^{E(\mathbb{Z}^d)}.$ If $\mu$ and $\nu$ are exponentially ratio weak mixing, then so is $\mu\otimes \nu$ as a measure on $(S\times T)^{E(\mathbb{Z}^d)}$.
\end{corollary}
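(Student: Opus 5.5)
The plan is to reduce the mixing inequality for $\mu\otimes\nu$ to product events, and then pass to general events using \Cref{Convexity_Lemma}. Write $\rho=\mu\otimes\nu$. Fix $n$ and take the same constant $k$ for both measures (the larger of the two; enlarging $k$ preserves \eqref{eq:RWM}, since $\mathcal{A}_{\mathbb{Z}^d\setminus\Lambda_{k'n}}\subseteq\mathcal{A}_{\mathbb{Z}^d\setminus\Lambda_{kn}}$ for $k'\ge k$). Let $\epsilon_n=C\exp(-cn)$ be the larger of the two error terms.

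First, for product events $A=A_1\times A_2$ with $A_1\in\mathcal{A}_{\Lambda_n}(S)$, $A_2\in\mathcal{A}_{\Lambda_n}(T)$, and $B=B_1\times B_2$ measurable outside $\Lambda_{kn}$, independence under the product measure gives
$$\frac{\rho[A\cap B]}{\rho[A]\rho[B]}=\frac{\mu[A_1\cap B_1]}{\mu[A_1]\mu[B_1]}\cdot\frac{\nu[A_2\cap B_2]}{\nu[A_2]\nu[B_2]}\in[(1-\epsilon_n)^2,(1+\epsilon_n)^2].$$
So the ratio is within $3\epsilon_n$ of $1$ once $\epsilon_n\le1$. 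Events of zero probability are trivial.

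Second, I extend to finite disjoint unions of such rectangles. Take $A=\bigsqcup_i R_i$ and $B=\bigsqcup_j Q_j$ with each $R_i$ a rectangle in $\mathcal{A}_{\Lambda_n}$ and each $Q_j$ a rectangle outside $\Lambda_{kn}$. Applying \Cref{Convexity_Lemma} with $f_i=\id_{R_i}$ and $g_j=\id_{Q_j}$, after discarding null pieces, gives $|\rho[A\cap B]/(\rho[A]\rho[B])-1|\le 3\epsilon_n$. Finite disjoint unions of rectangles form an algebra generating the product $\sigma$-algebra on each side.

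Third, I pass to general $A\in\mathcal{A}_{\Lambda_n}$ and $B\in\mathcal{A}_{\mathbb{Z}^d\setminus\Lambda_{kn}}$. The inequality $|\rho[A\cap B]-\rho[A]\rho[B]|\le 3\epsilon_n\rho[A]\rho[B]$ is stable under approximation in $\rho$-measure. By the approximation lemma for algebras generating a $\sigma$-algebra, there are sets $A_m$ in the algebra with $\rho[A\triangle A_m]\to0$, and similarly $B_m\to B$. All three quantities $\rho[A_m\cap B_m]$, $\rho[A_m]$ and $\rho[B_m]$ converge, so the inequality passes to the limit.

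The main obstacle is a measurability subtlety. The $\sigma$-algebra $\mathcal{A}_{\Lambda_n}$ on $(S\times T)^{E}$ restricted to $\Lambda_n$ must equal the product of the corresponding $\sigma$-algebras on $S^{E(\Lambda_n)}$ and $T^{E(\Lambda_n)}$, so that rectangles generate it. This holds because coordinates of $S\times T$ split, and likewise for the infinite exterior region, where the product $\sigma$-algebra is generated by cylinders. I expect this to be routine; the approximation step is a standard $\pi$-$\lambda$/monotone class argument. Finally, I rename constants so that $3\epsilon_n\le C'\exp(-cn)$.
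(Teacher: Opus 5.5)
Your proposal is correct and follows essentially the same route as the paper: factor the ratio on rectangles using the product structure, extend to unions of rectangles via \Cref{Convexity_Lemma}, and reach general events by approximation. The only difference is the order of the last two steps---you apply the lemma exactly on finite disjoint unions of rectangles (an algebra) and then pass the inequality to the limit, whereas the paper applies it directly to countable $L^1$-approximants of $F,F'$ and carries an explicit $\varepsilon$-error term; using disjoint pieces has the small advantage that the indicator sums stay bounded by $1$, so your limiting argument is immediate.
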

\begin{proof}
For readability, we give the proof in case $S=T.$ The general proof is analogous.
For general events $F\in \mathcal{A}_{\Lambda_n}\otimes\mathcal{A}_{\Lambda_n}$ and $F'\in \mathcal{A}_{\mathbb{Z}^d\setminus \Lambda_{kn}}\otimes \mathcal{A}_{\mathbb{Z}^d\setminus \Lambda_{kn}}$ of positive probability and given $0<\varepsilon\leq \frac{1}{2}\min \{\mu\otimes \nu[F],\mu\otimes \nu[F']\} $, use that sets of the form $H\times H'$ form an intersection stable generating set of the product $\sigma$-algebra to write
$$
\mu\otimes \nu\left[\left|\id_F-\sum_{i=1}^{\infty} \id_{F_i\times H_i}\right|\right]\leq  \varepsilon \qquad  \mathrm{and}\qquad  \mu\otimes \nu \left[\left|\id_{F'}-\sum_{j=1}^{\infty} \id_{F'_j\times H'_j}\right|\right]\leq \varepsilon
$$
for $F_i,H_i\in \mathcal{A}_{\Lambda_n}$ and $F'_j,H'_j\in \mathcal{A}_{\mathbb{Z}^d\setminus \Lambda_{kn}}$. By \Cref{Convexity_Lemma} with $\rho=\mu\otimes \nu,$ $f_i=\id_{F_i\times H_i}$ and $g_j=\id_{F'_j\times H'_j}$ (for those indices where those functions are not $0$ $\mu\otimes \nu$-a.s.), 
\begin{align*}
    &\left|\frac{\mu\otimes \nu[F\cap F']}{\mu\otimes \nu[F]\mu\otimes \nu[F']}-1\right|\\
    \leq & \left|\frac{\mu\otimes\nu[\sum_{i,j} f_ig_j]}{\mu\otimes \nu[\sum_l f_l]\mu\otimes \nu[\sum_k g_k]}-1\right| + \left|\frac{\mu\otimes\nu[\sum_{i,j} f_ig_j]}{\mu\otimes \nu[\sum_l f_l]\mu\otimes \nu[\sum_k g_k]}-\frac{\mu\otimes\nu[F\cap F']}{\mu\otimes \nu[ F]\mu\otimes \nu[F']}\right| \\
    \leq & \sup_{i,j} \left|\frac{\mu[F_i\cap F'_j]\nu[H_i\cap H'_j]}{\mu[F_i]\mu[F'_j]\nu[H_i]\nu[H'_j]} -1\right|+\frac{((\mu\otimes \nu[F]+\varepsilon)(\mu\otimes \nu[F']+\varepsilon)+\varepsilon)}{(\mu\otimes\nu[F]-\varepsilon)^2(\mu\otimes \nu[F']-\varepsilon)^2}2\varepsilon \\
    \leq& C\exp(-c n)+\frac{5}{(\mu\otimes\nu[F]-\varepsilon)^2(\mu\otimes \nu[F']-\varepsilon)^2}2\varepsilon.
\end{align*}
Taking $\varepsilon$ to $0$ yields the desired.

\end{proof}

With stability of mixing and \Cref{thm:Unique_Loop} in hand, we are ready to deduce consequences for the random current. For a current $\mathbf{n},$ we write $\widehat{\mathbf{P}}^A_{G,\beta}=\ell^{A}_{G,x}\cup\mathbb{P}_{G,1-\sqrt{1-x^2}}$ for the distribution of $\hat{\mathbf{n}}$ (see \eqref{dd_extraction}).
\begin{proof}[Proof of \Cref{thm:Free-mix}.]
By \Cref{thm:Unique_Loop}, for any weak limit $\widehat{\mathbf{P}}_{\mathbb{Z}^d,\beta}$ of traced random current models, $$\widehat{\mathbf{P}}_{\mathbb{Z}^d,\beta}=\lim_{G_n\nearrow \mathbb{Z}^d} \widehat{\mathbf{P}}^{A_n}_{G_n,\beta}=\lim_{G_n\nearrow \mathbb{Z}^d} \ell^{A_n}_{G_n,x}\cup \mathbb{P}_{G_n,1 - \sqrt{1-x^2}} = \ell_{\mathbb{Z}^d,x}\cup \mathbb{P}_{\mathbb{Z}^d,1 - \sqrt{1-x^2}},$$
since the union is a continuous map $\cup:\{0,1\}^{E(\mathbb{Z}^d)}\times \{0,1\}^{E(\mathbb{Z}^d)}\to \{0,1\}^{E(\mathbb{Z}^d)}.$

Ratio weak mixing follows from the ratio weak mixing of $\ell_{\mathbb{Z}^d,x}\otimes \mathbb{P}_{\mathbb{Z}^d,1 - \sqrt{1-x^2}},$ which follows from \Cref{Stable_RWM}. Similarly, we get ratio weak mixing for $\widehat{\mathbf{P}}_{\mathbb{Z}^d,\beta}\otimes \widehat{\mathbf{P}}_{\mathbb{Z}^d,\beta}$. Deducing mixing for the full currents follows from ratio weak mixing of $\ell_{\mathbb{Z}^d,x}\otimes \mathtt{Unif}([0,1])^{\otimes E(\mathbb{Z}^d)}$ (which is another instance of \Cref{Stable_RWM}) and  \Cref{Loop_to_curren_sampling}.
\end{proof}

Note that the same proof also shows that exponential ratio weak mixing of the loop O(1) model implies exponential ratio weak mixing of the FK-Ising model cf. \eqref{FK_is_sprinkled_O(1)}. We did however use exponential ratio weak mixing of the FK-Ising in the proof of \Cref{thm:Unique_Loop}.

\subsection{Proof of \Cref{theorem:any_incerasing_event}}
This subsection is dedicated to the proof of \Cref{theorem:any_incerasing_event}. 
We note that the proof strategy is more or less that of \cite[Theorem 3.1]{Pis96}. 

The following lemma due to Pisztora \cite[Lemma 3.3]{Pis96} is central in the proof. Recall the slab of height $h$, denoted by $S_h = \mathbb{Z}^2\times \{0,\dots,h\}^{d-2}$.  
\begin{lemma}[\cite{Pis96}] \label{Long-range-slab}
 For $d\geq 3,$ $p>p_c$, there exists $h\in \mathbb{N}$ and a constant $c>0$ such that for all $R\in \mathbb{N}$ and all  $v,w\in S_h\cap \Lambda_R,$ we have $\phi^0_{S_h\cap \Lambda_R,p}[v\cc w]>c$.
\end{lemma}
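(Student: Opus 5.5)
The plan is to recognise the statement as the defining property of Pisztora's slab threshold and to import Bodineau's identification of that threshold with $p_c$. Concretely, following \cite{Pis96}, for $h\in\mathbb{N}$ set
$$
\hat p_c(h)=\inf\Big\{p\in(0,1)\;:\;\inf_{R\in\mathbb{N}}\;\inf_{v,w\in S_h\cap\Lambda_R}\phi^0_{S_h\cap\Lambda_R,p}[v\cc w]>0\Big\},\qquad \hat p_c=\lim_{h\to\infty}\hat p_c(h).
$$

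First I will check that these objects are well defined and monotone. For fixed $h$, $R$, $v$, $w$, the quantity $\phi^0_{S_h\cap\Lambda_R,p}[v\cc w]$ is nondecreasing in $p$, since the random-cluster measure with $q\geq1$ is stochastically increasing in $p$. It is also nondecreasing in $h$: enlarging the slab from $S_h$ to $S_{h+1}$ while keeping free boundary conditions is the same as allowing extra edges in the graph, and \eqref{eq_CBC} compares these measures after restricting to the smaller graph. Hence $\hat p_c(h)$ is nonincreasing in $h$, so $\hat p_c$ exists. Moreover $\hat p_c\geq p_c$ trivially.

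The substantive input is the reverse inequality $\hat p_c\leq p_c$ for the FK-Ising model ($q=2$) with $d\geq3$. This is Bodineau's theorem \cite{Bod05}; see \cite{SeverSlab} for a shorter proof. Given it, the argument closes quickly. Fix $p>p_c$ and choose $p'\in(p_c,p)$. Since $p'>\hat p_c$, there is some $h$ with $\hat p_c(h)<p$. By the definition of the infimum and monotonicity in $p$, there is then $c>0$ with $\phi^0_{S_h\cap\Lambda_R,p}[v\cc w]>c$ uniformly in $R$ and in $v,w\in S_h\cap\Lambda_R$. This is exactly the lemma.

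The main obstacle is Bodineau's step, and I will not reprove it. If I had to sketch it, I would proceed in two stages.

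First, use the Grimmett--Marstrand-type argument in Bodineau's form. This produces, for $p>p_c$ and large $h$, a block event at scale $L$ that occurs with high probability even under free boundary conditions inside the slab. The event asks that the block contain a crossing cluster that connects to the crossing clusters of neighbouring blocks.

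Second, run a two-dimensional renormalisation on the coarse-grained lattice $\mathbb{Z}^2$. The block events dominate a highly supercritical site percolation; this uses the domination theorem of Liggett--Schonmann--Stacey, applied with the finite-energy and Markov properties to handle the dependencies. In that site percolation, any two blocks inside a finite box are joined by a path of good blocks with probability bounded below uniformly in the box size, by planar duality and a Peierls argument. I would then glue $v$ and $w$ to their local good blocks using finite energy; the cost is a constant depending only on $h$ and $L$. The delicate point is keeping all estimates uniform under the free boundary condition at $\partial\Lambda_R$. I would handle it by running the renormalisation only on blocks at distance at least $L$ from the boundary, and connecting $v,w$ near the boundary inward via finite energy.
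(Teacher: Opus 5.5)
Your proposal is correct and takes essentially the paper's route. The paper gives no argument beyond citing Pisztora's slab-percolation formulation \cite[Lemma 3.3]{Pis96} together with Bodineau's theorem \cite{Bod05} (see also \cite{SeverSlab}) that the slab threshold equals $p_c$ for the FK-Ising model, which is exactly your reduction via the monotonicity of $\hat p_c(h)$ in $p$ and $h$; your sketch of Bodineau's proof is supplementary and not needed.
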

From this, we immediately get:
\begin{corollary} \label{Long-range thin annulus}
For $d\geq 3, p>p_c,$ there exists $h\in \mathbb{N}$ and a constant $c>0$ such that for all $R\in \mathbb{N}$ and all $v,w\in \Lambda_{R+h}\setminus \Lambda_R,$ we have $\phi^0_{\Lambda_{R+h}\setminus \Lambda_R,p}[v\cc w]>c.$
\end{corollary}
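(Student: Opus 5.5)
The plan is to embed the thin annulus $\Lambda_{R+h}\setminus\Lambda_R$ into a bounded number of slab pieces isometric to $S_h\cap\Lambda_{R'}$, and then chain connections between them using \Cref{Long-range-slab} and \eqref{FKG}.

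First, the annulus $\Lambda_{R+h}\setminus\Lambda_R$ is the union of its $2d$ faces. Each face is, up to a lattice isometry, a set of the form $[-R-h,R+h]^{d-1}\times\{R+1,\dots,R+h\}$. This face contains many copies of $S_h$-boxes: for every choice of two coordinate directions among the $d-1$ tangential ones, fix the remaining $d-3$ tangential coordinates in windows of height $h$. Then the slab box $\mathbb{Z}^2\times\{0,\dots,h\}^{d-2}\cap\Lambda_{R'}$, with the normal direction playing the role of one of the thin coordinates, sits inside the annulus after translation, with $R'\asymp R$.

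A single slab box does not contain all points. Instead, I would fix a bounded family of slab boxes $B_1,\dots,B_K$ inside the annulus, with $K=K(d)$ independent of $R$, satisfying three properties:
\begin{itemize}[label={}]
\item the boxes cover $\Lambda_{R+h}\setminus\Lambda_R$;
\item consecutive boxes overlap in at least one vertex;
\item the overlap graph of the family is connected.
\end{itemize}
For example, take the two-dimensional sheets spanned by pairs of tangential directions on each face, with thin directions at all positions mod $h$. This requires care, since the thin coordinates range over about $R$ values, not $h$. So covering needs thickening: a point $v$ whose thin tangential coordinates are arbitrary is connected to a "spine" sheet through a chain of sheets. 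The sheets in directions $(e_1,e_2)$, then $(e_2,e_3)$, and so on, let each coordinate be moved in turn. Since any two points are joined through at most $d$ sheets per face, plus at most a bounded number of face changes (adjacent faces share edges of the annulus, where a sheet can bend, or be realized as an $S_h$ box straddling the corner region), the chain length is bounded by a constant $K(d)$.

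Given $v,w$, choose a chain $v=u_0,u_1,\dots,u_K=w$ with $u_{i-1},u_i$ in a common slab box $B_i\subseteq\Lambda_{R+h}\setminus\Lambda_R$. By \eqref{eq_CBC}, the free measure on the annulus restricted to $B_i$ dominates $\phi^0_{B_i,p}$. Applying \Cref{Long-range-slab} gives $\phi^0_{\Lambda_{R+h}\setminus\Lambda_R,p}[u_{i-1}\cc u_i]\ge c$. The events $\{u_{i-1}\cc u_i\}$ are increasing, so \eqref{FKG} yields
$$\phi^0_{\Lambda_{R+h}\setminus\Lambda_R,p}[v\cc w]\ge \prod_i\phi^0_{\Lambda_{R+h}\setminus\Lambda_R,p}[u_{i-1}\cc u_i]\ge c^K,$$
which is the claimed bound with constant $c^K$.

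The main obstacle is the geometry at the corners and edges of the annulus, where faces meet. A flat slab box cannot turn a corner. I would handle this by noting that near an edge between two faces, the annulus contains an $h$-thick region isometric to a box of the form $\{0,\dots,h\}\times[\,\cdot\,]$ in both normal directions. A slab box lying in one face can be chosen to reach within distance $h$ of the edge, and the other face's slab box can do the same. The two boxes then intersect in the corner cube of side $h$, which is fine. Small $R<h$ is handled by finite energy, since the annulus then has bounded size.
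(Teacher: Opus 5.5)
Your proposal is correct and is essentially the paper's argument made explicit: cover the annulus by boundedly many translates of slab boxes $S_h\cap\Lambda_{R'}$, compare $\phi^0_{\Lambda_{R+h}\setminus\Lambda_R,p}$ with the free slab measures via \eqref{SMP} and \eqref{eq_CBC}, apply \Cref{Long-range-slab}, and chain the increasing connection events with \eqref{FKG}. One nitpick: the annulus has only $h$ normal layers while $S_h$ has $h+1$, so take the corollary's $h$ one larger than Pisztora's; also, the corner issue resolves automatically, since adjacent face regions of the form $[-R-h,R+h]^{d-1}\times\{R+1,\dots,R+h\}$ already overlap.
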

\begin{proof}
This follows from \Cref{Long-range-slab}, repeated application of \eqref{FKG} and \eqref{SMP}.
\end{proof}

The positive connection probabilities in the rings of constant size $h$ can be iterated to get the following exponential decay estimate. 

\begin{lemma} \label{Pt-to-pt-decay}
For $d\geq 3,$ $p>p_c,$ there exists $c>0$ such that for any $n\in \mathbb{N}$ and any $v,w\in \partial_v \Lambda_n,$ any boundary condition $\xi$ and any increasing event $\mathcal{F},$  we have
$$
\phi^{\xi}_{\Lambda_{n},p}[v\cc \Lambda_{n/2},w\cc \Lambda_{n/2},\omega|_{\Lambda_{n}\setminus \Lambda_{n/2}}\in\{v \centernot\cc w \}\mid \mathcal{F}]\leq \exp(-cn).
$$
\end{lemma}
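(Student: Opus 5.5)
We follow Pisztora's layered-annuli strategy. Partition $\Lambda_n\setminus\Lambda_{n/2}$ into $K\asymp n/h$ disjoint thin shells $R_j=\Lambda_{r_j+h}\setminus\Lambda_{r_j}$, with $r_j=n/2+jh$ and $h$ as in \Cref{Long-range thin annulus}. On the event in question, $v$ and $w$ are both connected to $\Lambda_{n/2}$. Each of the two clusters therefore contains a path from $\partial_v\Lambda_n$ to $\Lambda_{n/2}$, and so crosses every shell $R_j$. Choose vertices $v_j\in\mathcal{C}_v\cap R_j$ and $w_j\in\mathcal{C}_w\cap R_j$. If $v_j\cc w_j$ inside $R_j$ for some $j$, then $v\cc w$ in $\Lambda_n\setminus\Lambda_{n/2}$. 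Hence the event forces, in every shell, the failure of a connection inside that shell between two crossing clusters.

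To turn this into a product bound, explore the shells one at a time from the outside inward. The natural order is to reveal $\omega$ outside $\Lambda_{r_j+h}$, together with the portions of $\mathcal{C}_v,\mathcal{C}_w$ seen so far. Given this information, we must bound the conditional probability that the two crossing arms, both of which enter $R_j$, fail to connect inside $R_j$. The bound we want is $1-c$, uniformly in everything revealed outside $R_j$ and in $\mathcal{F}$. Iterating over the $K$ shells then gives $(1-c)^{K}=\exp(-c'n)$.

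The single-shell estimate comes from \eqref{SMP}, \eqref{CBC} and \eqref{FKG}. Conditionally on the exterior configuration, the law on $R_j$ dominates $\phi^0_{R_j,p}$. The event $\mathcal{F}$, together with the arrival of the arms, should only push the configuration upward, since both are increasing information. The event that $v_j$ and $w_j$ connect inside $R_j$ is increasing, so under this domination \Cref{Long-range thin annulus} gives it probability at least $c$.

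The main obstacle is that the event we are bounding is not monotone: it requires both arms to exist, which is increasing, and the non-connection of $v$ and $w$, which is decreasing. This is combined with conditioning on an arbitrary increasing $\mathcal{F}$. The first thing to try is the following. Bound the non-connection of $v_j,w_j$ in $R_j$ using only the configuration inside $R_j$. Condition on everything outside $R_j$, and take $v_j,w_j$ to be the entry points of the arms coming from outside; these are determined by the exterior, because the arms reach $R_j$ from $\partial_v\Lambda_n$. Then the remaining constraints on $\omega|_{R_j}$ are of three kinds: $\mathcal{F}$, which restricted to $R_j$ given the exterior is still increasing; the continuation of the arms inward, which is increasing; and the decreasing non-connection. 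Drop the non-connection when lower bounding via FKG, noting that it is exactly the event whose complement we lower bound. We thus need $\phi[v_j\cc w_j \text{ in } R_j\mid I]\ge c$ with $I$ increasing, and this follows from FKG together with \Cref{Long-range thin annulus}.

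The care required is in arranging the filtration so that the conditioning at each step is a conjunction of a boundary condition and an increasing event on the shell. A natural way to do this is to reveal shells from the outside in, but to defer the inner-connection requirement as an increasing event intersected at the end.
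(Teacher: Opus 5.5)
Your proposal is correct and is essentially the paper's proof. The paper also reveals $\omega$ outside $\Lambda_{n-kh}$ one width-$h$ shell at a time, from the outside in. It uses \eqref{SMP}, \eqref{FKG} and \eqref{eq_CBC} to see that, given the exterior and the increasing event $\mathcal{F}$, the law on the next shell dominates the free measure there. It then applies \Cref{Long-range thin annulus} to the points where the two arms enter that shell, obtaining a factor $1-c$ per shell over $\lfloor n/2h\rfloor$ nested steps.

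The only difference is cosmetic. The paper handles the non-monotonicity by bounding each step's event by the indicator that both arms have already reached the shell, times the conditional probability of the decreasing non-connection event alone. This amounts to dropping the increasing inward-arm requirement, which is the simpler of the two options you describe.
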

\begin{remark}
As $\{v\cc \Lambda_{n/2},w\cc \Lambda_{n/2},v\centernot\cc w\}$ is not increasing, the above lemma does not follow automatically from the unconditioned statement and, indeed, by Ornstein-Zernike theory (e.g., \cite[Theorem C]{OZFK}), it is false in the subcritical regime e.g. for $v=ne_1,$ $w=-ne_1$ and $\mathcal{F}=\{v\cc \Lambda_{n/2},w\cc \Lambda_{n/2}\}$.
\end{remark}

\begin{proof}
The lemma is first proven for $d \geq 3$, after which the case $d=2$ is handled separately. 
Let $h\in \mathbb{N}$ be as in \Cref{Long-range thin annulus} and let $\xi_k=\omega^{\xi}|_{\mathbb{Z}^d \setminus \Lambda_{n-kh}}$, for $0 \leq k \leq  \lfloor \frac{n}{2h} \rfloor$. 

Define the event $\mathcal{E}_k = \left \{ v \overset{\xi_{k}}\longleftrightarrow \Lambda_{n-kh},w\overset{\xi_{k}}\longleftrightarrow \Lambda_{n-kh}, \xi_{k}\in \{v\centernot\cc w\} \right \}$. 
Notice that as $\xi_k \subseteq \xi_{k+1}$, it holds that 
$\mathcal{E}_{k+1} \subseteq \mathcal{E}_{k}$. 
By \eqref{SMP}, \eqref{FKG} and \Cref{Long-range thin annulus}, 
\begin{align*}
\phi^{\xi}_{\Lambda_{n},p}[\mathcal{E}_{k} \mid \xi_{k-1},\mathcal{F}]
&=\phi^{\xi_{k-1}}_{\Lambda_{n-h(k-1)},p}[v\overset{\xi_{k}}{\longleftrightarrow} \Lambda_{n-kh},w \overset{\xi_{k}}{\longleftrightarrow} \Lambda_{n-kh}, v \overset{\xi_{k}}{\centernot\longleftrightarrow} w \mid \omega^{\xi_{k-1}}\in \mathcal{F}]\\
& \leq   \id[v\overset{\xi_{k-1}}{\cc} \Lambda_{n-(k-1)h},w\overset{\xi_{k-1}}{\cc} \Lambda_{n-(k-1)h}] \phi^{\xi_{k-1}}_{\Lambda_{n-h(k-1)},p}[ v \overset{\xi_{k}}{\centernot\longleftrightarrow} w \mid \omega^{\xi_{k-1}}\in \mathcal{F}] \leq  (1-c).
\end{align*}

Since, $\mathcal{E}_{k-1}$ is $\xi_{k-1}$-measurable, it follows that $\phi^{\xi}_{\Lambda_{n},p}[\mathcal{E}_{k} \mid F,\mathcal{E}_{k-1}]\leq 1-c$ and hence,

$$
\phi^{\xi}_{\Lambda_{n},p}[\mathcal{E}_k \mid \mathcal{F}]=
\phi^{\xi}_{\Lambda_{n},p}[\mathcal{E}_k \mid \mathcal{F},\mathcal{E}_{k-1}]\phi^{\xi}_{\Lambda_n,p}[\mathcal{E}_{k-1}\mid \mathcal{F}]\leq (1-c)\phi^{\xi}_{\Lambda_n,p}[\mathcal{E}_{k-1}\mid \mathcal{F}].
$$
Iterating $\lfloor \frac{n}{2h} \rfloor$ times yields the desired exponential decay. 
\end{proof}

\begin{proof}[Proof of \Cref{theorem:any_incerasing_event}]
Let us first do the proof for $d\geq 3$. If $\mathtt{UC}_n$ fails, there exist $v,w\in \partial_v\Lambda_n$ satisfying $v\cc \Lambda_{n/2},$ $w\cc\Lambda_{n/2},$ but which belong to distinct clusters. By a union bound and \Cref{Pt-to-pt-decay}, we get
$$
\phi^\xi_{\Lambda_n,p}[\mathtt{UC}_n\mid \mathcal{F}]\geq 1- Cn^{2(d-1)}\exp(-cn)\geq 1-\exp(-c'n)
$$
for an adjusted constant when $n$ is sufficiently large. Adjusting the constant to accommodate lower values of $n$ establishes the desired.

For $d= 2$, let $\mathtt{Circ}_n$ denote the event that there is a circuit of open edges in $\Lambda_n\setminus \Lambda_{n/2}.$ We note that
$$
\phi^\xi_{\Lambda_n,p}[\mathtt{UC}_n\mid \mathcal{F}]\geq \phi^\xi_{\Lambda_n,p}[\mathtt{Circ}_n\mid \mathcal{F}]\geq \phi^0_{\Lambda_n,p}[\mathtt{Circ}_n] =1-\phi^1_{\Lambda_n^*,p^*}[ \Lambda_{n/2}\cc \partial_v\Lambda_{n}]\geq 1-\exp(-Cn)
$$
where the second inequality is FKG, the equality is planar duality \cite[Proposition 2.17]{DC17}, and the third inequality is sharpness of the phase transition for the subcritical FK-model \cite{DCsharpness}.
\end{proof}

\subsection{Mixing, uniqueness of Gibbs measures and uniqueness of weak limits}
The statistical mechanics literature offers several classical approaches to infinite volume systems. The most naïve is to start with a model $\mu^{\xi}_G$ indexed by finite graphs $G$ and boundary conditions $\xi$ and try to take a weak limit $\lim_{G_n\nearrow \mathbb{Z}^d}\mu^{\xi_n}_{G_n}.$ 
However, often in statistical mechanics, one is concerned with models having some sort of Markov property.  
This, in turn, gives rise to a notion of Gibbs measures for the models, which are infinite volume measures sharing the same Markov property. The Backwards Martingale Convergence Theorem implies that any tail-trivial Gibbs measure must also be a weak limit in the previous sense\footnote{See e.g., the proof of \cite[Theorem 6.63]{friedli2018}.}, whereas the non tail-trivial ones are generally only weak limits if one allows the boundary conditions $\xi_n$ to be random. Conversely, if the interactions of the model are local, then the Markov property necessarily survives in the weak limit, and any weak limit is just a Gibbs measure. However, for models with non-local interactions, the two notions start being a priori different and this plays a role e.g., for the random-cluster model on non-amenable graphs \cite{Haggstrom1996}, and attempts have been made to remedy that \cite{halberstam2023uniqueness}. The general moral stands that uniqueness of weak limits is the stronger of the two.

Similarly, mixing statements for an infinite volume Gibbs measure $\mu$, with \eqref{eq:RWM} being among the strongest one might hope for, imply a certain indifference to boundary conditions for the finite volume measures $\mu^{\xi_n}_{G_n}.$ More precisely, if $\mu$ has the finite energy property\footnote{Meaning that the state of any given edge $e$ has full support on its state space conditionally on the state of all other edges}, and the interactions of the model are not too long range\footnote{We will allow ourselves to be vague as to exactly what counts, but the moral is hopefully clear.}, one may impose arbitrary boundary conditions $\xi_n$ on a finite graph $G_n$ under $\mu$ by hand. Thus, in this case, mixing will imply uniqueness of weak limits. As such, morally, mixing should be thought of as the strongest property discussed in this paper.

\section{The main theorems with bulk sources} \label{sec:how_to_handle_sources}

In this section, we show how our techniques can be extended to handling sources in the bulk. For any finite graph $G \subseteq \Z^d$ and $A \subseteq V(G)$ with $|A|$ odd,  let $\ell^{A,\delta}_{G,x}$ denote Bernoulli percolation at edge probability $\frac{x}{1+x}$ conditioned on the vertices of odd degree being equal to $A$ and an odd number of vertices in $\partial_v G$. For $A$ with $|A|$ even, we denote $\ell^{A,\delta}_{G,x}=\ell_{G,x}^A$. It is worth noting that for $\abs{A}$ odd, $\ell^{A,\delta}_{G,x}$ fits into \Cref{Thm:The_Coupling} as the uniform subgraph of $\omega\sim \phi^1_{G,p}$ with sources $A\cup \{\partial_v G\}$ (the latter is counted as a single vertex in $\kappa^1(\omega)$). The goal of this section is to prove the following theorem:
\begin{theorem} \label{thm:Unique_loop_with_sources}
For any $d\geq 2,$ $x>x_c$ and finite subset $A\subseteq V(\mathbb{Z}^d),$ the weak limit 

\noindent $\ell^A_{\mathbb{Z}^d,x}=\lim_{G_n\nearrow \mathbb{Z}^d} \ell_{G_n,x}^{A,\delta}$ exists, and for any $A_n\subseteq \partial _vG_n$ with $|A\cup A_n|$ even, 
$$
\ell^A_{\mathbb{Z}^d,x}=\lim_{G_n\nearrow \mathbb{Z}^d}\ell^{A\cup A_n}_{G_n,x}.
$$
Furthermore, $\ell^A_{\mathbb{Z}^d,x}$ is exponentially ratio weak mixing.
\end{theorem}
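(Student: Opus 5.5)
We follow the proof of \Cref{thm:Unique_Loop}. The difference is that the sources now split into a fixed bulk part $A$ and a boundary part $A_n$ (or the ghost vertex $\delta$ when $|A|$ is odd). Fix $m$ large enough that $A\subseteq\Lambda_m$.

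\textbf{Step 1: infinite-volume candidate.} For $\omega\sim\phi_{\mathbb{Z}^d,p}$ we have a unique infinite cluster. If $|A|$ is even, condition on $\mathcal{F}_A$ (each finite cluster meets $A$ evenly). If $|A|$ is odd, allow the infinite cluster to carry the odd remainder. Then take $\eta$ to be the uniform subgraph of $\omega$ with sources $A$, interpreted through a limit of $\operatorname{UG}^{A}$ on finite pieces. Concretely, I would define
\[
\ell^A_{\mathbb{Z}^d,x}[F]=\lim_{n}\phi_{\mathbb{Z}^d,p}\bigl[\operatorname{UG}^{A\cup B_n}_{\omega|_{\Lambda_n}}[F]\bigm| \mathcal{F}_{A\cup B_n}\bigr],
\]
where $B_n$ is any admissible boundary set. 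The real content is showing that this limit is independent of all choices, which is what the next steps do.

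\textbf{Step 2: finite-volume comparison.} Take $F\in\mathcal{A}_{\Lambda_{n/8}}$ with $n\gg m$. By \Cref{Thm:The_Coupling} (for odd $|A|$, its wired version with $\partial_vG$ collapsed to a single vertex), $\ell^{A\cup A_n}_{\Lambda_n,x}[F]$ is $\phi^{0}_{\Lambda_n,p}[\operatorname{UG}^{A\cup A_n}_\omega[F]\mid\mathcal{F}_{A\cup A_n}]$, or the wired analogue. On $\mathtt{UC}_n$, the analogue of \Cref{lemma:FAevent_and_unique_crossings} holds. Indeed, pair up the boundary sources inside their clusters in the annulus. At most one boundary source (or $\delta$) is left unpaired, and it must be matched to the bulk through the unique crossing cluster. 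Thus there is $\eta_0$ with $\partial\eta_0=A_n\triangle\{\text{one point of }\partial_v\Lambda_{n/2}\text{ on the crossing cluster}\}$, supported in the annulus, or $\partial\eta_0=A_n$ if parity allows. Symmetric difference with $\eta_0$ then reduces $\operatorname{UG}^{A\cup A_n}_\omega$ on $\Lambda_{n/2}$ to a measure that depends only on $A$ and on whether the crossing cluster acts as a ``sink''. This generalizes \Cref{lemma:UGA_to_UEG}. Since $\mathcal{F}_{A\cup A_n}$ is increasing, \Cref{theorem:any_incerasing_event} gives $\mathtt{UC}_n$ with probability $1-e^{-cn}$ under the conditioned measure. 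By \eqref{SMP}, the problem then reduces to comparing $\phi^{\xi}_{\Lambda_{n/2},p}[\,\cdot\mid\mathcal{F}^{\xi}_{A}]$ over boundary conditions $\xi$, where $\mathcal{F}^\xi_A$ is the requirement that clusters of $A$ not reaching the boundary are even.

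\textbf{Step 3: the main obstacle.} The key step is removing the conditioning on $\mathcal{F}^\xi_A$, which now lives in the bulk rather than at the boundary. The probability of $\mathcal{F}^\xi_A$ is bounded below uniformly in $\xi$ and $n$: by \eqref{FKG} and \eqref{CBC}, connect all of $A$ to a crossing cluster or pair them locally, at cost $c(A)>0$ using finite energy. Hence the conditioning costs only a constant factor. Exponential weak mixing of FK-Ising \cite{duminil2020exponential} in ratio form, via Alexander \cite{alexander1998weak}, then lets me compare $\phi^{\xi}_{\Lambda_{n/2},p}[X\,\id_{\mathcal{F}_A}]$ with $\phi_{\mathbb{Z}^d,p}[X\,\id_{\mathcal{F}_A}]$ up to an error $e^{-cn}$. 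Here $X=\id_{\mathtt{UC}_{n/4}}\operatorname{UG}^{A,\ast}_{\omega}[F]$ is, as in \Cref{lemma:UGA_to_UEG}, measurable with respect to $\omega|_{\Lambda_{n/4}}$. The delicate point is that $\mathcal{F}_A$ is not local. On $\mathtt{UC}_{n/4}$, however, it is equivalent to a $\Lambda_{n/4}$-measurable event (parity of $A$ in each cluster of $\omega|_{\Lambda_{n/4}}$, with the crossing cluster absorbing any odd excess when $|A|$ is odd). So both ratios are controlled, giving $|\ell^{A\cup A_n}_{\Lambda_n,x}[F]-\ell^A_{\mathbb{Z}^d,x}[F]|\le e^{-cn}$ uniformly in $A_n$, and also for $\ell^{A,\delta}$. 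This yields existence and uniqueness of the limit. General exhaustions $G_n$ are handled by applying the same argument in a box $\Lambda_n\subseteq G_n$ after \eqref{SMP}.

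\textbf{Step 4: mixing.} As in the proof of \Cref{thm:Unique_Loop}, I condition on $\partial(\eta|_{\mathbb{Z}^d\setminus\Lambda_n})$ and use the uniform estimate from Step 3 to obtain weak mixing for events $F\in\mathcal{A}_{\Lambda_{n/8}}$ versus $F'$ outside $\Lambda_n$. Since the model is finite range, Alexander's theorem upgrades this to ratio weak mixing. One subtlety is that the definition of mixing is centered at the origin while $A$ is located near it. For $n\ge m$ the argument above applies directly, and small $n$ are absorbed into the constant $C$.
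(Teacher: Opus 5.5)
\textbf{Gap: Step 3 fails for $|A|$ odd.} This is the case the measures $\ell^{A,\delta}$ are built for.

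Your claim that on $\mathtt{UC}_{n/4}$ the conditioning event becomes $\Lambda_{n/4}$-measurable is false when $|A|$ is odd. The cluster $\mathcal{C}^*$ crossing $\Lambda_{n/4}\setminus\Lambda_{n/8}$ can absorb the odd excess only if it is itself joined to the sink. For $\mathcal{G}_A$ the sink is $\infty$; for $\mathcal{G}^n_A$ it is $\partial_v\Lambda_n$, reached through $\xi$. So $\mathcal{G}^n_A\cap\mathtt{UC}_{n/4}=L\cap\mathtt{UC}_{n/4}\cap\{\mathcal{C}^*\leftrightarrow\text{sink}\}$, where $L$ is your local parity event, and the last event is long-range.

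For $|A|$ even your locality claim is correct. There your argument goes through and is simpler than the paper's: a uniform lower bound on the conditioning by finite energy, locality on $\mathtt{UC}_{n/4}$, and total-variation closeness of FK marginals.

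For $|A|$ odd, the two sides behave differently.
\begin{itemize}
\item On the infinite-volume side, the connection to $\infty$ can be dropped at cost $e^{-cn}$, since large finite clusters are exponentially unlikely for $p>p_c$.
\item On the finite-volume side, it cannot be dropped uniformly in $\xi\in\mathtt{UC}_n$. Suppose the crossing cluster of $\xi$ meets $\partial_v\Lambda_{n/2}$ in a single vertex $w$. Then finite energy at $w$ gives $\phi^\xi_{\Lambda_{n/2},p}[\mathcal{C}^*\leftrightarrow w\mid L]\le 1-c$. Hence $\phi^\xi_{\Lambda_{n/2},p}[X\id_{\mathcal{G}^n_A}]$ and $\phi_{\mathbb{Z}^d,p}[X\id_{\mathcal{G}_A}]$ differ by order one, so your numerator comparison is false.
\end{itemize}
The \emph{ratios} may still be close. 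But a constant lower bound on the conditioning probability plus mixing of the \emph{unconditioned} FK measure gives no control over this. What you actually need is that conditioning on a long-range connection to an arbitrary sink does not perturb the law near the origin.

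\textbf{What the paper uses instead.} This is the role of \Cref{Limit_with_sources}, which has no counterpart in your proposal: the paper proves that $\phi_{\mathbb{Z}^d,p}[\,\cdot\mid\mathcal{G}_A]$ is itself exponentially ratio weak mixing. The proof runs as follows.
\begin{itemize}
\item It places Pisztora giants in $\Lambda_{2N}\setminus\Lambda_N$ and $\Lambda_{8N}\setminus\Lambda_{6N}$.
\item It uses \Cref{lemma:G_A_decoupling_lemma} to rewrite $\mathcal{G}_A$ as $\mathcal{G}^{2N}_A$ together with ``the two giants are connected and the outer one reaches $\infty$''.
\item A Markov-inequality argument shows that, outside an exponentially small set $\mathtt{Bad}$ of annulus configurations, the giants connect with conditional probability $1-e^{-CN}$. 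So conditioning on that connection is a small total-variation perturbation.
\item These high-probability statements transfer to the conditioned measure via the uniform bound \eqref{eq:GAprob_bound}.
\item Alexander's theorem then upgrades weak mixing to ratio form.
\end{itemize}
The ratio form is what lets the paper replace $\phi_{\mathbb{Z}^d,p}[\,\cdot\mid\mathcal{G}_A]$ by $\phi_{\mathbb{Z}^d,p}[\,\cdot\mid\omega|_{\Lambda_N\setminus\Lambda_{N/2}}=\xi,\mathcal{G}_A]=\phi^\xi_{\Lambda_{N/2},p}[\,\cdot\mid\mathcal{G}^N_A]$ for every $\xi\in\mathtt{UC}_N$, including adversarial ones.

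To repair your route you would need one of two things. Either adopt this ingredient, or show that adversarial $\xi$ carry negligible weight under the conditioned annulus law and that for typical $\xi$ the connection $\mathcal{C}^*\leftrightarrow\text{sink}$ is overwhelmingly likely. The second option again requires the Pisztora-giant gluing the paper uses.
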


One may, just like before, introduce Gibbs measures for the loop O($1$) model with sources. In particular, \Cref{thm:Unique_loop_with_sources} says that this Gibbs measure is unique for any finite set of sources.

\Cref{thm:Unique_loop_with_sources} again yields the corresponding statement for the random current. For a finite  graph $G\subseteq \Z^d$ and $A\subseteq V(G)$ such that $\abs{A}$ is odd, define $\mathbf{P}^{A,\delta}_{G,\beta}$ to be an i.i.d. family $\nn_e$ of $\mathbf{Poi}(\beta)$ variables such that $\sum_{w: wv\in E(G)} \nn_{wv}$ is odd for $v \in A$ and an odd number of vertices on $\partial_v G$. For $\abs{A}$ even, we denote $\mathbf{P}^{A,\delta}_{G,\beta}=\mathbf{P}^{A}_{G,\beta}$. The version of \Cref{thm:Unique_loop_with_sources} for the random current is then:
\begin{theorem} \label{thm:Unique_cur_with_sources}
For any $d\geq 2,$ $\beta>\beta_c$  and finite subset $A\subseteq V(\mathbb{Z}^d),$ the weak limit

\noindent $\mathbf{P}^A_{\mathbb{Z}^d,\beta}=\lim_{G_n\nearrow \mathbb{Z}^d} \mathbf{P}^{A,\delta}_{G_n,\beta}$ exists and for any $A_n\subseteq \partial _vG_n$ with $|A\cup A_n|$ even, 
$$
\mathbf{P}^A_{\mathbb{Z}^d,\beta}=\lim_{G_n\nearrow \mathbb{Z}^d}\mathbf{P}^{A\cup A_n}_{G_n,\beta}.
$$
Furthermore, $\mathbf{P}^A_{\mathbb{Z}^d,\beta}$ is exponentially ratio weak mixing, and so is $\mathbf{P}^A_{\mathbb{Z}^d,\beta}\otimes \mathbf{P}^B_{\mathbb{Z}^d,\beta}$ for (possibly distinct) finite sets $A$ and $B$.
\end{theorem}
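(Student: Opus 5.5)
The plan is to deduce \Cref{thm:Unique_cur_with_sources} from \Cref{thm:Unique_loop_with_sources}, in the same way \Cref{thm:Free-mix} was deduced from \Cref{thm:Unique_Loop}. The bridge is \Cref{Loop_to_curren_sampling}, applied with sources.

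The first step is to check that the source-free identities carry over to the $\delta$-measures. For $|A|$ even there is nothing to do. For $|A|$ odd, the weight computation preceding \eqref{dd_extraction} gives $\mathbf{P}^{A,\delta}_{G,\beta}[\mathbf{n}^{\mathtt{odd}}\in\cdot\,]=\ell^{A,\delta}_{G,x}$, because the constraint (odd degree on $A$, and an odd number of odd vertices on $\partial_v G$) depends only on $\mathbf{n}^{\mathtt{odd}}$. The conditional law \eqref{Resampling density} of $\mathbf{n}$ given $\mathbf{n}^{\mathtt{odd}}$ is the same product measure as before. Hence \Cref{Loop_to_curren_sampling} holds verbatim: $\mathbf{n}=\Phi(\eta,U)$ with $\eta\sim\ell^{A,\delta}_{G,x}$ (respectively $\ell^{A\cup A_n}_{G,x}$) and $U$ an independent i.i.d. uniform field. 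Here $\Phi(\eta,U)_e=(1-\eta_e)f_{\mathtt{even}}(U_e)+\eta_e f_{\mathtt{odd}}(U_e)$ acts coordinatewise, and each coordinate depends on finitely many inputs.

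The second step is existence and uniqueness of the limits. Under the product topology on $\mathbb{N}_0^{E}$, the map $(\eta,U)\mapsto\Phi(\eta,U)$ is continuous off the null set of $U$ where $f_{\mathtt{even}}$ or $f_{\mathtt{odd}}$ jumps. Equivalently, one can work with cylinder events: $\{\mathbf{n}|_F=m\}$ has probability $\sum_{\eta_F}\ell[\eta|_F]\prod_{e\in F} q_{\eta_e}(m_e)$, which is a finite linear combination of cylinder probabilities of $\eta$. Therefore the convergences $\ell^{A,\delta}_{G_n,x}\to\ell^A_{\mathbb{Z}^d,x}$ and $\ell^{A\cup A_n}_{G_n,x}\to\ell^A_{\mathbb{Z}^d,x}$ from \Cref{thm:Unique_loop_with_sources} imply convergence of all cylinder probabilities of the currents to those of $\Phi_*(\ell^A_{\mathbb{Z}^d,x}\otimes \mathtt{Unif}([0,1])^{\otimes E})$. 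This is the same limit for both sequences. Tightness is automatic, since each $\mathbf{n}_e$ has a fixed law dominated uniformly (a mixture of the two parity laws). This identifies $\mathbf{P}^A_{\mathbb{Z}^d,\beta}$.

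The third step is mixing. By \Cref{thm:Unique_loop_with_sources}, $\ell^A_{\mathbb{Z}^d,x}$ is exponentially ratio weak mixing. A product measure trivially satisfies \eqref{eq:RWM} with zero error, so \Cref{Stable_RWM} shows that $\ell^A_{\mathbb{Z}^d,x}\otimes\mathtt{Unif}([0,1])^{\otimes E}$ is ratio weak mixing on $(\{0,1\}\times[0,1])^{E}$. Since $\Phi$ acts edge by edge, events in $\mathcal{A}_{\Lambda_n}$ (respectively $\mathcal{A}_{\mathbb{Z}^d\setminus\Lambda_{kn}}$) for $\mathbf{n}$ pull back to events in the corresponding $\sigma$-algebras upstairs. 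The inequality \eqref{eq:RWM} therefore transfers directly to $\mathbf{P}^A_{\mathbb{Z}^d,\beta}$. For the pair $\mathbf{P}^A_{\mathbb{Z}^d,\beta}\otimes\mathbf{P}^B_{\mathbb{Z}^d,\beta}$, one applies \Cref{Stable_RWM} once more with $S=T=\mathbb{N}_0$; the two factors may have different laws, which is the ``general'' case of that corollary.

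The only point needing real care is the first step. One must verify that the odd-boundary-parity conditioning defining $\ell^{A,\delta}$ and $\mathbf{P}^{A,\delta}$ matches exactly under the odd-part extraction, so that the correspondence is exact rather than approximate. This is a direct recomputation of the weights with the indicator $\id_{\partial\mathbf{n}^{\mathtt{odd}}=A\cup B,\ B\subseteq\partial_vG,\ |B|\ \mathrm{odd}}$ in place of $\id_{\partial\mathbf{n}^{\mathtt{odd}}=A}$. Beyond that, all the difficulty lies in \Cref{thm:Unique_loop_with_sources}, which we take as given.
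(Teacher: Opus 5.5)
Your proposal is correct and follows the paper's route: the paper says the proof is verbatim that of \Cref{thm:Free-mix}, transferring convergence and exponential ratio weak mixing from $\ell^A_{\mathbb{Z}^d,x}$ to the currents via \Cref{Loop_to_curren_sampling} and \Cref{Stable_RWM}. Your explicit check that the odd part of $\mathbf{P}^{A,\delta}_{G,\beta}$ is $\ell^{A,\delta}_{G,x}$ fills in a detail the paper leaves implicit. The same goes for your cylinder-probability argument for convergence of the full current, where \Cref{thm:Free-mix} works with the traced current and continuity of the union map.
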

 For $A$ with $|A|$ odd, let $\mathcal{G}^n_{A}$ denote the event that $\omega|_{\Lambda_n}$ has a subgraph $\eta$ with $\partial \eta=A\cup B$ for some $B\subseteq \partial_v \Lambda_n$ with $|B|$ odd. Let $\mathcal{G}_{A}$ denote the infinite volume event that $\omega$ has a subgraph $\eta$ with $\partial \eta=A$. Note that for $\abs{A}$ odd, $\mathcal{G}^k_A\supseteq \mathcal{G}^{k+1}_A$ for every $k$ and that $\cap_{k=1}^{\infty} \mathcal{G}^k_A=\mathcal{G}_A.$ For $|A|$ even, let $\mathcal{G}^n_{A}= \{\omega\vert_{\Lambda_n} \in \mathcal{F}_A\}$ and note that 
 $\mathcal{G}_A=\cup_{n=1}^{\infty} \mathcal{G}^{n}_A.$ 

\begin{remark}
The two equivalent definitions of the event $\mathcal{F}_A$ in finite volume give rise to two distinct definitions in infinite volume. For instance, $\mathcal{G}_{\{v\}}$ can be satisfied by an infinite path starting at $v$, whereas it is impossible for $|\mathcal{C}_v\cap \{v\}|$ to be even. The notation $\mathcal{G}_A$ is chosen to avoid confusion, but \Cref{Limit_with_sources} carries for the other infinite volume definition of $\mathcal{F}_A$ as well (where every cluster intersects $A$ an even number of times) when $|A|$ is even. 
\end{remark}

A central input in the proof is the surface order large deviations originally due to Pisztora. 

\begin{definition}\label{def:Pisztora_event}
For a box $\Lambda_n$ (or a translate of it), Pisztora considered the event $\mathtt{Pis}_{\Lambda_n}(\varepsilon, \theta, L_0)$ defined as follows:
\begin{enumerate}
    \item[$i)$] There is a cluster $\Giant_{\Lambda_n} $ touching all faces of $\partial_v \Lambda_n$ and the box $\Lambda_{n/2}$. 
    \item[$ii)$] The cluster has a density: $\abs{\Giant_{\Lambda_n}}\geq (\theta- \varepsilon)|\Lambda_n|$.
    \item[$iii)$] Most other clusters have size bounded by $L_0:$ $\abs{ \left \{v \in \Lambda_n \setminus \mathtt{Giant}_{\Lambda_n} \mid \abs{\mathcal{C}_v} \geq L_0 \right \} } \leq \varepsilon |\Lambda_n|.$
\end{enumerate}
\end{definition}

When convenient, we will also refer to $\Giant_B$ as the \textbf{local giant}.
Pisztora's result  (combined  with Bodineau's) gives the following:
\begin{theorem}[\cite{Pis96}]\label{thm:pisztora}
Fix $d \geq 3, p > p_c(d),$ $\theta=\phi_{\mathbb{Z}^d,p}[0\cc \infty]$ and $\varepsilon = 2^{-d}\theta$. There exist  $L_0, N_0, c >0$ such that for any $n \geq N_0$ and any boundary condition $\xi$,
$$
\phi_{{\Lambda_n},p}^{\xi}[\mathtt{Pis}_{\Lambda_n}(\varepsilon, \theta, L_0) ] \geq 1 - e^{-c n^{d-1}}.
$$
\end{theorem}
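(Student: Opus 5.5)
The plan is to follow Pisztora's original coarse-graining (renormalisation) scheme. Bodineau's theorem \cite{Bod05} provides the needed input: for the FK-Ising model the slab threshold coincides with $p_c$, so \Cref{Long-range-slab} holds for every $p>p_c$. Throughout, I treat the boundary condition $\xi$ by noting that every estimate below concerns events that are either increasing or controlled via \eqref{SMP} and \eqref{eq_CBC} uniformly in $\xi$. The worst case is then essentially the free measure $\phi^0$ restricted to the relevant boxes.

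\emph{Step 1 (good blocks at a fixed scale).} Fix a mesoscopic scale $L$ and tile $\Lambda_n$ by boxes $B_i$ of side $L$, each with a slightly enlarged concentric box $B_i'$ of side $\tfrac{5}{4}L$. Call $B_i$ \emph{good} if the following hold in $B_i'$:
\begin{enumerate}
\item[(a)] there is a unique cluster crossing $B_i'\setminus B_i$, and it touches all faces of $B_i$;
\item[(b)] every cluster in $B_i$ of diameter at least $L/10$ belongs to this crossing cluster;
\item[(c)] the crossing cluster has density in $B_i$ within $\varepsilon/2$ of $\theta$.
\end{enumerate}
I will show that $\inf_\xi \phi^\xi_{B_i',p}[B_i \text{ good}]\to 1$ as $L\to\infty$. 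Property (a) follows from \Cref{theorem:any_incerasing_event} together with \Cref{Pt-to-pt-decay} and a union bound (the existence part uses the free measure and FKG). Property (b) follows from the same point-to-point decay argument applied on smaller annuli around each point. Property (c) follows from the ergodic theorem for $\phi_{\mathbb{Z}^d,p}$ combined with the mixing input of \cite{duminil2020exponential}, which lets us pass from infinite volume to $\phi^\xi_{B_i'}$ uniformly in $\xi$. Two adjacent good blocks have overlapping enlarged boxes, so by uniqueness in (a) their crossing clusters coincide.

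\emph{Step 2 (domination and Peierls).} Goodness of $B_i$ depends only on $\omega|_{B_i'}$, and by \eqref{SMP} it has conditional probability at least $1-\delta(L)$ given the configuration outside $B_i'$. Hence the indicator field of bad blocks is a finite-range dependent field with small marginals, and by Liggett--Schonmann--Stacey it is dominated by Bernoulli site percolation with parameter $\rho(L)\to0$. On the event that (i) fails, the good blocks connecting $\Lambda_{n/2}$ to all faces must be separated by a $*$-connected set of bad blocks of size at least $c(n/L)^{d-1}$. A Peierls count of such sets, against $\rho(L)^{c(n/L)^{d-1}}$, gives probability at most $e^{-cn^{d-1}}$ once $L$ is large. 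Items (ii) and (iii) fail only if at least $\tfrac{\varepsilon}{4}(n/L)^d$ blocks are bad, or the good blocks are not all joined to the giant. The first event has probability $e^{-cn^d}$ by Chernoff. The second again forces a surface-sized bad separating set.

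\emph{Step 3 (concluding (ii) and (iii)).} On the event that there are few bad blocks and no large bad $*$-cluster, choose $L_0\geq L^d$. Any vertex outside $\mathtt{Giant}_{\Lambda_n}$ whose cluster has size at least $L_0$ has diameter at least $L/10$, so by (b) it lies in a bad block or within distance $L$ of $\partial\Lambda_n$. Both sets are of size at most $\varepsilon|\Lambda_n|$. Summing the densities from (c) over good blocks gives $|\mathtt{Giant}|\ge(\theta-\varepsilon)|\Lambda_n|$.

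The main obstacle is Step 1, specifically obtaining the density statement (c) and the uniqueness statement (b) uniformly in $\xi$ at a fixed large scale. This is exactly where Bodineau's slab result is indispensable, and it is where the original paper \cite{Pis96} does the real work. I would first try to import it directly via \Cref{Pt-to-pt-decay}, and handle the density by the mixing of \cite{duminil2020exponential}.
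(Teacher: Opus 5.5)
The paper does not prove this statement. It is imported from \cite{Pis96}, with \cite{Bod05} supplying slab percolation for all $p>p_c$. Your outline reconstructs that block renormalisation, and the architecture is the right one, but there is a genuine gap. Step~2 claims that the event ``the good blocks are not all joined to the giant'' forces a surface-sized bad separating set, and Step~3 relies on this. The claim is false. Close every edge of the edge boundary of $B_i'$. This does not affect the goodness of $B_i$, which is $\omega|_{B_i'}$-measurable, but it cuts the crossing cluster of $B_i$ off from $\Giant_{\Lambda_n}$. By your own gluing claim, the face-neighbours of $B_i$ must then be bad, so at block level $B_i$ is isolated by a bad set of bounded size. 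By \eqref{SMP} and finite energy this has conditional probability at least $c(L)>0$ at each of order $(n/L)^d$ well-separated positions. So for fixed $L$, the event you need to be rare has probability tending to $1$ as $n\to\infty$, not $O(e^{-cn^{d-1}})$. Consequently two steps are unjustified: the claim in Step~3 that every vertex outside $\Giant_{\Lambda_n}$ with $|\mathcal{C}_v|\geq L_0$ lies in a bad block or near $\partial\Lambda_n$, and the summation of (c) over \emph{all} good blocks to get (ii).

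The missing ingredient is a volume estimate. Good blocks outside the main good component lie in regions cut off by bad $*$-connected sets $S$, and by isoperimetry each region has volume at most $C|S|^{d/(d-1)}$. You must show $\sum_S |S|^{d/(d-1)}\le \eta (n/L)^d$ except with probability $e^{-c(n/L)^{d-1}}$ under the Bernoulli domination. This requires a Peierls/BK count organised dyadically in $|S|$, not merely the absence of one large separating set. With it, Step~3 goes through once ``bad block'' is replaced by ``bad block or good block outside the main component''.

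There are three smaller issues. First, the enlarged boxes of blocks at $\partial\Lambda_n$ leave $\Lambda_n$, where $\omega$ is frozen to $\xi$. Your scheme therefore only controls the region at distance $\gtrsim L$ from the boundary. Property (i), that $\Giant_{\Lambda_n}$ touches every face of $\partial_v\Lambda_n$ even for free $\xi$, needs a separate boundary-layer argument.

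Second, in (c), the mixing of \cite{duminil2020exponential} compares laws on sub-boxes at positive distance from $\partial B_i'$, whereas the crossing cluster depends on all of $\omega|_{B_i'}$, so you must localise first. You can in fact avoid mixing altogether, since (ii) needs only the lower bound:
\begin{itemize}
\item $\phi^\xi_{B_i'}$ dominates a product of free measures on mesoscopic sub-boxes. Hence the number of $x\in B_i$ with $x\cc\partial\Lambda_\ell(x)$ exceeds $(\theta-\varepsilon/4)|B_i|$ with high probability, exactly as for the variables $H_B$ in \Cref{lemma:fraction_connects_to_supergiant}.
\item All but an expected $O(e^{-c\ell})$-fraction of these points lie in the crossing cluster. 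This uses unique crossing of $\Lambda_\ell(x)\setminus\Lambda_{\ell/2}(x)$ (\Cref{theorem:any_incerasing_event} with \eqref{SMP}). It also uses $\Lambda_{\ell/2}(x)\cc\partial B_i'$ with probability $1-e^{-c\ell}$, obtained from many disjoint slabs via \Cref{Long-range-slab}.
\end{itemize}
This also spares you from checking that the mixing input does not itself rest on surface-order estimates.

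Third, the block field is not finite-range dependent under FK. What you actually have, and what suffices for Liggett--Schonmann--Stacey-type domination, is the bound conditional on $\omega$ outside $B_i'$.
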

Henceforth, we will fix $\theta=\phi_{\mathbb{Z}^d,p}[0\cc\infty]$ and abbreviate $\mathtt{Pis}_{\Lambda_n}=\mathtt{Pis}_{\Lambda_n}(2^{-d}\theta,\theta, L_0)$. One may note that the event $\mathtt{Pis}$ is, regrettably, non-increasing. This often causes technical difficulties when working with supercritical percolation models in higher dimension.

Let $\mathtt{Pis}_{r,R}$ denote the event that $\omega|_{B}\in \mathtt{Pis}_B$ for every translate $B$ of $\Lambda_{\frac{R-r}{2}}$ contained in $\Lambda_R\setminus \Lambda_r$ (there are $O(R^d)$ many such).  On $\mathtt{Pis}_{r,R},$ there is a unique giant cluster in $\Lambda_R\setminus \Lambda_r,$ which we refer to as $\mathtt{Giant}_{r,R}.$
We also let $\mathtt{UC}_{r,R}$ denote the event that there is a unique crossing cluster in the annulus $\Lambda_R \setminus \Lambda_r$. The following identity extends \Cref{lemma:FAevent_and_unique_crossings} to the case where $\abs{A}$ is odd.
\begin{lemma}\label{lemma:G_A_decoupling_lemma}
    For $A \subseteq \Lambda_r$, with $\abs{A}$ odd, the following localization identity holds
    \begin{align*}
        \mathcal{G}_A \cap \mathtt{UC}_{r,R} \cap \Pis_{r,R} \cap \{ \Giant_{r,R} \cc \infty \} =
        \mathcal{G}^{R}_A \cap \mathtt{UC}_{r,R} \cap \Pis_{r,R} \cap \{ \Giant_{r,R} \cc \infty \}. 
    \end{align*}
\end{lemma}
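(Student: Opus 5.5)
The identity says that, on the event $E:=\mathtt{UC}_{r,R}\cap\Pis_{r,R}\cap\{\Giant_{r,R}\cc\infty\}$, we have $\mathcal{G}_A=\mathcal{G}^R_A$. I will prove the two inclusions separately, working on $E$ throughout. The general idea is the same as in \Cref{lemma:FAevent_and_unique_crossings}: pair up sources and connect each pair by paths. The new feature is that one source is left unpaired, and it must be routed through the unique crossing cluster, which is also the giant. That giant either reaches $\partial_v\Lambda_R$ (for $\mathcal{G}^R_A$) or reaches $\infty$ (for $\mathcal{G}_A$).

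\emph{Step 1: identify the crossing cluster with the giant.} By item $i)$ of \Cref{def:Pisztora_event}, applied to translates of $\Lambda_{(R-r)/2}$ tiling the annulus, on $\Pis_{r,R}$ the cluster $\Giant_{r,R}$ crosses $\Lambda_R\setminus\Lambda_r$ from inner to outer boundary. On $\mathtt{UC}_{r,R}$ it is therefore the unique crossing cluster. Every other cluster of $\omega|_{\Lambda_R\setminus\Lambda_r}$ that meets $\partial_v\Lambda_r$ stays inside $\Lambda_R$, or more precisely does not reach $\partial_v\Lambda_R$.

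\emph{Step 2: show $\mathcal{G}_A\cap E\subseteq\mathcal{G}^R_A$.} Take $\eta\subseteq\omega$ with $\partial\eta=A$. Since $|A|$ is odd and the graph is locally finite, the component decomposition of $\eta$ contains an odd number of ``infinite ends''. Concretely, restrict attention to the clusters of $\omega$ that meet $A$.
\begin{itemize}
\item If such a cluster $\mathcal{C}$ satisfies $|\mathcal{C}\cap A|$ even, pair up its sources.
\item If $|\mathcal{C}\cap A|$ is odd, the part of $\eta$ inside $\mathcal{C}$ has an odd number of sources, so $\mathcal{C}$ must be infinite. It therefore crosses the annulus, and by Step 1 it contains $\Giant_{r,R}$. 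The number of such odd clusters is odd, since the sum over clusters of $|\mathcal{C}\cap A|$ equals $|A|$. By uniqueness of the crossing cluster, all of them coincide, so there is exactly one.
\end{itemize}
For each pair $(v,w)$ in the same cluster, I choose a path $\gamma_{\mathfrak p}$ in $\omega|_{\Lambda_R}$. This works as in \Cref{lemma:FAevent_and_unique_crossings}:
\begin{itemize}
\item If the cluster does not cross, the whole cluster lies inside $\Lambda_R$. More carefully, any path leaving $\Lambda_R$ must cross the annulus, so a path that never crosses stays in $\Lambda_R$.
\item If the cluster crosses, the two excursions into the annulus from $v$ and from $w$ both join the unique crossing cluster inside the annulus, so $v$ and $w$ connect within $\Lambda_R$.
\end{itemize}
For the single unpaired source $a$ in the giant cluster, I take a path in $\omega|_{\Lambda_R}$ from $a$ to the giant and then along the giant to a vertex $b\in\partial_v\Lambda_R$. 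Then $\eta':=\Delta_{\mathfrak p}\gamma_{\mathfrak p}\,\Delta\,\gamma_{a}$ satisfies $\partial\eta'=A\cup\{b\}$ and $\eta'\subseteq\omega|_{\Lambda_R}$, so $\omega\in\mathcal{G}^R_A$.

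\emph{Step 3: show $\mathcal{G}^R_A\cap E\subseteq\mathcal{G}_A$.} Take $\eta\subseteq\omega|_{\Lambda_R}$ with $\partial\eta=A\cup B$, where $|B|$ is odd. I first redo the pairing inside $\Lambda_R$: the clusters of $\omega|_{\Lambda_R}$ meeting $A$ with an odd count must also meet $B\subseteq\partial_v\Lambda_R$. They therefore cross the annulus, so there is exactly one of them, and it is the giant. Pairing as before leaves one source $a$ connected within $\Lambda_R$ to $\Giant_{r,R}$. Since $\Giant_{r,R}\cc\infty$, there is an infinite self-avoiding path from $a$ in $\omega$. Taking the symmetric difference of the finite pairing paths with this infinite path gives a subgraph $\eta''\subseteq\omega$ with $\partial\eta''=A$, which is a locally finite symmetric difference. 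Hence $\omega\in\mathcal{G}_A$.

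\emph{Main obstacle.} The delicate point is the parity bookkeeping for infinite $\eta$ in Step 2. I need to justify that a cluster with an odd number of sources of $\eta$ must be infinite and must cross the annulus, and also to handle sources sitting outside $\Lambda_r$. The statement assumes $A\subseteq\Lambda_r$, so the latter is harmless. For the former, I would argue via the finite cluster: if $\mathcal{C}$ were finite, then $\eta|_{\mathcal{C}}$ would be a finite graph with an odd number of odd-degree vertices, which is a contradiction. An infinite cluster containing a point of $\Lambda_r$ must cross $\Lambda_R\setminus\Lambda_r$.
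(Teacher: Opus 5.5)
Your proof is correct. Step 3 is essentially the paper's argument with a small twist. The paper keeps $\eta_0$ and cancels the boundary sources: it pairs the points of $B$ off the crossing cluster by paths inside the annulus, pairs all but one point $b_0$ of $B\cap\Giant_{r,R}$, and attaches an infinite open path at $b_0$. You instead keep only the parity information that $\eta_0$ provides, pair the $A$-sources inside clusters of $\omega|_{\Lambda_R}$, and attach the infinite path at the leftover $a\in A$. Your version avoids having to check that the $B$-points off the crossing cluster come in even numbers in each annulus cluster, which the paper leaves implicit. Both versions rely on identifying the unique crossing cluster with $\Giant_{r,R}$, and you make this explicit in Step 1.

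Where you genuinely diverge is Step 2, and there the paper's route is much shorter: it uses the unconditional inclusion $\mathcal{G}_A\subseteq\mathcal{G}^R_A$. If $\eta\subseteq\omega$ has $\partial\eta=A$, then $\eta\cap E(\Lambda_R)$ has the same degree parity as $\eta$ at every vertex of $\Lambda_R\setminus\partial_v\Lambda_R$. Since $A\subseteq\Lambda_r$ misses $\partial_v\Lambda_R$, its source set is $A\cup B$ with $B\subseteq\partial_v\Lambda_R$, and $|B|$ is odd by the handshake lemma. So the ``main obstacle'' you flag never arises, and the events $\mathtt{UC}_{r,R}$, $\Pis_{r,R}$, $\{\Giant_{r,R}\cc\infty\}$ are needed only for the reverse inclusion. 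Your cluster-by-cluster construction in Step 2 is valid, but it rebuilds from scratch what restriction gives directly. The opening heuristic about an ``odd number of infinite ends'' is not meaningful in general, since $\eta$ may well have infinitely many ends, but your actual argument does not use it.
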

\begin{proof}
  One inclusion follows from  $\mathcal{G}_A \subseteq \mathcal{G}_A^{R}$. 
  For the other inclusion, let $\omega$ be contained in the right hand side, and suppose that $\eta_0 \subseteq \omega\vert_{\Lambda_{R}}$, is such that $\partial \eta_0 = A \cup B$, with $B \subset \partial \Lambda_{R}$  (with $\abs{B}$ odd). All the $b \in B$ which are not connected to the unique crossing cluster inside $\omega\vert_{\Lambda_{R}\setminus \Lambda_r}$ can be paired (using paths inside $\omega\vert_{\Lambda_{R}\setminus \Lambda_r}$), denote the symmetric difference of these paths $\gamma_0$. All the points in $B \cap \Giant_{r,R}$ except for one (call it $b_0$) can also be paired. Call the symmetric difference of all these paths $\gamma_1$.  
  Since $b_0 \in \Giant_{r,R}$ and $\Giant_{r,R} \cc \infty$, one can take an open self-avoiding path $\gamma_2$ from $b_0 $ to $\infty$, then $\partial \gamma_2 = \{b_0\}$.
  Then $\eta =\eta_0 \triangle \gamma_0 \triangle \gamma_1 \triangle \gamma_2$ is open in $\omega$ and satisfies that $\partial \eta = A$, hence $\omega \in \mathcal{G}_A$. 
\end{proof}

Now, we are in position to prove exponential ratio weak mixing. The strategy is as follows: First, we establish that $\mathcal{G}_A$ has positive probability uniformly  in possible boundary conditions. This, again, allows us to inherit all the high probability events from the unconditioned measure. Using this, we argue that, due to the presence of giant clusters, uniformly in boundary conditions, the marginal on a big annulus is insensitive to the conditioning on $\mathcal{G}_A$ and thus, looks like the unconditioned infinite volume measure. Once this has been established, another appeal to the giant cluster allows one to decouple a box around the origin from boundary conditions at macroscopic distance.  The strategy is illustrated in \Cref{fig:Mixing_strat}.

\begin{figure}
    \centering
\begingroup%
  \makeatletter%
  \providecommand\color[2][]{%
    \errmessage{(Inkscape) Color is used for the text in Inkscape, but the package 'color.sty' is not loaded}%
    \renewcommand\color[2][]{}%
  }%
  \providecommand\transparent[1]{%
    \errmessage{(Inkscape) Transparency is used (non-zero) for the text in Inkscape, but the package 'transparent.sty' is not loaded}%
    \renewcommand\transparent[1]{}%
  }%
  \providecommand\rotatebox[2]{#2}%
  \newcommand*\fsize{\dimexpr\f@size pt\relax}%
  \newcommand*\lineheight[1]{\fontsize{\fsize}{#1\fsize}\selectfont}%
  \ifx\svgwidth\undefined%
    \setlength{\unitlength}{530.07874016bp}%
    \ifx\svgscale\undefined%
      \relax%
    \else%
      \setlength{\unitlength}{\unitlength * \real{\svgscale}}%
    \fi%
  \else%
    \setlength{\unitlength}{\svgwidth}%
  \fi%
  \global\let\svgwidth\undefined%
  \global\let\svgscale\undefined%
  \makeatother%
  \begin{picture}(1,0.34759358)%
    \lineheight{1}%
    \setlength\tabcolsep{0pt}%
    \put(0,0){\includegraphics[width=\unitlength,page=1]{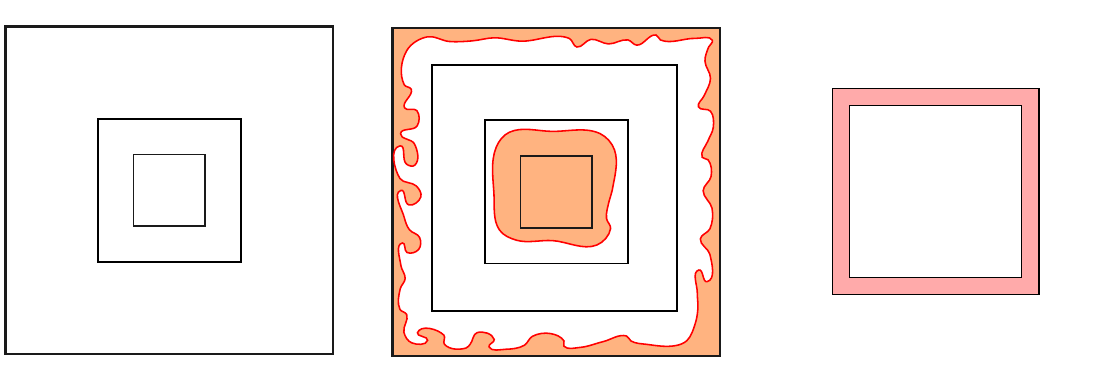}}%
    \put(0.02720205,0.06206929){\makebox(0,0)[lt]{\lineheight{1.25}\smash{\begin{tabular}[t]{l}$\Lambda_{8N}$\end{tabular}}}}%
    \put(0.17546408,0.12031663){\makebox(0,0)[lt]{\lineheight{1.25}\smash{\begin{tabular}[t]{l}$\Lambda_k$\end{tabular}}}}%
    \put(0.14492314,0.17361987){\makebox(0,0)[lt]{\lineheight{1.25}\smash{\begin{tabular}[t]{l}$A$\end{tabular}}}}%
    \put(0,0){\includegraphics[width=\unitlength,page=2]{Decoupling_pic.pdf}}%
    \put(0.11627545,0.33041318){\makebox(0,0)[lt]{\lineheight{1.25}\smash{\begin{tabular}[t]{l}$\xi$\end{tabular}}}}%
    \put(0,0){\includegraphics[width=\unitlength,page=3]{Decoupling_pic.pdf}}%
    \put(0.83895695,0.17258278){\makebox(0,0)[lt]{\lineheight{1.25}\smash{\begin{tabular}[t]{l}$A$\end{tabular}}}}%
    \put(0,0){\includegraphics[width=\unitlength,page=4]{Decoupling_pic.pdf}}%
    \put(0.81030926,0.3293761){\makebox(0,0)[lt]{\lineheight{1.25}\smash{\begin{tabular}[t]{l}$\xi$\end{tabular}}}}%
  \end{picture}%
\endgroup%

    \caption{The rough strategy for proving mixing conditioned on $\mathcal{G}_A$. Since, uniformly in the boundary condition $\xi,$ giants exist and connect to each other with good probability (depicted in the centre), the marginal of the conditioned measure on the smaller pink annulus, depicted right, looks like the marginal from the unconditioned measure, which, in turn, looks like the marginal of the infinite volume, unconditioned measure.}
    \label{fig:Mixing_strat}
\end{figure}

\begin{theorem} \label{Limit_with_sources}
For $d\geq 2,$ $p>p_c,$ the measure $\phi_{\mathbb{Z}^d,p}[\;\cdot\mid  \mathcal{G}_{A}]$ is exponentially ratio weak mixing.
\end{theorem}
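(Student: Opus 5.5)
Since $\phi_{\mathbb{Z}^d,p}$ is exponentially ratio weak mixing \cite{duminil2020exponential}, it suffices to show that conditioning on $\mathcal{G}_A$ is invisible on a large annulus, uniformly in the outside configuration. The plan follows the strategy sketched in \Cref{fig:Mixing_strat}. When $|A|$ is even, $\mathcal{G}_A=\bigcup_n\mathcal{G}^n_A$. Then $\phi_{\mathbb{Z}^d,p}[\mathcal{G}_A]>0$, and for $F'$ far away the events $\mathcal{G}^n_A$, combined with \Cref{lemma:FAevent_and_unique_crossings} and \Cref{theorem:any_incerasing_event}, reduce to events localised near $A$ up to an exponentially small error. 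So the main case is $|A|$ odd. Fix $k$ with $A\subseteq\Lambda_k$, and take $F\in\mathcal{A}_{\Lambda_n}$, $F'\in\mathcal{A}_{\mathbb{Z}^d\setminus\Lambda_{Kn}}$ with $n\gg k$.

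\textbf{Step 1 (uniform positivity).} We show that $\phi^{\xi}_{\Lambda_N,p}[\mathcal{G}^N_A]\geq c_A>0$ uniformly in $N\geq 2k$ and in $\xi$, and likewise for the infinite-volume event $\mathcal{G}_A$. This follows from \eqref{FKG}, \eqref{eq_CBC} and finite energy. Open all edges of $\Lambda_k$, which costs a constant depending only on $k$, and intersect with the event that $\Lambda_k$ is connected to $\partial_v\Lambda_N$ or to $\infty$. That event has probability bounded below by $\theta$ (or by its finite-volume analogue) under the free measure. On this intersection the odd number of sources can be paired inside $\Lambda_k$, leaving one source that is routed to the boundary or to infinity, so $\mathcal{G}^N_A$ (respectively $\mathcal{G}_A$) holds. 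Consequently, any event $E$ with $\phi^\xi[E^c]\leq e^{-cn}$ satisfies $\phi^\xi[E^c\mid\mathcal{G}_A]\leq c_A^{-1}e^{-cn}$.

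\textbf{Step 2 (localisation).} Take $r=2n$ and $R=Kn/2$. By \Cref{thm:pisztora} together with a union bound over the $O(R^d)$ sub-boxes, by \Cref{theorem:any_incerasing_event} applied to $\mathtt{UC}_{r,R}$ (\Cref{Pt-to-pt-decay} works on general annuli), and by the infinite-volume estimate $\phi[\Giant_{r,R}\centernot\cc\infty]\leq e^{-cn}$ (which comes from Pisztora boxes chained outward), the event
\[
E=\mathtt{UC}_{r,R}\cap\Pis_{r,R}\cap\{\Giant_{r,R}\cc\infty\}
\]
satisfies $\phi[E^c]\leq e^{-cn}$. \Cref{lemma:G_A_decoupling_lemma} then gives $\mathcal{G}_A\cap E=\mathcal{G}^R_A\cap E$. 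The event $\mathcal{G}^R_A$ is $\Lambda_R$-measurable. Moreover, $\mathcal{G}^R_A\cap E$ should reduce to a condition on $\omega|_{\Lambda_r}$ together with the annulus event: either the cluster structure inside $\Lambda_r$ already pairs $A$ up to one source that exits to $\partial\Lambda_r$, or it does not. Given $\mathtt{UC}$, the crossing cluster collects all exits. The event $\{\Giant\cc\infty\}$ is the only part that is not local, and Step 1 handles it as a small-probability error.

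\textbf{Step 3 (decoupling).} Write
\[
\phi[F\cap F'\cap\mathcal{G}_A]=\phi[F\cap F'\cap\mathcal{G}^R_A\cap E]+O\bigl(e^{-cn}\phi[F']\bigr).
\]
To obtain this error in ratio form, condition on $\omega|_{\mathbb{Z}^d\setminus\Lambda_R}$ and use the uniform bounds of Step 1 and Step 2 with boundary condition $\xi=\omega|_{\Lambda_R^c}$. Then apply ratio weak mixing of $\phi_{\mathbb{Z}^d,p}$ to the pair consisting of the $\Lambda_R$-measurable event $F\cap\mathcal{G}^R_A\cap E_{\mathrm{loc}}$ and the event $F'$ at distance $Kn\gg R$. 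Undoing the localisation gives
\[
\phi[F\cap F'\mid\mathcal{G}_A]=\phi[F\mid\mathcal{G}_A]\,\phi[F'\mid\mathcal{G}_A]\bigl(1+O(e^{-cn})\bigr),
\]
which is the claim. Alternatively, one can first prove weak mixing, i.e.\ with additive errors proportional to $\phi[F']$ as in the proof of \Cref{thm:Unique_Loop}, and then invoke Alexander's theorem \cite[Theorem 3.3]{alexander1998weak}. That would require checking that $\phi[\cdot\mid\mathcal{G}_A]$ has the Markov-type structure the theorem needs away from $A$, which should follow from the localisation in Step 2.

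\textbf{Main obstacle.} The hard step is making the errors multiplicative, i.e.\ proportional to $\phi[F']$ when $F'$ may be very unlikely. This requires Steps 1 and 2 to hold conditionally on arbitrary exterior configurations, and in particular requires control of $\{\Giant\cc\infty\}$ given the outside. For that I would replace $\{\Giant_{r,R}\cc\infty\}$ by the event that the giant reaches $\partial\Lambda_R$, and use that every exterior $\xi$ either supplies an infinite cluster touching $\partial\Lambda_R$ or is itself conditioned. For $d=2$, I would instead use open circuits via duality, as in the proof of \Cref{theorem:any_incerasing_event}.
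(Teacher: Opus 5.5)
There is a genuine gap in the odd case. It sits where you pass from $\phi[F\cap F'\cap\mathcal{G}^R_A\cap E]$ to an expression to which ratio mixing of $\phi_{\mathbb{Z}^d,p}$ applies. Your even case, Step 1 and the use of \Cref{lemma:G_A_decoupling_lemma} are in the spirit of the paper.

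You claim that $\{\Giant_{r,R}\cc\infty\}$ is ``the only non-local part'' and that Step 1 turns it into a small-probability error, so that $E$ can be replaced by $E_{\mathrm{loc}}$. This fails. A mixing estimate must hold given arbitrary exterior events. Given the exterior, $\{\Giant_{r,R}\cc\infty\}$ is the event that the giant reaches the particular vertices of $\partial_v\Lambda_R$ that are joined to infinity outside, and this need not be likely.

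For instance, let $F'$ close a separating shell outside $\Lambda_{Kn}$ except for one edge. Then $\phi[\Giant_{r,R}\cc\infty\mid F']$ is bounded away from $1$. So $\phi[F\cap F'\cap\mathcal{G}_A]$ and $\phi[F\cap F'\cap\mathcal{G}^R_A\cap E_{\mathrm{loc}}]$ differ by a factor bounded away from $1$, not by $1+O(e^{-cn})$. Showing that this factor is (nearly) independent of $F$ is exactly what has to be proved, because this event is how $\mathcal{G}_A$ couples $\Lambda_n$ to $F'$.

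Your proposed fix, replacing the event by ``the giant reaches $\partial\Lambda_R$'', does not help. What matters is which boundary points are reached, and that constraint distorts $\omega|_{\Lambda_R}$ in an exterior-dependent way. Relatedly, the uniform bound of Step 1 for the infinite-volume event only holds for $\xi\in(\partial\Lambda_N\cc\infty)$. It comes from point-to-point connection bounds via slab percolation, not from $\theta$.

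The missing idea is a buffer that absorbs this constraint. The paper uses two annuli, each with its own giant.
\begin{itemize}
\item The inner annulus $\Lambda_{2N}\setminus\Lambda_N$ localises $\mathcal{G}_A$ to $\mathcal{G}^{2N}_A$.
\item The outer annulus $\Lambda_{8N}\setminus\Lambda_{6N}$ carries the connection to infinity as $\{\Giant_{6N,8N}\overset{\xi}{\cc}\infty\}$. This event is measurable with respect to the outer annulus and the exterior.
\item The configuration $\xi_0$ on these two annuli is sampled from the conditioned measure. It is never claimed to look unconditioned.
\end{itemize}
Given a good $\xi_0$, the conditioned law on $\Lambda_{6N}\setminus\Lambda_{2N}$ is $\phi^{\xi_0}_{\Lambda_{6N}\setminus\Lambda_{2N},p}[\;\cdot\mid\Giant_{N,2N}\cc\Giant_{6N,8N}]$, by \eqref{eq:G_Aisconditionedonconnected_giants}. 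This conditioning event has probability at least $1-2e^{-CN}$, except on a set of $\xi_0$ that is exponentially small under the conditioned measure by uniform positivity.

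Ratio mixing of $\phi_{\mathbb{Z}^d,p}$ then gives a total-variation bound on $\Lambda_{5N}\setminus\Lambda_{3N}$ that is uniform over all $\xi\in(\partial\Lambda_{8N}\cc\infty)$. Unique crossings there transfer this inward, which gives weak mixing. Alexander's theorem then upgrades weak mixing to ratio weak mixing. Your ``alternatively'' route via Alexander matches this last step, but its weak-mixing input still needs the two-giant decoupling.
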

\begin{proof}
We give the proof in the case $\abs{A}$ odd. For $\abs{A}$ even similar considerations yield a simplified proof. 

We start by arguing that there exists $c>0$ such that for any $N\geq 2k$ and any $\xi\in (\partial \Lambda_{8N}\cc \infty),$ we have that
\begin{align} \label{eq:GAprob_bound}
\phi^{\xi}_{\Lambda_{8N},p}[\omega^{\xi}\in\mathcal{G}_A]\geq c^{|A|}.
\end{align} 
This follows from \eqref{FKG} and \eqref{SMP},  since $\phi^0_{\Lambda_{8N}\setminus \Lambda_k,p}[v\cc w]\geq c'$ uniformly in $n \geq k$ and $v,w\in \Lambda_{2n}\setminus \Lambda_k$. For $d\geq 3,$ this may be obtained by repeated application of percolation in slabs \Cref{Long-range-slab}, \eqref{eq_CBC} and \eqref{FKG}. If $d=2,$ one gets it by the FKG inequality and repeated application of the fact that $1-\phi^0_{\Lambda_N,p}[\mathtt{Circ}_N]\leq \exp(-CN)$, as in the proof of \Cref{Pt-to-pt-decay}.

For $d\geq 3$, we will first argue that the marginal on a large subannulus of $\Lambda_{8N}\setminus \Lambda_N$ is insensitive to both the conditioning on $\mathcal{G}_A$ and on the exterior configuration $\xi.$ To this end, let $\boldsymbol{P}$ denote the following coupling: Sample $\xi_0$ according to marginal of $\phi^{\xi}_{\Lambda_{8N},p}[\;\cdot\mid \omega^{\xi}\in \mathcal{G}_A]$ on $(\Lambda_{2N}\setminus \Lambda_N)\cup(\Lambda_{8N}\setminus \Lambda_{6N})$ and given $\xi_0,$ sample $\psi\sim \phi_{\Lambda_{6N}\setminus \Lambda_{2N},p}[\omega|_{\Lambda_{5N}\setminus\Lambda_{3N}}\in \cdot\; \mid \xi_0],$ where we abuse notation and write $\xiA$ for the event $\{ \omega\vert_{\Lambda_{2N}\setminus \Lambda_{N} \cup \Lambda_{8N}\setminus \Lambda_{6N}}=\xiA\}.$  Note that this conditional distribution is equal to $\phi^{\xi_0}_{\Lambda_{6N}\setminus \Lambda_{2N},p}[\omega|_{\Lambda_{5N}\setminus\Lambda_{3N}}\in \cdot \;]$ for $\xi_0\in \mathtt{UC}_{6N,8N}\cap \mathtt{UC}_{N,2N}$. Given $\xi_0$ and $\psi,$ sample $$
\psi_A\sim\phi^{\xi}_{\Lambda_{8N},p}[\omega|_{\Lambda_{5N}\setminus \Lambda_{3N}}\in  \cdot\; \mid \xi_0,\omega^{\xi}\in \mathcal{G}_A] \qquad \mathrm{and} \qquad \psi_{\infty}\sim \phi_{\mathbb{Z}^d,p}[\omega|_{\Lambda_{5N}\setminus\Lambda_{3N}}\in \cdot\;]
$$ conditionally independently subject to maximising the probabilities that $\psi_A=\psi$ and $\psi_{\infty}=\psi$. Note that by the Law of Total Probability, $\psi_{\infty}\sim \phi_{\mathbb{Z}^d,p}[\omega|_{\Lambda_{5N}\setminus \Lambda_{3N}}\in \cdot\;] $ and $\psi_A\sim \phi^{\xi}_{\Lambda_{8N},p}[\omega|_{\Lambda_{5N}\setminus \Lambda_{3N}}\in \cdot\mid \omega^{\xi}\in \mathcal{G}_A]$.

Let $\mathtt{Bad}$ denote the event that either $\xi_0\not\in \mathtt{Pis}_{6N, 8N}\cap \mathtt{UC}_{6N,8N}\cap \mathtt{Pis}_{N,2N}\cap \mathtt{UC}_{N,2N}$ or that the giants fail to satisfy $\phi_{\Lambda_{6N}\setminus \Lambda_{2N},p}^{\xiA}[\mathtt{Giant}_{N,2N}(\xiA) \cc \mathtt{Giant}_{6N,8N}(\xiA)]\geq 1-2\exp(-C'N)$ for a constant $C'>0$ to be determined. We will argue the existence of a constant $C>0$ satisfying the two following inequalities:
\begin{align}
\boldsymbol{P}[\xi_0\in \mathtt{Bad}] &\leq \exp(-CN) \label{eq:Bad_is_Unlikely}\\
\boldsymbol{P}[\psi_A=\psi=\psi_{\infty}\mid \xi_0] &\geq 1-\exp(-CN) \qquad \forall \xi_0\not\in\mathtt{Bad}, \label{eq:Not_Bad_is_Good}
\end{align}
from which it follows by a union bound that
\begin{align}
&\frac{1}{2}d_{TV}(\phi^{\xi}_{\Lambda_{8N},p}[ \omega|_{\Lambda_{5N}\setminus \Lambda_{3N}}\in \cdot\mid \omega ^{\xi}\in \mathcal{G}_A], \phi_{\mathbb{Z}^d,p}[\omega|_{\Lambda_{5N}\setminus \Lambda_{3N}} \in \cdot \;])
\leq  \boldsymbol{P}[\psi_A\neq\psi_{\infty}]\leq 2\exp(-CN).\label{eq:annulus_marginals_close_to_unconditioned}  
\end{align}

To estimate the probability of $\mathtt{Bad},$ first note that
\begin{align}\label{eq:high_probability_events_proof}
\phi^{\xi}_{\Lambda_{8N},p}[\mathtt{Pis}_{6N, 8N}, \mathtt{UC}_{6N,8N}, \mathtt{Pis}_{N,2N},\mathtt{UC}_{N,2N}\mid \omega^{\xi}\in\mathcal{G}_A]\geq 1- \exp(-C'N)
\end{align}
uniformly in $\xi$ by \eqref{eq:GAprob_bound}.

Two applications of \Cref{lemma:G_A_decoupling_lemma} show that the crossing giants localise the infinite volume event $\mathcal{G}_A$: 
\begin{align*}
    \mathcal{G}_A \cap \mathtt{UC}_{N,2N} \cap \Pis_{N,2N} \cap \mathtt{UC}_{6N,8N}\cap \Pis_{6N,8N} \cap \{\Giant_{N,2N} \cc \Giant_{6N,8N}\} \cap \{\Giant_{6N,8N} \cc \infty \}\\
    =   \mathcal{G}^{2N}_A \cap \mathtt{UC}_{N,2N} \cap \Pis_{N,2N} \cap \mathtt{UC}_{6N,8N}\cap \Pis_{6N,8N} \cap \{\Giant_{N,2N} \cc \Giant_{6N,8N}\} \cap \{\Giant_{6N,8N}\cc \infty \}.
\end{align*}

Thus, if $\xiA$ is a configuration in the annuli $\Lambda_{2N}\setminus \Lambda_{N}$ and $\Lambda_{8N}\setminus \Lambda_{6N}$, such that $\xiA\in\mathtt{Pis}_{6N, 8N}\cap \mathtt{UC}_{6N,8N}\cap \mathtt{Pis}_{N,2N} \cap\mathtt{UC}_{N,2N}$, then 
\begin{align*}
\xiA \cap \{ \omega^\xi \in \mathcal{G}_A\} =   \xiA \cap \mathcal{G}_A^{2N} \cap \{\mathtt{Giant}_{N,2N}(\xiA)\cc \mathtt{Giant}_{6N,8N}(\xiA) \} \cap \{\Giant_{6N,8N}(\xiA) \overset{\xi}{\cc} \infty \}.
\end{align*}

So for each $\xi_0 \in\mathtt{Pis}_{6N, 8N}\cap \mathtt{UC}_{6N,8N}\cap \mathtt{Pis}_{N,2N} \cap\mathtt{UC}_{N,2N}$, 
\begin{align}
\label{eq:G_Aisconditionedonconnected_giants}
\phi^{\xi}_{\Lambda_{8N},p}[\; \omega\vert_{\Lambda_{6N}\setminus \Lambda_{2N}} \in \cdot \mid \xiA,\omega^{\xi}\in \mathcal{G}_A]=\phi_{\Lambda_{6N}\setminus \Lambda_{2N},p}^{\xiA}[\; \cdot \mid \mathtt{Giant}_{N,2N}(\xiA)\cc \mathtt{Giant}_{6N,8N}(\xiA)].
\end{align}

Since, by the law of total probability,
\begin{align*}
 1-\exp(-CN)&\leq 
\phi^{\xi}_{\Lambda_{8N},p}[\mathtt{Giant}_{N,2N}\cc \mathtt{Giant}_{6N,8N} ]\\
&=\sum_{\xiA} \phi_{\Lambda_{6N}\setminus \Lambda_{2N},p}^{\xiA}[\mathtt{Giant}_{N,2N}(\xiA) \cc \mathtt{Giant}_{6N,8N}(\xiA)] \phi^{\xi}_{\Lambda_{8N},p}[\xiA]
\end{align*}
and $\phi_{\Lambda_{6N}\setminus \Lambda_{2N},p}^{\xiA}[ \mathtt{Giant}_{N,2N}(\xiA) \cc \mathtt{Giant}_{6N,8N}(\xiA)]\leq 1$, the set 
\begin{align*} 
   \left \{ \xiA  \mid \phi_{\Lambda_{6N}\setminus \Lambda_{2N},p}^{\xiA}[\mathtt{Giant}_{N,2N}(\xiA) \cc \mathtt{Giant}_{6N,8N}(\xiA)]\geq 1-2\exp(-CN) \right \}
\end{align*}must have probability at least $1-2\exp(-CN)$ under $\phi^{\xi}_{\Lambda_{8N,p}}$ (and this determines the constant in the definition of $\mathtt{Bad}$).
It follows by \eqref{eq:GAprob_bound} that the probability is at least $1-2c^{-|A|}\exp(-CN)$ under $\phi ^{\xi}_{\Lambda_{8N,p}}[\;\cdot\mid \omega^{\xi}\in\mathcal{G}_A].$  Combining this with \eqref{eq:high_probability_events_proof} and \eqref{eq:G_Aisconditionedonconnected_giants}  yields \eqref{eq:Bad_is_Unlikely}.

Furthermore, \eqref{eq:G_Aisconditionedonconnected_giants} gives us that for $\xiA\not \in \mathtt{Bad},$ we have
\begin{align}\label{eq:set_of_good_xi}
d_{TV}(\phi_{\Lambda_{6N}\setminus \Lambda_{2N},p}^{\xiA}[\; \cdot \mid  \mathtt{Giant}_{N,2N}(\xiA) \cc \mathtt{Giant}_{6N,8N}(\xiA)],\phi_{\Lambda_{6N}\setminus \Lambda_{2N},p}^{\xiA}[\; \cdot\;])\leq 4\exp(-CN).
\end{align}

Exponential ratio mixing of the supercritical random-cluster model gives  
\begin{align}\label{eq:exp_ratio_boundary_proof}
    d_{TV}(\phi_{\Lambda_{6N}\setminus \Lambda_{2N},p}^{\xiA}[\omega|_{\Lambda_{5N}\setminus \Lambda_{3N}}\in \cdot \;],\phi_{\mathbb{Z}^d,p}[\omega|_{\Lambda_{5N}\setminus \Lambda_{3N}}\in \cdot \;])\leq \exp(-C'N), 
\end{align}
which, combined with \eqref{eq:G_Aisconditionedonconnected_giants} and \eqref{eq:set_of_good_xi}, yields \eqref{eq:Not_Bad_is_Good}.

Since $\xi$ was arbitrary, for $\xi,\xi'\in (\partial \Lambda_{8N}\cc \infty),$ \eqref{eq:annulus_marginals_close_to_unconditioned} implies the existence of a coupling $\mathscr{P}$ of $\omega_{\xi}\sim \phi^{\xi}_{\Lambda_{8N},p}[\;\cdot\mid \omega^{\xi}\in \mathcal{G}_A],$ $\omega_{\xi'}\sim\phi^{\xi'}_{\Lambda_{8N},p}[\;\cdot\mid \omega^{\xi'}\in \mathcal{G}_A]$ and $\omega\sim\phi_{\mathbb{Z}^d,p}$ such that 
$$
\mathscr{P}[\omega_{\xi}|_{\Lambda_{5N}\setminus \Lambda_{3N}}=\omega|_{\Lambda_{5N}\setminus \Lambda_{3N}}=\omega_{\xi'}|_{\Lambda_{5N}\setminus \Lambda_{3N}}]\geq 1-2\exp(-CN).
$$
Furthermore, if $\omega|_{\Lambda_{5N}\setminus \Lambda_{3N}}\in \mathtt{UC}_{3N,5N},$ then the conditional marginal on $\Lambda_{3N}$ is the same under $\phi^{\xi}_{\Lambda_{8N}}[\;\cdot\mid \omega^{\xi}\in \mathcal{G}_A]$ and $\phi^{\xi'}_{\Lambda_{8N}}[\;\cdot\mid \omega^{\xi'}\in \mathcal{G}_A]$. That is, $\mathscr{P}$ may be chosen such that
$$
\mathscr{P}[\omega_{\xi}|_{\Lambda_{5N}}=\omega_{\xi'}|_{\Lambda_{5N}}]\geq \mathscr{P}[\omega_{\xi}|_{\Lambda_{5N}\setminus \Lambda_{3N}}=\omega_{\xi'}|_{\Lambda_{5N}\setminus \Lambda_{3N}}=\omega|_{\Lambda_{5N}\setminus \Lambda_{3N}}, \omega\in \mathtt{UC}_{3N,5N} ]\geq 1-3\exp(-CN),
$$
where the last inequality is a union bound.

This establishes weak mixing. However, as we have also established Alexander's exponentially bounded controlling regions property (given by the unique crossings), we deduce ratio weak mixing from \cite[Theorem 3.3]{alexander1998weak}.

Finishing the proof for $d= 2$ is similar, except we replace the Pisztora events by events of the form $\mathtt{Circ}_{r,R}$ and define $\mathtt{Giant}_{r,R}$ to be the cluster of the outermost circuit in $\Lambda_R\setminus \Lambda_r$ enclosing $\Lambda_r.$
\end{proof}

With \Cref{Limit_with_sources} in hand, we are ready to prove \Cref{thm:Unique_loop_with_sources}.
In the following, for $x>x_c,$ and $A\subset V(\Z^d)$ finite, we define 
$$
\ell^A_{\mathbb{Z}^d,x}[\;\cdot\;]=\phi_{\mathbb{Z}^d,p}[\operatorname{UG}^A_{\omega}[\;\cdot\;]\mid \mathcal{G}_A].
$$
Our goal is then to prove that this \textit{is}, indeed, equal to the right weak limits.

\begin{proof}[Proof of \Cref{thm:Unique_loop_with_sources}]

For any source set $A_N\subseteq \partial_v \Lambda_N$ with $|A\cup A_N|$ even and any percolation configuration $\xi\in \{0,1\}^{E(\Lambda_N)}\cap \mathtt{UC}_N\cap \mathcal{F}_{A\cup A_N}$,
$$
\{\omega \in \{0,1\}^{E(\Lambda_{N/2})}\mid \omega^{\xi}\in \mathcal{F}_{A\cup A_N}\}=\{\omega \in \{0,1\}^{E(\Lambda_{N/2})}\mid \omega^{\xi}\in \mathcal{G}^N_A\}.
$$
An adaptation of \Cref{lemma:UGA_to_UEG}, shows that for $\omega \in \mathtt{UC}_N$ the marginal of $\operatorname{UG}^{A\cup A_N}_{\omega}$ on $\Lambda_{N/2}$ is equal to the marginal of $\operatorname{UG}^{A,\delta}_{\omega}$ on $\Lambda_{N/2},$ where $\operatorname{UG}_{\omega}^{A,\delta}$ denotes the uniform measure on subgraphs $\eta$ of $\omega$ with $\partial \eta= A\cup B$ for some $B \subseteq \partial_v \Lambda_N$.

 Accordingly, for $F\in  \mathcal{A}_{\Lambda_{N/8}},$  by \eqref{O(1)_as_ueg} and our definition of $\ell^A_{\mathbb{Z}^d,x}$,
\begin{align*}
|\ell^{A\cup A_N}_{\Lambda_N,x}[F]-\ell^A_{\mathbb{Z}^d,x}[F]| &=|\phi^0_{\Lambda_N,p}[\operatorname{UG}^{A\cup A_N}_{\omega}[F]\mid \mathcal{F}_{A\cup A_N}]-\phi_{\mathbb{Z}^d,p}[\operatorname{UG}^A_{\omega}[F]\mid \mathcal{G}_A]| \\
&\leq |\phi^0_{\Lambda_N,p}[\operatorname{UG}^{A\cup A_N}_{\omega}[F]\mid \mathcal{F}_{A\cup A_N} ,\mathtt{UC}_N]-\phi_{\mathbb{Z}^d,p}[\operatorname{UG}^A_{\omega}[F]\mid \mathcal{G}_A]|+1-\phi^0_{\Lambda_N,p}[\mathtt{UC}_N\mid \mathcal{F}_{A\cup A_N}] \\
&\leq  \sup_{\xi\in \mathtt{UC}_N} |\phi^{\xi}_{\Lambda_{N/2},p}[\operatorname{UG}^{A,\delta}_{\omega}[F] \mid \omega^{\xi}\in \mathcal{G}^{N}_A]-\phi_{\mathbb{Z}^d,p}[\operatorname{UG}^A_{\omega}[F]\mid \mathcal{G}_A]| +\exp(-cN),
\end{align*}
where the second inequality is due to \Cref{theorem:any_incerasing_event} and \eqref{SMP}. 

To finish, we once again argue as in the proof of \cite[Theorem 1.3]{hansen2023uniform}. The random variable $\id_{\operatorname{UC}_{N/4}}(\omega) \operatorname{UG}^{A,\delta}_{\omega}[F]$ is positive, bounded from above by 1, and is measurable with respect to $\omega|_{\Lambda_{N/4}}$ (as a random variable on the state space $\mathcal{G}^N_A)$. Furthermore, it is equal to $\id_{\operatorname{UC}_{N/4}}(\omega)\operatorname{UG}^{A}_{\omega}[F]$ supposing that $\omega\in \mathcal{G}_A$.
Hence, applying ratio weak mixing of $\phi_{\mathbb{Z}^d,p}[\;\cdot \mid \mathcal{G}_A],$ we get
\begin{align*}
&\sup_{\xi\in \mathtt{UC}_N} |\phi^{\xi}_{\Lambda_{N/2},p}[\operatorname{UG}^{A,\delta}_{\omega}[F] \mid \omega^{\xi}\in \mathcal{G}^{N}_A]-\phi_{\mathbb{Z}^d,p}[\operatorname{UG}^A_{\omega}[F]\mid \mathcal{G}_A]|\\
\leq& \sup_{\xi \in \mathtt{UC}_N} |\phi^{\xi}_{\Lambda_{N/2},p}[\id_{\operatorname{UC}_{N/4}}\operatorname{UG}^{A,\delta}_{\omega}[F] \mid \omega^{\xi}\in \mathcal{G}^{N}_A]-\phi_{\mathbb{Z}^d,p}[\id_{\operatorname{UC}_{N/4}}\operatorname{UG}^A_{\omega}[F]\mid \mathcal{G}_A]|+C\exp(-cN)\\
\leq &\sup_{\xi\in \mathtt{UC}_N} |\phi^{\xi}_{\Lambda_{N/2},p}[\id_{\operatorname{UC}_{N/4}}\operatorname{UG}^{A,\delta}_{\omega}[F] \mid \omega^{\xi}\in \mathcal{G}^{N}_A]
-\phi_{\mathbb{Z}^d,p}[\id_{\operatorname{UC}_{N/4}}\operatorname{UG}^A_{\omega}[F]\mid \omega|_{\Lambda_N\setminus \Lambda_{N/2}}=\xi|_{\Lambda_N\setminus\Lambda_{N/2}},\mathcal{G}_A]|
\\&+2C\exp(-cN)\\
=&\sup_{\xi\in \mathtt{UC}_N} |\phi^{\xi}_{\Lambda_{N/2},p}[\id_{\operatorname{UC}_{N/4}}\operatorname{UG}^{A,\delta}_{\omega}[F] \mid \omega^{\xi}\in \mathcal{G}^{N}_A]-\phi^{\xi}_{\Lambda_{N/2},p}[\id_{\operatorname{UC}_{N/4}}\operatorname{UG}^{A,\delta}_{\omega}[F]\mid \omega^{\xi}\in\mathcal{G}^N_A]|+2C\exp(-cN) \\
=&2C\exp(-cN).
\end{align*}
The rest of the proof follows from the Markov property, similarly to the proof of \Cref{thm:Unique_Loop}. Again, the exponentially bounded controlling regions property of Alexander's Theorem 3.3 is automatically satisfied.
\end{proof}
The proof of \Cref{thm:Unique_cur_with_sources} given \Cref{thm:Unique_loop_with_sources} is verbatim that of \Cref{thm:Free-mix} and we omit it here.

\section{Adaptation of the results to the \texorpdfstring{$q$}{q}-flow model}\label{sec:qflow}
In this section, we discuss how the arguments presented above apply to the $q$-flow model, which is the analogue of the loop O($1$) model for the random-cluster model with integer cluster weight ($q \neq 2$). The random-cluster model $\phi^{\xi}_{G,p,q}$ with cluster weight $q>0$ and boundary condition $\xi$,  on a finite graph $G=(V,E),$ as the percolation model with weights
$$
\phi^{\xi}_{G,p,q}[\omega]\propto q^{\kappa^{\xi}(\omega)}\left( \frac{p}{1-p}\right)^{|\omega|}.
$$
Note that $\phi^{\xi}_{G,p,1}=\mathbb{P}_{G,p}$ and $\phi^{\xi}_{G,p,2}=\phi^{\xi}_{G,p}$. Similar to before, there exist infinite volume limits $\phi^1_{\mathbb{Z}^d,p,q}=\lim_{G_n\nearrow \mathbb{Z}^d} \phi^1_{G_n,p,q}$ and $\phi^0_{\mathbb{Z}^d,p,q}=\lim_{G_n\nearrow \mathbb{Z}^d} \phi^0_{G_n,p,q},$ but it remains open whether these limits are equal in general\footnote{And, indeed, Pirogov-Sinai theory \cite{Pirogov-Sinai-FK} gives that they will differ at $p_c$ when $q$ is large enough.}. By \cite{Convexity_of_pressure}, it is known\footnote{And elementary arguments give equality when $\phi_{\Z^d,p,q}^1$ has no infinite cluster.} that the two differ for at most countably many values of $p$. For $q \geq 1$, the model satisfies \eqref{eq_CBC}, and so, when these two measures coincide, so do any other weak limits and hence, the definition of $p_c$ is unambiguous.

 However, in previous sections, we used Bodineau's result \cite{Bod05} that the slab and percolation thresholds agree for $q=2,$ which is not known for general $q$. Recall that a slab is a graph of the form $S_h=\mathbb{Z}^2\times \{0,\dots,h\}^{d-2}$.  Define the slab percolation threshold 
 $$
 p_{slab}=\inf\{p\in [0,1]\mid \sup_h\phi^0_{S_h,p,q}[0\cc \infty]>0\}.
 $$
 Conjecturally, $p_c=p_{slab}$ \cite{Pis96}. For convenience, we define $p_{slab}(\Z^2,q) = p_c(\Z^2,q)$.
\subsection{Generalities on uniform cycles}

In this section, we discuss the application of our arguments to the $q$-flow representation of the random-cluster model for integer $q$. For a simple graph $G=(V,E),$ let $\mathcal{O}(E)$ denote its set of oriented edges and denote by $\Omega^q(G)$ the set of $\mathbb{Z}/q\mathbb{Z}$-valued 1-forms on $E.$ These are maps $\eta:\mathcal{O}(E)\to \mathbb{Z}/q\mathbb{Z}$ such that $\eta_{(v,w)}=-\eta_{(w,v)}$ for all $(v,w)\in \mathcal{O}(E)$. There is a linear divergence map $\partial^G:\Omega^q(G)\to (\mathbb{Z}/q\mathbb{Z})^V$ given by $(\partial^G\eta)_v=\sum_{w\sim v} \eta_{(v,w)}.$ In the $q=2$ case, one gets $\eta_{(v,w)}=-\eta_{(v,w)}$ and one may therefore simply regard a $1$-form as a function on the edges.

It is a computation (see e.g., \cite[Lemma 4.1]{hansen2025general}) that for a percolation configuration $\omega$ on a finite graph $G$, $|\ker\partial^{\omega}|=q^{\abs{\omega}+\kappa(\omega)-|V|}$ and it follows that $\left(\frac{p}{1-p}\right)^{\abs{\omega}}q^{\kappa(\omega)}\propto \left(\frac{x}{1-x}\right)^{\abs{\omega}}|\ker \partial^{\omega}|$ for $x=\frac{p}{p+q(1-p)}$. This motivates the $q$-flow measure $\ell_{G,x}$ on $\ker \partial^{G}$ given by $\ell_{G,x}[\eta]\propto x^{|\hat{\eta}|},$ where, $\hat{\eta}_{vw}=\id_{\eta_{(v,w)}\neq 0},$ is called the trace of $\eta$. The $q$-flow model couples to $\omega\sim \phi^0_{G,p,q}$ as the uniformly random divergence-free form on $\omega$ - such forms are also called cycles. In this section, we will also need the version with sources.
\begin{definition}
For $q\in \mathbb{N}_{\geq 2},$ $x\in (0,1)$, a finite graph $G=(V,E)$ and $A\in(\mathbb{Z}/q\mathbb{Z})^V$ with $\sum_{v\in V} A_v=0,$  define the $q$-flow model with sources $A$ to be the measure on $\Omega^q(G)$ with 
$$
\ell_{G,x}^{q,A}[\eta]\propto \id_{\partial \eta=A} x^{|\hat{\eta}|}.
$$
Similarly, let $\hat{\ell}^{q,A}_{G,x}[\;\cdot\;]=\ell^{q,A}_{G,x}[\hat{\eta}\in\cdot\;].$ 
\end{definition}
Analogously to \eqref{SMP}, the $q$-flow measure has a Markov property,  
\begin{align}\label{eq:q_flow_Markov}
     \ell^{q,A}_{G,x}[\eta|_H\mid \eta|_{G\setminus H}]=\ell_{H,x}^{q,A-\partial (\eta|_{G\setminus H})}[\eta|_H].
\end{align}
We will briefly discuss the uniform measure on the cycle space, which we will denote $\operatorname{UD}_G=\operatorname{UD}^q_G=\ell^{q,0}_{G,1}$, as well as the uniform measure on its cosets $\{\partial \eta=A\}$ for $A\in (\mathbb{Z}/q\mathbb{Z})^V$ with $\sum_vA_v=0$, denoted $\operatorname{UD}^A_G=\operatorname{UD}^{q,A}_G$. 
In the proof of \Cref{thm:Unique_Loop}, we used that unique crossing events decouple the uniform even subgraph (cf. \cite[Proposition 3.6]{hansen2023uniform}). This is not specific to the $\mathbb{Z}/2\mathbb{Z}$ case and lifts straightforwardly to cycles with coefficients in $\mathbb{Z}/q\mathbb{Z}$ (or any other compact, Abelian group for that matter). First, we note that if $G$ is a graph and $H\subseteq G$ is a subgraph, then the restriction map $\pi_H:\ker(\partial^{G})\to (\mathbb{Z}/q\mathbb{Z})^{\mathcal{O}(E(H))}$ is a group homomorphism and since $\operatorname{UD}_G$ is the Haar measure on $\ker(\partial^G)$, it follows that the marginal of $\operatorname{UD}_G$ on $H$ is simply the uniform measure on the image of $\pi_H$. This furthermore allows one to deduce conditional independence from unique crossings. The proof is mutatis mutandis the same as in \cite[Proposition 3.6]{hansen2023uniform} and is sketched for completeness. For an edge set $E,$ we denote by $G(E)$ the induced graph, i.e. the graph with edge set $E$ and vertex set equal to the set of end-points of elements of $E$.

\begin{proposition} \label{Separating_Surface_Prop}
Let $q\in\mathbb{N}_{\geq 2},$ $G=(V,E)$ be a graph and let $E_1,E_3\subseteq E$ be disjoint. Denote $E_2:=E\setminus (E_1\cup E_3).$ Suppose that
\begin{enumerate}[label=(\roman*)]
    \item $E_1$ and $E_2$ are finite.
    \item The induced graph $G(E_2)$ is connected.
    \item Any path $(\gamma_j)_{1\leq j\leq n}$ in $G$ with $\gamma_1\in G(E_1)$ and $\gamma_n\in G(E_3)$ must have a $1<j<n$ with $\gamma_j\in G(E_2).$
\end{enumerate}
Then, $\pi_{G(E_1)}(\ker(\partial^G))=\pi_{G(E_1)}(\ker(\partial^{G(E_1\cup E_2)})).$ In particular, for any $A\in (\mathbb{Z}/q\mathbb{Z})^{V(G(E_3))}$ with $\sum_v A_v=0$,
$$\operatorname{UD}^A_G[\eta|_{\mathcal{O}(E_1)} \in \cdot\;]=\operatorname{UD}_{G}[\eta|_{\mathcal{O}(E_1)} \in \cdot\;]=\operatorname{UD}_{G(E_1\cup E_2)}[\eta|_{\mathcal{O}(E_1)}\in\cdot\;].$$ Furthermore, for $\eta\sim \operatorname{UD}^A_G,$ the random variables $\eta|_{\mathcal{O}(E_1)}$ and $\eta|_{\mathcal{O}(E_3)}$ are independent. 
\end{proposition}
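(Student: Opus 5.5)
The plan is to prove the key set equality $\pi_{G(E_1)}(\ker\partial^G)=\pi_{G(E_1)}(\ker\partial^{G(E_1\cup E_2)})$ by a rerouting argument, and then read the measure statements off from the fact that $\operatorname{UD}$ and $\operatorname{UD}^A$ are Haar measure on a group, respectively uniform measure on a coset.

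\textbf{Step 1: the set equality.} The inclusion $\supseteq$ is trivial: a cycle on $G(E_1\cup E_2)$, extended by zero, is a cycle on $G$. For $\subseteq$, take $\eta\in\ker\partial^G$ and let $\eta'=\eta|_{\mathcal{O}(E_1\cup E_2)}$, extended by zero.
\begin{itemize}
\item By (iii), no edge of $E_1$ touches a vertex of $G(E_3)$ outside $G(E_2)$. Hence every vertex $v$ of $G(E_1\cup E_2)$ is either not incident to $E_3$, in which case $(\partial\eta')_v=0$, or lies in $V(G(E_2))$.
\item So the defect $B:=\partial\eta'$ is supported on $V(G(E_2))$.
\item Since $\partial\eta'$ is a divergence of a finitely supported form (finiteness is (i)), $\sum_v B_v=0$.
\item As $G(E_2)$ is connected by (ii), there is a form $\zeta$ supported on $E_2$ with $\partial\zeta=B$: route each defect along a spanning tree of $G(E_2)$.
\end{itemize}
Then $\eta'-\zeta\in\ker\partial^{G(E_1\cup E_2)}$ and agrees with $\eta$ on $E_1$, which gives $\subseteq$.

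\textbf{Step 2: the marginals.} By the Haar remark preceding the proposition, the marginal of $\operatorname{UD}_G$ on $E_1$ is uniform on $\pi_{G(E_1)}(\ker\partial^G)$. Likewise, the marginal of $\operatorname{UD}_{G(E_1\cup E_2)}$ on $E_1$ is uniform on $\pi_{G(E_1)}(\ker\partial^{G(E_1\cup E_2)})$. Step 1 says these two sets coincide, which gives the second equality.

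For sources $A$ supported on $V(G(E_3))$ with $\sum_v A_v=0$, I first need some $\eta_0$ with $\partial\eta_0=A$ and $\eta_0|_{E_1}=0$. Pick any $\eta_0$ with $\partial\eta_0=A$; if $G$ is infinite, work componentwise, and if no such $\eta_0$ exists the measure is undefined anyway. Apply the same rerouting to $\eta_0|_{E_1\cup E_2}$, which has divergence $A$ on $G(E_3)$-vertices plus a defect on $G(E_2)$. Then subtract the resulting cycle $\eta_0|_{E_1\cup E_2}-\zeta$, extended by zero; this subtraction kills $\eta_0$ on $E_1$ without changing $\partial\eta_0$. Translation $\eta\mapsto\eta+\eta_0$ is then a bijection $\ker\partial^G\to\{\partial\eta=A\}$ preserving the $E_1$-restriction, which yields $\operatorname{UD}^A_G$-marginal $=\operatorname{UD}_G$-marginal.

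\textbf{Step 3: independence.} It suffices to treat $A=0$ and then translate by $\eta_0$. Repeat the Step 1 rerouting with the roles of $E_1$ and $E_3$ swapped. This needs care, since $E_3$ may be infinite; use (iii) symmetrically and (ii) again. Combining the two reroutings shows that for any $\eta^{(1)},\eta^{(3)}\in\ker\partial^G$ there is $\eta\in\ker\partial^G$ with $\eta|_{E_1}=\eta^{(1)}|_{E_1}$ and $\eta|_{E_3}=\eta^{(3)}|_{E_3}$: take the $E_1\cup E_2$ part of $\eta^{(1)}$ and the $E_3$ part of $\eta^{(3)}$, and fix the mismatch on $G(E_2)$ by a spanning-tree correction in $E_2$. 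Hence the image of $\ker\partial^G$ under $(\pi_{E_1},\pi_{E_3})$ is the full product of the two images. Haar measure on a group pushes forward to Haar measure on the product group, which is the product of the Haar measures; this is exactly independence.

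\textbf{Main obstacle.} I expect the main obstacle to be this last combination step. Checking that the defect of the glued form lives only on $V(G(E_2))$ and sums to zero needs care when $E_3$ is infinite, since the divergence identity $\sum_v(\partial\eta)_v=0$ must then be applied to the finite piece $E_1\cup E_2$. In infinite volume, "Haar measure on $\ker\partial^G$" should also be interpreted via a compact-group limit, as in \cite{hansen2023uniform}.
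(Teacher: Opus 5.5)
Your proof is correct, but it follows a different route from the paper's sketch.

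\textbf{The paper's route.} It argues at the level of paths. It uses that $\ker(\partial^G)$ is spanned by simple loops and bi-infinite paths, cuts each such path at its visits to $G(E_2)$, and closes the pieces up inside the connected graph $G(E_2)$. For sources it takes a representative supported on $E_2\cup E_3$. It then derives independence from the equality of marginals together with the Markov property \eqref{eq:q_flow_Markov}: conditionally on $\eta|_{\mathcal{O}(E_3)}$, the law on $E_1\cup E_2$ is a uniform cycle with sources on $V(G(E_2))$, and its $E_1$-marginal does not depend on those sources.

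\textbf{Your route.} You work with forms directly. You restrict to the finite set $E_1\cup E_2$, observe that the defect lives on $V(G(E_2))$ and has zero total mass, and cancel it by a spanning-tree correction in $E_2$. The same device gives you the representative $\eta_0$. By gluing, it also shows that the image of $\ker\partial^G$ under $(\pi_{E_1},\pi_{E_3})$ is a product group, so independence follows from Haar measure alone.

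\textbf{What each buys.} Your argument never needs the path-spanning claim, which for infinite $G$ holds only in a closure sense. It also never conditions on the infinitely many edges of $E_3$, and it makes the algebraic content of the independence explicit. The paper's argument is shorter given the Markov property it uses throughout, and it matches the geometric picture of loops rerouted through a separating surface.

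\textbf{Two small points.}
\begin{enumerate}
\item The worry in your last paragraph is settled in one line. Since $\eta^{(3)}$ is a cycle, $\partial(\eta^{(3)}|_{E_3})=-\partial(\eta^{(3)}|_{E_1\cup E_2})$, which is the divergence of a finitely supported form. Hence the glued defect sums to zero, and the ``swapped'' rerouting you mention is not needed.
\item In Step 2, the divergence of $\eta_0|_{E_1\cup E_2}$ is supported entirely on $V(G(E_2))$, not ``$A$ on $G(E_3)$-vertices plus a defect''. By (iii), vertices of $G(E_1)$ carry no source and touch no $E_3$-edge. This is exactly what lets your Step 1 correction apply verbatim.
\end{enumerate}
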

\begin{proof}[Sketch of proof.] We start with the cycle case $A\equiv 0.$
Since $\ker(\partial^G)$ is spanned by paths which are either simple loops or bi-infinite, it suffices to argue that for any such path $\gamma,$ there is a $\tilde{\gamma}\subseteq G(E_1\cup E_2)$ such that $\gamma\cap E_1=\tilde{\gamma}\cap E_1.$ This is achieved by using $(iii)$ to cut $\gamma$ according to the times when it hits $G(E_2)$ and then using the connectedness of $G(E_2)$ to form loops.

When $A \neq 0,$ we may take a representative $\eta_0$ with $\partial\eta=A$ and get a bijection $\psi:\{\partial \eta=A\}\to\ker \partial,$ given by $\eta\mapsto \eta-\eta_0.$ Applying $(ii)$ and $(iii)$ again, one may pick $\eta_0$ to have support in $E_2\cup E_3.$ Then, if $\eta\sim \operatorname{UD}^A_G,$ then $\eta-\eta_0\sim \operatorname{UD}_{G},$ and deterministically, $\eta|_{E_1}=(\eta-\eta_0)|_{E_1}$.

The last conclusion follows from the equality of the marginals together with the Markov property \eqref{eq:q_flow_Markov}.
\end{proof}
 Similarly, one gets the natural generalisation of the determination of the Gibbs measures of the uniform even graph \cite[Theorem 3.14]{hansen2023uniform}. We say that a probability measure $\mu$ on $\ker \partial^\mathbb{G}$ is \textit{Gibbs} for the uniform cycle if for any finite $\Lambda\subseteq \mathbb{G},$ we have that $\mu[\;\cdot\mid \eta|_{\mathbb{G}\setminus \Lambda}]$ is $\mu$-a.s. uniform on $\{\eta' \in \Omega^q(\Lambda)\mid \partial \eta'=-\partial \eta|_{\mathbb{G}\setminus \Lambda} \}.$ Furthermore, we denote by $(\ker \partial^\mathbb{G})^{<\infty}$ the set of finitely supported cycles and $(\ker \partial^\mathbb{G})^{0}=\overline{(\ker \partial^\mathbb{G})^{<\infty}}$ with the closure taken in the topology of pointwise convergence. We refer to $(\ker \partial^\mathbb{G})^{0}$ as the set of \emph{free} cycles. Since $\ker(\partial^\mathbb{G})$ is spanned by finite loops and bi-infinite paths, we see that $(\ker \partial^\mathbb{G})^{0}=\ker \partial^\mathbb{G}$ if and only if $\mathbb{G}$ is one-ended by arguments as those in \cite{angel2021uniform,hansen2023uniform}. Again, the proof lifts from \cite{hansen2023uniform} mutatis mutandis and we include it for completeness.
 \begin{theorem} \label{Gibbs for cycles}
For any infinite, locally finite graph $\mathbb{G}$, the set of extremal Gibbs measures of the uniform cycle for $q\geq 2$ is in 1-1 correspondence with $\ker \partial^\mathbb{G}/(\ker \partial^\mathbb{G})^0$.
\end{theorem}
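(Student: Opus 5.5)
The strategy is to mimic the proof of \cite[Theorem 3.14]{hansen2023uniform}: first build one Gibbs measure for each coset, then show that every extremal Gibbs measure is one of these. For the construction, fix a coset $[\zeta]\in \ker\partial^\mathbb{G}/(\ker\partial^\mathbb{G})^0$ with representative $\zeta$. Note that $(\ker\partial^\mathbb{G})^0$ is a closed subgroup of the compact abelian group $\ker\partial^\mathbb{G}\subseteq(\mathbb{Z}/q\mathbb{Z})^{\mathcal{O}(E)}$, taken with the product topology. So $(\ker\partial^\mathbb{G})^0$ carries a Haar measure $H^0$, and we define $\mu_{[\zeta]}$ as the law of $\zeta+\eta$ with $\eta\sim H^0$. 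By translation invariance of $H^0$ this does not depend on the choice of $\zeta$.

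To see that $\mu_{[\zeta]}$ is Gibbs, fix a finite $\Lambda$. Finitely supported cycles that are supported in $\Lambda$ belong to $(\ker\partial^\mathbb{G})^0$, so $\mu_{[\zeta]}$ is invariant under adding any cycle $\gamma$ supported in $\Lambda$. Given $\eta|_{\mathbb{G}\setminus\Lambda}$, the admissible completions are exactly the sets $\{\eta'\in\Omega^q(\Lambda):\partial\eta'=-\partial\eta|_{\mathbb{G}\setminus\Lambda}\}$, and these are cosets of the group of cycles supported in $\Lambda$. Invariance under this group therefore forces the conditional law to be uniform on the coset. Extremality follows once we know tail triviality. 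For this we use the description of $H^0$ as the weak limit of $\operatorname{UD}_{G_n}$ along an exhaustion; the finite-volume cycles are dense in $(\ker\partial)^0$, which identifies the limit. Then \Cref{Separating_Surface_Prop} gives approximate independence between a finite window and the far exterior.

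For the converse, let $\mu$ be an extremal Gibbs measure. Consider the quotient map $\pi:\ker\partial^\mathbb{G}\to\ker\partial^\mathbb{G}/(\ker\partial^\mathbb{G})^0$. The main claim is that $\pi$ is tail measurable. Two cycles in $\ker\partial^\mathbb{G}$ that agree outside a finite $\Lambda$ differ by a finitely supported cycle, so they have the same coset; modulo care about measurability, the coset is then a tail function. By extremality, the Gibbs measure $\mu$ is tail-trivial, so $\pi$ is $\mu$-a.s. equal to a constant $[\zeta]$. Next, the Gibbs property says that $\mu$ is invariant under adding finitely supported cycles. Since these are dense in $(\ker\partial)^0$ and the action is continuous, $\mu$ is invariant under all of $(\ker\partial)^0$. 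A probability measure supported on the single coset $\zeta+(\ker\partial)^0$ and invariant under $(\ker\partial)^0$ is the translated Haar measure, so $\mu=\mu_{[\zeta]}$. Finally, distinct cosets give mutually singular measures, so the correspondence is injective.

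I expect the main obstacle to be the tail-measurability of the coset and the extremality of $\mu_{[\zeta]}$ when $\mathbb{G}$ has infinitely many ends, since then the quotient group can be uncountable. The first thing I would try is to reduce to the description via ends, as in \cite{hansen2023uniform}. Here $(\ker\partial)^0$ is identified as the set of cycles whose net flux through every end vanishes, where flux is computed through the finite cut sets separating the ends. The coset is then recorded by these fluxes, which are measurable functions of $\eta$ restricted to the complement of large finite sets.
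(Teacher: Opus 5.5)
Your outline is right: the coset measures are Gibbs, and an extremal Gibbs measure is carried by one coset and is then the translated Haar measure. But your proof that $\mu_{[\zeta]}$ is extremal does not work as written. \Cref{Separating_Surface_Prop} needs, around a finite window $E_1$, a finite $E_2$ disjoint from $E_1$ with $G(E_2)$ connected and separating $G(E_1)$ from infinity. In a general locally finite graph no such $E_2$ need exist. Glue two copies of $\mathbb{Z}^2$ at a vertex $v$ and let the window contain all edges at $v$. Paths from neighbours of $v$ to infinity avoiding $v$ force $G(E_2)$ to contain vertices of both copies other than $v$. Joining these inside $E_2$ would require an edge at $v$, and all of those lie in $E_1$.

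You do not need tail triviality here; the ingredients you already use in the converse suffice, and this is the paper's argument. Suppose $\mu_{[\zeta]}=t\nu_1+(1-t)\nu_2$ with $t\in(0,1)$ and $\nu_i$ Gibbs. Then $\nu_i\ll\mu_{[\zeta]}$, so $\nu_i$ is carried by the closed coset $\zeta+(\ker\partial^{\mathbb{G}})^0$. Each $\nu_i$ is also $(\ker\partial^{\mathbb{G}})^0$-invariant, since Gibbs means invariance under finitely supported cycles, which extends by density and continuity. Uniqueness of Haar measure then gives $\nu_i=\mu_{[\zeta]}$.

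If you prefer to keep tail triviality, prove it directly from density. For a finite window $W$, the restrictions to $W$ of elements of $(\ker\partial^{\mathbb{G}})^0$ form a finite set, and each is attained by some finitely supported cycle. So a large enough finite $F\supseteq W$ carries cycles realising all of them. By the Gibbs property of $\mu_{[\zeta]}$ on $F$, the conditional law of $\eta|_W$ given $\eta$ off $F$ is then uniform on the restriction of the coset. That is exactly the unconditional law, so $\eta|_W$ is independent of the exterior.

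For the converse, your route genuinely differs from the paper's. The paper never touches the tail $\sigma$-algebra. Once Gibbs measures are identified with $(\ker\partial^{\mathbb{G}})^0$-invariant measures, take an invariant $\mu$ not carried by a single coset. Since the quotient is a compact metric space, some Borel set $H$ of cosets has $\mu$-mass in $(0,1)$. Conditioning on the invariant set $\pi^{-1}(H)$ and on its complement splits $\mu$ into two distinct Gibbs measures.

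You instead combine ``extremal $\Rightarrow$ tail-trivial'' for specifications with tail-measurability of the coset map. This is valid, and it gives the extra information that the extremal measures are tail-trivial, at the price of importing that general theorem.

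The obstacle you anticipate is not real, and the number of ends plays no role. $\ker\partial^{\mathbb{G}}$ is compact metrizable and $\pi$ is continuous. For each finite $F$, $\pi$ is constant on fibres of the restriction map to edges outside $F$. That map is continuous from a compact space, hence closed and a quotient map onto its image. So $\pi$ factors continuously through it and is measurable with respect to the exterior of every finite $F$. The flux-through-ends description of $(\ker\partial^{\mathbb{G}})^0$ is unnecessary, and would itself require proof.
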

\begin{proof} By definition, a measure on $\ker \partial^\mathbb{G}$ is Gibbs for the uniform cycle if and only if it is invariant under the natural action of $(\ker \partial^\mathbb{G})^{<\infty}$. Since this action is continuous, it extends to an invariance under all of $(\ker \partial^\mathbb{G})^{0}.$ By uniqueness of the Haar probability measure, there is only one  $(\ker \partial^\mathbb{G})^{0}$-invariant measure on each co-set in $\ker \partial^\mathbb{G}/(\ker \partial^\mathbb{G})^0,$ which gives that all of these must be extremal Gibbs measures for the uniform cycle. On the other hand, any $(\ker \partial^\mathbb{G})^0$-invariant measure $\mu$ on $\ker \partial^\mathbb{G}$ for which there is $H\subseteq \ker \partial^\mathbb{G}/(\ker \partial^\mathbb{G})^0$ with $\mu[\cup_{h\in H} h]\in (0,1),$ we of course have
$$
\mu[\;\cdot\;]=\mu[\cup_{h\in H} h]\cdot \mu[\;\cdot \mid \cup_{h\in H} h]+(1-\mu[\cup_{h\in H}h])\cdot \mu[\;\cdot \mid \ker\partial^\mathbb{G}\setminus (\cup_{h\in H}h)],
$$
and since both measures on the right-hand side are $(\ker\partial^\mathbb{G})^0$-invariant and distinct, $\mu$ cannot be extremal. Since $\ker \partial^\mathbb{G}/(\ker(\partial ^{\mathbb{G}}))^0$ is a compact metric space, the only $\{0,1\}$-valued measures on $\ker \partial^\mathbb{G}/(\ker(\partial ^{\mathbb{G}}))^0$ are Dirac masses. Accordingly, for any $\ker(\partial^{\mathbb{G}})^0$-invariant measure $\mu$ on $\ker(\partial^{\mathbb{G}})$ which is not supported on a single co-set, there must exist $H\subseteq \ker \partial^\mathbb{G}/(\ker \partial^\mathbb{G})^0$ with $\mu[\cup_{h\in H} h]\in (0,1).$ This finishes the proof.
\end{proof}

Furthermore, we have the following natural marriage of \cite[Theorem 3.2]{aizenman2019emergent} and \cite[Proposition 4.3]{hansen2025general} (originally due to \cite{zhang2020loop}). In the following, for $A\in (\mathbb{Z}/q\mathbb{Z})^V$ with $\sum_v A_v=0$, we let $\mathcal{F}_{A}$ denote the set of graphs $\omega\in \{0,1\}^E$ such that there exists $\eta\in (\mathbb{Z}/q\mathbb{Z})^{\mathcal{O}(\omega)}$ with $\partial \eta=A.$ We note that this event is not simply equal to $\mathcal{F}_{\widehat{A}}$ as in the $q=2$ case - for instance, if $q\geq 3$ and $A=\id_{v}+\id_{v'}-\id_{w}-\id_{w'},$ then 
$$
\mathcal{F}_{A}=\left(\{v\cc w\}\cap \{v'\cc w'\} \right)\cup \left(\{v\cc w'\}\cap \{v'\cc w\}\right).
$$
Nonetheless, it remains true that the events $\mathcal{F}_{A}$ are increasing, and that $\mathcal{F}_A \cap \mathtt{UC}_n$ is measurable with respect to $\omega\vert_{\Lambda_n \setminus \Lambda_{n/2}}$ for any $A \in (\Z/q\Z)^{\partial_v \Lambda_{n/2}}$, making them amenable to the analysis from the rest of the paper. 
\begin{coupling} \label{thm:The_q_Coupling}
Let $G$ be a finite graph, $x\in (0,1)$, $A\in (\mathbb{Z}/q\mathbb{Z})^{V}$ such that $\sum_{v\in V} A_v=0,$  and $(\omega,\eta)$ be a random element of $\{0,1\}^E\times \{\partial \eta=A\}$ with distribution
$$
\mathscr{P}[(\omega,\eta)]\propto \mathbb{P}_{G,x}[\omega]\id_{\hat{\eta}\subseteq \omega}.
$$
Then, $\mathscr{P}[\omega\in\cdot \;]=\phi^0_{G,p,q}[\;\cdot\; \mid \mathcal{F}_{A}]$ satisfying $x=\frac{p}{p+q(1-p)},$ $\mathscr{P}[\eta\in \cdot \;]=\ell^{q,A}_{G,x}[\;\cdot \;]$, $\mathscr{P}[\eta\mid \omega]\propto \id_{\hat{\eta}\subseteq \omega}$ is uniform, and $\mathscr{P}[\omega\mid \eta]\propto (\delta_{\hat{\eta}}\cup \mathbb{P}_{x})[\omega]$.
\end{coupling}

\begin{proof}
By \cite[Theorem 1.1]{hansen2025general}, 
\begin{align*}
&\mathscr{P}[\omega=\omega_0]\propto \left(\frac{x}{1-x}\right)^{|\omega_0|}|\{\eta : \partial\eta=A,\hat{\eta}\subseteq \omega_0\}|=\id_{\mathcal{F}_{A}}(\omega_0)\left(\frac{x}{1-x}\right)^{|\omega_0|}|\ker(\partial^{\omega})|\propto \phi^0_{G,p,q}[\omega_0\mid \mathcal{F}_{A}], \\
&\mathscr{P}[\eta=\eta_0]\propto \mathbb{P}_{G,x}[\widehat{\eta_0}\;\mathrm{ open}]=x^{|\hat{\eta_0}|}.\\
&\mathscr{P}[\eta\mid \omega]\propto \id_{\hat{\eta}\subseteq \omega}\\
&\mathscr{P}[\omega\mid \eta] \propto \mathbb{P}_{G,x}[\omega\mid \hat{\eta}\;\mathrm{open}]=\delta_{\hat{\eta}}\cup \mathbb{P}_{G,x},
\end{align*}
which was what we wanted.
\end{proof}

The following lemma mirrors the input we used in the $q=2$ case in the proof of \Cref{thm:Unique_Loop}. The conclusion is weaker because we do not know uniqueness of infinite volume measures for the random-cluster model for general $q$.
\begin{lemma} \label{Source relaxation}
Let $q\in \mathbb{N}_{\geq 2}$, $d\geq 2$ and $p>p_{slab},$ and suppose that $\phi$ is a weak limit of the form
$$
\phi[\;\cdot\;]=\lim_{G_n\nearrow \mathbb{Z}^d}\phi^0_{G_n,p,q}[\;\cdot\mid \mathcal{F}_{A_n} ]
$$
for $A_n\in (\mathbb{Z}/q\mathbb{Z})^{\partial_vG_n}$ with $\sum_{v\in \partial_vG_n} A_{n,v}=0.$ Then, there exist $(\xi_n)_{n\in\mathbb{N}}\subseteq  \{0,1\}^{E(\mathbb{Z}^d)}$ (possibly random), such that
$$
\phi=\lim_{n\to\infty} \phi^{\xi_n}_{\Lambda_{n},p,q}.
$$
\end{lemma}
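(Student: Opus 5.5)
The plan is to show that the conditioning on $\mathcal{F}_{A_n}$ is invisible in the bulk once we reveal a unique-crossing configuration in an annulus. The random boundary conditions $\xi_n$ will be read off from that annulus.

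First I establish a $q$-analogue of \Cref{theorem:any_incerasing_event}: for $p>p_{slab}$ there is $c>0$ such that $\phi^{\xi}_{\Lambda_m,p,q}[\mathtt{UC}_m\mid \mathcal{F}]\geq 1-\exp(-cm)$, uniformly in increasing $\mathcal{F}$ and in $\xi$.

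1. For $d\geq 3$ the proof is verbatim. \Cref{Long-range-slab} is replaced by the definition of $p_{slab}$. Since $p>p_{slab}$, some slab $S_h$ has $\phi^0_{S_h,p,q}[0\cc\infty]>0$. From this, uniformly positive point-to-point connection probabilities in $S_h\cap\Lambda_R$ should follow as in Pisztora's work. This is the input I expect to be the main obstacle if it is not already available in \cite{Pis96} for general $q\geq 1$; I would cite Pisztora, whose arguments are stated for general $q\geq1$ above the slab threshold.
2. Given that, \Cref{Long-range thin annulus} and \Cref{Pt-to-pt-decay} go through, since they use only \eqref{FKG}, \eqref{eq_CBC} and \eqref{SMP}, all valid for $q\geq1$.
3. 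For $d=2$, with the convention $p_{slab}=p_c$, one uses planar duality and sharpness for the subcritical dual random-cluster model with $q\geq1$.

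Second, I localise the conditioning. As noted before \Cref{thm:The_q_Coupling}, $\mathcal{F}_{A_n}\cap\mathtt{UC}$ is measurable with respect to the annulus configuration. More precisely, choose $m=m(n)\to\infty$ with $\Lambda_{m}\subseteq G_n$, e.g.\ the largest box inside $G_n$, and place the annulus $\Lambda_m\setminus\Lambda_{m/2}$ inside. The same argument as in \Cref{lemma:FAevent_and_unique_crossings} then applies. On $\mathtt{UC}_m$, any flow witnessing $\mathcal{F}_{A_n}$ can be rerouted to avoid $\Lambda_{m/2}$, because every cluster entering $\Lambda_{m/2}$ from outside passes through the unique crossing cluster.

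By \eqref{SMP}, for every configuration $\zeta$ outside $\Lambda_{m/2}$ with $\zeta\in\mathtt{UC}_m\cap\mathcal{F}_{A_n}$, the conditional law of $\omega|_{\Lambda_{m/2}}$ under $\phi^0_{G_n,p,q}[\,\cdot\mid\mathcal{F}_{A_n}]$ given $\zeta$ is $\phi^{\zeta}_{\Lambda_{m/2},p,q}$. Hence
$$
\phi^0_{G_n,p,q}[\,\cdot\mid\mathcal{F}_{A_n}]|_{\Lambda_{m/2}}=\mathbb{E}\big[\phi^{\zeta_n}_{\Lambda_{m/2},p,q}\big]+O(e^{-cm})
$$
in total variation, where $\zeta_n$ is distributed as the exterior configuration. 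On the exceptional event $\mathtt{UC}_m^c$, which has conditional probability at most $e^{-cm}$ by the first step applied with $\mathcal{F}=\mathcal{F}_{A_n}$ increasing, I can set $\zeta_n$ arbitrarily, say free. Equivalently, I define the random boundary condition $\xi$ to be $\zeta$ on the good event and the exterior restriction otherwise.

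Finally, I reindex along the subsequence of box sizes $m(n)/2$ and set $\xi_{m(n)/2}:=\zeta_n$. Then $\phi^{\xi}_{\Lambda_{m/2},p,q}$, averaged over the random $\xi$, differs from $\phi^0_{G_n,p,q}[\,\cdot\mid\mathcal{F}_{A_n}]$ on local events by $O(e^{-cm})$, so it has the same weak limit $\phi$. To get a sequence indexed by all $n$, I fill in the intermediate boxes with the induced marginal boundary conditions. This works because $\phi^{\xi}_{\Lambda_{k}}$ with $\xi$ drawn from $\phi^{\xi'}_{\Lambda_{k'}}$, $k<k'$, equals $\phi^{\xi'}_{\Lambda_{k'}}$ restricted to $\Lambda_k$, by \eqref{SMP}.
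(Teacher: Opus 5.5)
Your proposal is correct and follows essentially the paper's own route. You extend \Cref{theorem:any_incerasing_event} to $q\geq 1$ and $p>p_{slab}$, and pay $e^{-cm}$ in total variation to restrict to $\mathtt{UC}$ on an annulus; on that event you use the rerouting argument of \Cref{lemma:FAevent_and_unique_crossings} together with \eqref{SMP} to write the inner marginal as a mixture of $\phi^{\xi}_{\Lambda,p,q}$ over random exterior boundary conditions. The differences are cosmetic: you keep the whole exterior configuration as the boundary condition, where the paper keeps only the annulus restriction (justified by $\mathtt{UC}$), and you spell out the reindexing that the paper hides in ``without loss of generality $\Lambda_{2n}\subseteq G_n$''; the slab-connectivity input you flag is likewise only asserted in the paper.
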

\begin{proof}
Without loss of generality, we may suppose that $\Lambda_{2n}\subseteq G_n.$ Note that, similarly to the $q=2$ case,
$$
\phi^0_{G_n,p,q}[\omega|_{\Lambda_n}\in\cdot  \mid \mathcal{F}_{A_n},\mathtt{UC}_{2n}]=\sum_{\xi} \phi^{\xi}_{\Lambda_n,p,q}[\;\cdot\;]\phi^0_{G_n,p,q}[\omega|_{\Lambda_{2n}\setminus \Lambda_n}=\xi \mid \mathcal{F}_{A_n},\mathtt{UC}_{2n}],
$$
where we have used the fact that $\xi\in \mathtt{UC}_{2n}$ to conclude that the value of $\omega|_{G_n\setminus \Lambda_{2n}}$ has no influence on the conditional distribution.

By \Cref{theorem:any_incerasing_event} (the proof of which works for any  $q \geq 1$ and any $p>p_{slab}$),  $d_{TV}(\phi^0_{G_n,p,q}[\omega|_{\Lambda_{2n}}\in\cdot  \mid \mathcal{F}_{A_n},\mathtt{UC}_{2n}],\phi^0_{G_n,p,q}[\omega|_{\Lambda_{2n}}\in\cdot  \mid \mathcal{F}_{A_n}])\leq \exp(-cn),$ where $d_{TV}$ denotes total variation distance. Thus,
$$
\lim_{n\to\infty}\phi^0_{G_n,p,q}[\omega|_{\Lambda_{n}}\in\cdot  \mid \mathcal{F}_{A_n}]=\lim_{n\to\infty} \phi^0_{G_n,p,q}[\omega|_{\Lambda_n}\in\cdot  \mid \mathcal{F}_{A_n},\mathtt{UC}_{2n}]=\lim_{n\to\infty}\phi^{\xi_n}_{\Lambda_n,p,q},
$$
where $\xi_n$ is random and chosen according to $\phi^0_{G_n,p,q}[\omega|_{\Lambda_{2n}\setminus \Lambda_n} \in \cdot \mid \mathcal{F}_{A_n},\mathtt{UC}_{2n}]$.
\end{proof}

Say that a measure $\ell$ on $\Omega^q(\mathbb{Z}^d)$ is \textit{Gibbs} for the $q$-flow model if for any finite 
$\Lambda\subseteq \mathbb{Z}^d,$ $$\ell[\;\cdot\mid \eta|_{\mathbb{Z}^d\setminus \Lambda}]= \ell^{q,-\partial (\eta|_{\mathbb{Z}^d\setminus \Lambda})}_{\Lambda,x}[\;\cdot\;] \hspace{2cm} \text{        $\ell$-a.s. }$$  From \Cref{theorem:any_incerasing_event}, we may conclude the following: Note that one-endedness of the infinite cluster\footnote{Which follows from deletion tolerance and uniqueness of the infinite cluster.} under $\phi_{\mathbb{Z}^d,p,q}$ implies uniqueness of its uniform cycle by \Cref{Gibbs for cycles}.

\begin{theorem}  \label{thm:Unique_loop_qversion}
Let $q\in \mathbb{N}_{\geq 2}$, $d\geq 2$ and $x>x_{slab}$. For any Gibbs measure $\ell^q_{\mathbb{Z}^d,x}$ of the $q$-flow model, there exists a weak limit $\phi_{\mathbb{Z}^d,p,q}$ of finite-volume random-cluster measures such that 
$$
\ell^q_{\mathbb{Z}^d,x}[\;\cdot\;]=\phi_{\mathbb{Z}^d,p,q}[\operatorname{UD}_{\omega}[\;\cdot\;]].
$$
In particular, whenever $x>x_{slab},$ there is a unique Gibbs measure for the $q$-flow model if and only if $\phi^1_{\mathbb{Z}^d,p,q}=\phi^0_{\mathbb{Z}^d,p,q}$.
\end{theorem}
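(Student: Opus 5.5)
Fix a Gibbs measure $\ell=\ell^q_{\mathbb{Z}^d,x}$ for the $q$-flow model and put $p$ with $x=\frac{p}{p+q(1-p)}$, so that $p>p_{slab}$. First I reduce to finite volume limits with boundary sources. The argument follows the proof of \Cref{thm:Unique_Loop}, but works at the level of the coupling and never uses uniqueness of the random-cluster measure.

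Let $\eta\sim\ell$. By the Gibbs property, conditionally on $\eta|_{\mathbb{Z}^d\setminus\Lambda_n}$ the law of $\eta|_{\Lambda_n}$ is $\ell^{q,A_n}_{\Lambda_n,x}$ with $A_n=-\partial(\eta|_{\mathbb{Z}^d\setminus\Lambda_n})$. The divergence constraint is local, so $A_n$ is supported on $\partial_v\Lambda_n$ and sums to $0$. Averaging over the exterior, $\ell|_{\Lambda_n}$ is a mixture, with random boundary sources $A_n$, of the measures $\ell^{q,A_n}_{\Lambda_n,x}$.

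\Cref{thm:The_q_Coupling} writes each $\ell^{q,A_n}_{\Lambda_n,x}$ as $\phi^0_{\Lambda_n,p,q}[\operatorname{UD}^{A_n}_\omega[\cdot]\mid\mathcal{F}_{A_n}]$. The joint law of $(\omega_n,\eta_n)$, mixed over $A_n$, is tight, so along a subsequence it converges to a coupling $(\omega,\eta)$ whose $\eta$-marginal is exactly $\ell$, since the $\eta$-marginals are $\ell$ restricted to growing boxes.

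Next I decouple the sources. By the remark before \Cref{thm:The_q_Coupling}, $\mathcal{F}_{A}\cap\mathtt{UC}_n$ is measurable with respect to the annulus. \Cref{theorem:any_incerasing_event}, whose proof works for any $q\ge1$ and $p>p_{slab}$, gives $\phi^0_{\Lambda_n,p,q}[\mathtt{UC}_n\mid\mathcal{F}_{A_n}]\ge 1-e^{-cn}$ uniformly in $A_n$. On $\mathtt{UC}_n\cap\mathcal{F}_{A_n}$, \Cref{Separating_Surface_Prop} applies with $E_1=\Lambda_{n/2}\cap\omega$, $E_2$ the annulus crossing cluster and $E_3$ the rest. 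It gives $\operatorname{UD}^{A_n}_{\omega}|_{\Lambda_{n/2}}=\operatorname{UD}_{\omega}|_{\Lambda_{n/2}}$, and this is a function of $\omega|_{\Lambda_n}$ alone.

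So for a local event $F$, $\ell[F]$ equals the expectation of $\operatorname{UD}_\omega[F]$ under the random-source measures $\phi^0_{\Lambda_n,p,q}[\cdot\mid\mathcal{F}_{A_n}]$, up to an error $e^{-cn}$. \Cref{Source relaxation}, extended to random $A_n$ by mixing, shows that the $\omega$-marginal of the limit is $\phi=\lim\phi^{\xi_n}_{\Lambda_n,p,q}$ for random boundary conditions $\xi_n$. This $\phi$ is a weak limit of finite volume random-cluster measures.

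The hardest step is passing $\operatorname{UD}_\omega[F]$ to the limit, since $\omega\mapsto\operatorname{UD}_\omega[F]$ is not continuous in general. I would handle it as in the proof of \Cref{thm:Unique_Loop}. On $\mathtt{UC}_{m}$ with $m$ fixed and large, $\mathbf 1_{\mathtt{UC}_m}\operatorname{UD}_\omega[F]$ is a local function of $\omega|_{\Lambda_m}$, again by \Cref{Separating_Surface_Prop}. Its expectations therefore converge, and $1-\phi[\mathtt{UC}_m]\le e^{-cm}$ uniformly by \Cref{theorem:any_incerasing_event} applied with $\mathcal{F}$ trivial. Letting $m\to\infty$ gives $\ell[F]=\phi[\operatorname{UD}_\omega[F]]$, where $\operatorname{UD}_\omega$ is the uniform cycle on the infinite configuration. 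It is well defined because the infinite cluster is one-ended, so \Cref{Gibbs for cycles} gives a unique Gibbs measure for the uniform cycle. The value of $\operatorname{UD}_\omega$ is also the limit of the local marginals on $\mathtt{UC}_m$.

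For the ``in particular'', suppose first that $\phi^1_{\mathbb{Z}^d,p,q}=\phi^0_{\mathbb{Z}^d,p,q}$. Then \eqref{eq_CBC} forces every weak limit, including those with random $\xi_n$, to equal this common measure, so $\ell$ is unique. Conversely, $\phi^i[\operatorname{UD}_\omega[\cdot]]$ for $i=0,1$ are both Gibbs measures for the $q$-flow model, which one checks from the finite volume coupling together with the Markov property \eqref{eq:q_flow_Markov}. If $\phi^1\ne\phi^0$, these two $q$-flow measures differ: by \Cref{thm:The_q_Coupling}, sprinkling $\hat\eta$ with independent $\mathbb{P}_x$ recovers the random-cluster measure in the limit, so different $\phi$ give different $\ell$. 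Hence uniqueness fails.
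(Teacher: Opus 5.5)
Your proposal is correct and follows essentially the paper's own route. The Gibbs property reduces $\ell$ to the boundary-source measures $\ell^{q,A_n}_{\Lambda_n,x}$, and the coupling identifies the sprinkled measure $\mu=\ell\cup\mathbb{P}_{\mathbb{Z}^d,x}$. \Cref{theorem:any_incerasing_event} together with \Cref{Separating_Surface_Prop} then strips the sources to give $\ell=\mu[\operatorname{UD}_\omega]$, and \Cref{Source relaxation} identifies $\mu$ as a random-boundary random-cluster weak limit. Your explicit $\mathtt{UC}_m$-localization for passing $\operatorname{UD}_\omega[F]$ to the limit, and your direct proof of the equivalence via \eqref{eq_CBC} and $\phi^i=\phi^i[\operatorname{UD}_\omega]\cup\mathbb{P}_{\mathbb{Z}^d,x}$ (instead of the injectivity of sprinkling that the paper invokes), fill in details the paper leaves implicit. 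The only slip is small: non-crossing annulus clusters that touch $\Lambda_{n/2}$ must go into $E_1$ rather than $E_3$, otherwise condition (iii) of \Cref{Separating_Surface_Prop} fails.
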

\begin{proof}
By \Cref{theorem:any_incerasing_event}, \Cref{thm:The_q_Coupling}, \Cref{Separating_Surface_Prop} and the Gibbs property, we have that $\ell^q_{\mathbb{Z}^d,x}=\mu[\operatorname{UD}_{\omega}],$ where $\mu=\ell^q_{\mathbb{Z}^d,x}\cup \mathbb{P}_{\mathbb{Z}^d,x}.$ By \Cref{Source relaxation}, we have that $\mu$ is a weak limit of random-cluster measures. Finally, note that the relation $\ell \cup \Prb_{x} = \ell' \cup \Prb_{x}$ implies that $\ell = \ell'$ (cf. \cite[Claim B.5.]{hansen2025general}).
\end{proof}

\section{Consequences for Codimension 1 Lattice Gauge Theories} \label{sec:gauge}
In this section, we discuss consequences of our work for dual spin representations. These so-called lattice gauge models have interactions over high-dimensional cells. A $k$-cell in $\mathbb{Z}^d$ is an embedded copy of the hypercube $\{0,1\}^{k}$. $1$-cells are edges and $2$-cells are (two-dimensional) plaquettes. For a finite subset $\Lambda\subseteq \mathbb{Z}^d,$ denote by $\Lambda_{k}$ its set of $k$-cells and $\mathcal{O}(\Lambda_{k})$ its set of oriented $k$-cells\footnote{Recall that an orientation of $\mathbb{R}^k$ is a choice of orthonormal basis up to the action of $SO(k)$.}. Each $\mathfrak{c}\in \mathcal{O}(\Lambda_k)$ has a boundary $\partial \mathfrak{c}$ with orientations inherited from $\mathfrak{c}.$

Similarly to the $q$-flow models, a $\mathbb{Z}/q\mathbb{Z}$-valued $k$-chain on $\Lambda$ is an anti-symmetric function $\sigma:\mathcal{O}(\Lambda_{k})\to \mathbb{Z}/q\mathbb{Z}$ (i.e. $\sigma_{\mathfrak{c}}=-\sigma_{-\mathfrak{c}}$ for all $\mathfrak{c}\in \mathcal{O}(\Lambda_{k}),$ where $-\mathfrak{c}$ denotes the same $k$-cell with the opposite orientation). Let $C_{k}=C_{k}(\Lambda)$ denote this set of chains. For $\sigma\in C_{d-2},$ we get a gradient $d\sigma\in C_{d-1}$ given by $d\sigma_{\mathfrak{c}}=\sum_{\mathfrak{c'}\in \partial \mathfrak{c}} \sigma_{\mathfrak{c'}}.$ We say that $\mathfrak{c}\in \mathcal{O}(\Lambda_{d-1})$ is satisfied if $d\sigma_{\mathfrak{c}}=0$. We note that since $\sigma$ is a chain, $\mathfrak{c}$ is satisfied if and only if $-\mathfrak{c}$ is and so, we may talk about the corresponding unoriented cell being satisfied or not. We denote by $S(\sigma)$ the number of satisfied \emph{unoriented} cells of the chain $\sigma$.

For $q\in \mathbb{N}_{\geq 2}$ and $d\geq 3$ and $\beta>0,$ the Potts lattice gauge model (interacting over codimension 1 plaquettes) is the measure on $C_{d-2}$ defined by
$$
\mu_{\Lambda,\beta}[\sigma]\propto \exp(\beta |S(\sigma)|).
$$
The reason for introducing this measure here is that it is dual to the $q$-flow model \cite{WegDual}. This generalises the planar case, where the loop O($1$) model has the distribution of the cluster interfaces of the Ising model. That is, to each edge $e\in E(\mathbb{Z}^d)$ corresponds a dual $(d-1)$-cell $e^*$ of the dual lattice $\mathbb{Z}^d+(1/2,1/2,...,1/2)$. This can be made consistent with the orientations. Then, if $\eta\sim \ell^q_{\Lambda,x'}$ and $\eta^*_{(v,w)^*}=\eta_{(v,w)},$ then $\eta^*$ exactly has the distribution of the gradient $d\sigma$ for $x'=1-\exp(-\beta)$ (see e.g., \cite[Section D.1]{hansen2025general}). 
 It is known that the phase diagram of the random-cluster model corresponds to different topological phases for the gauge theories with $p<p_c$ corresponding to the so-called perimeter law regime and $p>p_{slab}$ corresponding to the so-called area law regime. We denote the dual parameter $\beta(p_{slab})=\beta_{\mathtt{area}}.$ We refer to \cite{AllTheAuthors, duncanPRCM2} for further reading on these connections.

As such, our work has the following corollaries: We say that an infinite volume measure $\mu^{\mathtt{grad}}_{\mathbb{Z}^d,\beta}$ on $C_{d-1}(\mathbb{Z}^d)$ is a \emph{gradient Gibbs measure} for $\mu_{\beta}$ if it is dual to a Gibbs measure of $\ell^{q}_{\mathbb{Z}^d,x}$, see \cite{duncanPRCM2,lebowitz1981surface} and \cite[Theorem 9.1]{aizenman2025geometric} and references therein, where area law and perimeter law of lattice gauge theories is also defined. In the Ising case ($q=2$), uniqueness of Gibbs measures for the loop O(1) model as proven in \Cref{thm:Unique_Loop} implies that:
\begin{theorem}
For $q=2,$ $d\geq 3$ and any $\beta\in(0,\beta_{\mathtt{area}})$, $\mu_{\Z^d,\beta}$ has a unique gradient Gibbs measure, which, furthermore, is ratio weak mixing.
\end{theorem}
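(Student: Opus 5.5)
The plan is to transfer \Cref{thm:Unique_Loop} through the duality between the loop O($1$) model and the $\mathbb{Z}/2\mathbb{Z}$ gauge theory. For $q=2$ the gradient $d\sigma$ is a $\{0,1\}$-valued function on $(d-1)$-cells. The map $e\mapsto e^*$ is a local bijection between edges of $\mathbb{Z}^d$ and $(d-1)$-cells of the dual lattice, and it sends $\eta\sim\ell_{\Lambda,x'}$ to the law of $d\sigma$ under $\mu_{\Lambda,\beta}$, with $x'=1-e^{-\beta}$. So a gradient Gibbs measure is, by definition, the image under this relabelling of a Gibbs measure for $\ell^{2}_{\mathbb{Z}^d,x'}$, and it suffices to do three things:
\begin{itemize}
\item[(i)] check that $\beta<\beta_{\mathtt{area}}$ corresponds to $x'>x_c$;
\item[(ii)] invoke uniqueness from \Cref{thm:Unique_Loop};
\item[(iii)] check that ratio weak mixing survives the relabelling.
\end{itemize}

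For step (i), I would first unwind the parameter correspondence. The map $\beta\mapsto x'=1-e^{-\beta}$ is monotone, and the threshold $\beta_{\mathtt{area}}=\beta(p_{slab})$ is defined through the $q$-flow coupling $x=\frac{p}{p+q(1-p)}$. Checking which side of the threshold corresponds to $p>p_{slab}$ is the one place where care is needed, since the dual convention may reverse the direction. Granting that the stated range $\beta<\beta_{\mathtt{area}}$ is exactly $x'>x_{slab}$, I then use Bodineau's result $p_{slab}=p_c$ for $q=2$, which the paper already invokes. This gives $x'>x_{slab}=x_c$, which is exactly the hypothesis of \Cref{thm:Unique_Loop}. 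I would also note that for $q=2$ the $q$-flow model with $x=\frac{p}{2-p}$ coincides with $\ell_{\cdot,x}$, consistent with \Cref{Thm:The_Coupling} and the relation $p=\frac{2x}{1+x}$.

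For step (ii), \Cref{thm:Unique_Loop} says $\ell_{\mathbb{Z}^d,x'}$ is the unique Gibbs measure, so there is exactly one gradient Gibbs measure. One point to verify is that the Gibbs specification for gradients of $\mu$ transports exactly to \eqref{eq:definition_of_Gibbs_measure_for_loop_O(1)}, with sources becoming boundary conditions on $d\sigma$. Under the paper's definition, however, this holds essentially by fiat, since a gradient Gibbs measure is defined as the dual of a loop Gibbs measure.

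For step (iii), the mixing transfers because $e\mapsto e^*$ maps the edges of $\Lambda_n$ into dual cells within a box of radius $n+1$, and maps edges outside $\Lambda_{kn}$ to dual cells outside radius $kn-1$. Hence the events in \eqref{eq:RWM} correspond after adjusting $k$ and the constants $C,c$.

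I expect the main obstacle to be the bookkeeping in step (i): pinning down the direction of the duality ($x'=1-e^{-\beta}$ versus $x=\tanh\beta$-type relations) so that the regime $\beta<\beta_{\mathtt{area}}$ genuinely lands in $x'>x_c$. I would first try to settle this by tracing the definition $\beta_{\mathtt{area}}=\beta(p_{slab})$ through the stated formula.
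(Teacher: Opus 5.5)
Your route is the paper's. There the statement is recorded as a direct consequence of \Cref{thm:Unique_Loop} under the duality $e\mapsto e^*$, with Bodineau's $p_{slab}=p_c$ for $q=2$ giving $x_{slab}=x_c$. Your steps (ii) and (iii) are fine: gradient Gibbs measures are by definition duals of loop Gibbs measures, and the relabelling only shifts box radii by $O(1)$, which you absorb into $k$, $C$, $c$.

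The gap is step (i). You leave it as ``granted'', yet it is the only non-formal input, and with the formula you quote it is false. The map $\beta\mapsto 1-e^{-\beta}$ is increasing. So if $x'=1-e^{-\beta}$ were the loop parameter, $\beta<\beta_{\mathtt{area}}$ would mean $x'<x_{slab}=x_c$. That is the subcritical side, where uniqueness is exactly the open question in the introduction. The paper's text also writes $x'=1-\exp(-\beta)$, so you cannot import that formula; the statement itself is right, but only under the correct parameter.

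You should compute the parameter rather than grant it. Since $|d^{-1}(\tau)|$ is constant on the image of $d$, the law of $\tau=d\sigma$ under $\mu_{\Lambda,\beta}$ is proportional to $e^{\beta S}\propto e^{-\beta\,|\{\mathfrak{c}:\,\tau_{\mathfrak{c}}\neq 0\}|}$ on that image. The image consists of $(d-1)$-cochains whose coboundary vanishes on each $d$-cell, which after $e\mapsto e^*$ are exactly the even subgraphs. Hence the dual loop parameter is $x=e^{-\beta}$. The quantity $1-e^{-\beta}$ is instead the parameter of the plaquette random-cluster representation of $\mu_\beta$, whose dual edge random-cluster model at $p^*$ satisfies $\frac{p^*}{2-p^*}=e^{-\beta}$.

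Reading $\beta_{\mathtt{area}}=\beta(p_{slab})$ through this duality gives $e^{-\beta_{\mathtt{area}}}=x_{slab}=\frac{p_{slab}}{2-p_{slab}}$. Since $e^{-\beta}$ is decreasing, $\beta<\beta_{\mathtt{area}}\iff x>x_{slab}=x_c$, which is exactly the hypothesis of \Cref{thm:Unique_Loop}. This also matches the paper's description that small $\beta$ is the area-law regime, corresponding to $p>p_{slab}$. With this computation in place, your steps (ii) and (iii) finish the proof.
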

We believe one could get a perturbative version of this theorem via classical techniques such as the cluster expansion, but we do not know of any other proof of this statement which works throughout the subcritical phase of a gauge theory.

For the other values of $q,$  \Cref{thm:Unique_loop_qversion} has the following consequence:
\begin{theorem}
    For $q\in \mathbb{N}_{\geq 3},$ $d\geq 3$ and any $\beta\in (0,\beta_{\mathtt{area}}),$ $\mu_{\Z^d,\beta}$ has a unique gradient Gibbs measure if and only if $\phi^1_{\mathbb{Z}^d,p,q}=\phi^0_{\mathbb{Z}^d,p,q}$ for the corresponding dual random-cluster measures.
\end{theorem}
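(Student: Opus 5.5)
The plan is to transfer the statement through the duality between the $q$-flow model and the gradient of the Potts lattice gauge model, and then apply \Cref{thm:Unique_loop_qversion}. By definition, a gradient Gibbs measure of $\mu_{\mathbb{Z}^d,\beta}$ is the image, under the edge-to-dual-cell map $\eta\mapsto \eta^*$, of a Gibbs measure of $\ell^q_{\mathbb{Z}^d,x}$ with $x=1-\exp(-\beta)$. This map is a bijection of configuration spaces. It preserves locality up to a shift by $(1/2,\dots,1/2)$, so it sends Gibbs measures to gradient Gibbs measures bijectively. Uniqueness of gradient Gibbs measures is therefore equivalent to uniqueness of Gibbs measures for the $q$-flow model at the dual parameter $x$.

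Next, I would match the parameter ranges. The condition $\beta<\beta_{\mathtt{area}}=\beta(p_{slab})$ should correspond to $x>x_{slab}$, that is, to $p>p_{slab}$ under $x=\frac{p}{p+q(1-p)}$. This is how $\beta_{\mathtt{area}}$ was defined: the area-law regime corresponds to $p>p_{slab}$. I need to check that the composite map $\beta\mapsto x\mapsto p$ is monotone in the orientation that sends $(0,\beta_{\mathtt{area}})$ onto $p\in(p_{slab},1)$. This is the only point needing care. The relation $x'=1-e^{-\beta}$ stated in the text is increasing in $\beta$, which seems to conflict with small $\beta$ corresponding to large $p$. The intended convention must therefore be the dual one, with $\eta$ living on the dual side, so that small $\beta$ corresponds to large $p$. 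I would resolve this by taking $\beta_{\mathtt{area}}$ to be defined through that correspondence. Once the convention is fixed, the direction of the inequality follows from the definition.

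With $x>x_{slab}$, \Cref{thm:Unique_loop_qversion} says that every Gibbs measure of the $q$-flow model has the form $\phi[\operatorname{UD}_\omega]$ for some weak limit $\phi$ of random-cluster measures. It also says there is a unique such Gibbs measure if and only if $\phi^1_{\mathbb{Z}^d,p,q}=\phi^0_{\mathbb{Z}^d,p,q}$. Pushing this statement through the duality bijection gives the claimed equivalence for gradient Gibbs measures. The main obstacle is the bookkeeping of the parameter correspondence and orientations in the duality, not any new probabilistic input. I would address it by citing \cite[Section D.1]{hansen2025general} for the precise dual pairing and checking consistency with $d\geq3$ and codimension-one cells.
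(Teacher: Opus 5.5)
Your proposal is correct and follows the paper's own route: the paper presents this statement as an immediate consequence of \Cref{thm:Unique_loop_qversion}, since gradient Gibbs measures are defined as the duals of Gibbs measures of the $q$-flow model. Your worry about orientation is justified, and a direct computation settles it. Pushing $\mu_{\Lambda,\beta}[\sigma]\propto\exp(\beta S(\sigma))$ forward under $\sigma\mapsto d\sigma$ gives weight $\propto e^{-\beta|\mathrm{supp}(d\sigma)|}$. So the dual flow parameter is $x=e^{-\beta}$, which is decreasing in $\beta$, and the stated $x'=1-e^{-\beta}$ is presumably a typo. Hence $\beta<\beta_{\mathtt{area}}$ corresponds exactly to $x>x_{slab}$, that is, to $p>p_{slab}$.
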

Note that this statement is not a priori true. For the ordinary Ising model (that is, with codimension $d-1$), despite the fact that $\phi_{\Z^d}^1=\phi_{\Z^d}^0$, it is not true that there is a unique gradient Gibbs measure at all temperatures on $\mathbb{Z}^d$, as evidenced by the existence of Dobrushin states \cite{DobrushinStates}. 

\begin{appendix} 
\section{Miscellaneous A Priori Infinite Volume Identities} \label{sec:A_priori_inf}
In this appendix, we permit ourselves to jot down some basic, a priori facts about the zoo of graphical representations of the Ising model that are known to some in the community but hard to come by in writing. We claim no originality to \Cref{Prop:Folklore} below, although the presentation is likely idiosyncratic.

For any finite graph $G=(V,E)\subseteq \Z^d$ consider the wired graph $G^1=\mathbb{Z}^d/(\mathbb{Z}^d\setminus G)\cong G\cup\{\delta\}$. One checks that $\phi^1_{G,p}=\phi_{(G^{\circ})^1,p}$ under the natural identification of edges, where $G^{\circ}=G\setminus \partial_vG.$ We will permit ourselves the slight abuse of notation of writing $\phi^1_{G,p}=\phi_{G^1,p}$ instead in this section.\footnote{Since we are taking a limit $G\nearrow \mathbb{G}$ anyway, the choice of convention does not matter.}
As a step towards proving continuity of the Ising phase transition, Aizenman, Duminil-Copin and Sidoravicius  \cite{aizenman2015random} proved that the limit $\Prbcur_{\Z^d,\beta}^+=\lim_{G\nearrow \Z^d}\Prbcur_{G^1,\beta}$ exists. Similarly, one may introduce $\ell^+_{\Z^d,x}=\lim_{G\nearrow \Z^d} \ell_{G^1,x},$ which exists on similar grounds. Many natural questions around uniqueness in random current measures go towards a priori understanding whether $\Prbcur_{\Z^d,\beta}^+=\Prbcur_{\Z^d,\beta}^{\emptyset}$  or, equivalently, whether $\ell^+_{\Z^d,x}=\ell^0_{\mathbb{Z}^d,x}$ (see, e.g., \cite{hansen2026enddoublerandomcurrent} for how such a statement can be used). A posteriori, however, this is known:
\begin{proposition}[Folklore] \label{Prop:Folklore}
If $\mathbb{G}$ is transitive, amenable, and $\phi_\mathbb{G}^1=\phi_\mathbb{G}^0$, then $\ell_\mathbb{G}^+=\ell_\mathbb{G}^0$ and $\mathbf{P}_\mathbb{G}^+=\mathbf{P}_\mathbb{G}.$
\end{proposition}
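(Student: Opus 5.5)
The plan is to realise both $\ell^+_\mathbb{G}$ and $\ell^0_\mathbb{G}$ as uniform even subgraphs of random-cluster configurations, and then to use amenability to show that the wired source at $\delta$ becomes invisible in the limit.

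\textbf{Step 1: wired coupling in finite volume.} For finite $G$, \Cref{Thm:The_Coupling} applied on the wired graph $G^1$ with $A=\emptyset$ gives $\ell_{G^1,x}=\phi_{G^1,p}[\operatorname{UEG}_{\omega}[\;\cdot\;]]$, where $\phi_{G^1,p}=\phi^1_{G,p}$. Restricting to the edges of $G$, an even subgraph of $\omega$ on $G^1$ corresponds to a subgraph of $\omega|_G$ whose sources lie in $\partial_v G$ and pair up through $\delta$ in arbitrary parity.

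\textbf{Step 2: pass to the limit.} Take $G_n\nearrow\mathbb{G}$ and extract a subsequential weak limit of the joint law of $(\omega,\eta)$. The $\omega$-marginal converges to $\phi^1_\mathbb{G}=\phi_\mathbb{G}$. Conditionally on $\omega$, the $\eta$-marginal is invariant under flipping along every finite cycle of $\omega$, since the finite-volume law is. Hence the limit of $\eta$ given $\omega$ is a Gibbs measure for the uniform even subgraph of $\omega$.

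By \cite[Cor.~3.17]{hansen2023uniform}, or \Cref{Gibbs for cycles} with $q=2$, the Gibbs measures of the uniform even subgraph of $\omega$ are indexed by the ends of $\omega$. Transitivity, amenability and Burton--Keane \cite{BurtonKeane} give at most one infinite cluster. By the usual insertion-tolerance argument, $\phi_\mathbb{G}$-a.s.\ every infinite cluster is one-ended. Therefore $\ker\partial^\omega=(\ker\partial^\omega)^0$ and the uniform even subgraph of $\omega$ is unique. So every subsequential limit equals $\phi_\mathbb{G}[\operatorname{UEG}_\omega]$, and this is also the free limit $\ell^0_\mathbb{G}$ by \eqref{O(1)_as_inf_ueg} applied to free boundary conditions.

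\textbf{Main obstacle: uniformity of the conditional convergence.} Passing the conditional uniformity to the limit needs care, because local marginals of $\operatorname{UEG}_{\omega|_{G^1}}$ need not depend continuously on $\omega$. I would handle this via the finite-volume domain Markov property of $\eta$ given $\omega$. On each finite $\Lambda$, the conditional law of $\eta|_\Lambda$ given $\omega$ and $\eta|_{\Lambda^c}$ is uniform on even completions. This is a local, continuous specification and therefore survives weak limits. Uniqueness of the Gibbs measure for a.e.\ $\omega$ then identifies the limit.

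Amenability enters only through one-endedness and uniqueness of the infinite cluster; without it the uniform even subgraph of $\omega$ can genuinely carry an odd flow to infinity, which is exactly the wired Gibbs state.

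\textbf{Step 3: currents.} By \eqref{Sprinkle_to_RC} and \Cref{Loop_to_curren_sampling}, the current $\mathbf{P}$ is a continuous function of the loop O($1$) configuration and independent uniforms. This holds both in finite volume on $G^1$ and in the limit. Hence $\ell^+_\mathbb{G}=\ell^0_\mathbb{G}$ yields $\mathbf{P}^+_\mathbb{G}=\mathbf{P}_\mathbb{G}$.
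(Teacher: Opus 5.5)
Your argument is correct, but it is organised differently from the paper's.

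\textbf{The paper's route.} The paper first proves two representations on an arbitrary locally finite graph, with no one-endedness input: $\ell^0_{\mathbb{G}}=\phi^0_{\mathbb{G}}[\operatorname{UEG}^0_\omega]$ (\Cref{prop:general_loop_O(1)_thermodynamic_limit}) and $\ell^+_{\mathbb{G}}=\phi^1_{\mathbb{G}}[\operatorname{UEG}_\omega]$ (\Cref{prop:general_loop_O(1)_thermodynamic_limit_wired}). It couples the finite-volume FK configurations monotonically along the exhaustion, increasing for free and decreasing for wired. It then shows that $\operatorname{UEG}_{\omega_G}$ converges almost surely. In the free case this is \cite[Theorem 3.10]{hansen2023uniform}. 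In the wired case, any accumulation point is supported on, and invariant under, $\Omega_\emptyset(\omega_{\mathbb{G}})$, hence is its Haar measure. The proposition is then two lines: one-endedness gives $\operatorname{UEG}_\omega=\operatorname{UEG}^0_\omega$ a.s.\ \cite[Lemma 3.9]{angel2021uniform}, and $\phi^0_{\mathbb{G}}=\phi^1_{\mathbb{G}}$ finishes. Currents follow since they have the same odd parts, which is your Step~3.

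\textbf{Your route.} You skip the monotone coupling. The local specification of $\eta|_\Lambda$ given $(\omega,\eta|_{\Lambda^c})$ passes to subsequential limits. So the limiting conditional law of $\eta$ given $\omega$ is a Gibbs measure for the uniform even subgraph of $\omega$. One-endedness together with \Cref{Gibbs for cycles} then makes it unique. Thus you use one-endedness to \emph{identify} each limit, rather than to compare two limits already identified.

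\textbf{What each buys.} Your route is softer. It handles the free and wired sequences with one argument and yields existence of the limits as a by-product. But it says nothing when the uniform even subgraph of $\omega$ has several Gibbs measures, since a limit could then be any mixture. The paper's route pins down both limits on every locally finite graph, and so separates the representation step from the coincidence step.

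\textbf{Two minor points.}
First, \eqref{O(1)_as_inf_ueg} is stated on $\mathbb{Z}^d$. On a general $\mathbb{G}$ the free limit is $\phi^0_{\mathbb{G}}[\operatorname{UEG}^0_\omega]$. So either invoke one-endedness there as well, or simply rerun your Step~2 on the free sequence.
Second, the standard reason the unique infinite cluster is one-ended is \emph{deletion} tolerance plus uniqueness. Closing a finite cut between two ends would create two infinite clusters with positive probability; this is the footnote in \Cref{sec:qflow}. Insertion tolerance enters only through Burton--Keane.
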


Note that  $\ell_\mathbb{G}^+=\ell_\mathbb{G}^0\Longleftrightarrow \mathbf{P}_\mathbb{G}^+=\mathbf{P}_\mathbb{G}$ by considering odd parts and $\ell^+_{\mathbb{G}}=\ell^0_{\mathbb{G}}\Longrightarrow \phi^1_{\mathbb{G}}=\phi^0_{\mathbb{G}}$ follows from \Cref{Thm:The_Coupling} by continuity of taking unions. That is, we have highlighted the least trivial implication.  We note that we believe that neither the assumption of transitivity nor that of amenability should be necessary, but technical hurdles arise in cases where there are multiple infinite clusters. We believe that this paper is neither the time nor the place to address this issue.
The following two propositions enable a new proof of \Cref{Prop:Folklore}, which will be given at the end of this section.
For a general graph $\mathbb{G},$ we denote by $\Lambda_n(v)$ the ball of radius $n$ for the graph distance around $v$.
\begin{proposition}\label{prop:general_loop_O(1)_thermodynamic_limit}
For any infinite, locally finite, countable graph $\mathbb{G}$ and any $x\in [0,1],$ the limit 
$
\ell^0_{\mathbb{G},x}=\lim_{G\nearrow \mathbb{G}}\ell_{G,x}
$ exists and satisfies
$$
\ell^0_{\mathbb{G},x}[\;\cdot\,]=\phi_{\mathbb{G},p}^0[\operatorname{UEG}^0_\omega[\;\cdot\;]].
$$
Furthermore, $\phi^0_{\mathbb{G},p}=\ell^0_{\mathbb{G},x}\cup \mathbb{P}_{\mathbb{G},x}$.
\end{proposition}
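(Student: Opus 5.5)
We combine the finite-volume coupling (\Cref{Thm:The_Coupling} with $A=\emptyset$) with the classical existence of the free limit $\phi^0_{\mathbb{G},p}=\lim_{G\nearrow\mathbb{G}}\phi^0_{G,p}$, which holds on any locally finite graph by monotonicity in the domain via \eqref{eq_CBC} and \eqref{SMP}. For each finite $G$ we have $\ell_{G,x}=\phi^0_{G,p}[\operatorname{UEG}_\omega]$ by \eqref{O(1)_as_ueg} and $\phi^0_{G,p}=\ell_{G,x}\cup\mathbb{P}_{G,x}$ by \eqref{FK_is_sprinkled_O(1)}. The second identity passes to the limit along any subsequence on which $\ell_{G,x}$ converges, because union is continuous. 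It therefore suffices to show that $\ell_{G,x}$ converges, with limit $\phi^0_{\mathbb{G},p}[\operatorname{UEG}^0_\omega]$. Here $\operatorname{UEG}^0_\omega$ is the free uniform even subgraph of $\omega$, i.e.\ the Haar measure on the closure $(\ker\partial^\omega)^0$ of the finitely supported even subgraphs, as in \Cref{Gibbs for cycles}.

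For convergence, fix a finite $H\subseteq\mathbb{G}$ and an event $F\in\mathcal{A}_H$, and study the map $\omega\mapsto\operatorname{UEG}_{\omega|_G}[F]$. The marginal of $\operatorname{UEG}_{\omega|_G}$ on $H$ is the uniform measure on the image $\pi_H(\ker\partial^{\omega|_G})$ (Haar measure on a group pushes forward to Haar measure on its image). This image is generated by the restrictions to $H$ of the simple cycles of $\omega|_G$. It is increasing in both $\omega$ and $G$, and it stabilises once $G$ contains a cycle witnessing each achievable restriction. The limit subgroup is exactly $\pi_H((\ker\partial^\omega)^0)$, since finitely supported even subgraphs are spanned by finite cycles and $\pi_H$ is continuous with finite target. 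Hence $g_G(\omega):=\operatorname{UEG}_{\omega|_G}[F]\to\operatorname{UEG}^0_\omega[F]$ pointwise as $G\nearrow\mathbb{G}$, with $\omega$ held fixed.

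The main obstacle is exchanging this pointwise limit with the weak limit of the measures: $g_G$ depends on all of $\omega|_G$ rather than on a fixed window. I would split
\begin{align*}
|\phi^0_{G,p}[g_G]-\phi^0_{\mathbb{G},p}[\operatorname{UEG}^0_\omega[F]]|\leq |\phi^0_{G,p}[g_G]-\phi^0_{G,p}[g_{G'}]|+|\phi^0_{G,p}[g_{G'}]-\phi^0_{\mathbb{G},p}[g_{G'}]|+|\phi^0_{\mathbb{G},p}[g_{G'}]-\phi^0_{\mathbb{G},p}[\operatorname{UEG}^0_\omega[F]]|
\end{align*}
for a fixed intermediate $G'\subseteq G$. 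The middle term vanishes as $G\nearrow\mathbb{G}$ because $g_{G'}$ is local. The last term is small for large $G'$ by dominated convergence. For the first term, use that $\pi_H(\ker\partial^{\omega|_{G'}})\subseteq\pi_H(\ker\partial^{\omega|_G})$, so the two uniform measures coincide unless the subgroup grows. The event that it grows is increasing in $\omega$ and (up to the finitely many cosets involved) expressible as the occurrence of a cycle through $H$ leaving $G'$ but lying in $G$. Under \eqref{eq_CBC}, $\phi^0_{G,p}\preceq\phi^0_{\mathbb{G},p}$ restricted to $G$, and this domination bounds the probability of that increasing event by its $\phi^0_{\mathbb{G},p}$-probability. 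That probability tends to $0$ as $G'\nearrow\mathbb{G}$, uniformly in $G$, by the pointwise stabilisation established above. Making this uniformity precise, by encoding "the image subgroup strictly grows between $G'$ and $G$" as a monotone event, is the delicate step.

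Finally, for the identity $\phi^0_{\mathbb{G},p}=\ell^0_{\mathbb{G},x}\cup\mathbb{P}_{\mathbb{G},x}$, take limits in \eqref{FK_is_sprinkled_O(1)}: the left side converges to the free measure and the right side converges by continuity of $\cup$ and the convergence just proved. The case $x\in\{0,1\}$ is trivial or direct: at $x=1$ we have $p=1$, and the formula reduces to the Haar measure on the free cycle space of $\mathbb{G}$.
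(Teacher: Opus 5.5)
Your architecture (the finite-volume identities, pointwise stabilisation of the restricted subgroup, the three-term split, continuity of $\cup$) is sound. The step you flag as delicate, however, fails as written. The event $\{\pi_H(\ker\partial^{\omega|_{G'}})\neq\pi_H(\ker\partial^{\omega|_{G}})\}$ is \emph{not} increasing in $\omega$. Take two cycles $C'\subseteq G'$ and $C\subseteq G$ through the same edge of $H$, with $C$ leaving $G'$. Then $\omega=C$ exhibits strict growth, while the larger configuration $\omega=C\cup C'$ does not.

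The increasing surrogate you name, ``an open cycle through $H$ that leaves $G'$ but lies in $G$'', does not rescue the step either. Its $\phi^0_{\mathbb{G},p}$-probability need not tend to $0$: in the supercritical regime on $\mathbb{Z}^d$, a given edge lies, with probability bounded below, on open cycles exiting every finite set. Pointwise stabilisation only says that long cycles eventually stop producing \emph{new} restrictions to $H$, not that they stop existing. So domination applied to that surrogate gives no vanishing bound, and the first term of your split remains uncontrolled.

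The paper sidesteps this uniformity problem entirely. It places all $\phi^0_{G,p}$ on one probability space via an increasing coupling with $\omega_G\uparrow\omega_{\mathbb{G}}\sim\phi^0_{\mathbb{G},p}$. Then $\pi_H(\ker\partial^{\omega_G})$ is nondecreasing in $G$, contained in $\pi_H((\ker\partial^{\omega_{\mathbb{G}}})^0)$, and eventually contains the restriction of every finite cycle of $\omega_{\mathbb{G}}$. Hence $\operatorname{UEG}_{\omega_G}[F]\to\operatorname{UEG}^0_{\omega_{\mathbb{G}}}[F]$ almost surely (this is what \cite[Theorem 3.10]{hansen2023uniform} supplies), and dominated convergence finishes.

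If you want to keep your split, the correct monotone encoding works subgroup by subgroup. Write $S_G=\pi_H(\ker\partial^{\omega|_G})$ and $E_{S,G}=\{S_G\supseteq S\}$. Each $E_{S,G}$ is increasing in $\omega$ and in $G$, and taking $S=S_G$ shows $\{S_{G'}\neq S_G\}\subseteq\bigcup_S(E_{S,G}\setminus E_{S,G'})$ over the finitely many subgroups $S$. Hence
\[
\phi^0_{G,p}[S_{G'}\neq S_G]\leq\sum_S\bigl(\phi^0_{G,p}[E_{S,G}]-\phi^0_{G,p}[E_{S,G'}]\bigr).
\]
Domination bounds the first probability by $\phi^0_{\mathbb{G},p}[\bigcup_{G''}E_{S,G''}]$. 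The second is local and converges to $\phi^0_{\mathbb{G},p}[E_{S,G'}]$. The resulting bound vanishes as $G'\nearrow\mathbb{G}$ by monotone convergence. The point is to apply domination only to the increasing piece and local convergence to the other, rather than domination to a non-monotone difference.
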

\begin{proof}
For finite $G\subseteq \mathbb{G},$ we identify $\phi^0_{G,p}$ with a measure on $\{0,1\}^{E(\mathbb{G})}$ for which edges outside of $G$ are deterministically closed. Pick an increasing coupling of finite graphs $(\omega_G)_{G\subseteq \mathbb{G}}$ of $\phi^0_{G,p}$ in the sense that if $G\subseteq G',$ then $\omega_G\preceq \omega_{G'}$ almost surely. Denote by $\mathcal{P}$ the joint distribution. In particular, $\omega_{\mathbb{G}}=\lim_{G\nearrow \mathbb{G}}\omega_G$ exists pointwise almost surely and $\omega_{\mathbb{G}}\sim \phi^0_{\mathbb{G},p}.$ By \cite[Theorem 3.10]{hansen2023uniform}, we get that, almost surely, $\lim_{G\nearrow \mathbb{G}}\operatorname{UEG}_{\omega_G}=\operatorname{UEG}^0_{\omega_{\mathbb{G}}}$ in the sense of weak limits. 

By the Dominated Convergence Theorem\footnote{There is, of course, no version of the Dominated Convergence Theorem for general nets, but note that there are only countably many finite subgraphs of $\mathbb{G}$ and so the limit can be understood purely in terms of sequences.}, for any event $A$ depending only on finitely many edges,
\begin{align*}
  \lim_{G\nearrow \mathbb{G}}\ell_{G,x}[A] &=\lim_{G\nearrow \mathbb{G}}\phi^0_{G,p}[\operatorname{UEG}_{\omega}[A]] =\lim_{G\nearrow \mathbb{G}} \mathcal{P}[\operatorname{UEG}_{\omega_G}[A]] \\
  &=\mathcal{P}[\lim_{G\nearrow \mathbb{G}} \operatorname{UEG}_{\omega_G}[A]]=\mathcal{P}[\operatorname{UEG}^0_{\omega_{\mathbb{G}}}[A]]=\phi^0_{\mathbb{G},p}[\operatorname{UEG}^0_{\omega}[A]].  
\end{align*}
Since the finitely supported events form an intersection stable generating set for the Borel $\sigma$-algebra on $\{0,1\}^{E(\mathbb{G})},$ this establishes the desired. The second statement follows from the finite volume version (cf. \Cref{Thm:The_Coupling}), since taking unions is a continuous operation.
\end{proof}

\begin{proposition}\label{prop:general_loop_O(1)_thermodynamic_limit_wired}
For any infinite, locally finite, countable graph $\mathbb{G}$ and any $x\in [0,1],$ the limit 
$
\ell^+_{\mathbb{G},x}=\lim_{G\nearrow \mathbb{G}}\ell_{G^1,x}
$ exists and satisfies
$$
\ell^+_{\mathbb{G},x}[\;\cdot\,]=\phi_{\mathbb{G},p}^1[\operatorname{UEG}_\omega[\;\cdot\;]].
$$

Furthermore, $\phi^1_{\mathbb{G},p}=\ell^+_{\mathbb{G},x}\cup \mathbb{P}_{\mathbb{G},x}.$
\end{proposition}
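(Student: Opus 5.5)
We follow the proof of \Cref{prop:general_loop_O(1)_thermodynamic_limit}, replacing free by wired throughout. The plan is to combine the finite-volume coupling with a monotone coupling of wired measures, and then pass to the limit using a convergence statement for uniform even subgraphs along decreasing sequences of configurations.

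The first step is the finite-volume identity. For finite $G\subseteq\mathbb{G}$, \Cref{Thm:The_Coupling} applied on the finite graph $G^1$ with $A=\emptyset$ gives
\[
\ell_{G^1,x}[\;\cdot\;]=\phi_{G^1,p}[\operatorname{UEG}_{\omega}[\;\cdot\;]]
\qquad\text{and}\qquad
\phi_{G^1,p}=\ell_{G^1,x}\cup\mathbb{P}_{G^1,x}.
\]
Here an even subgraph of $G^1$ is identified with a configuration on $E(\mathbb{G})$ that vanishes outside $E(G)$. Note that edges to $\delta$ are edges to the boundary, so even subgraphs of $\omega|_{G^1}$ correspond to subgraphs of $\omega^1$ whose odd-degree vertices all lie on $\partial_v G$.

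The second step is the monotone coupling. By \eqref{eq_CBC} and the domain Markov property, $\phi^1_{G,p}$ restricted to $E(G)$ is stochastically decreasing in $G$. By Strassen's theorem we can realise $(\omega_G)_G$ decreasing along an exhaustion $G_1\subseteq G_2\subseteq\cdots$, so that $\omega_{\mathbb{G}}=\lim_n\omega_{G_n}$ exists almost surely and has law $\phi^1_{\mathbb{G},p}$. Once the first display is passed to the limit, the union identity $\phi^1_{\mathbb{G},p}=\ell^+_{\mathbb{G},x}\cup\mathbb{P}_{\mathbb{G},x}$ follows from the second finite-volume display by continuity of $\cup$, exactly as in the free case. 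Independence of the limit from the exhaustion then comes from interleaving two exhaustions.

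The third step, which is the main obstacle, is the wired analogue of \cite[Theorem 3.10]{hansen2023uniform}: almost surely,
\[
\operatorname{UEG}_{\omega_{G_n}^{1}}\;\longrightarrow\;\operatorname{UEG}_{\omega_{\mathbb{G}}}\quad\text{weakly},
\]
where the left-hand side is the uniform even subgraph of the wired finite graph. The natural limit is the wired uniform even subgraph $\operatorname{UEG}^1$, which is Haar measure on the full cycle space $\ker\partial^{\omega}$, and this is what $\operatorname{UEG}_\omega$ should denote in the statement. The finite-volume measure is Haar measure on the closed subgroup $H_n$ of even subgraphs of $\omega_{G_n}^{1}$. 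Each $H_n$ is viewed as a compact subgroup of $\{0,1\}^{E(\mathbb{G})}$ consisting of those $\eta\subseteq\omega_{G_n}$ with $\partial\eta\subseteq\partial_vG_n$.

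To prove this convergence, we use that Haar measures of compact subgroups converge weakly whenever the subgroups converge in the appropriate sense. Since the $\omega_{G_n}$ decrease, it suffices to show that, for each finite edge set $F$, the projections $\pi_F(H_n)$ eventually equal $\pi_F(\ker\partial^{\omega_{\mathbb{G}}})$. One inclusion, $\pi_F(\ker\partial^{\omega_{\mathbb{G}}})\subseteq\pi_F(H_n)$, holds for all large $n$. Indeed, $\ker\partial^{\omega_{\mathbb{G}}}$ is generated by finite loops and bi-infinite or one-ended infinite paths, and such a path truncated at $\partial_vG_n$ lies in $H_n$ once $\omega_{G_n}$ agrees with $\omega_{\mathbb{G}}$ on a large enough neighbourhood of $F$. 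For the reverse inclusion, take a compactness limit: given $\eta_n\in H_n$ with fixed restriction to $F$, a subsequence converges pointwise to some $\eta$. This limit satisfies $\eta\subseteq\omega_{\mathbb{G}}$ because the configurations decrease, and $\partial\eta=\emptyset$ because boundary vertices escape to infinity, so $\eta\in\ker\partial^{\omega_{\mathbb{G}}}$. Only finitely many projections to $F$ are possible, so this identifies the limit of $\pi_F(H_n)$. With this in hand, dominated convergence finishes the proof as before.
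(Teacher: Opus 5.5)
Your proof is correct and is essentially the paper's argument. The two inclusions you prove for $\pi_F(H_n)$ are exactly the paper's observations that $\Omega_{\emptyset}(\omega_{\mathbb{G}})\subseteq\Omega_{\emptyset}(\omega_G)$ and $\bigcap_G\Omega_{\emptyset}(\omega_G)=\Omega_{\emptyset}(\omega_{\mathbb{G}})$; the first makes any accumulation point of $\operatorname{UEG}_{\omega_G}$ invariant under $\Omega_{\emptyset}(\omega_{\mathbb{G}})$, and the second makes it supported there. You finish by eventual equality of uniform finite-dimensional marginals where the paper appeals to uniqueness of Haar measure. The paper also keeps the edges outside $G$ as independent Bernoulli($p$) edges precisely so that these containments hold literally rather than after restricting to $E(G)$.

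One simplification: your first inclusion holds for every $n$ with $F\subseteq E(G_n)$. The decreasing coupling already gives $\omega_{\mathbb{G}}|_{E(G_n)}\subseteq\omega_{G_n}$, so the restriction to $E(G_n)$ of any $\eta\in\ker\partial^{\omega_{\mathbb{G}}}$ lies in $H_n$, and you need neither the decomposition into loops and paths nor agreement of $\omega_{G_n}$ with $\omega_{\mathbb{G}}$ near $F$.
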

\begin{proof}
This will be similar to the above, except that we do not have access to \cite[Theorem 3.10]{hansen2023uniform}. However, uniqueness of the Haar measure saves the day. For finite $G\subseteq \mathbb{G},$ we identify $\phi^1_{G,p}$ with a measure on $\{0,1\}^{E(G^1)}\cong \{0,1\}^{E(\mathbb{G})}$ for which edges with no end-point\footnote{The reason for not throwing away the  edges outside $G$ is that keeping them admits direct comparisons of the respective spaces of even subgraphs, which previously proved an obstruction to getting an explicit a priori description of \Cref{Thm:The_Coupling} in infinite volume in the wired case (say, when $\phi_{\mathbb{G}}^0\neq \phi_{\mathbb{G}}^1$) - contrast with \cite[Theorem 3.11]{hansen2023uniform}. The reason we choose Bernoulli percolation with parameter $p$ is that this is the measure obtained upon identifying all vertices in the complement of $G$.}  in $G$ are sampled according to a Bernoulli percolation with parameter $p$ - with components counted in $G^1$. Pick an increasing coupling of finite graphs $(\omega_G)_{G\subseteq \mathbb{G}}$ of $\phi^1_{G,p}$ in the sense that if $G\subseteq G',$ then $\omega_G\succeq \omega_{G'}$ almost surely. Denote by $\mathcal{P}$ the joint distribution. In particular, $\omega_{\mathbb{G}}=\lim_{G\nearrow \mathbb{G}}\omega_G$ exists pointwise almost surely and $\omega_{\mathbb{G}}\sim \phi^1_{\mathbb{G},p}.$ We want to argue that $\lim_{G\nearrow \mathbb{G}} \operatorname{UEG}_{\omega_G}=\operatorname{UEG}_{\omega_\mathbb{G}}$ almost surely. By compactness of the space of probability measures on $\{0,1\}^{E(\mathbb{G})},$ it suffices to uniquely characterise any accumulation point. As $\operatorname{supp}(\operatorname{UEG}_{\omega_G})=\Omega_{\emptyset}(\omega_G),$ any accumulation point must have support on $\cap_{G\subseteq \mathbb{G}, G \text{ finite}}\Omega_{\emptyset}(\omega_G)=\Omega_{\emptyset}(\omega_{\mathbb{G}}).$ Furthermore, for any finite graph $G\subseteq \mathbb{G},$ $\Omega_{\emptyset}(\omega_{\mathbb{G}})\subseteq \Omega_{\emptyset}(\omega_{G})$ and hence, $\operatorname{UEG}_{\omega_G}$ is invariant under the action of $\Omega_{\emptyset}(\omega_{\mathbb{G}}).$ The conclusion is that any accumulation point $\operatorname{UEG}_{\omega_G}$ is supported on $\Omega_{\emptyset}(\omega_{\mathbb{G}})$ and invariant under the action of $\Omega_{\emptyset}(\omega_{\mathbb{G}})$. By uniqueness of the Haar measure, the only such measure is $\operatorname{UEG}_{\omega_{\mathbb{G}}}$ and hence, $\lim_{G\nearrow \mathbb{G}}\operatorname{UEG}_{\omega_G}=\operatorname{UEG}_{\omega_{\mathbb{G}}}$ almost surely.

In particular, by the Dominated Convergence Theorem, for any event $A$ depending only on finitely many edges,
\begin{align*}
\lim_{G\nearrow \mathbb{G}}\ell_{G^1,x}[A]
&=\lim_{G\nearrow \mathbb{G}}\phi^1_{G,p}[\operatorname{UEG}_{\omega}[A]] =\lim_{G\nearrow \mathbb{G}} \mathcal{P}[\operatorname{UEG}_{\omega_G}[A]]\\
&=\mathcal{P}[\lim_{G\nearrow \mathbb{G}} \operatorname{UEG}_{\omega_G}[A]]=\mathcal{P}[\operatorname{UEG}_{\omega_{\mathbb{G}}}[A]]=\phi^1_{\mathbb{G},p}[\operatorname{UEG}_{\omega}[A]].
\end{align*}
Since the finitely supported events form an intersection stable generating set for the Borel $\sigma$-algebra on $\{0,1\}^{E(\mathbb{G})},$ this establishes the desired. The second statement follows from the finite volume version (cf. \Cref{Thm:The_Coupling}), since taking unions is a continuous operation.
\end{proof}
And so, we are in position:

\begin{proof}[Proof of \Cref{Prop:Folklore}]
By amenability and transitivity, $\phi_{\mathbb{G},p}$ is almost surely one-ended if it percolates for some $p<1$. Accordingly, $\operatorname{UEG}_{\omega}=\operatorname{UEG}_{\omega}^0$ almost surely by \cite[Lemma 3.9]{angel2021uniform}. Thus, \Cref{prop:general_loop_O(1)_thermodynamic_limit} and \Cref{prop:general_loop_O(1)_thermodynamic_limit_wired}  show that $\ell_\mathbb{G}^+=\ell_\mathbb{G}^0.$ This immediately implies $\Prbcur_\mathbb{G}^+=\Prbcur_\mathbb{G}^{\emptyset}$ since they have the same odd parts. 
\end{proof}

An alternative, and plausibly more standard, proof would be to plug $\phi_\mathbb{G}^1=\phi_\mathbb{G}^0$ into the Edwards-Sokal coupling to deduce that the gradient Gibbs measures for the Ising model with free respectively $+$ boundary conditions agree. Then, one may retrace the proof of the existence of $\Prbcur_\mathbb{G}^+$ and $\Prbcur_\mathbb{G}^{\emptyset}$ (cf. the proof of Theorem 2.3 in \cite{aizenman2015random}) and realise that the only Ising correlations used are functions of the gradient. We also note that this strategy should work out without the assumptions of transitivity and amenability.

A complementary perspective is the following analogue of a well-known result (cf. the proof of \cite[Theorem 5.33a)]{Gri06}) for the random-cluster model:
\begin{proposition} \label{Unique Non-perc}
For any infinite, locally finite, countable graph $\mathbb{G}$ and any $x\in [0,1],$ any Gibbs measure $\ell_{\mathbb{G},x}$ of the loop O($1$) model which does not percolate is necessarily $\ell^0_{\mathbb{G},x}.$
\end{proposition}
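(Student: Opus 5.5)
The plan is to show that a non-percolating Gibbs measure $\ell$ agrees with $\ell^0_{\mathbb{G},x}$ on every cylinder event. Fix a finite set of edges $F_0$ and an event $F$ depending only on $F_0$. The idea is that, since $\ell$ does not percolate, the cluster of $F_0$ in $\eta$ is almost surely finite. So we may find a large finite region whose exterior configuration imposes no sources on a neighbourhood of $F_0$. Concretely, fix an exhaustion $G_n\nearrow\mathbb{G}$ and let $\mathcal{E}_n$ be the event that no vertex of $\partial_v G_n$ is connected in $\eta$ to $V(F_0)$. By non-percolation, $\ell[\mathcal{E}_n]\to 1$.

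The key step is to decompose $\eta$ according to the clusters touching $\partial_v G_n$. Let $\mathcal{C}_n$ be the union of the $\eta$-clusters intersecting $\partial_v G_n$. Then $\mathcal{C}_n$ is determined by exploring from $\partial_v G_n$ and from outside $G_n$. On $\mathcal{E}_n$ it is disjoint from $V(F_0)$, and each of its clusters is an even subgraph restricted appropriately, since every cluster of an even graph has even degrees everywhere. The event $\{\mathcal{C}_n = C\}$ is measurable with respect to the edges of $C$ together with its edge boundary, all of which are closed outside $C$. I would condition on $\mathcal{C}_n = C$ and on $\eta|_{\mathbb{G}\setminus G_n}$, and apply the Gibbs property \eqref{eq:definition_of_Gibbs_measure_for_loop_O(1)} on the finite graph $H = G_n\setminus (C\cup \partial_e C)$, the remaining unexplored region. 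Since $\eta$ restricted to $C$ has even degree at every vertex (as a union of full clusters of an even graph) and the boundary edges are closed, the induced source set on $H$ is empty. Hence the conditional law on $H$ is $\ell^{\emptyset}_{H,x}$, which is the loop model on $H$ with free boundary.

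The step I expect to be the main obstacle is that conditioning on a random explored set is only a stopping-set version of the Markov property. I would justify it by summing over the countably many finite possibilities of the explored set inside $G_n$, in the standard way used for the random-cluster model, as in the proof in \cite{Gri06}.

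It remains to compare $\ell^{\emptyset}_{H,x}[F]$ with $\ell^0_{\mathbb{G},x}[F]$, where $H$ is a random finite subgraph containing a fixed neighbourhood of $F_0$. Here I would use \Cref{prop:general_loop_O(1)_thermodynamic_limit}. Its proof gives convergence along any increasing family, and I would like a uniform statement: for every $\varepsilon$ there is a finite $K$ such that every finite $H\supseteq K$ satisfies $|\ell_{H,x}[F]-\ell^0_{\mathbb{G},x}[F]|<\varepsilon$. The limit over the directed set of finite subgraphs in \Cref{prop:general_loop_O(1)_thermodynamic_limit} is exactly a net limit, which gives this uniformity. To apply it I need $H\supseteq K$ with high probability, so I would strengthen $\mathcal{E}_n$ to the event that no cluster meeting $\partial_v G_n$ reaches $K$. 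By non-percolation, this event still has probability tending to one as $n\to\infty$ for fixed $K$.

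Combining the pieces gives
\[
|\ell[F]-\ell^0_{\mathbb{G},x}[F]|\le \varepsilon + \ell[\mathcal{E}_n^c]\to\varepsilon ,
\]
and letting $\varepsilon\downarrow 0$ concludes the proof.
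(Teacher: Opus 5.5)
Your argument is correct and follows essentially the same route as the paper: you condition on the union of the clusters of the boundary (together with the exterior), use the Gibbs property to identify the conditional law on the unexplored region as the free loop measure, and conclude with \Cref{prop:general_loop_O(1)_thermodynamic_limit}. The only difference is the final limiting step, where the paper uses backward martingale convergence and dominated convergence along the increasing random exhaustion $\Lambda_N(v)\setminus \mathbf{C}_N$, whereas you use a uniform-in-$H$ (net) form of the thermodynamic limit; this is legitimate because $\mathbb{G}$ is countable, so a failure of that uniformity would produce an increasing exhaustion along which the limit in \Cref{prop:general_loop_O(1)_thermodynamic_limit} fails.
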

\begin{remark}
For $\mathbb{G}=\mathbb{H}$ the hexagonal lattice, $\phi_{\mathbb{H},p}[\operatorname{UEG}_{\omega}[\;\cdot\;]]$ never percolates for any $p\in[0,1]$ and so, the statement has nontrivial content even in the regime $x>x_c.$
\end{remark}
\begin{proof}
Let $v\in V(\mathbb{G}),$ $k\in \mathbb{N}$ and $A\in \mathcal{A}_{\Lambda_k}$ be given. For $N>k$, denote by $\mathbf{C}_N$ the union of the clusters of $\partial_v \Lambda_N(v)$ in $\Lambda_N(v)$. If $\mathbf{C}_N$ does not intersect $\Lambda_k(v),$ by the Gibbs property, \eqref{eq:definition_of_Gibbs_measure_for_loop_O(1)}, then as random variables,
$$
\ell_{\mathbb{G},x}[A\mid \mathbf{C}_N,\eta|_{\Lambda_N^c(v)} ]=\ell_{\mathbb{G},x}[A\mid \mathbf{C}_N ]=\ell_{\Lambda_N(v)\setminus \mathbf{C}_N,x}[A].
$$
Since $\sigma((\mathbf{C}_N,\eta|_{\Lambda_N(v)^c}))$ is decreasing in $N,$ we can apply the Backwards Martingale Convergence Theorem and get
$$
\ell_{\mathbb{G},x}[A]= \lim_{N\to\infty}\mathbb{E}[\ell_{\mathbb{G},x}[A\mid \mathbf{C}_N,\eta|_{\Lambda_N(v)^c} ]]=\mathbb{E}[\lim_{N\to\infty}\ell_{\mathbb{G},x}[A\mid \mathbf{C}_N,\eta|_{\Lambda_N(v)^c} ]],
$$
where the second equality is due to the Dominated Convergence Theorem. Since $\ell_{\mathbb{G},x}$ does not percolate, we get that $\Lambda_N(v)\setminus \mathbf{C}_N\nearrow \mathbb{G}$ almost surely and so, by \Cref{prop:general_loop_O(1)_thermodynamic_limit},
$$
\mathbb{E}[\lim_{N\to\infty}\ell_{\mathbb{G},x}[A\mid \mathbf{C}_N,\eta|_{\Lambda_N(v)^c} ]]=  \mathbb{E}[\lim_{N\to\infty}\ell_{\Lambda_N(v)\setminus \mathbf{C}_N,x}[A]]=\mathbb{E}[\ell^0_{\mathbb{G},x}[A]]=\ell^0_{\mathbb{G},x}[A].
$$
\end{proof}

\section{Pisztora's giant meets the boundary}

\label{sec:Pisztora_giant meets_the_boundary}
The geometry of the random-cluster model beyond the so-called slab percolation threshold has been well-understood since the seminal work of Pisztora \cite{Pis96}: In a finite box, the infinite cluster manifests as a single giant cluster, and all other clusters are small.

The goal of this appendix is to prove \Cref{prop:giant_touches_face_density}, showing that for any fixed subset of the boundary, Pisztora's giant cluster will touch some proportion, even under adverse boundary conditions. In \Cref{sec:Catch}, we will use \Cref{prop:giant_touches_face_density} to prove the following proposition (a special case of \Cref{theorem:any_incerasing_event}) which is also sufficent to deduce the main theorems.

\begin{proposition}\label{proposition:unique_crossing_with_fa_old_version}
For any $d\geq 2$ and $p>p_c,$ there exists $C>0$ such that for any $N\in \mathbb{N}$ and any $A\subseteq \partial_v \Lambda_N$ with $|A|$ even,
$$
\phi^0_{\Lambda_N,p}[\mathtt{UC}_N\mid \mathcal{F}_A]\geq 1-\exp(-CN).
$$
\end{proposition}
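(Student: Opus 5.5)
The quickest route is to observe that the statement is the special case $\xi\equiv 0$, $\mathcal{F}=\mathcal{F}_A$ of \Cref{theorem:any_incerasing_event}. This works because $\mathcal{F}_A$ is increasing: if $\eta\subseteq\omega$ with $\partial\eta=A$ and $\omega\preceq\omega'$, then $\eta\subseteq\omega'$. However, the point of this appendix is an independent argument that avoids the conditional point-to-point estimate of \Cref{Pt-to-pt-decay}. So I would instead follow the multiscale ``catching'' strategy sketched in the introduction, with \Cref{prop:giant_touches_face_density} as the main geometric input.

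\textbf{Step 1: reduce to the unconditioned measure.} The conditioning on $\mathcal{F}_A$ is felt only through the requirement that each cluster meets $A$ an even number of times. Consider the event that all points of $A$ are connected, inside the outer annulus $\Lambda_N\setminus\Lambda_{N/2}$, to a single giant cluster, together possibly with a few points paired among themselves locally. On this event, by the pairing argument of \Cref{lemma:FAevent_and_unique_crossings}, $\mathcal{F}_A$ is automatically satisfied up to a parity constraint that is cheap to fix. The conditional law on the inner box is then comparable to the unconditioned one, for which $\mathtt{UC}_N$ holds with probability $1-e^{-cN}$ by \Cref{thm:pisztora} (or circuits when $d=2$).

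\textbf{Step 2: catch sources scale by scale.} To get there, I would decompose $\Lambda_N\setminus\Lambda_{N/2}$ into $K\asymp\log N$ thin concentric layers, each of width of order $N/\log N$, and explore from the outside inwards. At each layer, \Cref{prop:giant_touches_face_density} ensures that, uniformly in the boundary condition produced by the exploration so far, the local Pisztora giant touches a positive fraction of the currently ``unmatched'' source-clusters arriving from outside. These become connected to the giant, and pairing within the giant removes them from the constraint. The number of unmatched sources therefore shrinks geometrically, and after $O(\log N)$ layers at most $O(1)$ remain. Those few can be handled by a finite-energy/\eqref{FKG} cost, as in \eqref{eq:GAprob_bound}.

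\textbf{Step 3: control the conditioning.} The difficulty is that we must bound $\phi^0_{\Lambda_N,p}[\mathtt{UC}_N^c\cap\mathcal{F}_A]/\phi^0_{\Lambda_N,p}[\mathcal{F}_A]$, where $\phi^0_{\Lambda_N,p}[\mathcal{F}_A]$ may be exponentially small in $|A|$. I would handle this by using \eqref{SMP} at each layer, so that the conditioning is always on an increasing event in the remaining inner region. Then \eqref{FKG} shows that Pisztora-type and giant-touching events, which have probability $1-e^{-cN^{d-1}}$ or $1-e^{-cN}$, survive the conditioning with a loss only in the ratio of the two measures. This is where the super-exponential surface-order bound of \Cref{thm:pisztora} is needed: it must beat the potentially $e^{-C|A|}\ge e^{-CN^{d-1}}$ cost of $\mathcal{F}_A$.

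I expect Step 3 to be the main obstacle, because the Pisztora event is not increasing and so cannot be transported through the conditioning by FKG alone. I would first try to sandwich it between increasing events (existence of crossings in all sub-blocks) and their complements, bounding the non-monotone part by its unconditioned probability divided by a lower bound on $\phi[\mathcal{F}_A\mid\cdot]$.
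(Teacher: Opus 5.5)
Your opening observation is already a complete proof. $\mathcal{F}_A$ is increasing, and it has positive probability since the fully open configuration lies in it, so the statement is the case $\xi\equiv 0$, $\mathcal{F}=\mathcal{F}_A$ of \Cref{theorem:any_incerasing_event}. This is exactly how the paper regards the proposition: the proof of \Cref{theorem:any_incerasing_event} via \Cref{Pt-to-pt-decay} does not use the appendices, and the catching argument there is explicitly a \emph{second} proof. The independent catching route you then commit to, however, does not close as sketched, for two reasons.

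\textbf{Scales and endpoint in Step 2.} You use $\asymp\log N$ layers of width $N/\log N$ with \Cref{prop:giant_touches_face_density} as the per-layer input. That proposition only guarantees success with probability at least a constant $\varepsilon$. So the best failure bound your scheme can certify is of order $(1-\varepsilon)^{O(\log N)}=N^{-O(1)}$, never $e^{-CN}$.

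The paper reverses the scales. It uses $\lfloor N/(4\log N)\rfloor$ annuli $\mathtt{Ann}^k$ of width $\log N$ and boosts the per-layer success to $1-N^{-\alpha}$ (\Cref{lemma:fraction_connects_to_supergiant}). It does this by applying \Cref{prop:giant_touches_face_density} in every $\log N$-box of a tiling, concentrating via Hoeffding over the many boxes, and gluing with \Cref{lemma:all_giants_glue}. The rate $e^{-CN}$ is then precisely $(N^{-\alpha})^{N/\log N}$ in \Cref{Lemma:Bernoulli_LD}.

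This boost needs many unmatched sources, so one cannot push their number down to $O(1)$, and the paper does not try. \Cref{The_Catching_Prop} only brings the number of $A$-clusters in $\Lambda_N\setminus\Lambda_{3N/4}$ below $\delta N$. After exploring these clusters, \eqref{SMP} leaves a random-cluster measure on the unexplored inner region conditioned on $\mathcal{F}_{A'}$ with $|A'|<\delta N$. There the a priori bound $\phi^{\xi}_{\Lambda_k,p}[\mathcal{F}_{A'}]\ge c^{|A'|}-e^{-c'k^{d-1}}$ (all of $A'$ joining the giant, via \Cref{Touch_Cluster} and \eqref{FKG}) is played against unique crossings with probability $1-e^{-c''k}$ uniformly in $\xi$, see \eqref{eq:last}, with $\delta=\min\{c',c''\}/(2\log(1/c))$. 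Working inside $\Lambda_{3N/4}$ also fixes a mismatch in your Step 1: there you explore the very annulus $\Lambda_N\setminus\Lambda_{N/2}$ in which $\mathtt{UC}_N$ has to be verified.

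\textbf{The ratio bound in Step 3.} When $|A|\asymp N^{d-1}$, $\phi^0_{\Lambda_N,p}[\mathcal{F}_A]$ can be of order $e^{-CN^{d-1}}$. Beating it with Pisztora's $e^{-cN^{d-1}}$ would need $c>C$, which nothing guarantees, and sandwiching $\mathtt{Pis}$ between increasing events leads back to the same ratio.

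The missing idea is that the paper never divides by $\phi[\mathcal{F}_A]$ while $|A|$ is large. \Cref{lemma:FA_dom} gives strong stochastic domination $\phi^0_{\Lambda_N,p}\ssd\phi^0_{\Lambda_N,p}[\,\cdot\mid\mathcal{F}_A]$. An exploration coupling ordered annulus by annulus therefore yields independent free samples $\omega^k\sim\phi^0_{\mathtt{Ann}^k,p}$ with $\omega^k\preceq\omega^A$, each independent of the already explored part of $\omega^A$. The non-monotone Pisztora events are evaluated only under the free law of $\omega^k$. Only their increasing consequence (sources glued together in $\omega^k$, hence in $\omega^A$) is transferred to the conditioned configuration. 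This is also what would justify your claim that the giant-touching estimate holds uniformly in the environment produced by the exploration.
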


\subsection{Exploration coupling}
In the following, we repeat a standard increasing coupling of FK-percolation measures and prove that it has certain properties that will suffice to study measures of the form $\phi[\;\cdot \mid \mathcal{F}_A]$.

We shall need a slightly stronger comparison than usual stochastic domination, which we call \textbf{strong} stochastic domination. We say that $\mu$ strongly stochastically dominates $\nu,$ written $\nu\overset{\mathbf{s}}{\preceq} \mu,$ if, for every $E'\subseteq E$ and every $\xi\preceq \xi'\in \{0,1\}^{E'}$ such that $\mu[\omega|_{E'}=\xi'],\nu[\omega|_{E'}=\xi]>0,$ we have $\nu[\;\cdot\mid \omega|_{E'}=\xi]\preceq \mu[\;\cdot \mid \omega|_{E'}=\xi'].$ It is worth noting that a lot of natural instances of stochastic domination in statistical mechanics are explicitly examples of strong stochastic domination. 
In fact, it is often used (see e.g., \cite[Proposition 2.6]{ScalingRelations}) to get explicit versions of Strassen's Theorem:

\vspace{0.3cm}
\begin{mdframed}[style=MyFrame]
\begin{center}
    \textbf{Coupling by Exploration:}
\end{center}

Let $E$ be a finite set of edges and fix a total ordering $e_1,e_2,...,e_{|E|}$. Let $(U_e)_{e\in E}$ be i.i.d. and uniformly distributed on $[0,1]$.

For a measure $\nu$ on $\{0,1\}^E,$ we define the $\{0,1\}^E$-valued random variable $\omega^{\nu}$ recursively by
\begin{align*}
\omega^{\nu}_{e_1}&=\id[U_{e_1}\leq \nu[e_1\text{ open}]]\\
\omega^{\nu}_{e_{j+1}}&=\id[U_{e_{j+1}}\leq \nu[e_{j+1}\text{ open}\mid \omega|_{\{e_1,...,e_j\}}=(\omega^{\nu}_{e_1},...,\omega^{\nu}_{e_j})]].
\end{align*}
Then, $\omega^{\nu}\sim \nu$. Furthermore, if $\nu$ and $\nu'$ are two measures on $\{0,1\}^E$ such that $\nu'\overset{\mathbf{s}}{\preceq} \nu$, then $\omega^{\nu'}\preceq \omega^{\nu}$ almost surely.
\end{mdframed}

\vspace{0.3cm} Checking the distribution of $\omega^{\nu}$ is a straightforward application of the Law of Total Probability. Checking that the coupling is increasing between strongly dominating measures is simply the fact that $\{e \text{ open}\}$ is an increasing event for every $e\in E$. One may note that in many applications, the ordering is actually taken to be random, with the choice of $e_{j+1}$ being a measurable function of $(U_{e_i})_{1\leq i\leq j}$. We omit this additional (but harmless) complication as it will not play a role in the current paper.
\begin{lemma} \label{lemma:FA_dom}
For any finite graph $G=(V,E),$ $p\in(0,1),$ $A\subseteq V$ with $|A|$ even, and any boundary condition $\xi,$ we have
$$\phi^\xi_{G,p} \overset{\mathbf{s}}{\preceq} \phi^\xi_{G,p}[\;\cdot \mid \mathcal{F}_A].$$
\end{lemma}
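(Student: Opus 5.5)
We need to show that for every $E'\subseteq E$ and every pair $\zeta\preceq\zeta'$ in $\{0,1\}^{E'}$ with positive probabilities,
$$\phi^\xi_{G,p}[\;\cdot\mid \omega|_{E'}=\zeta]\preceq \phi^\xi_{G,p}[\;\cdot\mid \mathcal{F}_A,\ \omega|_{E'}=\zeta'].$$
The plan is to split this comparison into two monotone steps through the intermediate measure $\phi^\xi_{G,p}[\;\cdot\mid\omega|_{E'}=\zeta']$, and then use transitivity of $\preceq$.

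The first step is
$$\phi^\xi_{G,p}[\;\cdot\mid \omega|_{E'}=\zeta]\preceq\phi^\xi_{G,p}[\;\cdot\mid \omega|_{E'}=\zeta'].$$
Conditioning on $\omega|_{E'}$ is the same as passing to the random-cluster model on the graph with the edges of $E'$ removed, with an enlarged boundary condition. Concretely, by \eqref{SMP} applied to $H=G\setminus E'$ (more precisely, by the domain Markov property on the edge set $E\setminus E'$), the conditional law given $\omega|_{E'}=\zeta$ is the random-cluster measure on $E\setminus E'$ with boundary condition $\zeta\cup\xi$. The same holds for $\zeta'$ with boundary condition $\zeta'\cup\xi$. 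Since $\zeta\cup\xi\preceq\zeta'\cup\xi$, \eqref{eq_CBC} gives the claim; it holds in its general form for arbitrary edge sets, via Holley's criterion. On $E'$ itself the two measures are the point masses $\delta_\zeta\preceq\delta_{\zeta'}$, so the full measures are ordered.

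The second step is
$$\phi^\xi_{G,p}[\;\cdot\mid \omega|_{E'}=\zeta']\preceq\phi^\xi_{G,p}[\;\cdot\mid \mathcal{F}_A,\ \omega|_{E'}=\zeta'].$$
Since $\mathcal{F}_A$ is increasing, its restriction $\{\omega|_{E\setminus E'}:\ \omega|_{E\setminus E'}\cup\zeta'\in\mathcal{F}_A\}$ is an increasing event on $\{0,1\}^{E\setminus E'}$. It has positive probability by the assumption that $\mu[\omega|_{E'}=\zeta']>0$ for $\mu=\phi^\xi_{G,p}[\;\cdot\mid\mathcal{F}_A]$. Applying \eqref{FKG} to the random-cluster measure on $E\setminus E'$ with boundary condition $\zeta'\cup\xi$ gives the claim. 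Combining the two steps proves the lemma.

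The only point needing care is the first step. There one must check that conditioning on the values of an arbitrary (not necessarily connected or boundary) edge set $E'$ is exactly a random-cluster measure with a boundary condition in the sense of \eqref{eq_CBC}, and that monotonicity in the boundary condition holds in this generality. The cleanest route is to verify the Holley lattice condition directly for $\phi^\xi_{G,p}$, which is standard for cluster weight $q\geq 1$ (cf. \cite{DC17,Gri06}). Because the lattice condition is preserved under conditioning on edge values and under restriction to increasing events, both steps then follow at once from Holley's inequality.
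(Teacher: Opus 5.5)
Your proof is correct and is essentially the paper's own argument: use \eqref{SMP} to identify both conditioned laws with random-cluster measures on $E\setminus E'$ with boundary conditions $\zeta\cup\xi$ and $\zeta'\cup\xi$, then go through the same intermediate measure $\phi^\xi_{G,p}[\;\cdot\mid\omega|_{E'}=\zeta']$, using \eqref{FKG} for the increasing restriction of $\mathcal{F}_A$ and \eqref{eq_CBC} for $\zeta\preceq\zeta'$ (the paper simply writes the chain in the opposite order). One correction to your closing remark: conditioning on an increasing event does not preserve the FKG lattice condition in general (e.g.\ two fair coins conditioned on at least one being $1$), and what Holley's inequality actually uses is the two-measure condition between $\mu$ and $\mu[\;\cdot\mid F]$, which does hold.
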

\begin{proof}
Since $\mathcal{F}_A$ is increasing, we get ordinary stochastic domination by \eqref{FKG}. Now, fix $E'\subseteq E$ and $\tilde \xi\preceq \xi'\in \{0,1\}^{E'}$ for which $\phi_{G,p}[\omega|_{E'}=\xi'\mid \mathcal{F}_A]>0$.  
Using \eqref{SMP}, if we let $\xi'_0=(\xi')^{\xi}$ and let $\tilde{\xi}_0=(\tilde{\xi})^{\xi},$
$$
\phi^\xi_{G,p}[\; \cdot \mid \mathcal{F}_A,\omega|_{E'}=\xi']=\phi_{G\setminus E',p}^{\xi'_0}[\;\cdot \mid \omega^{\xi'_0}\in \mathcal{F}_{A}]\succeq \phi^{\xi'_0}_{G\setminus E',p}\succeq \phi^{\tilde \xi_0}_{G\setminus E',p}=\phi^\xi_{G,p}[\;\cdot \mid \omega|_{E'}=\tilde \xi],
$$
where the first inequality is \eqref{FKG}, and the second is \eqref{eq_CBC}. 
\end{proof}

Using \eqref{eq_CBC} and \eqref{SMP}, one can similarly show that $\phi_{B',p}^0 \ssd \phi_{B,p}^0$ for $B'\subseteq B,$ where $\phi_{B',p}^0$ is identified with a measure on $B$ such that every edge outside $B'$ is deterministically closed. 

\begin{lemma}\label{lemma:coupling_many_FK_measures}
    Under any exploration coupling $\mathscr{P}$, the marginals $\omega_B \sim \phi_B^0$ are coupled such that if $B' \subseteq B,$ then $\omega_{B'} \preceq \omega_{B}$ and if $ E(\tilde B) \cap E(B) = \emptyset,
    $ then $\omega_B \independent \omega_{\tilde B}$. 
\end{lemma}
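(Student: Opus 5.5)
The plan is to read the statement as being about the exploration coupling applied simultaneously to the whole family of measures $(\phi^0_{B,p})_{B}$. All of these are viewed as measures on a common ambient edge set $E$ (say the edges of a fixed large box containing every $B$ under consideration), with edges outside $B$ closed deterministically. One fixed ordering $e_1,\dots,e_{|E|}$ is used, together with one family of i.i.d. uniforms $(U_e)_{e\in E}$, and we set $\omega_B:=\omega^{\phi^0_{B,p}}$. The Coupling by Exploration box already gives $\omega_B\sim\phi^0_{B,p}$ for every $B$. Two things remain: monotonicity and independence.

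For monotonicity I will invoke the remark immediately preceding the lemma: for $B'\subseteq B$, \eqref{eq_CBC} and \eqref{SMP} give $\phi^0_{B',p}\ssd\phi^0_{B,p}$. The argument is short. Condition on $\omega|_{E'}=\tilde\xi\preceq\xi'$. The $B'$-measure becomes a free measure on $B'\setminus E'$ with boundary condition $\tilde\xi$ (edges outside $B'$ still closed). The $B$-measure becomes a measure on $B\setminus E'$ with boundary condition $\xi'$. Restricting the latter to $B'\setminus E'$ and conditioning on the remaining edges of $B\setminus B'$ yields a random-cluster measure with boundary condition at least $\tilde\xi$, so \eqref{eq_CBC} gives the domination. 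The last claim of the Coupling by Exploration box then yields $\omega_{B'}\preceq\omega_B$ almost surely.

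For independence, suppose $E(\tilde B)\cap E(B)=\emptyset$. The key observation is that, at each step $j$, the conditional probability $\phi^0_{B,p}[e_{j+1}\text{ open}\mid \omega|_{\{e_1,\dots,e_j\}}]$ depends only on the already explored edges lying in $E(B)$. It equals $0$ if $e_{j+1}\notin E(B)$. If $e_{j+1}\in E(B)$, edges outside $E(B)$ are a.s. closed under $\phi^0_{B,p}$, so conditioning on them is trivial. By induction, $\omega_B$ is a measurable function of $(U_e)_{e\in E(B)}$ alone, and likewise $\omega_{\tilde B}$ is a function of $(U_e)_{e\in E(\tilde B)}$. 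These are disjoint families of independent uniforms, so $\omega_B\independent\omega_{\tilde B}$.

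The main obstacle I expect is the bookkeeping in this induction. Conditioning events of probability zero must be avoided: one should note that the recursion only ever conditions on the realised values $\omega^{\nu}$, which have positive probability, and that conditioning on closed edges outside $E(B)$ is vacuous. The domination step also needs care, to make sure the free boundary outside $B'$ is correctly compared with the configuration of $\omega_B$ on $B\setminus B'$. Both issues are routine once the measures are placed on a common space.
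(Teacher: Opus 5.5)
Your proposal is correct and takes the same approach as the paper. Monotonicity comes from $\phi^0_{B',p}\ssd\phi^0_{B,p}$ (via \eqref{eq_CBC} and \eqref{SMP}) fed into the exploration coupling, and independence comes from $\omega_B$ being a measurable function of $(U_e)_{e\in E(B)}$ alone; you just spell out details the paper leaves implicit: the mixture argument for strong domination, and the observation that edges outside $E(B)$ are a.s.\ closed and hence irrelevant to the recursion.
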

\begin{proof}
 Since $\phi_{B',p}^0 \ssd \phi_{B,p}^0$ whenever $B' \subseteq B,$ the first item follows. Since $\omega_B$ is $(U_e)_{e\in E(B)}$-measurable, the second item follows.
\end{proof}

\subsection{Pisztora's giants touch a density of points on the boundary}
In the following, we argue that the local giants will touch even free boundaries robustly, in the sense that for any designated set $A\subseteq \partial_v \Lambda_n,$ the giant will touch at least $\gamma|A|$ of its vertices with positive probability.

Our strategy for doing this is to couple a tree of several giants in increasing fashion and using the local geometry of each giant to deduce sufficient regularity of the biggest one. In the following, we let $\mathscr{T}_n^{\mathtt{d}}$ denote the rooted tree with $n$ generations and each node having $\mathtt{d}$ children. For $v\in V(\mathscr{T}_n^\mathtt{d})$, we denote by $\mathtt{gen}(v)$ the generation of $v$, i.e. the graph distance from the root $o$. We denote by $V_j=V_j(\mathscr{T}_n^\mathtt{d})$ the set of vertices in the $j$'th generation. The following technical lemma is the key to our proof of \Cref{prop:giant_touches_face_density} below.

\begin{lemma} \label{lemma:Big_Bad_Tree_lemma} Let $n\in \mathbb{N}$, $\mathtt{d}> 2$,$C>0, \lambda >1.$ There exists $C'>0$ with the following property: Suppose that $\alpha>0$ and $\nu$ is a percolation measure  on $\mathscr{T}_n^\mathtt{d}$ satisfying \begin{enumerate}
    \item[$i)$] For any $v\in V,$ 
    $
    \nu[v\;\mathrm{ closed}]\leq \alpha \exp\left(-C \lambda^{
    n-\mathtt{gen}(v)}\right).
    $
    \item[$ii)$] For any $W,W'\subseteq V(\mathscr{T}_n^\mathtt{d})$ with no common descendants, the $\sigma$-algebras $\mathcal{A}_{W}$ and $\mathcal{A}_{W'}$ are independent.
\end{enumerate}
Let $\mathcal{C}_o$ denote the cluster of the root.  Then, for every $A\subseteq V_n,$ 
$$
\nu[|\mathcal{C}_o \cap A|\leq (1-\alpha C'-|A|^{-1/3}) |A| ]\leq \alpha C'|A|^{-1/3}.
$$
\end{lemma}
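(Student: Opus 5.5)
The idea is to count the leaves of $A$ that are cut off from the root, bound the mean and variance of that count, and finish with Chebyshev's inequality.

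\textbf{Setup.} We treat $\nu$ as site percolation on $\mathscr{T}_n^\mathtt{d}$. For $v\in V$, let $D(v)$ be the set of descendants of $v$, including $v$ itself. Put $w_v=|A\cap D(v)|$ and $p_v=\nu[v\;\mathrm{closed}]$, and write $k_v=n-\mathtt{gen}(v)$.

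A leaf $a\in A$ lies in $\mathcal{C}_o$ exactly when every vertex on the geodesic from $o$ to $a$ is open; here $o$ and $a$ are included. Hence
$$
|A|-|\mathcal{C}_o\cap A|\leq X:=\sum_{v\in V}\id_{v\,\mathrm{closed}}\, w_v .
$$
This overcounts leaves with several closed ancestors, which is harmless for an upper bound. It therefore suffices to show
$$
\nu\big[X\geq \alpha C'|A|+|A|^{2/3}\big]\leq \alpha C'|A|^{-1/3}.
$$

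\textbf{First moment.} For each fixed generation, the sets $A\cap D(v)$ with $v$ in that generation partition $A$. So $\sum_{\mathtt{gen}(v)=n-k} w_v=|A|$ for every $k$. Using $i)$,
$$
\nu[X]\leq \alpha|A|\sum_{k\geq 0} e^{-C\lambda^k}=:\alpha C_0|A|,
$$
and $C_0<\infty$ because $\lambda>1$.

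\textbf{Second moment.} This is the main step. By $ii)$, if neither of $u,v$ is an ancestor of the other, they have no common descendants, so $\id_{u\,\mathrm{closed}}$ and $\id_{v\,\mathrm{closed}}$ are independent and have zero covariance. The only surviving terms come from comparable pairs. For $u$ an ancestor of $v$ (allowing $u=v$), we bound $|\mathrm{Cov}|\leq p_u w_u w_v$.

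Fix $u$ and a level below it. The values $w_v$ for $v\in D(u)$ at that level sum to $w_u$, and there are $k_u+1$ levels. Also $w_u\leq \mathtt{d}^{k_u}$. Therefore
$$
\mathrm{Var}(X)\leq 2\sum_u p_u\, w_u^2\,(k_u+1)\leq 2\alpha\sum_{k}e^{-C\lambda^k}(k+1)\mathtt{d}^k\sum_{k_u=k}w_u=2\alpha|A|\sum_k (k+1)\mathtt{d}^k e^{-C\lambda^k}=:\alpha C_1|A|.
$$
The constant $C_1$ is finite precisely because the double-exponential decay $e^{-C\lambda^k}$ beats the growth $(k+1)\mathtt{d}^k$. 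This is the one place where $\lambda>1$ is essential, and it is the step I expect to need the most care.

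\textbf{Conclusion.} By Chebyshev's inequality,
$$
\nu\big[X\geq \alpha C_0|A|+|A|^{2/3}\big]\leq \frac{\alpha C_1|A|}{|A|^{4/3}}=\alpha C_1|A|^{-1/3}.
$$
Taking $C'=\max(C_0,C_1)$ gives the claim. The constant $C'$ depends only on $\mathtt{d}$, $C$ and $\lambda$, and not on $n$, $\alpha$ or $A$.
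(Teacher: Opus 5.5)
Your proof is correct. It uses the same first-moment, second-moment and Chebyshev scheme as the paper, but with a different statistic, and the comparison is worth making.

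The paper works with the exact number $|\mathfrak{C}^A_n|$ of leaves $a\in A$ whose root path $\gamma_a$ is not open, which is a sum of path indicators. Its covariance bound $\operatorname{Cov}_\nu[\id_{\gamma\in\mathfrak{C}},\id_{\gamma'\in\mathfrak{C}}]\leq\nu[\gamma\cap\gamma'\in\mathfrak{C}]$ comes from a union bound together with the independence, via $ii)$, of the two branches $\gamma\setminus\gamma'$ and $\gamma'\setminus\gamma$. The variance is then organised by the branching point of $\gamma$ and $\gamma'$.

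You instead linearise, replacing the exact count by the overcount $X=\sum_v w_v\id_{v\,\mathrm{closed}}$. This buys three simplifications:
\begin{itemize}
\item $ii)$ is only needed for pairs of incomparable single vertices.
\item Comparable pairs are handled by the trivial bound $|\operatorname{Cov}|\leq p_u$.
\item $i)$ is only ever used site by site. The paper's step $\nu[\gamma\in\mathfrak{C}_k]\leq\alpha e^{-C\lambda^{n-k}}$ tacitly sums $i)$ along the path, which holds only up to a constant factor.
\end{itemize}
The price is the extra factor $(k_u+1)$ from summing over the chain below $u$, together with the multiple counting of leaves that have several closed ancestors. Both are absorbed by the double-exponential decay $e^{-C\lambda^k}$. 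The first moments of the two statistics are bounded by exactly the same expression.

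A side benefit of your write-up is that it makes explicit that $C'$ depends only on $\mathtt{d}$, $C$ and $\lambda$, and not on $n$, $\alpha$ or $A$. This uniformity is what the application in \Cref{prop:giant_touches_face_density} actually requires, since there the depth of the tree grows with $n$.
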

\begin{proof}
This will essentially be a second moment computation.

For $A\subseteq V_j$, let $\Gamma^A_j$ be the set of simple paths from $o$ to $A$ and abbreviate $\Gamma^{V_j}_j = \Gamma_j$.  Say that each such path is \emph{open} if all the vertices it traverses are open, and \emph{not open} if at least one vertex along the path is closed. Let $\mathfrak{C}^{A}_j$ be the set of non-open paths in $\Gamma^{A}_j$. We write $\mathfrak{C}_j$ for the set of non-open paths from the root $o$ to the $j$'th generation 
and $\mathfrak{C}$ for the random set of all simple non-open paths (whether or not they contain the root). 

For the first moment, for $v\in V(\mathscr{T}_n^{\mathtt{d}}),$ let $\gamma_v$ denote the unique path from $v$ to the root. Then, $i)$ yields that
\begin{align}\label{eq:expected_connection}
\nu[|\mathfrak{C}_n^A|]=\sum_{v\in A} \nu[\gamma_v\mathrm{\; not\; open}]\leq  |A|\alpha \sum_{k=0}^n\exp\left(-C \lambda^{
    n-k}\right)\leq \alpha C' |A|. 
\end{align}

Turn now to the second moment. 
By $ii)$ and a union bound, for $\gamma,\gamma'\in \Gamma_j$,
$$
\nu[\gamma,\gamma'\in \mathfrak{C}]\leq \nu[\gamma\setminus \gamma'\in \mathfrak{C}]\nu[\gamma'\setminus \gamma \in \mathfrak{C}]+\nu[\gamma\cap \gamma'\in \mathfrak{C}].
$$
On generic grounds, 
$$
\nu[\gamma\in \mathfrak{C}_j]\nu[\gamma'\in \mathfrak{C}_j]\geq \nu[\gamma\setminus \gamma'\in \mathfrak{C}]\nu[\gamma'\setminus \gamma\in\mathfrak{C}],
$$
so, all in all,
$$
\operatorname{Cov}_{\nu}[\id_{\gamma\in \mathfrak{C}},\id_{\gamma'\in \mathfrak{C}} ]\leq \nu[\gamma\cap \gamma'\in \mathfrak{C}].
$$
Letting $D(\gamma)$ denote the descendants of the last vertex on $\gamma$, this yields
$$
\operatorname{Var}_{\nu}[|\mathfrak{C}^A_n|]\leq  \sum_{\gamma,\gamma'\in \Gamma^A_n}\nu[\gamma\cap \gamma'\in \mathfrak{C}]=\sum_{k=0}^n \sum_{\gamma\in \Gamma^A_k}\nu[\gamma\in \mathfrak{C}_k] {|A\cap D(\gamma)| \choose 2}\leq \sum_{k=0}^n \alpha e^{-C \lambda^{
    n-k}} \mathtt{d}^{(n-k)} |A|\leq \alpha C' |A|,
$$
where, in the middle equality, we summed over the value of $\gamma\cap \gamma'\in \Gamma_k$.  The second inequality used $i)$ together with the fact that $\gamma\in \Gamma_k$ has at most $\mathtt{d}^{n-k}$ descendants in $V_n$ and that each $v\in A$ is the descendant of exactly one $\gamma\in \Gamma_k^A$.

Now, by Chebyshev's Inequality,
$$
\nu[|\mathfrak{C}^A_n-\nu[|\mathfrak{C}^A_n|]|\geq |A|^{2/3}]\leq  \alpha C'|A|^{-1/3}.
$$
This yields the conclusion, since 
 $$
 \nu[|\mathcal{C}_o\cap A|\leq (1-\alpha C'-|A|^{-1/3})|A|]=\nu[|\mathfrak{C}^{A}_n|\geq (\alpha C'+|A|^{-\frac{1}{3}}) |A|]\leq \nu[||\mathfrak{C}^{A}_n|-\nu[|\mathfrak{C}^{A}_n|]|\geq |A|^{\frac{2}{3}}]\leq \alpha C'|A|^{-\frac{1}{3}}.
 $$
\end{proof}

\begin{figure}[ht]
\centering
\begin{tikzpicture}[
    scale=1,
    every node/.style={font=\small},
    shared/.style={draw=violet!70!black, fill=violet!30, thick, circle, inner sep=2.5pt},
    gammanode/.style={draw=red!70!black, fill=red!30, circle, inner sep=2pt},
    gammapnode/.style={draw=blue!70!black, fill=blue!30, circle, inner sep=2pt},
    ghostnode/.style={draw=black!25, fill=black!10, circle, inner sep=1.5pt},
    sharedline/.style={violet!70!black, line width=1.8pt},
    gammaline/.style={red!60!black, line width=1.4pt},
    gammaprline/.style={blue!60!black, line width=1.4pt},
    ghostline/.style={black!15, line width=0.5pt},
    genlabel/.style={font=\footnotesize\itshape, text=black!50},
]

\node[genlabel, anchor=east] at (-0.6, 0) {gen $0$};
\node[genlabel, anchor=east] at (-0.6, -1.8) {gen $k$};
\node[genlabel, anchor=east] at (-1.9, -6.2) {gen $n$};

\draw[black!20, densely dashed, thin] (-0.3,0) -- (0.2,0);
\draw[black!20, densely dashed, thin] (-0.3,-1.8) -- (0.2,-1.8);
\draw[black!20, densely dashed, thin] (-0.3,-6.2) -- (0.2,-6.2);

\draw[ghostline] (0,0) -- (-2.2,-1.8);
\draw[ghostline] (0,0) -- (2.2,-1.8);
\node[ghostnode] at (-2.2,-1.8) {};
\node[ghostnode] at (2.2,-1.8) {};
\draw[ghostline] (-2.2,-1.8) -- (-2.55,-2.6);
\draw[ghostline] (-2.2,-1.8) -- (-1.85,-2.6);
\draw[ghostline] (2.2,-1.8) -- (1.85,-2.6);
\draw[ghostline] (2.2,-1.8) -- (2.55,-2.6);

\draw[ghostline] (0,-1.8) -- (0,-2.8);
\node[ghostnode] at (0,-2.8) {};
\draw[ghostline] (0,-2.8) -- (-0.25,-3.4);
\draw[ghostline] (0,-2.8) -- (0.25,-3.4);

\draw[black!30, dashed, rounded corners=8pt, thin] 
    (-2.5,-2.2) rectangle (2.5,-6.6);
\node[font=\footnotesize, text=black!45] at (2.85,-2.5) {$D(\gamma\cap\gamma')$};

\draw[sharedline] (0,0) -- (0,-1.8);

\draw[gammaline] (0,-1.8) -- (-1.2,-3.0);
\draw[gammaline] (-1.2,-3.0) -- (-1.5,-4.3);
\draw[gammaline] (-1.5,-4.3) -- (-1.65,-6.2);
\draw[ghostline] (-1.2,-3.0) -- (-0.8,-3.7);
\draw[ghostline] (-1.5,-4.3) -- (-1.9,-5.0);
\draw[ghostline] (-1.5,-4.3) -- (-1.1,-5.0);
\node[ghostnode] at (-0.8,-3.7) {};
\node[ghostnode] at (-1.9,-5.0) {};
\node[ghostnode] at (-1.1,-5.0) {};

\draw[gammaprline] (0,-1.8) -- (1.2,-3.0);
\draw[gammaprline] (1.2,-3.0) -- (1.5,-4.3);
\draw[gammaprline] (1.5,-4.3) -- (1.65,-6.2);
\draw[ghostline] (1.2,-3.0) -- (0.8,-3.7);
\draw[ghostline] (1.5,-4.3) -- (1.1,-5.0);
\draw[ghostline] (1.5,-4.3) -- (1.9,-5.0);
\node[ghostnode] at (0.8,-3.7) {};
\node[ghostnode] at (1.1,-5.0) {};
\node[ghostnode] at (1.9,-5.0) {};

\node[shared, inner sep=3pt] (root) at (0,0) {};
\node[anchor=west, font=\small] at (0.3, 0.05) {$o$};
\node[shared, inner sep=3pt] (vk) at (0,-1.8) {};
\node[anchor=west, font=\small] at (0.3,-1.75) {$v_k$};
\node[gammanode] at (-1.2,-3.0) {};
\node[gammanode] at (-1.5,-4.3) {};
\node[gammanode, inner sep=2.8pt] at (-1.65,-6.2) {};
\node[font=\footnotesize, anchor=north] at (-1.65,-6.55) {$v\in A$};
\node[gammapnode] at (1.2,-3.0) {};
\node[gammapnode] at (1.5,-4.3) {};
\node[gammapnode, inner sep=2.8pt] at (1.65,-6.2) {};
\node[font=\footnotesize, anchor=north] at (1.65,-6.55) {$v'\in A$};

\draw[decorate, decoration={brace, amplitude=5pt, mirror}, violet!70!black]
    (-0.35,0) -- (-0.35,-1.8)
    node[midway, left=7pt, font=\footnotesize\itshape, text=violet!70!black] {$\gamma\!\cap\!\gamma'$};

\node[font=\footnotesize\itshape, text=red!60!black, anchor=east] at (-1.75,-3.5) {$\gamma \setminus \gamma'$};
\node[font=\footnotesize\itshape, text=blue!60!black, anchor=west] at (1.75,-3.5) {$\gamma' \setminus \gamma$};
\end{tikzpicture}
\caption{Two root-to-leaf paths $\gamma,\gamma'\in \Gamma_n^A$ sharing a common prefix down to $v_k$ at generation $k$. By condition $(ii)$, the non-open events on $\gamma \setminus \gamma'$ and $\gamma'\setminus \gamma$ are independent, so $\operatorname{Cov}[\mathbf{1}_{\gamma\in\mathfrak{C}},\mathbf{1}_{\gamma'\in\mathfrak{C}}]\leq \nu[\gamma\cap\gamma'\in\mathfrak{C}]$.} 
\label{fig:tree_paths}
\end{figure}
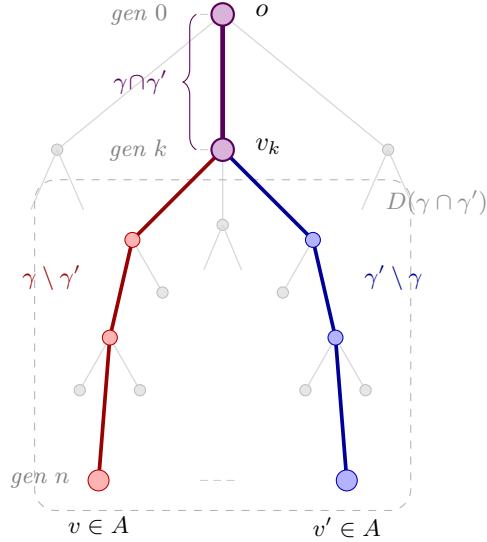

\begin{remark} An additional application of $ii)$ and $i)$ makes it possible to bootstrap the decay in probability to something stretch-exponential in $|A|$ rather than sublinear. However, for our purposes, it suffices to have some rate of decay.
\end{remark}

The following corollary follows from the machinery of Pisztora (recall \Cref{def:Pisztora_event}).

\begin{corollary} \label{Touch_Cluster}
For $d\geq 3$ and $p>p_c,$ there exists $c>0$ such that for any $n\in \mathbb{N}$ and any $v\in \partial_v \Lambda_n,$ 
$$
\phi^0_{\Lambda_n,p}[|\mathcal{C}_v| \geq \frac{2^d-1}{2^{d+1}}\theta|\Lambda_n|]\geq c.
$$
\end{corollary}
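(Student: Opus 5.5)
The plan is to attach $v$ to the local giant of a sub-box of $\Lambda_n$ that sits next to $v$, using the Pisztora event from \Cref{thm:pisztora} for that giant and a constant-probability local connection to join $v$ to it. The target $\frac{2^d-1}{2^{d+1}}\theta|\Lambda_n|$ is slightly less than $\frac{\theta}{2}|\Lambda_n|$. A box of side $n$ inside $\Lambda_n$ has roughly $2^{-d}|\Lambda_n|$ vertices, which is too small on its own. So I would not use a small box. Instead I would work in $\Lambda_n$ itself, or in a large box $B\subseteq\Lambda_n$ touching $v$'s face and with $|B|\geq \frac12|\Lambda_n|(1+o(1))$, for instance a half-box $\{x: x_1\in[0,n]\}$ when $v$ lies on the face $x_1=n$. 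The density bound must come from \Cref{thm:pisztora} applied with the free boundary condition, via the SMP/CBC reduction $\phi^0_{\Lambda_n}\succeq\phi^0_{B}$. Pisztora gives $|\Giant_B|\geq(\theta-2^{-d}\theta)|B|$. With $|B|\approx\frac12|\Lambda_n|$ this is $\approx\frac{2^d-1}{2^{d+1}}\theta|\Lambda_n|$, which explains the constant. Since the statement is for all $n$ with a fixed target, I would either pick $B$ slightly larger than half, or note that $|B|/|\Lambda_n|\geq 1/2$ for the half-box including the middle hyperplane. Pisztora's theorem is stated for cubes, so if $B$ must be a cube, I would cover the half-box by boundedly many translated cubes whose Pisztora events hold simultaneously and whose giants are connected through the overlaps of neighbouring cubes.

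The steps in order are as follows. (1) Fix $B$ with $v\in\partial_v B$ and $|B|\geq\frac12|\Lambda_n|$. (2) By \Cref{thm:pisztora} together with \eqref{SMP} and \eqref{eq_CBC} (free is the worst boundary condition, and Pisztora is uniform in $\xi$), we get $\phi^0_{\Lambda_n}[\Pis_B]\geq 1-e^{-cn^{d-1}}$. This holds for $n\geq N_0$; smaller $n$ are absorbed by finite energy, by opening every edge. (3) Show that $v$ connects to $\Giant_B$ with probability bounded below, uniformly, under the conditional measure given $\Pis_B$. The giant touches every face of $B$ and has density, so it passes within distance $O(1)$ of $v$ at many points of the face. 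The natural route is to condition on $\omega$ away from a box $v+\Lambda_{K}$ and open a bounded number of edges by finite energy, gaining a factor $c_K$. The difficulty is that $\Pis_B$ is not increasing, and the giant might avoid the $K$-neighbourhood of $v$. (4) On the intersection of these events, $|\mathcal C_v|\geq|\Giant_B|$, which gives the claim.

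The main obstacle is step (3): guaranteeing that the giant comes near the specific point $v$ with uniformly positive probability, rather than merely touching the face somewhere. My first attempt is a local Pisztora argument at mesoscopic scale. Consider a small cube $Q\ni v$ of side $K$ with $v\in\partial_v Q$. By \Cref{thm:pisztora}, with probability $1-e^{-cK^{d-1}}$ the cube $Q$ has a crossing giant touching the face containing $v$. Chain the cubes $Q=Q_0\subset Q_1\subset\dots$ of doubling sizes up to $B$. The giants of consecutive cubes intersect, because each contains a density fraction of the smaller cube and the Pisztora condition iii) forbids a second large cluster. A union bound over scales gives that all these Pisztora events hold with probability at least $1-\sum_k e^{-c(2^kK)^{d-1}}\geq\frac12$ for $K$ large. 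Finally, connect $v$ to $\Giant_Q$ inside the bounded box $Q$ via finite energy. Here I would use the strong-domination exploration coupling or explicit edge modification, at a cost of a constant depending only on $K$. Modifying $O(K^d)$ edges changes the probabilities of the multiscale event by at most a multiplicative constant, which yields the constant $c$.
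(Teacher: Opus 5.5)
Your route is correct in outline but genuinely different from the paper's. The paper applies \Cref{thm:pisztora} only once, to a translate of $\Lambda_{n-h}$ placed against the face opposite to $v$. Its giant then has size about $(1-2^{-d})\theta|\Lambda_n|$, twice the target, and it reaches the slab of bounded width that separates this box from $v$'s face. The paper then conditions on the configuration outside that slab and uses \Cref{Long-range-slab} ($d-2$ times, with \eqref{FKG} and \eqref{SMP}) to connect $v$ to wherever the giant meets the slab, with probability at least $c'$ uniformly. Conditioning on the exterior is what neutralises the non-monotonicity of $\Pis$. You instead force the giant to come within distance $K$ of $v$ through a multiscale chain of Pisztora cubes anchored at $v$, and then pay a finite-energy constant in a box of bounded size. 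This avoids the slab lemma altogether, using only \Cref{thm:pisztora} and finite energy, at the cost of a union bound over $O(\log n)$ scales.

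Three corrections and simplifications:

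\textbf{Gluing of consecutive giants.} As you state it, this step fails. With $\varepsilon=2^{-d}\theta$ and doubling side lengths, $|\Giant_{Q_k}|\geq (1-2^{-d})\theta|Q_k|\approx (1-2^{-d})2^{-d}\theta|Q_{k+1}|$. That is below the allowance $\varepsilon|Q_{k+1}|=2^{-d}\theta|Q_{k+1}|$ in condition $iii)$ of $\Pis_{Q_{k+1}}$, so $iii)$ does not force $\Giant_{Q_k}\subseteq\Giant_{Q_{k+1}}$. Use a scale ratio $r$ with $r^d<2^d-1$ (for example $r=3/2$), or invoke Pisztora's theorem with a smaller $\varepsilon$, which the original result permits.

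\textbf{The half-box and covering are unnecessary.} Take $B=\Lambda_n$ itself. It contains $v$ on its boundary, and its giant already has size $(1-2^{-d})\theta|\Lambda_n|$, so the chain can simply end at $\Lambda_n$.

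\textbf{The non-monotonicity worry in step (3) disappears.} The target event is increasing. On the multiscale event $E$, opening every edge of $Q_0$ gives $\omega'\succeq\omega$ in which $v$ is joined to $\Giant_{Q_0}(\omega)\subseteq\Giant_{\Lambda_n}(\omega)$. Hence $|\mathcal{C}_v(\omega')|\geq|\Giant_{\Lambda_n}(\omega)|$, and finite energy gives probability at least $c^{|E(Q_0)|}\phi^0_{\Lambda_n,p}[E]$. You never need $\Pis$ to survive the modification.
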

\begin{proof}
By insertion tolerance, it suffices to consider $n$ large.
Assume without loss of generality that the face of $\Lambda_n$ containing $v$ is contained in the hyperplane $\{w_1=n\}=\{(w_1,...,w_d)\in \mathbb{Z}^d\mid w_1=n\}$. Fix $h$ as in \Cref{Long-range-slab}. Let $\Lambda\subseteq \Lambda_n$ denote a translate of $\Lambda_{n-h}$ which has a face contained in $\{w_1=-n\}$. Note that whenever $\omega\in \mathtt{Pis}_{\Lambda},$ then there exists a cluster in $\omega|_{\Lambda_n\setminus\{w_1\geq n-h\}}$ which is larger than $\frac{2^{d}-1}{2^d}\theta |\Lambda_{n-h}|,$
which, in turn, is larger than $\frac{2^d-1}{2^{d+1}}\theta|\Lambda_n|$ for $n$ large. Let $F$ denote the event that $\omega|_{\Lambda_n\setminus \{w_1\geq n-h\}}$ contains a cluster with size at least $\frac{2^d-1}{2^{d+1}}\theta|\Lambda_n|$ and which touches the hyperplane $\{w_1=n-h\}.$

By applying \Cref{Long-range-slab} $d-2$ times along with \eqref{FKG} and \eqref{SMP}, there exists a constant $c'>0$ such that
$$
\phi^0_{\Lambda_n,p} \left[|\mathcal{C}_v |\geq\frac{2^d-1}{2^{d+1}}\theta|\Lambda_n| \mid \omega|_{\Lambda_n\setminus \{w_1\geq n-h\}}\right]\geq c'\id_{\omega\in F}.
$$
Integrating yields
$$
\phi^0_{\Lambda_n,p}\left[|\mathcal{C}_v|\geq \frac{2^d-1}{2^{d+1}}\theta|\Lambda_n|\right]\geq c'\phi^0_{\Lambda_n,p}[F]\geq c' \phi^0_{\Lambda_n,p}[\mathtt{Pis}_{\Lambda}]\geq c
$$
for some adjusted constant.
\end{proof}

The following proposition shows that Pisztora's giants touch a density of any set with constant probability. 
For convenience, in the sequel, we extend the definition of $\Giant_K \subseteq V_K$ so that it is the giant in the event that $\mathtt{Pis}_K$ occurs and equal to $\emptyset$ otherwise. 
\begin{proposition}[The giant touches a density of any subset of the boundary]\label{prop:giant_touches_face_density}
Fix $d \geq 3, p > p_c(d)$. There exist $\varepsilon, \gamma >0$ such that for each $n \in \N$ and any  $A \subseteq \partial_v \Lambda_n$, 
$$
\phi_{\Lambda_n,p}^0[\abs{ \mathtt{Giant}_{\Lambda_n} \cap A} \geq \gamma \abs{A}] \geq \varepsilon.
$$    
\end{proposition}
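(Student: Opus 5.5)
We will use the tree machinery of \Cref{lemma:Big_Bad_Tree_lemma} together with the exploration coupling of \Cref{lemma:coupling_many_FK_measures}. Take a dyadic-type decomposition near the boundary. Let $\mathtt{d}=2^{d-1}$ (or a fixed power of it) and $K\approx \log_2 n$. For each node $v$ of $\mathscr{T}^{\mathtt{d}}_K$ at generation $j$, associate a box $B_v$ of side length of order $n2^{-j}$, chosen as follows. The root box is $\Lambda_n$. The children of $B_v$ are the $\mathtt{d}$ sub-boxes of half the side length obtained by subdividing the part of $B_v$ adjacent to $\partial_v\Lambda_n$ along the boundary face(s). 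The leaves are boxes of constant size $L$, each containing boundary points. We then set up this subdivision so that, as in the hypotheses of \Cref{lemma:Big_Bad_Tree_lemma}, boxes with no common descendant have disjoint edge sets. (One has to shrink children slightly inside parents, or use the $2^{d-1}$ faces separately by first partitioning $A$ among the $2d$ faces and keeping the face containing at least $|A|/2d$ points.) Under the exploration coupling, the configurations $\omega_{B_v}\sim\phi^0_{B_v,p}$ are increasing in $v$ along ancestry and independent for disjoint boxes, so condition $ii)$ holds.

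Declare a node $v$ \emph{open} if $\omega_{B_v}\in\mathtt{Pis}_{B_v}$, and additionally $\mathtt{Giant}_{B_v}$ (computed in $\omega_{B_v}$) intersects $\mathtt{Giant}_{B_{v'}}$ for each child $v'$. At leaves, we additionally require that the leaf box is fully open, which costs only a constant. By \Cref{thm:pisztora}, each Pisztora event fails with probability at most $\exp(-c(n2^{-j})^{d-1})$. The giant of a child box occupies a $\theta$-density of it, and the parent's giant occupies $\theta$-density of the parent with all other clusters small. Hence, by a volume/counting argument using item $iii)$ of \Cref{def:Pisztora_event} (and monotonicity $\omega_{B_{v'}}\preceq\omega_{B_v}$, so that the child's giant sits inside a cluster of $\omega_{B_v}$ of size $\ge(\theta-\varepsilon)|B_{v'}|>L_0$), the child's giant must be contained in the parent's giant up to an error controlled again by Pisztora at the child scale. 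Thus $\nu[v\text{ closed}]\le \exp(-C\lambda^{K-\mathtt{gen}(v)})$ with $\lambda=2^{d-1}$, i.e. hypothesis $i)$ with $\alpha$ small once $L$ is large. On the event that the root-to-leaf path is open, the leaf boundary points lie in $\mathtt{Giant}_{\Lambda_n}$ of $\omega_{\Lambda_n}$.

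Let $\tilde A$ be the set of leaves whose boxes meet $A$; then $|\tilde A|\ge |A|/L^{d-1}$. Applying \Cref{lemma:Big_Bad_Tree_lemma}, with probability at least $1-\alpha C'|\tilde A|^{-1/3}$ the root cluster contains a fraction $1-\alpha C'-|\tilde A|^{-1/3}$ of $\tilde A$. For $|A|$ large this gives $|\mathtt{Giant}_{\Lambda_n}\cap A|\ge\gamma|A|$ with $\gamma\approx L^{-(d-1)}/2$ and probability at least $1/2$. Small $|A|$ (bounded size) is handled separately: by \Cref{Touch_Cluster} and FKG, a single point of $A$ is in a cluster of size $\gtrsim\theta|\Lambda_n|$, which on $\mathtt{Pis}_{\Lambda_n}$ must be the giant. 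This gives constant probability $\varepsilon$ for $\gamma=1/|A|$.

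The main obstacle is making hypothesis $i)$ rigorous. One must show that giants of nested boxes are connected to each other with super-exponentially small failure probability in the scale, despite $\mathtt{Pis}$ being non-increasing, and despite the boxes near the free boundary (where the relevant geometry of sub-boxes touching $\partial_v\Lambda_n$ needs care). I would handle this by requiring Pisztora's event in an intermediate box overlapping parent and child and using the density argument. A secondary issue is arranging genuine edge-disjointness for non-ancestral boxes while keeping the nesting.
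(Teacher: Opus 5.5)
There is a genuine gap at the leaves, which is exactly the step that connects the particular points of $A$ to the glued giants.

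You put ``the leaf box is fully open'' into the tree percolation and then claim hypothesis $i)$ of \Cref{lemma:Big_Bad_Tree_lemma} with $\alpha$ small once $L$ is large. But a box of side $L$ is fully open with probability only of order $c^{L^d}$. So a leaf is closed with probability at least $1-c^{L^d}$, which tends to $1$ as $L$ grows, whereas hypothesis $i)$ at the last generation asks for $\nu[v\text{ closed}]\leq\alpha e^{-C}$.

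Worse, $c^{L^d}$ is much smaller than the Pisztora failure probability $e^{-c(2L)^{d-1}}$ of the parent box. Hence no union bound along root-to-leaf paths (which is all the first-moment part of the lemma uses) can show that even a small positive fraction of those paths is open. Your final accounting, $\gamma\approx L^{-(d-1)}/2$ with probability $1/2$, silently drops this cost.

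Nor can you split the leaf requirement off and handle it afterwards with a Chernoff bound. In your geometry the points of $A$ lie inside the leaf boxes, so any bridging event uses edges whose uniforms also drive the Pisztora events of the leaf and of all its ancestors. Those events are non-increasing, so FKG is of no help, and the bridging events are not independent of which leaves got glued.

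The missing idea, which is how the paper proceeds, is to separate the two mechanisms spatially.
\begin{enumerate}
\item Build the dyadic tree inside $\Lambda_{n-L}$, so that no tree box meets the shell $\Lambda_n\setminus\Lambda_{n-L}$.
\item Run \Cref{lemma:Big_Bad_Tree_lemma} with Pisztora-only nodes on a target set $\mathfrak{B}$ of leaves within distance $L$ of $A$. Now $\alpha$ really is small, and you get a set $\mathcal{K}\subseteq\mathfrak{B}$ of leaves whose giants are glued to the root giant, with $|\mathcal{K}|\geq(1-o(1))|\mathfrak{B}|$ with high probability.
\item Attach to each $K\in\mathfrak{B}$ a bridging box $B^K$ of side $L$ in the shell, sharing a face with $K$ (a face that $\Giant_K$ touches). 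Choose the $B^K$ to cover $A$ and to have disjoint interiors.
\item Let $A_K$ be the event that all interior edges of $B^K$ are open. Under the exploration coupling these events are mutually independent and independent of $\mathcal{K}$, each of probability at least $c^{L^d}$. Chernoff--Hoeffding then gives a $\delta$-fraction of $K\in\mathcal{K}$ with $A_K$.
\item On $\Pis_{\Lambda_n}$ this yields $|\Giant_{\Lambda_n}\cap A|\geq\gamma|A|$, with $\gamma$ of order $\delta L^{-d}$ and $\delta>0$ depending only on $L$.
\end{enumerate}
With this change, your face-adapted $2^{d-1}$-ary tree would be fine. So would your small-$|A|$ argument: one point of $A$ lies in a cluster of density of order $\theta$, hence in the giant on $\Pis_{\Lambda_n}$, giving $\gamma=1/M$ for $|A|\leq M$.

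Finally, what you flag as the main obstacle is not one. Take $B'$ a child of $B$ and work on $\Pis_{B}\cap\Pis_{B'}$. The deterministic inclusion $\omega_{B'}\preceq\omega_B$ from \Cref{lemma:coupling_many_FK_measures} puts $\Giant_{B'}$ in a cluster of $\omega_B$ of size at least $(\theta-\varepsilon)|B'|$. This exceeds $\varepsilon|B|$ once $\varepsilon$ is small compared to $2^{-d}\theta$; the relevant comparison in item $iii)$ of \Cref{def:Pisztora_event} is with $\varepsilon|B|$, not with $L_0$. So that cluster must be $\Giant_B$, and no intermediate boxes are needed.
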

\begin{proof}
Let us give an outline of the proof. 
First, we prove the proposition in case $\abs{A}\leq M$ for some constant $M$ that will be fixed at the end of the proof. Then, we do a dyadic decomposition and show how the giants glue. This is used for the case $\abs{A}\geq M$ afterwards. We finish by combining those bounds. 

Suppose first $\abs{A}\leq M.$ Then, by \Cref{Touch_Cluster}, a union bound, and \eqref{FKG}, 
\begin{align*}
\phi^0_{\Lambda_n,p}[A\subseteq \mathtt{Giant}_{\Lambda_n}]&=\phi^0_{\Lambda_n,p}\left[\cap_{v\in A} \left(|\mathcal{C}_v|\geq \frac{2^d-1}{2^{d+1}}\theta|\Lambda_n|\right)\cap \mathtt{Pis}_{\Lambda_n}\right]\\
&\geq \phi^0_{\Lambda_n,p}\left[\cap_{v\in A} \left(|\mathcal{C}_v|\geq \frac{2^d-1}{2^{d+1}}\theta|\Lambda_n|\right)\right] -\exp(-cn^{d-1})\\
&\geq c^{|A|}-\exp(-cn^{d-1})\geq (c')^M
\end{align*}
for some adjusted constant.

Next, we will consider the following dyadic subdivision scheme: For fixed $L$, to be determined, we will split $\Lambda_{n-L}$ into its $2^d$ hyperoctants and remove from any hyperoctant the edges of the faces where one of the coordinates is minimal - this ensures that any two resulting boxes are edge-disjoint. We proceed in this way inductively with each of the resulting boxes, stopping the iteration after $\log(n)-\log(L)$ steps, which yields that the last scale is of side-length roughly $L$. By possibly enlarging $L,$ we may assume that the last scale is strictly smaller than $L$.

Among the last generation of boxes, those of side length roughly $L,$ at least $\abs{A}L^{-d}$ of them must be within distance $L$ of a vertex $a \in A$.  From this last generation, fix a (target) set $\mathfrak{B}$ of size $\abs{A}L^{-d+1}\leq \abs{\mathfrak{B}} \leq \abs{A}$, such that for every $a\in A,$ there is a box $K\in \mathfrak{B}$ within distance $L$ of $a$.

 Let $\mathscr{P}$ be an exploration coupling under an arbitrary ordering of the edges of $\Lambda_n.$ For $G\subseteq \Lambda_n,$ we denote by $\omega_G$ the corresponding sample of $\phi^0_{G,p}$. Furthermore, define 
 $$
\mathcal{K}= \{K \in \mathfrak{B}\mid \Giant_K(\omega_K) \overset{\omega_{\Lambda_{n-L}}}{\longleftrightarrow } \Giant_{\Lambda_{n-L}}(\omega_{\Lambda_{n-L}})\},
 $$
 which can be thought of as the set of "targets" in  $\mathfrak{B}$, that were "hit" by the giant $\Giant_{\Lambda_{n-L}}(\omega_{\Lambda_{n-L}})$, see \Cref{fig:skydeskive}. 
  
 A main technical step  is to use \Cref{lemma:Big_Bad_Tree_lemma} to argue the existence of $c,C'>0$ such that, uniformly in our choice of $L,$
 \begin{align} \label{eq:tree_lemma_output}
 \mathscr{P}\left[\abs{\mathcal{K}}\leq (1- e^{-cL^\frac{d}{2}} C' - \abs{\mathfrak{B}}^{-\frac{1}{3}})\abs{\mathfrak{B}}\right] \leq  e^{-cL^\frac{d}{2}} C'\abs{\mathfrak{B}}^{-\frac{1}{3}}. 
 \end{align}

We will then fix $L$ so large that $e^{-cL^{d/2}}C'<1/2.$

We consider each dyadic box $B$ as a vertex in a graph $\mathscr{T}$, with an edge between two vertices of the graph if one of the corresponding boxes is included in the other. Note that $\mathscr{T}$ is a $2^d$-regular rooted tree with root corresponding to $\Lambda_n$.

    On $\mathscr{T}$,  define a site percolation $\tau$ which declares a vertex $B$ open if $\omega_B \in \Pis_B$. Denote by $\nu$ its distribution.

    Whenever $L \geq \max\{L_0,N_0\}$ (as in \Cref{thm:pisztora}), it is ensured that $\Giant_{B_{j}}(\omega_{B_j}) \overset{\omega_{B_{j-1}}}\longleftrightarrow \Giant_{B_{j-1}}(\omega_{B_{j-1}})$ whenever $B_{j}$ is a hyperoctant of $B_{j-1}$ and both are open in $\tau$. Indeed, suppose $\omega_{B_{j-1}} \in \Pis_{B_{j-1}}$ and $\omega_{B_{j}} \in \Pis_{B_{j}}$. 
    By \Cref{lemma:coupling_many_FK_measures},
    $\omega_{B_{j}} \preceq \omega_{B_{j-1}}$  almost surely. 
     Therefore, for any vertex $v \in \Giant_{B_{j}},$ its enlarged cluster $\mathcal{C}_v(\omega_{B_{j-1}})$ has density at least $\theta \cdot 2^{-d}$. 
    However, in $\omega_{B_{j-1}},$ the local giant $\Giant_{B_{j-1}}(\omega_{B_{j-1}})$ is the only cluster with diameter at least $L$ and density larger than $\theta \cdot 2^{-d}$. Therefore, $v \in \Giant_{B_{j-1}}$. See \Cref{fig:Gluing_Giants}.
Iterating this argument shows that whenever $B_k \in \mathcal{T}_{2^d}$ is an open vertex and there is a path of open vertices from $B_k$ to the root $o$ then $\Giant_{B_k}(\omega_{B_k}) \overset{\omega_{\Lambda_{n-L}}}{\longleftrightarrow} \mathtt{Giant}_{\Lambda_{n-L}}(\omega_{\Lambda_{n-L}})$. 

\begin{figure} 
    \centering
    \includegraphics[width=\linewidth]{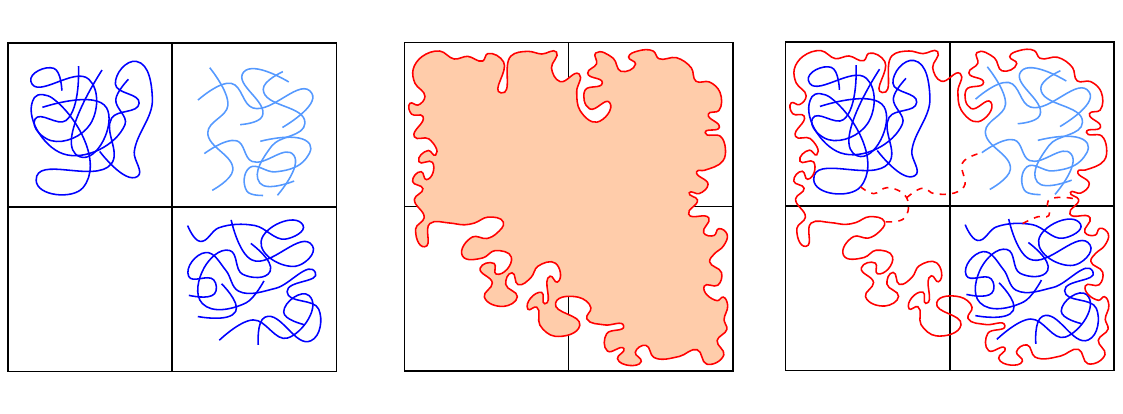}
    \caption{One step of the dyadic subdivision process. On the left: Giant clusters existing under free boundary conditions on each smaller box in various shades of blue. In the middle: The giant component under free boundary conditions on the larger box in red and orange. On the right: The blue giants are so large that, under the increasing coupling, they must be connected to the red giant via red edges. Due to planar limitations of the graphical presentation, we have elected not to indicate that all giants are connected to the boundary.}
    \label{fig:Gluing_Giants}
\end{figure}

Let us verify that the assumptions of \Cref{lemma:Big_Bad_Tree_lemma} are satisfied.  
Item $ii)$ of \Cref{lemma:Big_Bad_Tree_lemma} follows
 from \Cref{lemma:coupling_many_FK_measures} since, by construction, boxes from the same generation are edge-disjoint. 
For item $i),$ first note that by Pisztora's result (\Cref{thm:pisztora}) the probability that a vertex in $\mathtt{gen}(v)$ is closed is at most $e^{-c (2^{-\mathtt{gen}(v)}n)^{d-1}}$.
Thus, denoting the probability measure on the constructed tree by $\nu$,  since there are $ \log(n)-\log(L)$ generations in total,
$$
\nu[v \text{ closed}] \leq e^{-c2^{-\mathtt{gen}(v)(d-1)}n^{d-1}} = e^{-c2^{(\log(n)-\mathtt{gen}(v))(d-1)}} \leq \alpha \cdot e^{-c2^{(\log(n)-\mathtt{gen}(v))(\frac{d}{2}-1)}},$$
with $\alpha = e^{-cL^\frac{d}{2}}$. Since every box is split into $\mathtt{d}=2^d$ boxes, item $i)$ of \Cref{lemma:Big_Bad_Tree_lemma} is satisfied with $\lambda = 2^{\frac{\log_2(\mathtt{d})}{2}-1} >1$,  since $d\geq 3$. 
Inputting all boxes $K \in \mathfrak{B}$ into  \Cref{lemma:Big_Bad_Tree_lemma}, the considerations about connecting the giants at dyadic scales above yield \eqref{eq:tree_lemma_output}.\\

Now, by possibly thinning $\mathfrak{B}$ (but maintaining the lower bound $|\mathfrak{B}|\geq \frac{1}{\sqrt{d}}L^{-d}|A|$), for each $K\in \mathfrak{B}$ one may choose a translate $B^K$ of $\Lambda_L$ such that a face of $K$ is contained within a face of $B^K$, $A\subseteq \cup_{K\in \mathfrak{B}}B^K$ and $B^K$ and $B^{K'}$ only intersect at a face for $K\neq K'$. This is also illustrated on \Cref{fig:skydeskive}. We let $B^{K,\circ}$ denote the edges of $B^K$ which do not lie on any face. Again, this ensures disjointness of the respective edge sets. Let $A_K$ denote the event that every edge of $B^{K,\circ}$ is open in $\omega_{B^{K,\circ}}$.

\begin{figure}
    \centering
\begin{tikzpicture}

  \fill[gray!12] (-3,-3) rectangle (3,3);
  \fill[white]   (-2.5,-2.5) rectangle (2.5,2.5);
  \draw[thick]   (-3,-3) rectangle (3,3);

  \pgfmathsetmacro{\L}{0.42}
  \pgfmathsetmacro{\Lh}{\L/2}

  \begin{scope}
    \clip (-2.5,-2.5) rectangle (2.5,2.5);
    \node at (0,0) {\includegraphics[width=5cm]{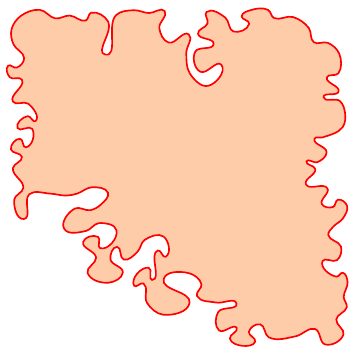}};
  \end{scope}
  \draw[thick] (-2.5,-2.5) rectangle (2.5,2.5);


  \fill[pattern=north east lines] ({0.9-\Lh},  {2.5-\L}) rectangle ({0.9+\Lh},  2.5);
  \draw                           ({0.9-\Lh},  {2.5-\L}) rectangle ({0.9+\Lh},  2.5);
  \fill[pattern=north east lines] ({-1.4-\Lh}, {2.5-\L}) rectangle ({-1.4+\Lh}, 2.5);
  \draw                           ({-1.4-\Lh}, {2.5-\L}) rectangle ({-1.4+\Lh}, 2.5);
  \fill[pattern=dots, pattern color=black] (-1.65, 2.5) rectangle (-1.15, 3.0);
  \draw (-1.65, 2.5) rectangle (-1.15, 3.0);
  \draw ({2.3-\Lh}, {2.5-\L}) rectangle ({2.3+\Lh}, 2.5);

  \fill[pattern=north east lines] ({2.5-\L}, {1.1-\Lh}) rectangle (2.5, {1.1+\Lh});
  \draw                           ({2.5-\L}, {1.1-\Lh}) rectangle (2.5, {1.1+\Lh});
  \fill[pattern=north east lines] ({2.5-\L}, {-1.3-\Lh}) rectangle (2.5, {-1.3+\Lh});
  \draw                           ({2.5-\L}, {-1.3-\Lh}) rectangle (2.5, {-1.3+\Lh});

  \draw ({-0.6-\Lh}, -2.5) rectangle ({-0.6+\Lh}, {-2.5+\L});
  \fill[pattern=north east lines] ({1.5-\Lh}, -2.5) rectangle ({1.5+\Lh}, {-2.5+\L});
  \draw                           ({1.5-\Lh}, -2.5) rectangle ({1.5+\Lh}, {-2.5+\L});
  \fill[pattern=dots, pattern color=black] (1.25, -3.0) rectangle (1.75, -2.5);
  \draw (1.25, -3.0) rectangle (1.75, -2.5);

  \draw (-2.5, {-1.2-\Lh}) rectangle ({-2.5+\L}, {-1.2+\Lh});
  \fill[pattern=north east lines] (-2.5, {1.6-\Lh}) rectangle ({-2.5+\L}, {1.6+\Lh});
  \draw                           (-2.5, {1.6-\Lh}) rectangle ({-2.5+\L}, {1.6+\Lh});

  \fill ( 0.9,  3)   circle (2pt);  \node[above, font=\small] at ( 0.9,  3)   {};
  \fill ( 1,  3)   circle (2pt);  \node[above, font=\small] at (1,  3)   {};
  \fill (-1.4,  3)   circle (2pt);  \node[above, font=\small] at (-1.4,  3)   {};
  \fill ( 3,    1.1) circle (2pt);  \node[right, font=\small] at ( 3,    1.1) {};
  \fill (-0.6, -3)   circle (2pt);  \node[below, font=\small] at (-0.6, -3)   {};
  \fill (-3,   -1.2) circle (2pt);  \node[left,  font=\small] at (-3,   -1.2) {};
  \fill ( 2.3,  3)   circle (2pt);  \node[above, font=\small] at ( 2.3,  3)   {};
  \fill ( 3,   -1.3) circle (2pt);  \node[right, font=\small] at ( 3,   -1.3) {};
    \fill ( 3,   -1.15) circle (2pt);  \node[right, font=\small] at ( 3,   -1.15) {};
  \fill ( 1.5, -3)   circle (2pt);  \node[below, font=\small] at ( 1.5, -3)   {};
  \fill (-3,    1.6) circle (2pt);  \node[left,  font=\small] at (-3,    1.6) {};

  \draw[<->, thin, gray!60] (3.4, -3) -- node[right, font=\scriptsize, gray]
        {$L$} (3.4, -2.5);
 
  \node at (1.9, -2.3) {\tiny{$K$}};
  
    \node at (2, -2.7) {\tiny{$B^K$}};
  \pgfmathsetmacro{\ly}{-3.78}      
  \pgfmathsetmacro{\yc}{\ly+\Lh}    
  \pgfmathsetmacro{\legendgap}{0.08}

  \pgfmathsetmacro{\kx}{-2.20}
  \pgfmathsetmacro{\bx}{-0.35}
  \pgfmathsetmacro{\akx}{2.55}

  \node[font=\small, anchor=east] at ({\kx-\legendgap}, {\yc}) {$\mathcal{K}$:};
  \fill[pattern=north east lines] (\kx, \ly) rectangle ({\kx+\L}, {\ly+\L});
  \draw                           (\kx, \ly) rectangle ({\kx+\L}, {\ly+\L});

  \node[font=\small, anchor=east] at ({\bx-\legendgap}, {\yc}) {$\mathfrak{B}$:};
  \draw (\bx, \ly) rectangle ({\bx+\L}, {\ly+\L});

  \node[font=\small, anchor=east] at ({\akx-\legendgap+1.5}, {\yc})
        {$B^K$ satisfying $A_K$:};
  \fill[pattern=dots, pattern color=black] (\akx+1.5, \ly) rectangle ({\akx+1.5+\L}, {\ly+\L});
  \draw                                    (\akx+1.5, \ly) rectangle ({\akx+1.5+\L}, {\ly+\L});
\end{tikzpicture}
    \caption{The box $\Lambda_n$ with marked points from $A$ on the boundary. In the box $\Lambda_{n-L}$, the boxes in $\mathfrak{B}$ are at the smallest dyadic scale, such that every point in $A$ is at most distance $L$ away from a box in $\mathfrak{B}$. $\mathcal{K}$ is the set of boxes in $\mathfrak{B}$ that connect to the giant. For each box $K\in \mathcal{K}$, a box of size $L$, denoted $B^{K}$, that contains a face of $K$ and touches $\partial \Lambda_n$ is chosen.  $A_K$ is the event that all internal edges in $B^{K}$ are open (shown with dots).} 
    \label{fig:skydeskive}
\end{figure}

 As the $\omega_{B^{K,\circ}}$ are mutually independent and independent of $\mathcal{K},$ by the Chernoff-Hoeffding inequality, there exists a $\delta >0$, which depends on $\min_K\mathscr{P}[A_K]\geq c^{L^d}$ by finite energy, such that 
$$
\mathscr{P}[ \sum_{K\in \mathcal{K}} \id_{A_K}\leq \delta |\mathcal{K}| \mid \mathcal{K}]\leq \exp(- \delta |\mathcal{K}|).
$$
Thus, for given $\gamma'>0,$
$$
\mathscr{P}\left[ \sum_{K\in \mathcal{K}} \id_{A_K}\leq \gamma' |A| \right]
\leq  \exp(-\gamma' |A|)\mathscr{P}\left[|\mathcal{K}|\geq \frac{\gamma'}{\delta}|A|\right]+\mathscr{P}\left[|\mathcal{K}|< \frac{\gamma'}{\delta}|A|\right].
$$
Applying \eqref{eq:tree_lemma_output} and the fact that $|\mathfrak{B}|\geq \frac{1}{\sqrt{d}}L^{-d}|A|$, we get for $\gamma = \delta \left(1- e^{-cL^\frac{d}{2}} C' - (\frac{L^{-d}M}{\sqrt{d}})^{-\frac{1}{3}} \right) \frac{1}{\sqrt{d}}L^{-d}$
$$
\mathscr{P}\left[ \sum_{K\in \mathcal{K}}\id_{A_K}\leq \gamma|A| \right]\leq \exp(-\gamma |A|)+e^{-cL^{d/2}}C'' L^{d/3} |A|^{-1/3}.
$$
Note that on the event $\{\mathtt{Giant}_K(\omega_K)\overset{\omega_{\Lambda_{n-L}}}{\longleftrightarrow}\mathtt{Giant}_{\Lambda_{n-L}}(\omega_{\Lambda_{n-L}})\}\cap A_K\cap \mathtt{Pis}_{\Lambda_n}(\omega_{\Lambda_n}),$ every vertex in $A\cap B^K$ is connected to the giant in $\omega_{\Lambda_n}$. Thus, on $\mathtt{Pis}_{\Lambda_n},$ $\sum_{K\in \mathcal{K}}\id_{A_K}\leq L^{-d}|\mathtt{Giant}_{\Lambda_{n}}\cap A|.$

All in all, 
by a union bound
\begin{align*}
\mathscr{P}[|A\cap \mathtt{Giant}_{\Lambda_{n}}|\leq \gamma |A|]&\leq
\mathscr{P}[ \sum_{K\in \mathcal{K}} \id_{A_K}\leq \gamma|A| ,\mathtt{Pis}_{\Lambda_n}] +\exp(-cn^{d-1})\\
&\leq \exp(-\gamma |A|)+e^{-cL^{d/2}}C' L^{d/3} |A|^{-1/3}+2\exp(-cn^{d-1}). 
\end{align*}
Since $|A|\leq c'' n^{d-1}$, we get the desired by choosing $M$  large enough that
$$
\exp(-\gamma M)+e^{-cL^{d/2}}C' L^{d/3} M^{-1/3}+2\exp(-c'''M)< 1
$$
and combining the upper bounds achieved.
\end{proof}

\section{Unique FK-crossings conditioned on \texorpdfstring{$\mathcal{F}_A$}{F\_A}: Proof of \texorpdfstring{\Cref{proposition:unique_crossing_with_fa_old_version}}{Proposition \ref*{proposition:unique_crossing_with_fa_old_version}}}
\label{sec:Catch}

In this section, we prove \Cref{proposition:unique_crossing_with_fa_old_version}. The strategy is, again, to use an exploration coupling to gradually relax the condition $\mathcal{F}_A$. Basically, we slice up the box $\Lambda_n$ into annuli and use \Cref{prop:giant_touches_face_density} to glue vertices in $A$ to giants under free boundary conditions in each annulus. This is the content of \Cref{sec:Catch}, which gives a useful bound until the number of clusters intersecting $A$ starts looking sublinear, at which point repeated application of \Cref{prop:giant_touches_face_density} is no longer strong enough to yield the right bound. However, at this point, realising the modified $\mathcal{F}_{A'}$ event given what was already explored has an a priori cost which is at most exponential with a rate we can control. Thus, we can get away with a union bound. 

\subsection{Catching via exploration}

The following two lemmata will be used to control a single exploration step in \Cref{The_Catching_Prop} below.  Define $N_k=N-k\log N$  and the annulus $\mathtt{Ann}^k=\Lambda_{N_{k-1}}\setminus \Lambda_{N_k}$
of width $\log(N)$. Similarly to the dyadic subdivision in \Cref{prop:giant_touches_face_density}, we tile $\ak$ by a maximal collection of edge disjoint boxes of side length $\log(N)$. Refer to one fixed such collection as $\mathfrak{B}_k$.  

We suggestively let $\mathtt{Pis}_{\ak}$ denote the event that $\omega|_{B}\in \mathtt{Pis}_{B}$ for all $B\in \mathfrak{B}_k$ and that $\mathtt{Giant}_B(\omega|_B)\overset{\omega_{\ak}}{\longleftrightarrow} \mathtt{Giant}_{B'}(\omega|_{B'})$ for $B,B'\in \mathfrak{B}_k.$ The following is a standard application of Pisztora and its proof is more or less the same as the beginning of the proof of \cite[Lemma 4.8]{hansen2023uniform} apart from the fact that the annulus under consideration is significantly thinner.
\begin{lemma}[All Giants Glue] \label{lemma:all_giants_glue}
Fix $d\geq 3$ and $p>p_c$. For any $N \in \N$, there exists $c>0$ such that for each $k < \lfloor \frac{N}{4\log N}\rfloor$,
$
\phi^0_{\ak,p}[\mathtt{Pis}_{\ak}]\geq 1- e^{-c\log(N)^{d-1}}.
$
\end{lemma}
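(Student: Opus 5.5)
The plan is a union bound over the $O(N^{d-1}/\log(N)^{d-1})$ boxes of $\mathfrak{B}_k$ (and over pairs of adjacent boxes). Two things must hold with high probability: each box is a Pisztora box, and the giants of neighbouring boxes are connected inside the annulus. Note that $\mathfrak{B}_k$ tiles an annulus of width $\log N$ by boxes of side $\log N$, so the tiling is essentially a $(d-1)$-dimensional shell of boxes. Since the annulus $\ak$ has width $\log N$, all estimates will be at scale $\log N$. Because the number of boxes is only polynomial in $N$, stretched-exponential bounds of the form $e^{-c\log(N)^{d-1}}$ with $d\geq 3$ beat the polynomial union bound factor. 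We then absorb the polynomial factor, after possibly reducing $c$ and treating small $N$ via finite energy.

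\textbf{Step 1 (each box is Pisztora).} For $B\in\mathfrak{B}_k$, the event $\mathtt{Pis}_B$ depends only on $\omega|_B$. By \eqref{SMP}, the marginal of $\phi^0_{\ak,p}$ on $B$ is a mixture over boundary conditions $\xi$ of $\phi^\xi_{B,p}$. \Cref{thm:pisztora} holds uniformly in $\xi$, which gives $\phi^0_{\ak,p}[\omega|_B\notin\mathtt{Pis}_B]\leq e^{-c\log(N)^{d-1}}$ once $\log N\geq N_0$.

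\textbf{Step 2 (adjacent giants glue).} To connect adjacent boxes $B,B'$, I would use an overlapping box $B''$ of the same scale, for instance a translate of side $\log N$ centred on the common face, trimmed to lie in $\ak$. Such a box exists since the annulus has width $\log N$; if necessary, use boxes of side $\frac12\log N$ throughout. On $\mathtt{Pis}_B\cap\mathtt{Pis}_{B''}\cap\mathtt{Pis}_{B'}$, the giants must intersect. Indeed, $\mathtt{Giant}_B$ restricted to $B\cap B''$ has density (property $ii)$ combined with $iii)$ applied to a sub-box), and within $B''$ the only cluster of density larger than $2^{-d}\theta$ and large diameter is $\mathtt{Giant}_{B''}$. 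This is the same gluing mechanism as in the proof of \Cref{prop:giant_touches_face_density}. The cleanest way to get the density of $\mathtt{Giant}_B$ in the overlap is to also require Pisztora events for all dyadic sub-boxes of a fixed fraction of the size, which are again polynomially many, each failing with probability $e^{-c\log(N)^{d-1}}$. Connectivity of the tiling's adjacency graph (the shell of boxes is connected for $d\geq 3$, and for $d=2$ the statement is not needed) then chains all giants together. Note that the giants of different boxes are connected inside $\omega|_{\ak}$, which is what $\mathtt{Pis}_{\ak}$ requires.

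\textbf{Step 3.} The total failure probability is $CN^{d}e^{-c\log(N)^{d-1}}\leq e^{-c'\log(N)^{d-1}}$, using $d-1\geq 2>1$. The restriction $k<\lfloor N/(4\log N)\rfloor$ only ensures that the annulus is a genuine thick shell with $N_k\geq 3N/4$. The main obstacle I expect is Step 2, namely making rigorous that the giant of $B$ intersects $B\cap B''$ densely and merges with the giant of $B''$. I would handle this by refining to sub-boxes exactly as in \cite[Lemma 4.8]{hansen2023uniform}.
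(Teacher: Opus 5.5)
Your proposal is correct and follows essentially the same route as the paper: a union bound over polynomially many boxes at scale $\log N$ inside $\ak$, where neighbouring giants are glued by also requiring $\mathtt{Pis}$ on smaller boxes (of half the size) lying in the overlap of two consecutive large boxes, so that the small giant belongs to both large giants, and the polynomial factor is absorbed because $d-1\geq 2$. The paper packages your Steps 1--2 by taking all large and small boxes centred on the middle surface $\partial_v\Lambda_{N_k+(\log N)/2}$ of the annulus, which makes the chaining, including around the corners of the shell, automatic.
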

\begin{proof}
Let $\mathscr{B}_k\supseteq \mathfrak{B}_k$ be an enlargement of $\mathfrak{B}_k$ with the following properties: 
\begin{enumerate}
    \item Each $B\in \mathscr{B}_k$ is either a translate of $\Lambda_{\log N}$ or a translate of $\Lambda_{\log(N)/2}$ and $B\subseteq \ak$. We will refer to the former as large boxes and to the latter as small boxes.
    \item For large boxes $B,B'\in \mathscr{B}_k,$ there exists an alternating sequence $B=B_0, b_{0,1},B_1,...,b_{n-1,n},B_n=B'$ with each $B_i$ large and each $b_{i,i+1}$ small and $b_{i,i+1}\subseteq B_i\cap B_{i+1}.$
    \item $|\mathscr{B}_k|\leq C N^{d-1}.$
\end{enumerate} 
For instance, $\mathscr{B}_k$ can be chosen to be the set of all large and small boxes centered at a point on $\partial_v \Lambda_{N_k+(\log N)/2}$. Note that for a path as in $(2),$ if $(\omega|  _{B_i},\omega|_{b_{i,i+1}},\omega|_{B_{i+1}})\in \mathtt{Pis}_{B_i}\times \mathtt{Pis}_{b_{i,i+1}}\times \mathtt{Pis}_{B_{i+1}}$, then since $|\omega|_{b_{i,i+1}}|\geq 2^{-d}\theta |B_i|=2^{-d}\theta|B_{i+1}|$, $\mathtt{Giant}_{b_{i,i+1}}(\omega|_{b_{i,i+1}})\subseteq 
\mathtt{Giant}_{B_i}(\omega|_{B_i})\cap \mathtt{Giant}_{B_{i+1}}(\omega|_{B_{i+1}})$. In particular,
\begin{align*}
   \phi^0_{\ak,p}[\mathtt{Pis}_{\ak}] &\geq \phi^0_{\ak,p}\left[\bigcap_{B\in \mathscr{B}_k} \mathtt{Pis}_{B}(\omega_B)\right] \\ 
   &\geq 1-|\mathscr{B}_k|\exp(-c(\log N/2)^{d-1})\geq 1-CN^{d-1}\exp(-c(\log N/2)^{d-1}),
\end{align*}
by a union bound. Now, since $\exp(-c(\log N/2)^{d-1})$ is superpolynomial, 
$$
CN^{d-1}\exp(-c(\log N/2)^{d-1})\leq \exp(-c'(\log N/2)^{d-1})
$$
for an adjusted constant $c'$ and $N$ large. Possibly adjusting the constant again to take care of smaller values of $N$ yields the lemma.
\end{proof}

Let $\partial_{out} \ak$ denote the outer boundary of the annulus $\ak$. Furthermore, on the event that $\omega|_{\ak}$ has a unique large cluster with density $2^{-d}\theta$ in each $B\in \mathfrak{B}_k,$ we denote by $\mathtt{Giant}_{\ak}$ the corresponding cluster. If no such cluster exists, we define it to be empty. 
\begin{lemma}\label{lemma:fraction_connects_to_supergiant}
  Fix $\delta>0$, and let $A \subseteq \partial_{out} \ak$ satisfy that $\abs{A} \geq \delta N$. There exists $\varepsilon, \alpha >0$ so that for all $N$ large enough,
    $$
    \phi_{\ak,p}^0[\abs{A \cap \Giant_{\ak}} \geq \varepsilon \abs{A}] \geq 1-N^{-\alpha}. 
    $$
\end{lemma}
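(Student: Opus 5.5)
Fix $\delta>0$ and $A\subseteq\partial_{out}\ak$ with $|A|\geq\delta N$. The plan is to combine \Cref{prop:giant_touches_face_density} (applied at scale $\log N$ in each tile of $\mathfrak{B}_k$ adjacent to the outer boundary) with \Cref{lemma:all_giants_glue} and a concentration argument across the many edge-disjoint tiles.

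First, for each $B\in\mathfrak{B}_k$ set $A_B=A\cap\partial_v B\cap\partial_{out}\ak$. After possibly passing to a subcollection, assign each $a\in A$ to exactly one tile. Since each tile contains at most $C(\log N)^{d-1}$ boundary points, the number of tiles with $A_B\neq\emptyset$ is at least $c\,|A|(\log N)^{-(d-1)}\geq c\delta N(\log N)^{-(d-1)}$.

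Second, work under an exploration coupling $\mathscr{P}$ as in \Cref{lemma:coupling_many_FK_measures}. This gives samples $\omega_B\sim\phi^0_{B,p}$ that are mutually independent over the edge-disjoint tiles $B\in\mathfrak{B}_k$, together with $\omega_{\ak}\sim\phi^0_{\ak,p}$ dominating each of them. For each tile, \Cref{prop:giant_touches_face_density} gives
\[
\mathscr{P}\bigl[|\mathtt{Giant}_B(\omega_B)\cap A_B|\geq\gamma|A_B|\bigr]\geq\varepsilon_0 .
\]
Let $X_B=|A_B|\,\id\{|\mathtt{Giant}_B(\omega_B)\cap A_B|\geq\gamma|A_B|\}$. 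These are independent, each bounded by $C(\log N)^{d-1}$, and $\sum_B\mathbb{E}X_B\geq\varepsilon_0|A|$. By Hoeffding's inequality,
\[
\mathscr{P}\Bigl[\sum_B X_B\leq\tfrac{\varepsilon_0}{2}|A|\Bigr]\leq\exp\bigl(-c|A|^2/(|A|(\log N)^{2(d-1)})\bigr)\leq\exp\bigl(-cN(\log N)^{-2(d-1)}\bigr),
\]
which is far smaller than $N^{-\alpha}$.

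Third, transfer from local giants to $\mathtt{Giant}_{\ak}$. On $\mathtt{Pis}_{\ak}(\omega_{\ak})$ every tile satisfies $\omega_{\ak}|_B\in\mathtt{Pis}_B$ and all local giants are glued in $\omega_{\ak}$. We must show that $\mathtt{Giant}_B(\omega_B)\subseteq\mathtt{Giant}_B(\omega_{\ak}|_B)$ whenever $\omega_B\in\mathtt{Pis}_B$. This is the same density argument as in the proof of \Cref{prop:giant_touches_face_density}: since $\omega_B\preceq\omega_{\ak}|_B$, the cluster of $\mathtt{Giant}_B(\omega_B)$ in $\omega_{\ak}|_B$ has density at least $(\theta-2^{-d}\theta)$, so by item $iii)$ of the Pisztora event it must be the local giant. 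Hence, on $\mathtt{Pis}_{\ak}$, we get $|A\cap\mathtt{Giant}_{\ak}|\geq\gamma\sum_BX_B$. A union bound with \Cref{lemma:all_giants_glue} then yields failure probability at most $e^{-c(\log N)^{d-1}}+e^{-cN(\log N)^{-2(d-1)}}\leq N^{-\alpha}$ for $N$ large, since $d\geq3$. This gives the claim with $\varepsilon=\gamma\varepsilon_0/2$.

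The main obstacle is this third step. The tiles in $\mathfrak{B}_k$ are boxes lying in $\ak$, and we need their boundary faces to coincide with $\partial_{out}\ak$, so that \Cref{prop:giant_touches_face_density} applies to $A_B\subseteq\partial_vB$. We also need the domination/density argument to work in the thin annulus with free boundary conditions. If the tiling is not aligned with the outer boundary, I would first shift or choose the tiling so that faces are flush, handling corner tiles by discarding them, which loses only a bounded fraction. Note also that the rate $N^{-\alpha}$ is dictated by \Cref{lemma:all_giants_glue}, whose bound $e^{-c(\log N)^{d-1}}$ is superpolynomially small when $d\geq3$.
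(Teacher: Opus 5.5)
Your proof is correct and follows the paper's overall plan: \Cref{prop:giant_touches_face_density} in each $\log N$-tile, Hoeffding over the edge-disjoint tiles, then \Cref{lemma:all_giants_glue} and a union bound. The only difference is how you handle the fact that the giant is not increasing. The paper introduces an increasing proxy $H_B$, the number of points of $A\cap B$ lying in clusters of size at least $2^{-d}\theta|B|$. It applies Hoeffding under $\bigotimes_B\phi^0_{B,p}$, which is stochastically dominated by $\phi^0_{\ak,p}$, and then returns to the giant using $G_B=H_B$ on $\mathtt{Pis}_B$. You instead stay inside the exploration coupling and apply Hoeffding to the independent variables built from the $\omega_B$. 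You then transfer via $\mathtt{Giant}_B(\omega_B)\subseteq\mathtt{Giant}_B(\omega_{\ak}|_B)$, which is the same density argument the paper uses to glue giants across dyadic scales in \Cref{prop:giant_touches_face_density}. The two devices are equivalent. Yours feeds the event supplied by the proposition directly into Hoeffding, so you need not pass from $\phi^0_{B,p}[G_B\mid\mathtt{Pis}_B]$ to $\phi^0_{B,p}[H_B]$. Your threshold $(1-2^{-d})\theta|B|$ also avoids a borderline case that item $iii)$ permits: a non-giant cluster of size exactly $2^{-d}\theta|B|$. On your closing remark, alignment is not really an obstacle. Any $a\in A\cap B$ with $B\subseteq\ak$ automatically lies in $\partial_vB$, since it has a neighbour outside $\Lambda_{N_{k-1}}$. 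So you only need the tiles to cover $\partial_{out}\ak$, and \Cref{prop:giant_touches_face_density} applies unchanged to tiles meeting several faces. You should not discard tiles meeting two or more faces, though. For $d\geq 3$, the outer-boundary points within distance $\log N$ of the $(d-2)$-dimensional edges of the cube number of order $N^{d-2}\log N\geq\delta N$, so $A$ could lie entirely there.
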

\begin{proof}   
It is possible to choose the tiling $\mathfrak{B}_k$ to have $|\mathfrak{B}_k|\leq C (\frac{N}{\log N})^{d-1} $. Since $A \subseteq \partial_{out} \ak,$ and $\abs{A} \geq \delta N$ it suffices to prove that $\Giant_{\ak}$ touches a fraction of the covered vertices. 
By  \Cref{prop:giant_touches_face_density} for some $\varepsilon_0, \gamma >0$  (and $n=\log(N)$) for any $B$ in the tiling, 
$$\phi_{B,p}^{0}[\abs{ \mathtt{Giant}_B \cap A} \geq \gamma \abs{A \cap B}] > \varepsilon_0. $$

 Define $G_B =\abs{ \mathtt{Giant}_B  \cap A}$ and $G = \sum_{B \in \mathfrak{B}_k}G_B$. Then $\sum_{B \in \mathfrak{B}_k}\mathscr{\phi}^0_{B,p}[G_B] \geq \gamma \varepsilon_0 \abs{A} =2 \varepsilon\abs{A}$ and, deterministically, $0 \leq G_B \leq \abs{B} \leq C\log(N)^{d-1}.$ 
As the existence of the giant is not increasing, we will make a slight detour to get concentration bounds for $G$ under $\phi^0_{\ak,p}$ out of $\mathscr{\phi}^0_{B,p}[G_B]$. Introduce the random variable corresponding to the points in dense clusters, $H_B = \abs{\{a \in A\cap B \mid \abs{\mathcal{C}_a(\omega_B)}\geq 2^{-d} \theta \abs{B} \}}  $ and note that $G_B \id_{\Pis_B} =H_B \id_{\Pis_B}$. Therefore,
$$
\phi_{B,p}^0[H_B] \geq \phi_{B,p}^0[H_B \mid \Pis_B] - e^{-c \log(N)^{d-1}} =  \phi_{B,p}^0[G_B \mid \Pis_B] - e^{-c \log(N)^{d-1}}.
$$
Set $H= \sum_{B}H_B.$ Since $H$ is increasing, Hoeffding's inequality shows that
\begin{align*}
\phi_{\ak,p}^0[H < \varepsilon\abs{A}] &\leq (\otimes_{B} \phi_{B,p}^0)[H < \varepsilon\abs{A}] \leq   \exp \left( - \frac{2 (\frac{1}{2}(\otimes_{B} \phi_{B}^0)[H])^2}{\sum_{B} \abs{A\cap B}^2} \right) \\
&\leq  \exp\left( - \frac{2 \varepsilon^2\abs{A}^2}{\log(N)^{d-1}\sum_{B}\abs{A\cap B}} \right) 
= \exp\left( - \frac{2 \varepsilon^2}{\log(N)^{d-1}} \abs{A}\right).  
\end{align*}
All Pisztora's events are likely enough to transfer to $G$. By \Cref{lemma:all_giants_glue}, 
\begin{align*}
\phi_{\ak}^0[G<\varepsilon\abs{A}] 
&\leq \phi_{\ak}^0[G<\varepsilon\abs{A} \mid \cap_B\Pis_B]  +e^{-c \log(N)^{d-1}}\\ 
&\leq \phi_{\ak}^0[H<\varepsilon\abs{A} \mid \cap_B\Pis_B] +e^{-c \log(N)^{d-1}}
\leq \exp\left( - \frac{2 \varepsilon^2}{\log(N)^{d-1}} \abs{A}\right) + 2e^{-c \log(N)^{d-1}}.
\end{align*}

Thus, as long as $\abs{A} \geq \delta N$, a proportion of the vertices glue to their local giant with high probability.

By \Cref{lemma:all_giants_glue}, all local giants (in boxes of sizes $\log(N)$) exist simultaneously and glue together to $\Giant_{\ak}$  with high probability.  The lemma follows by a union bound. 
\end{proof}

 Furthermore, we will need the following elementary bound.
\begin{lemma} \label{Lemma:Bernoulli_LD}
Let $\alpha>0$, $k=N/\log(N)$ and $p=N^{-\alpha}$. Then, for any $C>0,$ 
$$
\operatorname{Bin}_{k,p}[Y\geq k - C \log(N)]\leq \exp(-N(\alpha+o(1))).
$$
\end{lemma}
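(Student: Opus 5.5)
The bound is a direct binomial tail estimate, and I would obtain it by passing to the complementary count. Write $m=\lceil C\log N\rceil$ and let $Z=k-Y$ be the number of failures, so that $Z\sim\operatorname{Bin}_{k,1-p}$ in the obvious sense. The event $\{Y\geq k-C\log N\}$ is exactly $\{Z\leq C\log N\}$, i.e. all but at most $m$ of the $k$ trials succeed.

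A union bound over which trials may fail gives
$$
\operatorname{Bin}_{k,p}[Y\geq k-C\log N]=\sum_{j=0}^{m}\binom{k}{j}p^{k-j}(1-p)^{j}\leq (m+1)\binom{k}{m}p^{k-m}\leq (m+1)k^{m}p^{k-m},
$$
where $\binom{k}{j}\leq k^{m}$ for $j\leq m\leq k/2$ (valid for $N$ large), and $p^{k-j}\leq p^{k-m}$ since $p\leq 1$.

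Taking logarithms and inserting $p=N^{-\alpha}$ and $k=N/\log N$, I expect
$$
\log\big[(m+1)k^{m}p^{k-m}\big]\leq \log(m+1)+m\log k-\alpha (k-m)\log N.
$$
The leading term is $\alpha k\log N=\alpha N$. The remaining terms are $O(\log\log N)$, $O((\log N)^2)$ and $\alpha m\log N=O((\log N)^2)$, all of which are $o(N)$. Hence the probability is at most $\exp(-N(\alpha+o(1)))$, as claimed.

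There is no real obstacle here; the only bookkeeping point is that the rate comes out as $\alpha k\log N=\alpha N$ precisely because $k=N/\log N$. The $o(1)$ must absorb the polylogarithmic corrections coming from the $C\log N$ slack, and it depends on $C$ and $\alpha$, which is allowed since both are fixed. If $k$ or $C\log N$ is not an integer, I will take floors and ceilings, which changes only lower-order terms.
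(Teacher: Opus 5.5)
Your argument is correct, but it takes a different route from the paper. The paper uses the exponential Markov (Chernoff) inequality with the tilt $\lambda=\alpha\log N$, chosen so that $pe^{\lambda}=1$. It then bounds the moment generating function crudely by $(1-p+pN^{\alpha})^k\leq 2^k$, which costs $k\log 2=N\log 2/\log N$ in the exponent, i.e.\ $o(1)=O(1/\log N)$.

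You instead write the tail exactly as a sum over configurations with at most $C\log N$ failing trials, and bound each term by $k^{m}p^{k-m}$. This is equally elementary and in fact sharper, giving $o(1)=O((\log N)^2/N)$. The reason is that you pay only a polylogarithmic combinatorial factor for the positions of the failures, rather than the factor $2^k$. Strictly, summing up to $\lceil C\log N\rceil$ gives an upper bound rather than an equality when $C\log N\notin\mathbb{N}$, which is harmless.

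The Chernoff route is the generic template one would reuse for other deviation thresholds. Your counting argument exploits that the event is an extreme deviation (almost all trials succeed), where direct enumeration is essentially tight. For the application in \Cref{The_Catching_Prop}, which only needs a rate $\alpha+o(1)$ with $\alpha>0$, either suffices.
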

\begin{proof}
    By the (exponential) Markov inequality, for $Y \sim \Bin_{k,p}$, 
    \begin{align*}
     \Bin_{k,p}[Y \geq k - c \log(N)] 
     &=   \Bin_{k,p}[\exp(\alpha \log(N)Y)\geq \exp(\alpha \log(N)(k - c \log(N)))] \\ 
     &\leq  \Bin_{k,p}[\exp(\alpha \log(N)Y)] \exp(- \alpha\log(N)(k - c \log(N))). 
    \end{align*}
    As $\Bin_{k,p}[\exp(\alpha \log(N)Y)] = (1-p+pe^{\alpha \log(N)})^k = (1-N^{-\alpha} +N^{-\alpha}N^{\alpha})^k \leq 2^k,$ plugging in $k=N/\log N$ and expanding, the resulting exponent is at most 
    $$
     - N ( \alpha - \log(2)\log(N)^{-1} - c \log^2(N)N^{-1}) = - N( \alpha + o(1)).
    $$
\end{proof}

\noindent For a percolation configuration $\omega$ and a set of vertices $A,$ let $\mathscr{C}_A(\omega)$ denote the clusters of $\omega$ intersecting $A$.

\begin{lemma} \label{The_Catching_Prop}
Fix $d\geq 3$, $p>p_c$. For any $\delta>0$, there exists $C>0$ such that for every $N\in \mathbb{N}$ and $A\subseteq \partial_v\Lambda_N$ with $|A|$ even,
$$
\phi^0_{\Lambda_N,p}[ |\mathscr{C}_A(\omega|_{\Lambda_N\setminus \Lambda_{3N/4}})| \geq   \delta N\mid \mathcal{F}_A]\leq C\exp(-C N).
$$
\end{lemma}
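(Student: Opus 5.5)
We work under an exploration coupling $\mathscr{P}$ in which the annuli $\mathtt{Ann}^1,\mathtt{Ann}^2,\dots$ are revealed successively from the outside in, for $k$ up to $K=\lfloor \frac{N}{4\log N}\rfloor$ (so that we stay inside $\Lambda_N\setminus\Lambda_{3N/4}$).

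By \Cref{lemma:FA_dom}, conditionally on everything explored in $\mathtt{Ann}^1,\dots,\mathtt{Ann}^{k-1}$, the law of the configuration on $\mathtt{Ann}^k$ under $\phi^0_{\Lambda_N,p}[\;\cdot\mid\mathcal{F}_A]$ strongly dominates $\phi^0_{\mathtt{Ann}^k,p}$ (free boundary conditions). We may therefore couple a sample $\omega_k\sim\phi^0_{\mathtt{Ann}^k,p}$ below the conditioned configuration, with the $\omega_k$ independent across $k$ by \Cref{lemma:coupling_many_FK_measures}, since the annuli are edge-disjoint.

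The bookkeeping is done with the number of distinct clusters, in the part explored so far, that intersect $A$. Call it $M_k$, and say that a cluster \emph{reaches} $\partial_{out}\mathtt{Ann}^k$ if it touches the outer boundary of the next annulus. Clusters of $A$ that never reach deeper annuli remain confined to the explored region. For this reason it is convenient to track the set $A_k\subseteq\partial_{out}\mathtt{Ann}^k$ consisting of one representative point per cluster still reaching the next annulus. If such a cluster stops reaching, it has been closed off, and the number of clusters of $A$ in $\Lambda_N\setminus\Lambda_{3N/4}$ can only decrease afterwards through merging.

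The key step is as follows. Whenever $|A_k|\ge\delta N$, \Cref{lemma:fraction_connects_to_supergiant} shows that with probability at least $1-N^{-\alpha}$ the giant $\mathtt{Giant}_{\mathtt{Ann}^k}(\omega_k)$ touches at least $\varepsilon|A_k|$ of the representatives. All of these then merge into a single cluster, so
\[
M_{k}\le (1-\varepsilon)M_{k-1}+1 .
\]
By independence of the $\omega_k$, call step $k$ a \emph{failure} if $|A_k|\ge \delta N$ and this contraction does not happen. Failures are dominated by independent Bernoulli$(N^{-\alpha})$ trials. Since $M_0\le|A|\le CN^{d-1}$, only $O(\log N)$ successful steps are needed before the count drops below $\delta N$. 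Hence the event in the lemma requires at least $K-C\log N$ failures among $K\approx N/\log N$ trials. By \Cref{Lemma:Bernoulli_LD} this has probability at most $\exp(-N(\alpha+o(1)))$.

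The main obstacle is the statement in \Cref{Lemma:Bernoulli_LD} itself: its scale is $K=N/\log N$, whereas here $K=N/(4\log N)$. This changes only constants, giving a bound of the form $\exp(-cN)$, as required.

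A second subtlety is that the contraction must be measured on the conditioned configuration, not merely on $\omega_k$. Monotonicity of the coupling handles this, because connections present in $\omega_k$ persist in the dominating configuration, so merged clusters stay merged.

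The last point concerns the case where the bound on $M_0$ is unavailable, or where $|A|\le CN^{d-1}$ gives too many required successes. Here I would instead use that every success shrinks the count by a factor $1-\varepsilon$ whenever the count exceeds $\delta N$. Thus $\log(CN^{d-1}/\delta N)/\varepsilon=O(\log N)$ successes suffice, which is consistent with the $C\log N$ slack in \Cref{Lemma:Bernoulli_LD}.
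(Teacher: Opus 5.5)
Your mechanism matches the paper's: exploration through the $\log N$-annuli, \Cref{lemma:fraction_connects_to_supergiant} applied to the current representatives, Bernoulli$(N^{-\alpha})$ domination of the failures, and \Cref{Lemma:Bernoulli_LD}. The gap is in the bookkeeping that turns this into a bound on $M_K$.

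The giant of $\omega_k$ catches an $\varepsilon$-fraction of the \emph{active} representatives $A_{k-1}$, not of all $M_{k-1}$ clusters. A success therefore only gives $M_k\le M_{k-1}-\varepsilon|A_{k-1}|+1$ and $|A_k|\le(1-\varepsilon)|A_{k-1}|+1$. Clusters of $A$ that have been closed off stay in $M_k$ for good and never merge again. So $M_k\le(1-\varepsilon)M_{k-1}+1$ is false in general, and your remark that the count ``can only decrease through merging'' does not help with those clusters. The same mismatch affects the trigger: your contraction is triggered by $|A_{k-1}|\ge\delta N$, while the event in the lemma is $M_K\ge\delta N$. If the active count falls below $\delta N$ through closing-off rather than merging, then $M_K\ge\delta N$ is compatible with zero failures. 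Hence the claim that the event forces $K-C\log N$ failures is unjustified.

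This cannot be repaired for the total count. Take $A$ to be $m\asymp N^{d-1}$ well-separated adjacent pairs $\{a_i,b_i\}$ on one face, and write $\phi^0_{\Lambda_N,p}[\;\cdot\mid\mathcal{F}_A]=\ell^A_{\Lambda_N,x}\cup\mathbb{P}_{\Lambda_N,x}$. Perform a local surgery on $\eta$: replace $\eta$ on the edges at $a_i,b_i$ by the single edge $a_ib_i$, and re-pair the neighbours that became odd by a short path avoiding $a_i,b_i$. This map is bounded-to-one and changes the weight by a bounded factor. It shows that each pair is an isolated cluster of $\omega$ with conditional probability at least some $c>0$. Hence, with probability at least $c/2$, there are at least $cm/2\gg\delta N$ such clusters, all lying in $\mathscr{C}_A(\omega|_{\Lambda_N\setminus\Lambda_{3N/4}})$.

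What the argument really controls is the number of $A$-clusters of the explored region still reaching $\partial_v\Lambda_{N_K}$, i.e.\ $|A_K|$. That is also all the second proof of \Cref{proposition:unique_crossing_with_fa_old_version} needs, since closed-off clusters do not enter the residual conditioning on the unexplored region. The paper runs its recursion on $|A^k|$ only. Its inequality $|A^{k_{\mathtt{fin}}}|\ge|\mathscr{C}_A(\omega|_{\Lambda_N\setminus\Lambda_{3N/4}})|$ is valid only if $\mathscr{C}_A$ is read as these reaching clusters.

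So drop $M_k$ and run your failure count on $|A_k|$. For $N$ large, a success with $|A_{k-1}|\ge\delta N$ gives $|A_k|\le(1-\varepsilon/2)|A_{k-1}|$, and $|A_0|\le CN^{d-1}$, so $O(\log N)$ successes suffice. State the conclusion for reaching clusters, and the proof goes through.

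Your definition of $A_k$, one representative per cluster of the whole explored region, is in fact preferable to the paper's. The paper tracks local clusters of $\omega^A|_{\mathtt{Ann}^k}$ through a single chosen vertex. That can lose a cluster whose continuation leaves through a boundary vertex other than the chosen representative.
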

\begin{proof}
Define $N_k=N-k\log N$ and let $k_{\mathtt{fin}}=\lfloor \frac{N}{4\log N}\rfloor$.

We will consider $\mathtt{Ann}^k=\Lambda_{N_{k-1}}\setminus \Lambda_{N_k}$ and the corresponding free random-cluster measures $\phi^0_{\mathtt{Ann}^k,p}.$ 
 We will explore the configuration in $\omega^A\sim \phi^0_{\Lambda_N,p}[\;\cdot\mid \mathcal{F}_A]$ one scale at a time under an (increasing) exploration coupling with $(\omega^k )_{1\leq k\leq k_{\mathtt{fin}}},$ where $\omega^k \sim \phi^0_{\mathtt{Ann}^k,p}$. That is, fix a total order of the edges such that every edge in $\ak$ is smaller than every edge in $\mathtt{Ann}^{k+1}$ for every $k$ and denote the corresponding exploration coupling by $\mathscr{P}$. By \Cref{lemma:coupling_many_FK_measures}, the $\omega^k$ are independent. Furthermore, by \Cref{lemma:FA_dom},  $\omega^k\preceq \omega^A$ for every $k$, and by our choice of ordering, $\omega^k$ is independent of $\omega^A|_{\Lambda_N \setminus \Lambda_{N_{k-1}}}.$

Let $A^0=A$ and inductively, define $A^k$ as follows: For each cluster $\mathcal{C}$ of $\omega^A|_{\mathtt{Ann}^k}$ which intersects $A^{k-1},$ pick one vertex $v_\mathcal{C}\in (\partial_v \Lambda_{N_{k}}\cap \mathcal{C})$ (say, the ones first in the lexicographical ordering). Set $A^k$ equal to the union of the $v_{\mathcal{C}}$'s.  Note that $|A^k|$ is decreasing in $k$ and $|A^{k_{\mathtt{fin}}}|\geq |\mathscr{C}_A(\omega|_{\Lambda_N\setminus \Lambda_{3N/4}})|.$ 
See \Cref{fig:catching_figure} for an illustration.

Let $\tau=\inf\{k\mid |A^k|< \delta N\}$. 
By \Cref{lemma:fraction_connects_to_supergiant} there exist $\alpha,\varepsilon>0$ such that, in every step, as long as $1\leq k\leq \tau$, there is probability at least $1-N^{-\alpha}$ that an $\varepsilon$-fraction of the vertices in $A^{k-1}$ glue to a single cluster in $\omega^k$.  Since $\omega^k \preceq \omega^A$, 
\begin{equation} \label{Binomial_Induction}
\mathscr{P}[|A^k|\leq (1-\varepsilon) |A^{k-1}| \mid A^{k-1}, \omega^A|_{\Lambda_N\setminus \Lambda_{N_{k-1}}} ] \geq \phi_{\ak}^0[|\mathtt{Giant}_{\ak}\cap A^{k-1}|\geq  \varepsilon |A^{k-1}| ]\geq 1-N^{-\alpha}.
\end{equation}
Accordingly, the set of $k\leq \tau$ for which $[|A^k|> (1-\varepsilon) |A^{k-1}|$ is stochastically dominated by a Bernoulli percolation on $\{1,...,\tau\}$ with density $N^{-\alpha}.$ Thus, \Cref{Lemma:Bernoulli_LD} implies that:
\begin{align*}
\phi_{\Lambda_N,p}^0[|\mathscr{C}_A(\omega|_{\Lambda_N\setminus \Lambda_{3N/4}})|\geq \delta N\mid \mathcal{F}_A]
=\mathscr{P}[|\mathscr{C}_A(\omega^A|_{\Lambda_N\setminus \Lambda_{3N/4}})|\geq \delta N]
\leq \mathscr{P}[|A^{k_{\mathtt{fin}}}|\geq \delta N]
\leq\exp\left(-CN\right).
\end{align*}
\end{proof}

\begin{figure}
    \centering
\usetikzlibrary{calc, backgrounds, decorations.pathreplacing}

\begin{tikzpicture}[scale=0.8]

\def\rA{5.0}   
\def\rB{3.8}   
\def\rC{2.6}   
\def\rD{1.4}   

\begin{scope}[on background layer]
  \fill[orange!8] (-\rA,-\rA) rectangle (\rA,\rA);
  \fill[white] (-\rB,-\rB) rectangle (\rB,\rB);
  \fill[blue!6] (-\rB,-\rB) rectangle (\rB,\rB);
  \fill[white] (-\rC,-\rC) rectangle (\rC,\rC);
  \fill[orange!5] (-\rC,-\rC) rectangle (\rC,\rC);
  \fill[white] (-\rD,-\rD) rectangle (\rD,\rD);
\end{scope}

\draw[thick] (-\rA,-\rA) rectangle (\rA,\rA);
\draw[gray!60] (-\rB,-\rB) rectangle (\rB,\rB);
\draw[gray!60] (-\rC,-\rC) rectangle (\rC,\rC);
\draw[thick, dashed, gray!40] (-\rD,-\rD) rectangle (\rD,\rD);

\node[anchor=south east, font=\small] at (-\rA, \rA) {$\Lambda_N$};
\node[anchor=south east, gray!70, font=\small] at (-\rB, \rB) {$\Lambda_{N_1}$};
\node[anchor=south east, gray!70, font=\small] at (-\rC, \rC) {$\Lambda_{N_2}$};

\draw[decorate, decoration={brace, amplitude=5pt, raise=3pt}, gray!60, thick]
  (-\rA, \rA) -- (-\rA, \rB);
\node[gray!60, font=\footnotesize, anchor=east] at (-\rA-0.35, {0.5*(\rA+\rB)}) {$\log N$};

\node[orange!60!black, font=\footnotesize] at ({0.5*(\rA+\rB)}, -{0.5*(\rA+\rB)}) {$\mathtt{Ann}^1$};
\node[blue!50!black, font=\footnotesize] at ({0.5*(\rB+\rC)}, -{0.5*(\rB+\rC)}) {$\mathtt{Ann}^2$};


\begin{scope}
  \clip (-\rA,-\rA) rectangle (\rA,\rA);
  \fill[orange!18, rounded corners=3pt]
    (-4.8, 4.55) .. controls (-3.5, 4.1) and (-2.0, 4.65) ..
    (-0.5, 4.15) .. controls (1.0, 3.95) and (2.5, 4.5) ..
    (4.8, 4.1) -- (4.8, 3.9) .. controls (2.5, 4.2) and (1.0, 3.85) ..
    (-0.5, 4.0) .. controls (-2.0, 4.5) and (-3.5, 3.9) ..
    (-4.8, 4.35) -- cycle;
  \fill[orange!12, rounded corners=3pt]
    (4.55, -3.5) .. controls (4.1, -2.0) and (4.6, -0.5) ..
    (4.15, 1.0) .. controls (3.95, 2.5) and (4.5, 3.5) ..
    (4.2, 3.6) -- (3.95, 3.6) .. controls (4.2, 3.0) and (3.85, 2.0) ..
    (3.95, 1.0) .. controls (4.4, -0.5) and (3.9, -2.0) ..
    (4.35, -3.5) -- cycle;
  \fill[orange!12, rounded corners=3pt]
    (-4.8, -4.5) .. controls (-3.0, -4.1) and (-1.0, -4.6) ..
    (1.0, -4.15) .. controls (3.0, -3.95) and (4.0, -4.5) ..
    (4.8, -4.2) -- (4.8, -4.0) .. controls (4.0, -4.3) and (3.0, -3.8) ..
    (1.0, -3.95) .. controls (-1.0, -4.4) and (-3.0, -3.9) ..
    (-4.8, -4.3) -- cycle;
  \fill[orange!12, rounded corners=3pt]
    (-4.5, 3.5) .. controls (-4.1, 2.0) and (-4.6, 0.5) ..
    (-4.2, -1.0) .. controls (-3.95, -2.5) and (-4.5, -3.5) ..
    (-4.3, -3.6) -- (-4.05, -3.6) .. controls (-4.3, -3.0) and (-3.85, -2.0) ..
    (-4.0, -1.0) .. controls (-4.4, 0.5) and (-3.9, 2.0) ..
    (-4.25, 3.5) -- cycle;
\end{scope}

\begin{scope}
  \clip (-\rB,-\rB) rectangle (\rB,\rB);
  \fill[blue!15, rounded corners=3pt]
    (-3.6, 3.45) .. controls (-2.5, 3.0) and (-1.0, 3.5) ..
    (0.5, 3.1) .. controls (1.5, 2.9) and (2.5, 3.4) ..
    (3.6, 3.1) -- (3.6, 2.85) .. controls (2.5, 3.1) and (1.5, 2.7) ..
    (0.5, 2.9) .. controls (-1.0, 3.3) and (-2.5, 2.75) ..
    (-3.6, 3.2) -- cycle;
\end{scope}

\foreach \x in {-4.5, -3.8, -3.0, -2.2, -1.4, -0.5, 0.3, 1.1, 1.9, 2.7, 3.5, 4.3} {
    \fill[red!80!black] (\x, \rA) circle (1.8pt);
}
\foreach \y in {-3.5, -1.5, 0.5, 2.5} {
    \fill[red!70!black, opacity=0.6] (\rA, \y) circle (1.5pt);
}
\foreach \x in {-3.5, -1.0, 1.5, 4.0} {
    \fill[red!70!black, opacity=0.6] (\x, -\rA) circle (1.5pt);
}
\foreach \y in {-2.5, 0.0, 2.5} {
    \fill[red!70!black, opacity=0.6] (-\rA, \y) circle (1.5pt);
}

\draw[red!40!orange, semithick, opacity=0.5] (-4.5, \rA) -- (-3.5, \rB);
\draw[red!40!orange, semithick, opacity=0.5] (-3.8, \rA) -- (-3.5, \rB);
\draw[red!40!orange, semithick, opacity=0.5] (-3.0, \rA) -- (-3.5, \rB);
\draw[red!40!orange, semithick, opacity=0.5] (-2.2, \rA) -- (-1.0, \rB);
\draw[red!40!orange, semithick, opacity=0.5] (-1.4, \rA) -- (-1.0, \rB);
\draw[red!40!orange, semithick, opacity=0.5] (-0.5, \rA) -- (-1.0, \rB);
\draw[red!40!orange, semithick, opacity=0.5] (0.3, \rA) -- (1.5, \rB);
\draw[red!40!orange, semithick, opacity=0.5] (1.1, \rA) -- (1.5, \rB);
\draw[red!40!orange, semithick, opacity=0.5] (1.9, \rA) -- (1.5, \rB);
\draw[red!40!orange, semithick, opacity=0.5] (2.7, \rA) -- (3.5, \rB);
\draw[red!40!orange, semithick, opacity=0.5] (3.5, \rA) -- (3.5, \rB);
\draw[red!40!orange, semithick, opacity=0.5] (4.3, \rA) -- (3.5, \rB);

\fill[violet!80!black] (-3.5, \rB) circle (2.2pt);
\fill[violet!80!black] (-1.0, \rB) circle (2.2pt);
\fill[violet!80!black] (1.5, \rB) circle (2.2pt);
\fill[violet!80!black] (3.5, \rB) circle (2.2pt);

\foreach \y in {-1.5, 1.5} {
    \fill[violet!70!black, opacity=0.5] (\rB, \y) circle (1.7pt);
}
\foreach \x in {-1.5, 1.5} {
    \fill[violet!70!black, opacity=0.5] (\x, -\rB) circle (1.7pt);
}
\fill[violet!70!black, opacity=0.5] (-\rB, 0) circle (1.7pt);

\draw[blue!50!violet, semithick, opacity=0.6] (-3.5, \rB) -- (-1.0, \rC);
\draw[blue!50!violet, semithick, opacity=0.6] (-1.0, \rB) -- (-1.0, \rC);

\draw[blue!50!violet, semithick, opacity=0.6] (1.5, \rB) -- (2.2, \rC);
\draw[blue!50!violet, semithick, opacity=0.6] (3.5, \rB) -- (2.2, \rC);

\fill[blue!80!black] (-1.0, \rC) circle (2.5pt);
\fill[blue!80!black] (2.2, \rC) circle (2.5pt);
\fill[blue!80!black, opacity=0.5] (\rC, 0) circle (1.8pt);
\fill[blue!80!black, opacity=0.5] (-0.3, -\rC) circle (1.8pt);

\begin{scope}[shift={(6.2, 1.5)}]
  \fill[red!80!black] (0, 3.0) circle (1.8pt);
  \node[font=\small, anchor=west] at (0.3, 3.0) {$A^0 = A$};

  \fill[violet!80!black] (0, 2.3) circle (2.2pt);
  \node[font=\small, anchor=west] at (0.3, 2.3) {$A^1$};

  \fill[blue!80!black] (0, 1.6) circle (2.5pt);
  \node[font=\small, anchor=west] at (0.3, 1.6) {$A^2$};

  \fill[orange!22] (-0.15, 0.75) rectangle (0.15, 1.05);
  \node[font=\small, anchor=west] at (0.3, 0.9) {$\mathtt{Giant}_{\mathtt{Ann}^k}$};
\end{scope}

\end{tikzpicture}
\caption{Schematic of the strategy in the proof of \Cref{The_Catching_Prop}. Concentric annuli are explored from the outside and inwards. In every annulus of size $\log(N)$, there is a giant cluster in the annulus and with probability tending to $1,$ an $\varepsilon$-fraction of the points in $A^k$ connect to this giant cluster. }
    \label{fig:catching_figure}
\end{figure}
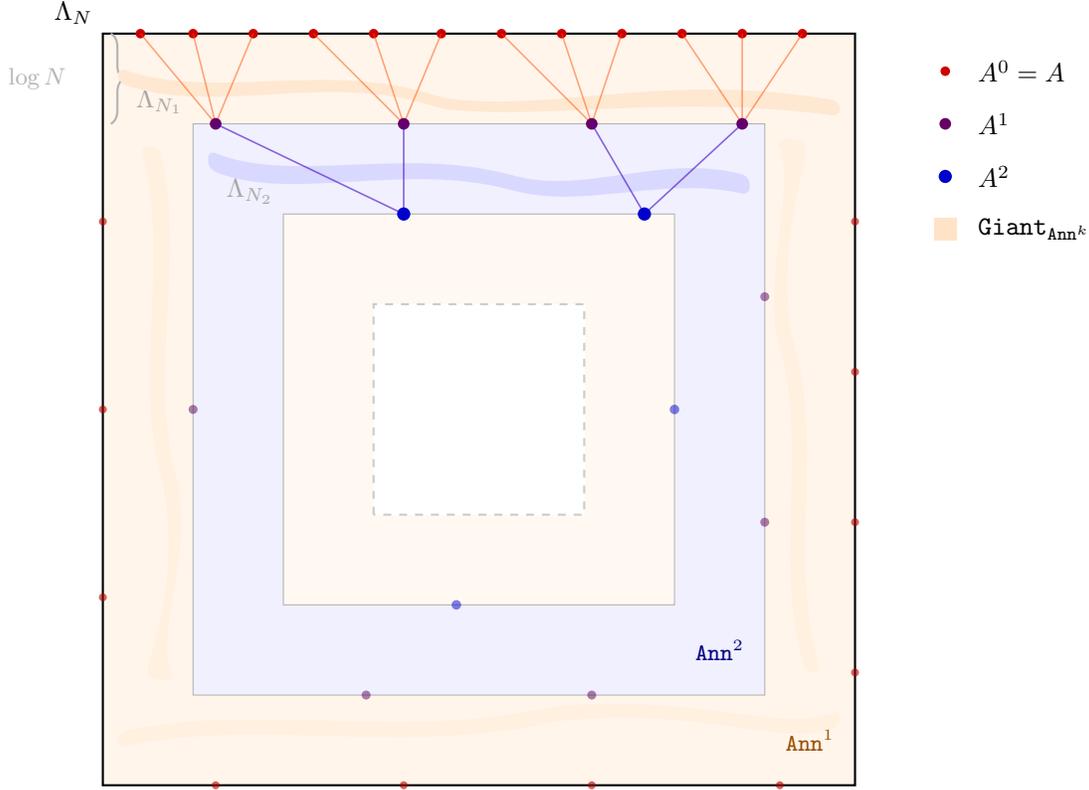

\subsection{Unique crossings conditioned on \texorpdfstring{$\mathcal{F}_A$}{F\_A}}
We are now in position to prove our main technical proposition, which generalises the corresponding lemma for $A=\emptyset$ from \cite[Lemma 4.8]{hansen2023uniform}.
\begin{proof}[Second proof of \Cref{proposition:unique_crossing_with_fa_old_version}]
If $d=2,$ let $\mathtt{Circ}_N$ denote the event that there is a circuit of open edges in $\Lambda_N\setminus \Lambda_{N/2}.$ We note that
$$
\phi^0_{\Lambda_N,p}[\mathtt{UC}_N\mid \mathcal{F}_A]\geq \phi^0_{\Lambda_N,p}[\mathtt{Circ}_N\mid \mathcal{F}_A]\geq \phi^0_{\Lambda_N,p}[\mathtt{Circ}_N]=1-\phi^1_{\Lambda_N^*,p^*}[ \Lambda_{N/2}\cc \partial_v\Lambda_{N}]\geq 1-\exp(-CN)
$$
where the second inequality is FKG, the equality is planar duality \cite[Proposition 2.17]{DC17}, and the third inequality is sharpness of the phase transition for the subcritical FK-model \cite{DCsharpness}.

Now, consider $d\geq 3.$ 
The strategy is first to show the lemma if there are relatively few elements in $A$ and then use \Cref{The_Catching_Prop} to reduce the problem to the case when $A$ does not have that many vertices.

Let $\theta,c>0$ be such that $\phi_{\Lambda_k,p}^{0}[ |\mathcal{C}_v|\geq \frac{\theta}{2} |\Lambda_k|]\geq c$ for all $k\in \mathbb{N},$ and all $v\in \partial_v \Lambda_k$ (cf. \Cref{Touch_Cluster}). By FKG and a union bound, for any $A'\subseteq \partial_v \Lambda_k/\xi ,$  
$$
\phi^{\xi}_{\Lambda_k,p}\left[ \cap_{v\in A'} (v\cc \mathtt{Giant}_{\Lambda_k})\right]\geq \phi^{\xi}_{\Lambda_k,p}\left[ \cap_{v\in A'} (|\mathcal{C}_v|\geq \frac{\theta}{2}|\Lambda_k|)\cap \mathtt{Pis}_{\Lambda_k}\right]\geq c^{|A'|}-\exp(-c'k^{d-1}).
$$
If all vertices in $A'$ connect to the giant, the event $\mathcal{F}_{A'}$ automatically occurs and so  $\phi^{\xi}_{\Lambda_k,p}[\mathcal{F}_{A'}]\geq c^{|A'|}-\exp(-c'k^{d-1}).$ By a slight extension of \cite[Lemma 4.8]{hansen2023uniform}, there exists a $c>0$ such that for all $k$, 
\begin{align}\label{eq:last}
  \inf_{\xi} \phi^{\xi}_{\Lambda_k,p}[\mathtt{UC}(\Lambda_k,\Lambda_{2k/3})]\geq 1-\exp(-c''k),   
\end{align}
where, for $n>m$, $\mathtt{UC}(\Lambda_n,\Lambda_m)$ is the event that there is a unique crossing in the annulus $\Lambda_n\setminus \Lambda_m$ from inner to outer boundary. Accordingly, by a union bound,
$$
\phi^{\xi}_{\Lambda_k,p}[\mathtt{UC}(\Lambda_k, \Lambda_{2k/3})\mid \mathcal{F}_{A'}]\geq 1-\frac{\exp(-c''k)}{c^{|A'|}-\exp(-c'k^{d-1})}.
$$
Now fix the constant $\delta =\frac{\min\{c',c''\}}{2\log(1/c)}$. Whenever $|A'|\leq \delta k ,$ we have $\phi^{\xi}_{\Lambda_k,p}[\mathtt{UC}(\Lambda_k,\Lambda_{2k/3})\mid \mathcal{F}_{A'}]\geq 1-\exp(-ck/2).$

Let $\omega_{\mathtt{exp}}$ be the exploration of the components of $A$ inside $\omega\vert_{\Lambda_N\setminus\Lambda_{3N/4}}$, 
and define $E^\delta_A=\{|\mathcal{C}_A(\omega_{\mathtt{exp}})| < \delta N \}.$ 
By \eqref{SMP}, 
\begin{align*}
\phi^{0}_{\Lambda_N,p}[\mathtt{UC}_N \mid E^\delta_A, \mathcal{F}_A]
& = \sum_{\omega_{\mathtt{exp}}}\phi^{0}_{\Lambda_N,p}[ \mathtt{UC}_N \mid \omega_{\mathtt{exp}},E^\delta_A, \mathcal{F}_A] \phi^{0}_{\Lambda_N,p}[\omega_{\mathtt{exp}} \mid  E^\delta_A, \mathcal{F}_A].
\\
&= \sum_{\omega_{\mathtt{exp}}}\phi^{\xi(\omega_{\mathtt{exp}})}_{\Lambda_{N}\setminus \omega_{\mathtt{exp}},p}[ \mathtt{UC}_N \mid  \omega^{\xi(\omega_{\mathtt{\exp}})}\in\mathcal{F}_{A}] \phi^{0}_{\Lambda_N,p}[\omega_{\mathtt{exp}} \mid  E^\delta_A, \mathcal{F}_A] \\
&\geq  1- \exp(- c N/4). 
\end{align*}

Accordingly,
\begin{align*}
\phi^0_{\Lambda_N,p}[\mathtt{UC}_N\mid \mathcal{F}_A]&\geq \left(1-\exp\left(-c \frac{3N}{8}\right)\right) \left(1-\phi^0_{\Lambda_N,p}[|\mathscr{C}_A(\omega|_{\Lambda_N \setminus \Lambda_{3N/4}})|\geq \delta N\mid \mathcal{F}_A]\right)\\
&\geq 1-\exp(-c' N)
\end{align*}
for an adjusted constant $c'$, where the last inequality is due to \Cref{The_Catching_Prop}. 
\end{proof}

\end{appendix}

\bibliographystyle{abbrv}
\bibliography{bibliography.bib}

\end{document}